\newtheorem{theorem}{Theorem}[subsection]
\newtheorem{cj}[theorem]{Conjecture}
\theoremstyle{definition}
\theoremstyle{remark}
\numberwithin{equation}{section}
\newcommand{\eqn}[1]{(\ref{#1})}
\newcommand{\om}{\omega}
\newcommand{\ZZ}{{\mathbb Z}}
\newcommand{\RR}{{\mathbb R}}
\newcommand{\HH}{{\mathbb H}}
\newcommand{\LL}{{\mathbb L}}
\newcommand{\PP}{{\mathbb P}}
\newcommand{\QQ}{{\mathbb Q}}
\newcommand{\CC}{{\mathbb C}}
\newcommand{\be}{{\bf e}}
\newcommand{\bG}{{\bf G}}
\newcommand{\bv}{{\bf v}}
\newcommand{\sC}{{\mathcal C}}
\newcommand{\sE}{{\mathcal E}}
\newcommand{\sG}{{\mathcal G}}
\newcommand{\EG}{{\delta}}
\newcommand{\sP}{{\mathcal P}}
\newcommand{\sEP}{{\mathcal E}{\mathcal P}}
\newcommand{\sGP}{{\mathcal G}{\mathcal P}}
\newcommand{\sHP}{{\mathcal H}{\mathcal P}}
\newcommand{\blambda}{{\bf \lambda}}
\newcommand{\bE}{{\bf E}}
\newcommand{\Ein}{{\rm Ein}}
\newcommand{\Eone}{{\rm E}_1}
\newcommand{\Li}{\,{\rm Li}}
\newcommand{\meas}{{ \rm meas}}
\renewcommand{\theequation}{\arabic{section}.\arabic{equation}}
\begin{document}

\title{Euler's constant: Euler's work and modern developments}

\author{Jeffrey C. Lagarias}
\address{Dept. of Mathematics,
University of Michigan,
Ann Arbor, MI 48109-1043, USA}
\curraddr{}
\email{lagarias@umich.edu}

\thanks{The research  of the author was supported by NSF Grants DMS-0801029 and DMS-1101373.}

\subjclass[2010]{Primary 11J02 Secondary 01A50, 11J72, 11J81, 11M06}

\date{October 10, 2013}

\dedicatory{}

\begin{abstract}
This paper has two parts. The first part  surveys  Euler's work on the
constant $\gamma=0.57721...$ bearing his name, 
together with some of his related work on the gamma function, values of the zeta function,
and divergent series.  
The second part describes various  mathematical developments involving Euler's constant,
as well as another constant,
 the Euler-Gompertz constant. These developments
 include  connections with arithmetic functions
 and the Riemann hypothesis,  and with
 sieve methods,  random permutations,
  and  random matrix products.
It also  includes recent results  on Diophantine
approximation and transcendence related to Euler's constant. 
\end{abstract}

\maketitle
\tableofcontents



%
%
%
%
\setlength{\baselineskip}{1.0\baselineskip}

\section{Introduction}

Euler discovered the constant $\gamma$, defined by
\begin{equation*}
\gamma := \lim_{n \to \infty}  \left( \sum_{j=1}^n \frac{1}{j} \,  - \log n\right) .
\end{equation*}
This is a fundamental constant, now called {\em Euler's constant.}  We give 
it to $50$ decimal places  as
\begin{equation*}
\gamma = 0. 57721~56649~01532 ~86060~ 65120~90082~40243~10421~59335~ 93992...
\end{equation*}
In this paper we shall describe  Euler's  work on it, and its 
connection with values of the gamma function and Riemann zeta function.
We consider as well its close 
relatives $e^{\gamma} =1. 7810724... $
and  $e^{-\gamma}=0.561459...$.
We also inquire as to  its possible meaning, by describing many situations
in mathematics where it occurs.

The constant $\gamma$ is often known as the {\em Euler-Mascheroni constant},
after later work of Mascheroni discussed in Section \ref{sec26}. As explained there,
based on their relative contributions  it seems appropriate to name it after  Euler.

There are many famous unsolved problems about the nature of this
constant. The most well known one is: 


\begin{cj} \label{conj1}
Euler's constant is irrational.
\end{cj}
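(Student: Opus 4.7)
The plan is to attempt the standard paradigm for modern irrationality proofs, pioneered by Apéry for $\zeta(3)$: construct a sequence of integer linear forms $L_n = a_n\gamma + b_n$ with $a_n,b_n\in\ZZ$ (or with controlled rational denominators) such that $L_n\ne 0$ for all large $n$ and $|L_n|\to 0$ fast enough to force $|\gamma + b_n/a_n| < a_n^{-1-\varepsilon}$ for infinitely many $n$. Such a sequence rules out $\gamma\in\QQ$ by the usual argument that a rational number cannot be approximated to better than denominator-reciprocal accuracy by distinct rationals of comparable size.

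First I would choose a promising analytic representation of $\gamma$ on which to hang Padé-type approximants. Natural starting points are the double integral
\[
\gamma = \int_0^1\!\!\int_0^1 \frac{1-x}{(1-xy)\log(xy)}\,dx\,dy
\]
and the Laplace-transform identity $\gamma = -\int_0^\infty e^{-t}\log t\,dt$. Second, I would introduce explicit polynomial weights $P_n(t)$ playing the role of shifted Legendre polynomials in Apéry's argument, and consider integrals $\int P_n(t)K(t)\,dt$ against a kernel $K$ chosen so that expansion produces a linear form in $1$ and $\gamma$. Third, one must execute two independent estimates: an arithmetic estimate bounding the common denominator of the coefficients in the expansion by $\mathrm{lcm}(1,\ldots,n)^{c}$ and invoking the prime-number-theorem asymptotic $\mathrm{lcm}(1,\ldots,n) \sim e^{n}$, and an analytic estimate via saddle-point/Laplace asymptotics showing that $|L_n|$ decays exponentially faster than these denominators grow. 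The two rates together must leave a positive gap.

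The main obstacle, and the reason the conjecture has remained open since Euler's time, is that every presently known construction of sufficiently sharp rational approximations to $\gamma$ in fact produces linear forms of the shape $a_n\gamma + b_n + c_n\delta$, in which the Euler--Gompertz constant $\delta$ (discussed later in the paper) inevitably appears. The Aptekarev--Rivoal simultaneous Padé construction therefore yields only the weaker statement that at least one of $\{\gamma,\delta\}$ is irrational (indeed transcendental), not the irrationality of $\gamma$ in isolation. To settle the conjecture along these lines one would need either an integral representation of $\gamma$ whose natural approximants structurally suppress $\delta$, or an independent arithmetic identity allowing $\delta$ to be eliminated from the linear forms. Both routes appear to require a genuinely new analytic idea, and I would expect this to be the principal difficulty rather than any of the routine asymptotic or arithmetic steps.
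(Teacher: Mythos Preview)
The statement you were given is labelled \emph{Conjecture} in the paper, and the paper explicitly says immediately after it: ``This is a long-standing open problem.'' There is no proof in the paper to compare your proposal against; the paper treats the irrationality of $\gamma$ as unresolved throughout and devotes Sections~3.15 and~3.16 to surveying the best known partial results. So your proposal is not a proof and was never going to be one, and you seem to understand this, since your write-up is really a diagnosis of why the Ap\'ery paradigm has so far failed for $\gamma$.

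That diagnosis is broadly reasonable but not quite accurate on one point. You say that every known construction of sharp rational approximations to $\gamma$ produces linear forms $a_n\gamma + b_n + c_n\delta$ with the Euler--Gompertz constant $\delta$ unavoidably present. In fact the paper records (Theorems~\ref{th3152} and~\ref{th3153}) the constructions of Aptekarev et al.\ and of the Hessami Pilehroods, which give linear forms $q_n\gamma - p_n$ in $\gamma$ alone, with $q_n\in\ZZ$ and $D_n p_n\in\ZZ$ where $D_n=\mathrm{lcm}(1,\ldots,n)$. The obstacle there is purely quantitative: $|q_n\gamma - p_n|$ decays like $e^{-c\sqrt{n}}$ while $q_n$ grows factorially, so the gap between the analytic and arithmetic rates goes the wrong way. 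The $\gamma$--$\delta$ entanglement you describe is the mechanism behind Rivoal's result (Theorem~\ref{th102}) that $\QQ(e,\gamma,\delta)$ has transcendence degree at least two, but it is not the reason the pure-$\gamma$ approximations fail. The honest summary is that no one has found approximations to $\gamma$ whose convergence beats the growth of their denominators, whether or not $\delta$ is involved.
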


This is a long-standing open problem. A recent and stronger
version of  it is the following. 


\bigskip
\begin{cj}~\label{conj2}
Euler's constant is not a Kontsevich-Zagier period. In particular,
Euler's constant is transcendental.
\end{cj}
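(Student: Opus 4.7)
The plan is to tackle the stronger non-period claim first: showing $\gamma$ is not a Kontsevich-Zagier period automatically yields transcendence, since every algebraic number is a period via integration over a semi-algebraic domain cut out by its minimal polynomial. I split the strategy into two layers, one aiming at irrationality as an unavoidable prerequisite, and one aiming at the finer period-theoretic obstruction.

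\textbf{Layer 1: Irrationality.} Starting from the identity $\gamma = -\Gamma'(1)$ and the representation $\gamma = \int_0^1 \frac{1-e^{-t}}{t}\,dt - \int_1^\infty \frac{e^{-t}}{t}\,dt$, I would look for a sequence of rational approximations $p_n/q_n$ analogous to the Ap\'ery sequences for $\zeta(2)$ and $\zeta(3)$. Natural candidates arise from Pad\'e approximants to the exponential integral $E_1(z)$ and from the continued fraction of the incomplete gamma function. The goal is an estimate $|q_n \gamma - p_n| = o(1)$ with $q_n$ of controlled arithmetic size. The main obstacle, which has defeated every previous attempt, is that in every known construction the denominators contain $n!$-type factors that exactly cancel the smallness in the numerator. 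A plausible workaround is to pursue \emph{simultaneous} rational approximations to $\gamma$ together with the Euler-Gompertz constant $\EG$, exploiting known algebraic relations between their integral representations to force at least one of the two to be irrational.

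\textbf{Layer 2: Non-period property.} Assuming irrationality is in hand, the strategy is to exploit the conjectural rigidity of the period ring $\sP$: by the Kontsevich-Zagier conjecture, two integral representations define the same period only if they can be connected by elementary operations (additivity, algebraic change of variables, Stokes). I would attempt to show that any hypothetical equality $\gamma = \int_\sigma \omega$, with $\omega$ an algebraic differential form over $\QQ$ and $\sigma$ a semi-algebraic chain, is incompatible through such operations with the representation $\gamma = \int_0^\infty \bigl(\frac{1}{1-e^{-u}} - \frac{1}{u}\bigr) e^{-u}\,du$, whose integrand is essentially transcendental. The most structured version of this is to place $\gamma$ in the extended ring $\sEP$ of exponential periods (where it manifestly lies) and prove that the inclusion $\sP \subset \sEP$ is strict at $\gamma$, via a new arithmetic invariant of exponential periods modulo periods.

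The hardest step, by a large margin, is Layer 1: no irrationality proof is known for any digamma-type constant including $\gamma$, and Ap\'ery's miraculous denominator-clearing identities have no analogue in this setting. I expect the project to stall here, since without a genuinely new hypergeometric-type identity for $\gamma$, or a motivic interpretation of it yet to be discovered, Layer 2 cannot be meaningfully engaged. The conjecture therefore appears to lie two substantial breakthroughs beyond current technique.
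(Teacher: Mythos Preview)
The statement you were asked to prove is labeled a \emph{conjecture} in the paper, and the paper contains no proof of it whatsoever---it is presented precisely as a long-standing open problem. There is therefore nothing to compare your attempt against: the paper merely states the conjecture, explains what a Kontsevich--Zagier period is, and notes that since $\overline{\QQ}\subset\sP$, the non-period claim would imply transcendence.

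Your write-up is not a proof but an honest research outline, and you say so yourself in the final paragraph. That is the correct assessment: Layer~1 (irrationality of $\gamma$) is already a famous open problem, and no Ap\'ery-type construction is known that clears the factorial growth in the denominators. Your remark about simultaneous approximation to $\gamma$ and the Euler--Gompertz constant $\EG$ is in fact already a theorem (Rivoal 2012, Theorem~\ref{th102} in the paper): one knows unconditionally that $\QQ(e,\gamma,\EG)$ has transcendence degree at least two over $\QQ$, and that at least one of $\gamma,\EG$ is transcendental---but this does not isolate $\gamma$. Layer~2 is even further out of reach, since the Kontsevich--Zagier conjecture on period identities is itself wide open, so you cannot use it as a tool. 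In short, your proposal accurately identifies the obstructions but does not surmount any of them; the conjecture remains open.
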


A {\em period} is defined by Kontsevich and Zagier \cite{KZ01}
to be a complex constant whose real and
imaginary parts separately are given as a finite sum of absolutely convergent integrals of
rational functions in any number of variables with rational coefficients, integrated
over a domain cut  out by a finite set of polynomial equalities and inequalities
with rational coefficients, see Section \ref{sec310}.  Many constants are known to be periods, 
 in particular all zeta values $\{\zeta(n): n \ge 2\}.$
The set of all periods forms a ring ${\mathcal P}$ which
includes the field $\overline{\QQ}$
of all algebraic numbers. It follows that if Euler's constant is not a period, then it must
be   a transcendenta numberl.   Conjecture \ref{conj2}  also implies  that $\gamma$ would be
$\overline{\QQ}$-linearly independent of all odd zeta values
$\zeta(3), \zeta(5), \zeta(7), ...$ and of $\pi$.

%
This paper  also presents  results on  another constant 
defined by Euler,  the {\em Euler-Gompertz constant}
\begin{equation}
\label{104aa}
\EG := \int_{0}^1 \frac{dv}{1-\log v} = \int_{0}^{\infty} \frac{e^{-t}}{1+t} dt  = 0.59634 \,73623\, 23194 \dots
\end{equation}
Here we adopt the notation of Aptekarev \cite{Apt09} for this constant (in which $\EG$
follows $\gamma$) and refer to it by the name given  by
Finch \cite[Sec. 6.2]{Fin03} (who denotes it $C_2$). Some results about $\delta$  intertwine  directly with
Euler's constant, see  Sections \ref{sec34},  \ref{sec35},  \ref{sec311} and  \ref{sec312}.

Euler's name  is also associated  with other  constants such as  $e=2.71828...$ given by
\begin{equation*}
e := \sum_{n=0}^{\infty}\,\frac{1}{n!}.
\end{equation*}
He did not discover this constant, but  did standardize  the symbol ``$e$'' for its use.
He used the notation $e$  in 
 his 1737 essay on continued fractions (\cite[p. 120]{E71}, cf. \cite{EWW85}),
where he determined its continued fraction expansion 
$$
e = [2; 1, 2,\, 1, 1, 4,\, 1,1, 6, \,1, 1, 8,\, \cdots ]= 2 + \cfrac{1}{1+ \cfrac{1}{2+ \cfrac{1}{1+ \cfrac{1}{1+ \cdots}}}}
$$
and from its form deduced that it was irrational. 
Elsewhere he   found the famous relation $e^{\pi i}= -1$. 
The constant  $e$ is also conjectured not to be a Kontsevich-Zagier period. 
It is well known   to  be a transcendental number, consistent
with this conjecture.

In this paper we review Euler's work on the constant $\gamma$ and  on mathematical
topics connected to it and  survey subsequent developments. 
It is correspondingly divided into two parts, 
 which may be read independently.
 We have also aimed to make  individual subsections  readable out of order.

The first part  of this paper considers  the work of Euler. Its emphasis is historical, and
it  retains some of the original notation of Euler.  It is a tour of part of
Euler's work related to zeta values and other  constants and methods of finding
relations between them.
Euler did  extensive work  related to his constant: he returned
to its study many  times. The basic fact
to take away is that Euler did an enormous amount of work directly on his
constant and related areas, far more than is commonly appreciated.

The second part of this paper  addresses mathematical developments 
 made since Euler's time concerning.
Euler's constant. 
Since Euler's constant is an unusual constant that seems unrelated
to other known constants, its appearance in different mathematical subjects
can be a signal of possible  connections between these subjects.
We  present  many  contexts  in which Euler's constant appears.
These include  connections to the Riemann hypothesis, to  random matrix theory,
to probability theory and to other subjects.  
There is a set of strong analogies between   factorizations of
random integers in the interval $[1, n]$ and  cycle structures of random permutations
in the symmetric group $S_N$,  taking  $N$ to be proportional to $\log n$.
In this analogy there appears another
constant that might have appealed to Euler: the {\em Golomb-Dickman constant},
which can be defined as 
$$
\lambda := \int_{0}^{1} e^{Li(x)} dx = 0.62432 \, 99885 \, 43550 \dots
$$
where $Li(x) := \int_{0}^x \frac{dt}{\log t}$,
see Section \ref{sec38}. The constant $\lambda$ appears in connection with the
distribution of the longest cycle of a random permutation, while the constant
$e^{-\gamma}$ appears in connection with the distribution of the shortest
cycle. The  discovery of this analogy  is described at the beginning of Section \ref{sec38a}.
It was found by  Knuth and Trabb-Pardo \cite{KTP76}
 through noticing a numerical coincidence to $10$ decimal places
 between two computed constants,
  both subsequently identified with  the Golomb-Dickman constant.

In passing we  present many 
striking  and elegant formulas, which exert a certain fascination in
themselves. Some of the
 formulas presented were obtained  by specialization
of more general  results in the literature.
Other works presenting striking  formulas  include
the  popular book of J. Havil \cite{Hav03}  devoted extensively to
mathematics around Euler's constant,  and   
Finch \cite[Sections 1.5 and 4.5]{Fin03}. 

In Section \ref{sec4} we make concluding remarks and briefly  describe
other directions of work related to Euler's constant.

 \bigskip

\renewcommand{\theequation}{\arabic{section}.\arabic{subsection}.\arabic{equation}}
%
%
%
%
\section{Euler's work}\label{sec2}
\setcounter{equation}{0}

Euler (1707--1783) was born in Basel, and entered the University
of Basel at age 13, to study theology. He also
 received private tutorials  in mathematics   from   Johann Bernoulli (1667--1748),
 who was Professor of Mathematics at the University of Basel. 
 Johann recognized his mathematical gift and
convinced his family to let him switch to mathematics. 
Euler was excellent at computation and had  an essentially photographic memory. 
He had great persistence, and returned over and over again to problems that
challenged him.

Here we 
only scratch the surface of Euler's enormous output,
which contains over 800 works.
 These include at least   $20$ books,
surveyed in  Gautschi \cite{Ga08b}.
We trace a   thread  comprising results
corresponding to one of 
his  interests,  the determination of specific constants
given by infinite sums, definite integrals,  infinite products,
and  divergent series, 
and  finding  identities giving  interrelations among  these constants.
Ancillary to this  topic is the related problem of determining numerical values
of these constants;  computing such values allow one to  guess and test possible
identities.  

In his work Euler made extensive calculations, deriving approximations of numbers to
many decimal places. He often reported  results of calculations 
obtained using various approximation schemes
and sometimes compared the  values he obtained by the different methods. 
In Euler's numerical results reported below
 the symbol $\approx$ is used to mean  ``the result of an approximate calculation''. 
 The symbol $\approx$  does not imply agreement to the number of digits provided.
 An underline of a digit in such an expansion indicates the first place where it disagrees with the
 known decimal expansion.

We refer to Euler's  papers by their   index numbers
in the En\"{e}strom catalogue (\cite{Ene13}).
These papers may be found online in the Euler archive (\cite{EA}). For discussions of Euler's early
work up to 1750 see Sandifer \cite{San07b}, \cite{San07}.
For  his work on the Euler(-Maclaurin) summation formula see Hofmann \cite{Hof57} and Knoebel et al.
\cite[Chapter 1]{KLLP07}.
For other discussions of his work on the zeta function  see
St\"{a}ckel \cite{Sta07}, Ayoub \cite{Ay74}, 
Weil \cite[Chap. 3] {We84} and Varadarajan \cite[Chap. 3]{Var08}. For Euler's life and work 
see Calinger \cite{Cal07} and other articles in Bradley and Sandifer \cite{BS07}.

\subsection{Background}\label{sec20}
\setcounter{equation}{0}

Euler spent much effort evaluating the sums of reciprocal powers
\begin{equation}\label{200b}
\zeta(m) := \sum_{j=1}^{\infty} \frac{1}{j^m}.
\end{equation}
for integer $m \ge 2$. 
He  obtained the formula $\zeta(2) = \frac{\pi^2}{6}$, solving the ``Basel problem,''
as well as formulas for all even zeta values $\zeta(2n).$ 
He repeatedly tried to find analogous
formulas for the odd zeta values $\zeta(2n+1)$ which would
explain the nature of these numbers. Here he did not completely succeed,
and the nature of the odd zeta values remains
unresolved to this day. 
Euler's constant naturally arises in this context, in
attempting to assign a value to  $\zeta(1)$.

Euler discovered the constant $\gamma$ in a  study of the {\em harmonic numbers}
\begin{equation}\label{200c}
H_n := \sum_{j=1}^n \frac{1}{j},
\end{equation}
which  are the partial sums of the harmonic series. The series
$``\zeta(1)" = \sum_{n=1}^{\infty} \frac{1}{n}$
diverges, and Euler  approached it by (successfully) 
 finding a function $f(z)$ (of  a real or complex variable $z$) that interpolates the harmonic number
values, i.e. it has  $f(n)=H_n$ for integer $n \ge 1$. 
The resulting function, denoted $H_{z}$ below,  is related to the gamma function
via \eqn{201c}.
In studying it he also studied, for integer $m$, the related sums 
\begin{equation}\label{200d}
H_{n,m}:= \sum_{j=1}^n \,\frac{1}{j^m},
\end{equation}
which we will term {\em $m$-harmonic numbers}; in this notation $\, H_{n,1} = H_n$.

Euler obtained many inter-related  results on his constant $\gamma$.
These are intertwined with his work on  the
gamma function and on values of the (Riemann) zeta function. In Sections 2.2 - 2.4
we present results that he obtained for Euler's constant,
for the gamma function and its logarithmic derivative, and
for  values of the zeta function at integer arguments, concentrating on explicit formulas.
In Section 2.5 we  discuss a paper  of Euler on how to  sum divergent series,
applied to the example $0!-1!+2!- 3!+ \cdots$   His analysis  produced
a new constant,  the Euler-Gompertz constant  $\delta$, given above
in \eqn{104aa}. In Section 2.6 we review the history of the name   Euler-Mascheroni constant,
the adoption of the notation $\gamma$ for it, and  summarize Euler's approach
to research.

Taken together Euler's many papers on his constant $\gamma$, on the related 
 values $\zeta(n)$ for $n \ge 2$,  and other values show his great interest in individual constants 
 (``notable numbers")  and on 
 their arithmetic interrelations.

\subsection{Harmonic series and Euler's constant} \label{sec21}
\setcounter{equation}{0}

One strand of Euler's early work involved 
finding continuous interpolations of various discrete sequences. 
For the sequence of factorials this led to the gamma function, discussed below. 
In  the 1729 paper \cite[E20]{E20} he notes 
that the  
harmonic numbers $H_n= \sum_{k=1}^n \frac{1}{k}$ are given by the integrals
\begin{equation*}
H_n:= \int_{0}^1 \frac{1- x^n}{1-x} dx.
\end{equation*}
He proposes a function interpolating  $H_n$   by treating $n$ as a continuous
variable in this integral.   The resulting interpolating function, valid for real $z \ge 0$,  is
\begin{equation}\label{201aa}
H_{z} := \int_{0}^1 \frac{1- x^z}{1-x} dx.
\end{equation}
Using this definition, he derives the formula
\begin{equation*}
H_{\frac{1}{2}}= 2- 2 \log 2.
\end{equation*}
The function $H_z$ is related to the 
digamma function $\psi(z) = \frac{d}{dz} \log \Gamma(z)$
by 
\begin{equation}\label{201c}
H_{z} = \psi(z+1)+ \gamma,
\end{equation}
see Section \ref{sec22}  and also Theorem~\ref{th30}.

In  a paper written in 1731  ("{\em On harmonic progresssions}") Euler \cite[E43]{E43} summed the harmonic series in terms
of zeta values, as follows,  and computed Euler's constant
to $5$ decimal places. 

\begin{theorem} \label{th21}
{\rm (Euler 1731)}
The  limit
$$
\gamma= \lim_{n \to \infty} \left(H_n - \log n \right)
$$
exists. It is given by the (conditionally) convergent series
\begin{equation}\label{202}
\gamma = \sum_{n=2}^{\infty} (-1)^n \frac{\zeta(n)}{n}.
\end{equation}
\end{theorem}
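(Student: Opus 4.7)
The plan is to first prove that the limit exists by rewriting $H_n - \log n$ as the tail of a telescoping series with positive, $O(k^{-2})$ summands, then expand each summand via the Taylor series of $\log(1+x)$ and carefully interchange the two summations.

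For existence, I would use the telescoping identity $\log n = \sum_{k=1}^{n-1} \log(1 + 1/k)$ to obtain
$$H_n - \log n = \frac{1}{n} + \sum_{k=1}^{n-1}\left(\frac{1}{k} - \log\left(1 + \frac{1}{k}\right)\right).$$
Since $\log(1+x) < x$ for $x > 0$, each summand is positive, and from the Taylor expansion $\log(1+x) = x - x^2/2 + x^3/3 - \cdots$ each summand equals $\frac{1}{2k^2} + O(k^{-3})$. Hence the infinite series converges absolutely and, with $1/n \to 0$, the limit $\gamma$ exists and equals $\sum_{k=1}^\infty (\frac{1}{k} - \log(1+\frac{1}{k}))$.

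Next I would insert the Taylor expansion $\log(1+1/k) = \sum_{m \geq 1}(-1)^{m+1}/(m k^m)$ into each summand to obtain $\frac{1}{k} - \log(1+\frac{1}{k}) = \sum_{m=2}^\infty (-1)^m/(mk^m)$. For the truncated partial sum
$$\gamma_N := \sum_{k=1}^N \left(\frac{1}{k} - \log\left(1+\frac{1}{k}\right)\right) = H_N - \log(N+1),$$
the outer sum over $k$ is \emph{finite}, so I may freely exchange order of summation to obtain $\gamma_N = \sum_{m=2}^\infty (-1)^m H_{N,m}/m$. Because $\log(N+1) - \log N \to 0$, the sequence $\gamma_N$ still tends to $\gamma$.

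The main obstacle is that the double series $\sum_{k,m}(-1)^m/(mk^m)$ is \emph{not} absolutely convergent (the inner $m$-sum for $k=1$ converges only conditionally), so Fubini does not directly justify passing $N \to \infty$ inside the $m$-series. I would address this with a direct tail estimate. First, to see that $\sum_{m=2}^\infty (-1)^m \zeta(m)/m$ converges at all, write $\zeta(m) = 1 + (\zeta(m)-1)$: the constant part yields the convergent alternating series $\sum_{m \geq 2}(-1)^m/m = 1 - \log 2$, while $\zeta(m) - 1 = O(2^{-m})$ makes the remaining series absolutely convergent. Second, the integral comparison gives the uniform tail bound $0 < \zeta(m) - H_{N,m} \leq 1/\bigl((m-1)N^{m-1}\bigr)$ for $m \geq 2$ and $N \geq 1$, so
$$\left|\sum_{m=2}^\infty \frac{(-1)^m \zeta(m)}{m} - \gamma_N\right| \;\leq\; \sum_{m=2}^\infty \frac{1}{m(m-1)N^{m-1}} \;\leq\; \frac{1}{N-1},$$
which tends to $0$ as $N \to \infty$. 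Combined with $\gamma_N \to \gamma$, this yields the identity $\gamma = \sum_{m=2}^\infty (-1)^m \zeta(m)/m$.
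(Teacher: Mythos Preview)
Your proof is correct and follows essentially the same route as Euler's argument presented in the paper: expand each $\log(1+1/k)$ by Mercator's series, sum over $k$ to obtain $H_N-\log(N+1)=\sum_{m\ge 2}(-1)^m H_{N,m}/m$, and then let $N\to\infty$ termwise. The paper explicitly notes that Euler's final step of passing the limit inside the $m$-sum is only ``formal''; your contribution is to supply the missing rigor via the tail estimate $\zeta(m)-H_{N,m}\le 1/((m-1)N^{m-1})$ and the decomposition $\zeta(m)=1+(\zeta(m)-1)$ to verify convergence of the target series.
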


Euler explicitly observes that this series \eqn{202} converges (conditionally) since it is an alternating series with
decreasing terms.  He reports that  the constant  $\gamma \approx 0.57721\underline{8}$.

Euler obtains the formula \eqn{202}
using the expansion 
$\log (1+x) = \sum_{k=1}^{\infty} (-1)^{k+1} \frac{x^k}{k},$
which was found by Nicholas Mercator (c.1620--1687) \cite{Mercator1668} in 1668.
Evaluating this formula at $x= 1, \frac{1}{2}, ..., \frac{1}{n}$ Euler observes the following in tabular form:
\begin{eqnarray*}
\log 2 &=& 1 - ~\frac{1}{2} \cdot \Big(\frac{1}{1}\Big)^2 + \frac{1}{3} \cdot \Big(\frac{1}{1}\Big)^3 - ...\\
\log \frac{3}{2} &=& \frac{1}{2} - \frac{1}{2}\cdot \Big(\frac{1}{2}\Big)^2 + \frac{1}{3} \cdot \Big(\frac{1}{2}\Big)^3 + ...\\
\log \frac{4}{3} &=& \frac{1}{3} -  \frac{1}{2}\cdot \Big(\frac{1}{3}\Big)^2 + \frac{1}{3} \cdot \Big(\frac{1}{3}\Big)^3 + ..\\
\log \frac{5}{4} &=& \frac{1}{4} -  \frac{1}{2}\cdot \Big(\frac{1}{4}\Big)^2 + \frac{1}{4} \cdot \Big(\frac{1}{4}\Big)^3 + ..\\
\end{eqnarray*}
Summing by columns the first $n$ terms, he obtains
\begin{equation}\label{203}
\log (n+1) = H_n - \frac{1}{2} H_{n,2} + \frac{1}{3} H_{n,3} - \cdots
\end{equation}
One may rewrite  this as
$$
H_n - \log (n+1) = \frac{1}{2} H_{n,2} - \frac{1}{3} H_{n,3} + \cdots
$$
Now one observes that for $j \ge 2$,
$$
\lim_{n \to \infty} H_{n,j} = H_{\infty,j} = \zeta (j)
$$
Taking this limit as $n \to \infty$ term by term in \eqn{203} formally gives the result \eqn{202}.

We note that such representations of infinite sums in a square array,
to be summed in two directions,  were already used repeatedly by Johann Bernoulli's
older brother  Jacob Bernoulli (1654--1705) in his 1689 book
on summing infinite series (\cite{Bernoulli1689}).
Another  nice example of an identity found by  array summation is
\begin{equation}\label{220}
(\zeta(2)-1) + (\zeta(3)-1)+ (\zeta(4)-1) + \cdots = 1,
\end{equation}
using
\begin{eqnarray*}
\zeta(2) -1 &=& \frac{1}{2^2} + \frac{1}{3^2} + \frac{1}{4^2} + \frac{1}{5^2} + \cdots \\
\zeta(3) -1 &=& \frac{1}{2^3} + \frac{1}{3^3} + \frac{1}{4^3} + \frac{1}{5^3} + \cdots \\
\zeta(4) -1 &=& \frac{1}{2^4} + \frac{1}{3^4} + \frac{1}{4^4} + \frac{1}{5^4} + \cdots \\
\end{eqnarray*}
since the column sums yield the telescoping series 
$$
\frac{1}{2^2} \times 2 + \frac{1}{3^2} \times \frac{3}{2} + \frac{1}{4^2} \times \frac{4}{3}+ \cdots
= \frac{1}{1\cdot 2} + \frac{1}{2 \cdot 3} + \frac{1}{3 \cdot 4} + \cdots = 1.
$$
Compare \eqn{220}  to Euler's expression \eqn{228} below for Euler's constant.

In  1732 Euler \cite[E25]{E25} stated his first form of his  summation formula,
without a proof. He then gives it  in detail in his 1735 paper \cite[E47]{E47}, published 1741.
Letting $s$ denote the sum of the function $t(n)$ over some set of integer values (say $n=1$ to $N$),
he writes
\begin{equation*}
s= \int t dn + \alpha t + \beta\frac{dt}{dn} + \gamma \frac{d^2 t}{dn^2} + \delta \frac{d^3 t}{dn^{3}} + \mbox{etc.}.
\end{equation*}
where $\alpha= \frac{1}{2}$, 
$\beta = \frac{\alpha}{2} - \frac{1}{6}$, 
$\gamma= \frac{\beta}{2} - \frac{\alpha}{6} + \frac{1}{24}$, $\delta= \frac{\gamma}{2} -\frac{\beta}{6} + \frac{\alpha}{24} - \frac{1}{120}.$
etc. These linear equations for the coefficients
are solvable to give $\alpha=\frac{1}{2}, \beta= \frac{1}{12}, \gamma= 0, \delta= -\frac{1}{720}$ etc.
Thus
$$
s= \int t dn + \frac{t}{2} + \frac{1}{12} \frac{dt}{dn} - \frac{1}{720}  \frac{d^3 t}{dn^{3}} + \mbox{etc}.
$$
The terms on the right must be evaluated at both endpoints of this interval being summed.
In modern terms, 
$$
\sum_{n=1}^N t(n) = \int_{0}^N t(n)\, dn + \frac{1}{2}\Big(t(N)- t(0)\Big) + \frac{B_2}{2!}\Big( \frac{d\,t(N)}{dn} - \frac{d\,t(0)}{dn}\Big)
- \frac{B_4}{4!}\Big( \frac{d^3\, t(N)}{dn^3} - \frac{d^3\, t(0)}{dn^3} \Big) + \cdots
$$
where the $B_{2k}$ are Bernoulli numbers,  defined in \eqn{221} below,
and the sum must be cut off at a finite point with a remainder term (cf. \cite[Sec. 1.0]{Ten95}).
Euler's formulas  omit any remainder term, and he applies them by cutting
 off the right side sum at a finite place, and then using the computed value of the
 truncated right side sum as an
approximation to the sum on the left,  in effect making the  assumption that the associated remainder term is small.

A similar summation formula was obtained independently by Colin Maclaurin (1698--1746),
who presented it in his 1742  two-volume work on Fluxions \cite{Mac1742}.  
A comparison of their formulas is made by Mills \cite{Mil85}, 
who observes  they are  substantially identical,
and that neither author keeps track of the remainder term.
It is appropriate that this formula, now with the remainder term included,   
is   called the {\em Euler-Maclaurin summation formula.}

 Jacob Bernoulli 
 introduced the  numbers now bearing his name
  (with no subscripts), in {\em Ars Conjectandi} \cite{Bernoulli1713}, published posthumously
in 1713. They arose  in evaluating (in modern notation) the sums of $n$-th powers
$$
\sum_{k=1}^{m-1}   k^n=  \frac{1}{n+1} 
\left(\sum_{k=0}^n \left( {{n+1}\atop{k}}\right) B_k \, m^{n+1-k}. \right)
$$
We  follow the convention  that  the {\em Bernoulli numbers} $B_n$ are  those given  by the expansion
\begin{equation}\label{221}
\frac{t}{e^t - 1} = \sum_{n=0}^{\infty} \frac{B_n}{n!} t^n,
\end{equation}
so that  
\[
B_1= -\frac{1}{2},~ B_2= \frac{1}{6}, ~B_3=0, ~B_4 = -\frac{1}{30}, ~B_5=0, ~B_6= \frac{1}{42}, ...
\]  
Euler  later named the Bernoulli numbers 
 in his 1768 paper (in \cite[E393]{E393}) (``{\em On the sum of series involving the Bernoulli numbers}"),
 and there  listed the first few as $\frac{1}{6}, \frac{1}{30}, \frac{1}{42}, \frac{1}{30}, \frac{5}{66}.$
 His definition led to the  convention\footnote{However in \cite{E393} and elsewhere Euler
does not use subscripts, writing ${\mathfrak A}, {\mathfrak B}, {\mathfrak C}$ etc.}  that the notation $B_n$ corresponds 
  to $|B_{2n}|$ in \eqn{221}. 
  This convention  
   is  followed in the classic text of 
Whittaker and Watson \cite[p. 125]{WW63}.

In his 1755 book on differential calculus \cite[E212, Part II, Chap. 5 and 6]{E212}
Euler  presented his summation formula in detail, with many worked examples.
(See Pengelley \cite{Pe00},  \cite{Pe07} for a  translation 
and for a detailed discussion of this work.)
 In Sect. 122 Euler computes the Bernoulli numbers  $|B_{2n}|$ through $n=15$. 
In Sect. 129 he remarks that:

\begin{quote}
The [Bernoulli numbers] form a highly divergent sequence, which
grows more strongly than any geometric series of growing terms.
\footnote{``pariter ac Bernoulliani ${{\mathfrak A}, \mathfrak B}, {\mathfrak C}, {\mathfrak D}$ \&c.
serium maxime divergentem, quae etiam magis increscat, quam ulla series geometrica terminis
crescentibus procedens.'' [Translation: David Pengelley \cite{Pe00}, \cite{Pe07}.]}
\end{quote}

\noindent In Sect. 142 he  considers 
the  series for the harmonic numbers $s=H_{x}= \sum_{n=1}^x \frac{1}{n}$, 
where his summation formula
reads  (in modern notation)
\begin{equation}\label{222}
s  = \log x + \left( \frac{1}{2x}- \frac{|B_2|}{2x^2} + \frac{|B_4|}{4x^4} - \frac{|B_6|}{6x^6} +  \cdots \right) + C,
\end{equation}
where the constant $C$ to be determined is Euler's constant. 
He remarks that this series is divergent. He substitutes $x=1$, stating that  $s=1$ in \eqn{222}, obtaining formally
\[ 
C = \frac{1}{2} +  \frac{|B_2|}{2} - \frac{|B_4|}{4} + \frac{|B_6|}{6} - \frac{|B_8|}{8} + \cdots
\]
although the right side diverges.
Substituting this  expression in \eqn{222} yields
\begin{eqnarray*}
s  &=& \log x + \left( \frac{1}{2x}- \frac{|B_2|}{2x^2} + \frac{|B_4|}{4x^4} - \frac{|B_6|}{6x^6} + \frac{|B_8|}{8x^8}-  \cdots \right)  \\
&& ~\quad\quad + \left( \frac{1}{2}\, + \frac{|B_2|}{2} - \frac{|B_4|}{4} + \frac{|B_6|}{6} - \frac{|B_8|}{8} + \cdots\right).
\end{eqnarray*}
In Sect. 143 he sets $x=10$ in \eqn{222}  so that    $s= H_{10}$,  
and gives the numerical value $s=2.92896~82539~68253~968$. The right side contains $C$
as an unknown, and by evaluating the other terms (truncating at a suitable term) he finds 
$$
C \approx 0.57721~56649~01532~\underline{5}
$$
a result accurate to $15$ places. 
We note that the divergent series on the right side of \eqn{222}  is  the
asymptotic expansion of the digamma function $\psi(x+1) +\gamma$,
where $\psi(x)= \frac{\Gamma'(x)}{\Gamma(x)}$, see Theorem~\ref{th32}, and
 in effect Euler truncates the asymptotic expansion
at a suitable term
to  obtain a good numerical approximation.

  Over his lifetime Euler obtained
many different numerical approximations to Euler's constant.
He calculated approximations
 using truncated divergent series, and
 his papers report the value obtained, 
 sometimes with a comment on accuracy, sometimes not.
In contrast, some of the convergent series he obtained for Euler's constant
converge slowly and proved of little value for numerics.  He  also explored various
series acceleration methods, which he applied to both convergent
and divergent series.
He  compared and cross-checked his work  with
various different numerical methods.
In this way he obtained confidence as to how many
digits of accuracy he had obtained using the various methods, and also obtained information on the reliability
of his calculations using divergent series.

In 1765 in a paper  studying the  gamma function \cite[E368]{E368}
(which will be discussed in more detail in  Section 2.3)  he  derives  the formula 
$\Gamma'(1) = - \gamma.$
It is given in Section 14 of his paper as  equation \eqn{273} in Section 2.3, which is   equivalent to the integral formula
\begin{equation}\label{225c}
-\gamma = \int_{0}^{\infty} e^{-x} \log x \, dx.
\end{equation}
This result follows  from differentiation
under the integral sign of  Euler's integral \eqn{272} in Section \ref{sec23}.

In a 1768 paper  mainly devoted to relating
Bernoulli numbers to values of  $\zeta(n)$,  Euler  \cite[E393]{E393}
obtains more formulas for  $\gamma$ (denoting it $O$).
In Section 24 he reports the  evaluation of $\gamma$
found earlier and remarks: 

\begin{quote}
$O= 0,57721~56649~01532~5$.  This number seems also the more noteworthy because
even though I have spent much effort in investigating it,
I have not been able to reduce it to a known kind of quantity.
\footnote{``$O=0.5772156649015325$ qui numerus eo maiori 
attentione dignus videtur, quod eum, cum olim in hac
investigatione multum studii consumsissem, nullo modo ad cognitum quantitatum genus
reducere valui.'' [Translation: Jordan Bell]}
\end{quote}
 
\noindent He derives in Section  25 the integral formula
\begin{equation}\label{226a}
\gamma = \int_{0}^{\infty} \left( \frac{e^{-y}}{1-e^{-y}} - \frac{e^{-y}}{y} \right) dy.
\end{equation}
Here we may note that the integrand
\[
 \frac{e^{-y}}{1-e^{-y}} - \frac{e^{-y}}{y}= \frac{1}{y}\Big( \frac{y}{e^y -1} - e^{-y} \Big)= 
 \sum_{n=1}^{\infty} \Big(B_n - (-1)^n \Big)\frac{y^{n-1}}{n!}.
\]
In Section 26 by change of variables he obtains
\begin{equation}\label{227a}
\gamma = \int_{0}^{1} \left(  \frac{1}{1-z}+ \frac{1}{\log z} \right) dz
\end{equation}
In Section 27   he gives the following new formula for  Euler's constant
\begin{equation}\label{228}
\gamma= \frac{1}{2} (\zeta(2)-1) + \frac{2}{3}(\zeta(3) -1) + \frac{3}{4}(\zeta(4) -1) + \cdots
\end{equation}
and in Section 28 he obtains a formula in terms of $\log 2$ and the odd zeta values:
\begin{equation*}
\gamma = \frac{3}{4} - \frac{1}{2}\log 2 + \sum_{k=1}^{\infty} \Big( 1-\frac{1}{2k+1} \Big)\Big(\zeta(2k+1)-1\Big)
\end{equation*}
He concludes in Section 29: 

\begin{quote}
Therefore the question remains of great moment, of what character the number $O$ is
and among  what species of quantities it can be classified.\footnote{``Manet ergo quaestio magni momenti, cujusdam indolis sit numerus iste $O [:=\gamma]$
et ad quodnam genus quantitatum sit referendus.''[Translation: Jordan Bell]}
\end{quote}

In a 1776 paper \cite[E583]{E583}
({\em On a  memorable number naturally occurring in the summation of the harmonic series}),
\footnote{De numero memorabili in summatione progressionis harmonicae naturalis occurrente}
  Euler  
studied the constant $\gamma$  in its own right. This paper was not published until 1785, after his death.
In Section 2 he
speculates that the number $N= e^{\gamma}$  should
be a notable number: 

\begin{quote}
 And therefore, if $x$ is taken to be an infinitely large number it will then be
 $$
  1+ \frac{1}{2} + \frac{1}{3} + \cdots + \frac{1}{x} = C + \log x.
 $$
One may suspect from this that the number $C$ is the hyperbolic [= natural]
logarithm of some notable number, which we put $=N$, so that $C= \log N$
and the sum of the infinite series is equal to the logarithm of the number $N\cdot x$.
Thus it will be worthwhile to inquire into the value of this number $N$, which indeed
it suffices to have defined to five or six decimal figures, since then one will be
able to judge without difficulty whether this agrees with any known number or 
not.\footnote{``Quod si ergo numerus $x$ accipiatur magnus, tum erit
 $$
 1+ \frac{1}{2} + \frac{1}{3} + \cdots + \frac{1}{z} = C + lx
 $$
 istum numerum $C [:= \gamma]$ esse logarithmum hyperbolicum cuiuspiam numeri
notabilis, quem statuamus $=N$, ita ut sit $C= \log N$, et summa illius seriei infinitae
aequeture logarithmo numeri $Nx$, under operae pretium erit in valorem huius numeri $N$ 
inquirerre, quem quidem sufficiet ad quinque vel fex figuras decimales definiuisse, quoniam hinc
non difficulter iduicari poterit, num cum quopiam numero cogito conueniat nec ne.''
[Translation: Jordan Bell]}
\end{quote}

\noindent
Euler  evaluates  $N = 1.7810\underline{6}$ and says  he cannot connect it to any number he knows.
He then gives
several new formulae for  $\gamma$, and tests their usefulness for calculating its value, comparing
the values obtained numerically  with the value $0.57721~56649~01532~5$ that he previously obtained.
His first new formula in Sect. 6 is
\begin{equation*}
1- \gamma = \frac{1}{2}(\zeta(2) -1) + \frac{1}{3} (\zeta(3)-1) + \frac{1}{4}(\zeta(4)-1) + \cdots\
\end{equation*}
Using the first $16$ terms of this expansion, he finds $\gamma \approx  0,57721~\underline{6}9$.
He finds several other formulas, including,  in Sect. 15,  the formula
\begin{equation*}
1 - \log \frac{3}{2} - \gamma= \frac{1}{3\cdot 2^2} (\zeta(3)-1) + \frac{1}{5 \cdot 2^4}(\zeta(5)-1) +
 \frac{1}{7\cdot 2^6} (\zeta(7) -1) + \cdots,
\end{equation*}
from which he finds $\gamma \approx 0.57721~56649~01\underline{7}91~3$,
a result accurate to $12$ places.
He concludes the paper with a  list of eight formulas relating  Euler's constant,
$\log 2$, and even and odd zeta-values. One striking formula (number VII), involving 
the odd zeta values  is 
\begin{equation*}
\gamma= \log 2 - \sum_{k=1}^{\infty} \frac{\zeta(2k+1)}{(2k+1)2^{2k}}.
\end{equation*}

In another 1776 paper  \cite[E629]{E629} ({\em The expansion of the integral formula 
$\int \partial x( \frac{1}{1-x} +\frac{1}{\ell x})$ with the
term extended from $x=0$ to $x=1$}), which was 
not published until 1789,  Euler
further  investigated the   integral formula \eqn{227a} 
for his constant.
He denotes its value by $n$, and says:

\begin{quote}
the number $n$,  [...] 
which is  found to be approximately $ 0, 57721~56649~01532~5$, whose value I have
been able in no way to reduce to already known transcendental measures;
therefore, it will hardly be useless to try to resolve the formula in many different
ways.\footnote{``numerus $n$ [...], et quem per approximationem olim inveni esse
$=0, 5772156649015325$, cuius valorem nullo adhuc modo ad mensuras
transcendentes iam cogitas redigere potui; unde haud inutile erit resolutionem huius
formulae propositae pluribus modis tentare.''[Translation: Jordan Bell]}
\end{quote}

\noindent He then finds several expansions for   related integrals. One of these starts from
the identity, valid for $m, n \ge 0$, 
\begin{equation*}
\int_{0}^1 \frac{x^m -x^n}{\log x}dx = \log \left( \frac{m+1}{n+1}\right) ,
\end{equation*}
and from this he deduces, for $n \ge 1$, that 
\begin{equation*}
\int_{0}^{1} \frac{(1-x)^n}{\log x} dx = \log \left( \prod_{j=0}^{n}  
(j+1)^{ (-1)^{j} \left({{n}\atop{j}}\right)}\right).
\end{equation*}
This formula gives the moments of the measure
$\frac{x}{\log(1-x)} dx$
on $[0,1]$.

\subsection{The gamma function} \label{sec22}
\setcounter{equation}{0}
Euler was not the originator of work on the factorial function.
This  function  was initially studied 
by John Wallis (1616--1703), followed by   Abraham de Moivre  (1667--1754)
and James Stirling (1692--1770). 
Their  work,  together with  work  of 
 Euler, Daniel Bernoulli (1700--1782) and Lagrange (1736--1813)  is reviewed in Dutka \cite{Dut91}.

In 1729, in correspondence with Christian Goldbach, Euler discussed 
 the problem
of obtaining continuous interpolations of discrete series.  He described
a (convergent)  infinite product interpolating
the factorial function, $m!$, 
\begin{equation*}
\frac{1 \cdot 2^m}{1+m} \cdot \frac{2^{1-m}\cdot 3^m}{2+m}\cdot
 \frac{3^{1-m}\cdot 4^m}{3+m} \cdot \frac{4^{1-m}\cdot 5^m}{4+m} \cdot etc.
\end{equation*}
This infinite product, grouped as shown, converges absolutely on the region $\CC \smallsetminus \{ -1, -2, ...\}$,
to the gamma function, which we denote  $\Gamma(m+1)$, following the later notation of Legendre.
John Wallis (1616--1703) had earlier called this  function  the
``hypergeometric series".
Euler substitutes $m=\frac{1}{2}$ and finds 
$$
\Gamma \Big(\frac{3}{2}\Big)=\frac{1}{2} \sqrt{\pi}, 
$$
by recognizing a relation of this infinite product to Wallis's infinite product for $\frac{\pi}{2}$.
In this letter he also introduced the notion of fractional integration, see Sandifer  \cite{San07}. 

Also in 1729 Euler \cite[E19]{E19} wrote 
a  paper presenting  these results
( {\em On transcendental progressions, that is, those whose general term cannot be given algebraically}),
but it was not published until 1738.
 This paper describes the infinite products, suitable
for interpolation. It then  considers {\em Euler's first integral}\footnote{Here the letter
$e$ denotes an integer. Only in  a later paper, [E71], written in 1734,  does Euler use the symbol  
$e$ to mean the constant $2.71828...$.}
\begin{equation*}
 \int_{0}^1 x^e (1-x)^n dx
\end{equation*}
which in modern terms defines the {\em Beta function} $B(e+1, n+1)$, derives
a recurrence relation from it, and in Sect. 9 derives
$B(e+1, n+1) = \frac{e! n!}{(e+n+1)!}$. More generally, in the later notation of
Legendre this extends to the Beta function  identity
\begin{equation}\label{250c}
B(r, s) : =  \int_{0}^1 x^{r-1} (1-x)^{s-1} dx = \frac{\Gamma(r) \Gamma(s)}{\Gamma( r+s)}.
\end{equation}
relating it to  the gamma function.
Euler \cite[Sect. 14]{E19}  goes on 
to obtain a  continuous interpolation of the factorial function
$f(n) = n!$, given by the  integral 
\begin{equation}\label{251}
f(n) = \int_{0}^{1} ( - \log x)^n dx,
\end{equation}
where $n$ is to be interpreted as a continuous variable, $n >-1$.
He then computes formulas for   $\Gamma \left( 1+ \frac{n}{2} \right)$
at half-integer values  \cite[Sect. 20]{E19}.

In 1765, but not published until 1769,  Euler \cite[E368]{E368} 
(``{\em  On a hypergeometric curve expressed by the equation $y=1* 2 * 3* \dots *x$}")   studied the  ``hypergeometric curve"   given
(using Legendre's notation)  by $y = \Gamma(x+1)$, where 
$$
\Gamma(x) := \int_{0}^{\infty} t^{x-1} e^{-t}dt.
$$
Euler interpolates the factorial function by an infinite product.  In Sect. 5 he gives
the  product formula
\begin{equation*}
y= a^x \prod_{k=1}^{\infty}\left( \frac{k}{k+x}\left(\frac{a+k}{a+k-1}\right)^x\right)
\end{equation*}
with any fixed $a$ with $1<a<x$, and with $a= \frac{1+x}{2}$ obtains
$$
y =  \left(\frac{1+x}{2}\right)^x\prod_{k=1}^{\infty} \left(\frac{k}{k+x} \left(\frac{2k+1 +x}{2k-1+x}\right)^x\right)
$$
In Sect. 9 he derives a formula, which in modern notation reads
\begin{equation}\label{271}
 \log y = -\gamma x + \sum_{n=2}^{\infty} (-1)^n \frac{\zeta(n)}{n} x^n,
\end{equation}
where $\gamma$ is Euler's constant.
In Sect. 11 he makes an exponential change of variables from his formula \eqn{251} to obtain
the integral formula
\begin{equation}\label{272}
y= \int_{0}^{\infty} e^{-v} v^x dv,
\end{equation}
which is (essentially) the standard integral representation for the gamma function.
This formula shows that  the ``hypergeometric curve" 
 is indeed given as\footnote{If one instead used the notation $\Pi(x)$ for
this integral used by Gauss, cf. Edwards \cite[p. 8]{Ed74}, then the hypergeometric curve is $y= \Pi(x).$
Gauss made an extensive study of hypergeometric functions.}
\begin{equation*}
y =  \Gamma( x+1).
\end{equation*}
In Sect. 12 he obtains a formula for the digamma function $\psi(x) = \frac{\Gamma'}{\Gamma}(x)$,
which is
\begin{equation*}
\frac{\Gamma'(x+1)}{\Gamma(x+1)} := \frac{dy}{y\,dx} = -\gamma + \sum_{k=1}^{\infty} \frac{x}{k(x+k)}.
\end{equation*}
In formula VII in this section he gives the asymptotic expansion of $\log \Gamma(x+1)$,
(Stirling's series), expressed  as 
\begin{eqnarray}\label{stirling}
\quad  \log y  &= &  \frac{1}{2}\log 2 \pi +  (x +\frac{1}{2}) \log x \nonumber\\
\\
&&  -  \, x + \frac{A}{2x} - \frac{1\cdot 2}{2^3} \frac{B}{x^3}
+ \frac{1 \cdot 2 \cdot 3\cdot 4}{2^5} \frac{C}{x^5} -
 \frac{ 1\cdot 2 \cdot 3 \cdot 4\cdot 5 \cdot 6}{2^7} \frac{D}{x^7}+ \mbox{etc.} \nonumber
\end{eqnarray}
with $A= \frac{1}{6}, $ $B= \frac{1}{90}$, $C= \frac{1}{945}$, $D= \frac{1}{9450}$, ...
Euler also  gives here the value of Euler's constant as
$\gamma \approx 0, 57721~56649~01\underline{4}22~5.$
In Sect. 14 he finds the explicit value 
\begin{equation}\label{273}
 \Gamma'(1) :=\frac{dy}{dx}\, \large{\mid}_{x=0}\, = - \gamma.
\end{equation}
In Sect. 15 he  uses $\Gamma(\frac{3}{2})= \frac{1}{2} \sqrt{\pi}$ to derive the formula
$$
\frac{\Gamma'}{\Gamma}\left(\frac{3}{2}\right) = -\gamma + 2 - 2 \log 2.
$$
In Sect. 26 he observes that, for $x=n$ a positive integer, and  with $a$ treated as a variable,
\begin{equation}\label{274}
\Gamma(x+1) = \sum_{k=0}^{\infty} (-1)^k \left( {{x}\atop{k}}\right) (a-k)^x,
\end{equation}
the series terminates at $k=n$, and the resulting function is independent of $a$.
This leads him to  study in the remainder of the paper the series
$$
s= x^n - m(x-1)^{n} + \frac{m(m-1)}{1\cdot 2} (x-2)^n +
\frac{m(m-1)(m-2)}{1\cdot 2\cdot 3 } (x-3)^n +\&\mbox{c}
$$
 In this formula he sets $n = m + \lambda$, and he investigates  the expansion $s= s(x, \lambda)$ as a
 function of two variables $(\lambda, x)$ which corresponds to the variables $(x-n, a)$ 
 in \eqn{274}. 
 These expansions involve Bernoulli numbers.

\subsection{Zeta values}\label{sec23}
\setcounter{equation}{0}

Already as a young student of Johann Bernoulli (1667--1748)
in the 1720's Euler was shown problems on the summation of infinite
series of arithmetical functions, including in particular the sums of
inverse $k$-th powers of integers.  
\noindent The problem of finding a closed form expression for this value, $\zeta(2)$,  became
a celebrated problem, called ``The Basel Problem."   
The Basel problem had  been raised 
by Pietro Mengoli (1628-1686) in his 1650 book on the
arithmetical series arising in geometrical quadratures (see \cite{ME06}, \cite{Giu91}). 
In that book Mengoli \cite[Prop. 17]{Mengoli1650} summed in closed form the 
series  of (twice the) inverse triangular numbers
\begin{equation*}
\sum_{n=1}^{\infty}\, \frac{1}{n(n+1)} = \frac{1}{2} + \frac{1}{6} + \frac{1}{12} + \cdots = 1,
\end{equation*}
which is a telescoping sum. 
The problem became well known because it was also raised by 
Johann Bernoulli's older brother 
Jacob (also called James or Jacques) Bernoulli (1654--1705),  
earlier holder of the professorship of mathematics at the
University of Basel.   He wrote a book on infinite series 
(\cite{Bernoulli1689}) in 1689;  and several additional
works on them. A collection of this work appeared posthumously
in 1713 \cite{Bernoulli1713}. He could tell that 
 the sum of inverse squares  converged to a value between $1$ and  $2$, 
 using the triangular numbers, via
 $$
1 <  \sum_{n=1}^{\infty} \frac{1}{n^2} < 
 2 \left(\sum_{n=1}^{\infty} \frac{1}{n(n+1)} \right)= 
 2 \sum_{n=1}^{\infty}\left(\frac{1}{n}- \frac{1}{n+1}\right) = 2.
$$
 He was unable to relate this value to known constants, and
 wrote (\cite[art. XVII, p. 254]{Bernoulli1713}):

\begin{quote}
And thus by this Proposition, the sums of series can be found when
the denominators are either triangular numbers less a particular triangular
number or square numbers less a particular square number; by 
[the results of art.] XV,  this happens
for pure triangular numbers, as in the series
$\frac {1}{1}+\frac{1}{3}+\frac{1}{6}+\frac{1}{10}+\frac{1}{15}$
$\&$c., while on the other hand, when the numbers are pure squares, as in
the series $\frac{1}{1}+\frac{1}{4}+\frac{1}{9}+\frac{1}{16}+\frac{1}{25}$
$\&$c., it is more difficult than one would have expected, which is
noteworthy.
If someone should succeed in finding what till now withstood our efforts and communicate
it to us,  we would be much obliged to 
them.\footnote{``Atque ita per hanc Propositionem, inveniri possunt summae serierum, cum
denominatores sunt vel numeri Trigonales minuti alio Trigonali, vel Quadrati minuti
alio Quadrato; ut  \& per XV. quando sunt puri Trigonales, ut in serie
$\frac{1}{1} + \frac{1}{3} + \frac{1}{6} + \frac{1}{10} + \frac{1}{15} $ $\&$c.
at, quod notatu dignum, 
quando sunt puri Quadrati, ut in serie $\frac{1}{1}+ \frac{1}{4} + \frac{1}{9}+ \frac{1}{16}+ \frac{1}{25}$ 
$\&$c
difficilior est, quam quis expectaverit, summae pervestigatio, quam tamen finitam esse, ex altera,
qua manifesto minor est, colligimus: Si quis inveniat nobisque communicet, quod industriam
nostram elusit hactenus, magnas de nobis gratias feret.''[Translation: Jordan Bell]}
\end{quote}

Here he says that  if $ a \ge 1$ 
is an integer and  the  denominators range over $n(n+1) - a(a+1)$ or 
alternatively $n^2- a^2$, with
$n \ge a+1$, then he can sum the series, but he cannot sum the
second series when $a=0$.

Euler solved the Basel problem by  1735 with his famous result that 
$$\zeta(2) = \frac{\pi^2}{6}.$$
Euler's first contributions on the Basel problem involved
getting  numerical approximations of $\zeta(2)$. 
In \cite[E20]{E20}, written in 1731, integrating his representation for the harmonic numbers
led to   a representation (in modern terms) 
\begin{equation*}
\int_{0}^1 \frac{dx}{x} \int_{0}^x \frac{1- t^{n}}{1-t} dt = \sum_{j=1}^{n}\, \frac{1}{j^2}.
\end{equation*}
By a series acceleration method he obtained the formula 
\begin{equation*}
\zeta(2) = (\log 2)^2 + \sum_{n=1}^{\infty} \, \frac{1}{2^{n-1}n^2}, 
\end{equation*}
in which the final sum on the right  is recognizable as the dilogarithm value $2\Li_2(\frac{1}{2}).$
Euler  used this formula  to calculate $\zeta(2) \approx 1.64492~4,$ cf. Sandifer \cite{San07}. 
He next obtained a much better approximation using
Euler summation (the Euler-Maclaurin expansion).
He announced his summation method in \cite[E25]{E25} in 1732, and gave
details in   \cite[E47]{E47}, a paper probably written in 1734.  In this paper he computed
Bernoulli numbers and polynomials, and also
 applied his summation  formula to obtain
$\zeta(2)$ and $\zeta(3)$ to 20 decimal places.

In December 1735, in \cite[E41]{E41}, published in  1740, Euler
gave three  proofs  that $\zeta(2) = \frac{\pi^2}{6}$.
The most well known of these
treated $\sin  x$ as though it were a polynomial with infinitely many roots
at $\pm \pi, \pm 2 \pi, \pm 3\pi, \cdots$. He combines these roots in pairs, and
asserts the infinite product formula
\begin{equation}\label{262a}
\frac{\sin x}{x} = \left( 1- \frac{x^2}{\pi^2}\right)\left( 1- \frac{x^2}{4\pi^2}\right)\left( 1- \frac{x^2}{9\pi^2}\right) \cdots,
\end{equation}
He then equates certain algebraic combinations of  coefficients of the Taylor series expansion at $x=0$ on
both sides of the equation.  The right side coefficients  give the power sums of the roots of the polynomials.  Writing
these Taylor coefficients  as
$1 - \alpha x^2 + \beta x^4- \gamma x^6 + \ldots$, he creates algebraic combinations giving the power sums of
the roots, in effect obtaining\footnote{Euler only writes down the right sides of these equations.}
\begin{eqnarray*}
\frac{1}{\pi^2}\zeta(2) & = &\alpha\\
\frac{1}{\pi^4} \zeta(4) & = &\alpha^2 - 2\beta\\
\frac{1}{\pi^6} \zeta(6) & =  & \alpha^3- 3 \alpha \beta + 3 \gamma,
\end{eqnarray*}
and so on. Now the coefficients $\alpha, \beta, \gamma$
may be evaluated as rational numbers from the power series terms of the expansion of $\frac{\sin x}{x}$
which Euler computes directly.  In this way he  determined formulas for $\zeta(2n)$ for $1\le n \le 6$.
He also evaluated by a similar approach  alternating sums of odd powers, obtaining in
Sect. 10  the result of Leibniz  (1646--1716) that 
\[
\sum_{k=0}^{\infty} (-1)^k\frac{1}{2k+1} = \frac{\pi}{4}.
\]
He views this as confirming his method, saying:

\begin{quote}
And indeed this is the same series discovered some time ago by {\em Leibniz}, by
which he defined the quadrature of the circle. From this, if our method should appear to some
as not reliable enough, a great confirmation comes to light here; thus there should not
be any doubt about the rest that will be derived from this method.
\footnote{``Atque haec est ipsa series a {\em Leibnitio} iam pridem prolata, qua circuli quadraturam
definiuit. Ex quo magnum huius methodi, si cui forte ea non satis certa videatur, firmamentum elucet;
ita ut de reliquis, quae ex hac methodo deriuabantur, omnino non liceat dubitari''
[Trans: Jordan Bell]
}
\end{quote}
In  Sect. 12 he obtained a parallel result for cubes, 
\[
\sum_{k=0}^{\infty}(-1)^k \frac{1}{(2k+1)^3}= \frac{\pi^3}{32}.
\]
These proofs are  not strictly rigorous, because
the infinite product expansion is not completely justified,  but Euler makes
some numerical cross-checks, and his formulas work. 
Euler was sure his answer was right from his previous numerical
computations. In this same paper he gives two more derivations of the result,
still susceptible to criticism, 
 based  on the use of  infinite products. 
 
 We note that directly equating power series coefficients on both sides of \eqn{262a}, for the 
the $2n$-th coefficient  would give the identity
$$
\frac{(-1)^n}{(2n+1)!} = \frac{(-1)^n}{\pi^{2n}}\sum_{1 \le m_1 < m_2 < \cdots < m_n} \frac{1}{ m_1^2 m_2^2 \cdots m_n^2}.
$$
The sum on the right hand side is the multiple zeta value $\zeta({2, 2, ..., 2})$ (using $n$ copies of $2$, see \eqn{MZV} below),
and the case $n=1$ gives $\zeta(2)$. Relations like this may have inspired Euler's later study of multiple
zeta values.
 
Euler communicated his  original proof to 
his correspondents Johann Bernoulli, Daniel Bernoulli (1700--1782), Nicolaus Bernoulli (1687--1759)
and  others.  
Daniel  Bernoulli criticized it
on the grounds that $\sin \pi x$ may have other
complex roots, cf.  \cite[Sec. 4]{Ay74}, \cite[p. 264]{We84}.  In response  to this criticism Euler  eventually justified his infinite product
expansion for $\sin \pi x$, obtaining a version that can be made rigorous in modern terms, cf. Sandifer \cite{San07}.

In a  paper \cite[E61]{E61} published in 1743,
 Euler obtained more results using infinite products, indicating how to derive
formulas for $\zeta(2n),$ for all $n\ge 1$, as well as for the sums 
\[
[ L( 2n+1, \chi_{-4}) :=] \, \sum_{k=0}^{\infty} (-1)^k \frac{1}{(2k+1)^{2n+1}},
\]
for $n \ge 0$.
In this paper he  studied the function $e^x= \sum_{n=0}^{\infty} \frac{x^n}{n!}$.
He recalls his formula for $\sin s$, as
\[ 
s -\frac{s^3}{1\cdot 2\cdot 3} + \frac{s^5}{1\cdot 2 \cdot 3 \cdot 4\cdot 5} -\cdots
= s(1 -\frac{s^2}{\pi^2}) (1- \frac{s^2}{4\pi^2}) (1- \frac{s^2}{9 \pi^2})(1-\frac{s^2}{16\pi^2}) \cdots
\]
Then he  obtains the formula $\sin z= \frac{e^{iz}- e^{-iz}}{2i}$, 
introducing  the modern notation for $e$, saying: 

\begin{quote}
For, this expression is equal to 
$\frac{e^{s \sqrt{-1}} - e^{-s \sqrt{-1}}}{2\sqrt{-1}},$
where $e$ denotes that number whose
logarithm $= 1$,  and 
$$
e^z= \left( 1+ \frac{z}{n}\right)^n,
$$
with $n$ being an infinitely large
 number\footnote{``Haec enim expressio aequivalet isti
 $\frac{e^{s \sqrt{-1}} - e^{-s \sqrt{-1}}}{2 \sqrt{-1}}$
 denotante $e$ numerorum, cujus logarithmus est $=1, \& $ cum sit 
$e^z=\left(1+\frac{z}{n}\right)^n$ existente $n$ numero infinito, '' [Translation: Jordan Bell]}.
\end{quote}

\noindent 
Now he introduces a new principle.
 He evaluates the following integrals, 
 where $p$ and $q$ are integers with  $0< p<q$:
\[
\int_{0}^1 \frac{x^{p-1} + x^{q-p-1}}{1+x^q} dx = \frac{\pi}{ q \sin \frac{p\pi}{q}}
\]
and 
\[
\int_{0}^1 \frac{x^{p-1} - x^{q-p-1}}{1-x^q} dx =
 \frac{\pi \cos \frac{p\pi}{q}}{ q \sin \frac{p\pi}{q}}
\]
Expanding the  first integral as an indefinite integral
 in power series in $x$,  evaluating term by term,
and setting $x=1$ he obtains the formula
\[
\frac{\pi}{ q \sin \frac{p\pi}{q}}=\frac{1}{p} + \frac{1}{q-p} - \frac{1}{q+p} - \frac{1}{2q-p} +\frac{1}{2q+p} +
\frac{1}{3q-p} - \frac{1}{3q+p} + \cdots
\]
and similarly for the second integral.
Setting $\frac{p}{q} =s$, and rescaling, he obtains
\[ 
\frac{\pi}{\sin \pi s}= \frac{1}{s} + \frac{1}{1-s} - \frac{1}{1+s} -\frac{1}{2-s} + \frac{1}{2+s}
+\frac{1}{3-s} - \cdots
\]
and
\[
\frac{\pi \cos \pi s}{\sin \pi s} = \frac{1}{s} - \frac{1}{1-s} + \frac{1}{1+s} - \frac{1}{2-s} + \frac{1}{2+s}
-\frac{1}{3-s} + \cdots
\]
By differentiating with respect to $s$, he obtains
\[
\frac{\pi^2 \cos \pi s}{(\sin \pi s)^2} = \frac{1}{s^2} - \frac{1}{(1-s)^2} - \frac{1}{(1+s)^2}
+ \frac{1}{(2-s)^2} + \frac{1}{(2+s)^2} - \frac{1}{(3-s)^2} + \cdots
\]
and 
\[ 
\frac{ \pi^2}{(\sin \pi s)^2} = \frac{1}{s^2} + \frac{1}{(1-s)^2} + \frac{1}{(1+s)^2} + \frac{1}{(2-s)^2}
+ \frac{1}{(2+s)^2} + \cdots
\]
Now he substitutes $s= \frac{p}{q}$ with $0< p < q$ integers and obtains many identities.
Thus
\[ 
\frac{\pi^2}{ q^2 (\sin \frac{p \pi}{q})^2} = \frac{1}{p^2} + \frac{1}{(q-p)^2} + \frac{1}{(q+p)^2}
+ \frac{1}{(2q-p)^2} + \frac{1}{(2q+p)^2} + \cdots
\]
For $q=4$ and $p=1$  he obtains in this way
\[ 
\frac{\pi^2}{8 \sqrt{2}} = 1 -\frac{1}{3^2} - \frac{1}{5^2} + \frac{1}{7^2} + \frac{1}{9^2} - \frac{1}{11^2}
- \frac{1}{13^2} + \cdots
\]
and
\[ 
\frac{\pi^2}{8} = 1 + \frac{1}{3^2} + \frac{1}{5^2} + \frac{1}{7^2} +\frac{1}{9^2} + \frac{1}{11^2}+\cdots
\]
He observes that it is not difficult to derive on like
principles the value $\zeta(2) = \frac{\pi^2}{6}$. (This identity follows for example from the 
observation that  the series above is   $(1-\frac{1}{2^2}) \zeta(2)$.)
He concludes  that one may continue differentiating in $s$ to obtain formulas for various 
series in $n$-th powers, and gives several formulas for the derivatives. 
In this way one  can obtain a formula for $\zeta(2n)$ that expresses it as
 a rational multiple of $\pi^{2n}$. However this approach does not 
 make evident the closed formula  for $\zeta(2n)$ 
expressed in terms of Bernoulli numbers (given below as \eqn{exact}).

In a paper  \cite[E63]{E63}, published 
in 1743, Euler  gave a  new derivation of $\zeta(2) = \frac{\pi^2}{6}$,
using standard methods in calculus
involving trigonometric integrals, which is easily made  rigorous by
today's standards.   He takes $s$ to be arclength on a circle of  radius $1$
and sets $x= \sin s$ so that $s = \arcsin x$. He observes $x=1$ corresponds to $s= \frac{\pi}{2}$
and here he uses  the symbol $\pi$ with  its contemporary
meaning :

\begin{quote}
 It is clear that I employ here the letter $\pi$ to denote the number
of  {\em Ludolf van Ceulen}  $3.14159265...$\footnote{``Il est clair que j'emplois la lettre $\pi$ pour marquer le nombre de
 LUDOLF \`{a} KEULEN 3,14159265 etc.''}
\end{quote} 

\noindent  Here Euler refers to  Ludolf van Ceulen (1540-1610), who  was a professor at
 Leiden University and who  computed $\pi$ to 35 decimal places. 
 In his calculations Euler works  with differentials
 and writes $ds =\frac{dx}{\sqrt{1-xx}}$, so that $s = \int \frac{dx}{\sqrt{1-xx}}$. Multiplying these expressions 
 gives 
 \[
 s ds = \frac{dx}{\sqrt{1-xx}} \int \frac{dx}{\sqrt{1-xx}}.
 \]
 He integrates both sides from $x=0$ to $x=1$. The left side is $\int_{0}^{\frac{\pi}{2}} s ds = \frac{\pi\pi }{8}.$
 For the right side, he uses the binomial theorem 
 \[
 \frac{1}{\sqrt{1-xx}} [ = (1- xx)^{-\frac{1}{2}} ]= 1 + \frac{1}{2} x^2 + \frac{1\cdot 3}{2 \cdot 4} x^4 + 
 \frac{1\cdot 3 \cdot 5}{2 \cdot 4\cdot 6} x^6 + \mbox{etc.}
 \]
 In fact this expansion converges for $|x|<1$. Euler integrates it term by term to get
 \[ 
 \int \frac{dx}{\sqrt{1-xx}} = x +\frac{1}{2\cdot 3} x^3 + \frac{1\cdot 3}{2 \cdot 4\cdot 5} x^5 + 
 \frac{1\cdot 3 \cdot 5}{2 \cdot 4\cdot 6\cdot 7} x^7+ \mbox{ etc.}
\]
He then obtains
\[
s ds = \frac{x dx}{\sqrt{1-xx}} + \frac{1}{2 \cdot 3} \frac{x^3 dx}{\sqrt{1-xx}} +
\frac{1\cdot 3}{2 \cdot 4\cdot 5} \frac{x^5 dx}{\sqrt{1-xx}}+ 
 \frac{1\cdot 3 \cdot 5}{2 \cdot 4\cdot 6\cdot 7} \frac{x^7 dx }{\sqrt{1-xx}} + \mbox{ etc.}
 \]
 Integrating from $x=0$ to $x=1$ would give
 \[
 \frac{\pi\pi}{8} =  \int_{0}^1 \frac{x dx}{\sqrt{1-xx}} + 
 \frac{1}{2\cdot 3} \int_{0}^1\frac{x^3dx}{\sqrt{1-xx}} + \frac{1\cdot 3}{2 \cdot 4\cdot 5} \int_{0}^1\frac{x^5 dx}{\sqrt{1-xx}} + 
  \mbox{etc.}
 \]
 The individual terms on the right can be integrated by parts
 \[ 
 \int \frac{x^{n+2}dx}{\sqrt{1-xx}} = \frac{n+1}{n+2} \int \frac{x^n dx}{\sqrt{1-xx}} - \frac{x^{n+1}}{n+2} \sqrt{1-xx}.
 \]
 When integrating from $x=0$ to $x=1$, the second term on the right vanishes and Euler gives
 a table
 \begin{eqnarray*}
 \int_{0}^1 \frac{x dx}{\sqrt{1-xx}} & =& 1- \sqrt{1-xx} = 1\\
 \int_{0}^1 \frac{x^3 dx}{\sqrt{1-xx}} &=& \frac{2}{3} \int_{0}^1 \frac{xdx}{\sqrt{1-xx}} = \frac{2}{3}\\
 \int_{0}^1 \frac{x^5 dx}{\sqrt{1-xx}} &=& \frac{4}{5} \int_{0}^1 \frac{x^3dx}{\sqrt{1-xx}} = \frac{2\cdot 4}{3\cdot 5}\\
 \int_{0}^1 \frac{x^7 dx}{\sqrt{1-xx}} &=& \frac{6}{7} \int_{0}^1 \frac{x^5dx}{\sqrt{1-xx}} = \frac{2\cdot 4\cdot 6}{3\cdot 5\cdot 7}\\
 \int_{0}^1 \frac{x^9 dx}{\sqrt{1-xx}} &=& \frac{8}{9} \int_{0}^1 \frac{x^7dx}{\sqrt{1-xx}} = 
 \frac{2\cdot 4\cdot 6\cdot 8}{3\cdot 5\cdot 7 \cdot 9}
 \end{eqnarray*}
  Substituting these values in the integration above yields
  \[
  \frac{\pi\pi}{8} = 1+ \frac{1}{3 \cdot 3} + \frac{1}{5 \cdot 5} + \frac{1}{7\cdot 7} + \frac{1}{9 \cdot 9} + \mbox{etc.}
 \]
 This gives the sum of reciprocals of odd squares, and Euler observes that multiplying by $\frac{1}{4}$ gives
 \[
   \frac{1}{4} + \frac{1}{16} +\frac{1}{36} +\frac{1}{64} + \mbox{etc.}
\]
from which follows 
\[
\frac{\pi\pi}{6} = 1 + \frac{1}{4} + \frac{1}{9} +\frac{1}{16} +\cdots
\]
This proof is rigorous and unarguable. 

 He goes on to  evaluate sums  of larger even powers by the same methods, and
gives a table of explicit evaluations for $\zeta(2n)$ for $1 \le n \le 13$, in the
 form 
 \[
 [\zeta(2n)= ]\, \frac{2^{2n-1}}{(2n+1)!} C_{2n} \pi^{2n}.
 \]
 In this expression, he says part of the formula is known and explainable, and that the 
 remaining difficulty  is to explain the nature of the fractions
[  $C_{2n}$]  which  take values of a different character:
 \[ 
 \frac{1}{2}~~~\, \frac{1}{6} ~~~\, \frac{1}{6} ~~~\,  \frac{3}{10}  ~~~\,\frac{5}{6}  ~~~\, \frac{691}{210} ~~~\,  \frac{35}{2} ~~  \mbox{etc.}
 \]
 He says that he has two other methods for finding these numbers, whose nature
 he does not specify.  As we know,  these involve Bernoulli numbers.

In this same period, around 1737,  not published until 1744, Euler \cite[E72]{E72}
obtained the  ``Euler product'' expansion of the zeta function. In \cite[Theorem 8]{E72}
he proved:

\begin{quote}
The expression formed from the sequence of prime numbers
$$
\frac{2^n \cdot 3^n \cdot 5^n \cdot 7^n \cdot 11^n \cdot \mbox{etc.}}
{(2^n-1)(3^n -1)(5^n -1) (7^n-1) (11^n -1) ~~\mbox{etc.} }
$$
has the same value as the sum of the series
$$
1+ \frac{1}{2^n} + \frac{1}{3^n} + \frac{1}{4^n}+\frac{1}{5^n}+\frac{1}{6^n} +\frac{1}{7^n} + \mbox{etc.}
\footnote{``Si ex serie numerorum primorum sequens formetur espressio
$\frac{2^n \cdot 3^n \cdot 5^n \cdot 7^n \cdot 11^n \cdot \mbox{etc.}}
{(2^n-1)(3^n -1)(5^n -1) (7^n-1) (11^n -1) ~~\mbox{etc.} }$
erit eius aequalis summae huius seriei
$
1+ \frac{1}{2^n} + \frac{1}{3^n} + \frac{1}{4^n}+\frac{1}{5^n}+\frac{1}{6^n} +\frac{1}{7^n} + \mbox{etc.} ''
$
[Translation: David Pengelley \cite{Pe00}, \cite{Pe07}.] Here $n$ is an integer.}
$$
\end{quote} 
This infinite product  formula for $\zeta(n)$
supplies  in principle another way to approximate the values $\zeta(n)$
for $n \ge 2$, using finite products.

Euler made repeated attempts throughout his life
to find closed forms for the odd zeta values, especially
$\zeta(3)$.  In a 1739 paper \cite[E130]{E130}, not published until 1750,  he  developed partial fraction decompositions
and   a method equivalent to Abel summation. In particular,
letting $\theta(s) = \sum_{n=0}^{\infty} \frac{1}{(2n+1)^s}$ and 
$\phi(s) = \sum_{n=1}^{\infty}\frac{(-1)^{n-1}}{n^s}$,
 he obtained the formula
$$
\phi(1-2m) = \frac{(-1)^{m-1} 2 \cdot (2m-1)!} {\pi^{2m} }\theta(2m)
$$
for $m=1, 2, 3, 4$, in which the left side is determined as the Abel sum
$$
\phi(m) := \lim_{x \to 1^{-}} \sum_{n=1}^{\infty} \frac{(-1)^{n-1}x^n}{n^m};
$$
in fact the function $\phi(s)$ is Abel summable  for all $s \in \CC$. 

In a paper written in 1749 \cite[E352]{E352}, 
not presented until 1761, and published in 1768, Euler derived the functional equation for the zeta function
at integer points, and he conjectured
$$
\frac{\phi(1-s)}{\phi(s)} = \frac{ - \Gamma(s)(2^s-1) \cos \frac{\pi s}{2} } {(2^{s-1}-1)\pi^s}.
$$
He apparently hoped to use this functional relation to deduce a closed form for the 
odd zeta values, including
$\zeta(3)$, and says: 

\begin{quote}
As far as the [alternating] sum of reciprocals of powers is concerned,
$$
1 - \frac{1}{2^n}+ \frac{1}{3^n}-\frac{1}{4^n} + \frac{1}{5^n} - \frac{1}{6^n} + \mbox{\&c}
$$
I have already observed the sum [$\phi(n)$] can be assigned a value only when $n$ is even
and that when $n$ is odd, all my efforts have been useless up to 
now.\footnote{``A l'\'{e}gard des s\'{e}ries r\'{e}ciproques des puissances
$$
1 - \frac{1}{2^n}+ \frac{1}{3^n}-\frac{1}{4^n} + \frac{1}{5^n} - \frac{1}{6^n} + \mbox{\&c}
$$
j'ai d\'{e}j\`{a} observ\'{e}, que leurs sommes ne sauroient \^{e}tre assign\'{e}es que
lorsque l'exposant $n$ est un nombre entier par, \& que pour les cas
ou $n$ est un nombre entier impair, tous mes soins ont \'{e}t\'{e} jusqu'ici inutiles.''}
\end{quote}

 In his 1755 book on differential calculus Euler \cite[E212, Part II, Chap. 6, Arts. 141-153]{E212}
 gave closed forms for $\zeta(2n)$ explicitly expressed in terms of the Bernoulli numbers.
These are equivalent to the modern form
\begin{equation}\label{exact}
\zeta(2n) =  (-1)^{n+1} \frac{B_{2n} (2 \pi)^{2n}}{2 (2n!)}.
\end{equation}
He also used his 
 summation formula to numerically evaluate the sums using
divergent series, as follows. To estimate  $\zeta(2)$
he considers the finite sums
$$
s= 1+ \frac{1}{4} + \frac{1}{9} + \cdots + \frac{1}{x^2},
$$
where $x$ is fixed, and  obtains the 
summation
\begin{equation}\label{256a}
s= C - \frac{1}{x} + \frac{1}{2x^2} -\frac{|B_2|}{x^3} + \frac{|B_4|}{x^5} - \frac{|B_6|}{x^7} +\cdots,
\end{equation}
where the constant $C$ 
 is to be determined by finding all the other terms at one value of $x$.
Choosing $x=\infty$ we have $s= \zeta(2)$ and the summation formula 
formally gives $s= C= \zeta(2).$ 
On setting  $x=1$, the finite sum gives $s=1$, therefore one has, formally,  
\begin{equation*}
C= 1+ 1  -\frac{1}{2} +|B_2|-|B_4|+|B_6|-\cdots
\end{equation*}
The right side of this expression  is a divergent series and 
Euler  remarks:

\begin{quote}
But this series alone does not give the value
of $C$, since it diverges strongly.
Above [Section 125] we  demonstrated that the  sum of  the series to infinity is  $\frac{\pi \pi}{6}$;
and therefore setting $x= \infty$ and $s= \frac{\pi \pi}{6}$, we have
$C= \frac{\pi \pi}{6}$, because then all other terms vanish. Thus it follows that
$$
1+ 1 -\frac{1}{2} +|B_2|-|B_4|+|B_6|-\cdots= \frac{\pi\pi}{6}.\footnote{``quae series autem cum sit maxime divergens, 
valorem constantis
$C [:=\zeta(2)]$ non ostendit. Quia autem supra demonstavimus summam huius seriei in infinitum 
continuatae esse $= \frac{\pi \pi}{6}$; facto $x=\infty$, si ponatur $s= \frac{\pi \pi}{6}$,
fiet $C= \frac{\pi\pi}{6},$ ob reliquos terminos omnes evanescentes. Erit ergo
$
1+ 1 -\frac{1}{2} + {\mathfrak U}  - {\mathfrak B} + {\mathfrak C} - {\mathfrak D} +{\mathfrak E} 
  -\&c= \frac{\pi\pi}{6}.
$
'' [Translation: David Pengelley \cite[Sec. 148]{Pe00}, \cite{Pe07}]}
$$
\end{quote}

\noindent
Euler's idea is to use these divergent series
to obtain good numerical approximations to $C$ by substituting a finite
value of $x$  and to add up the terms
of the series ``until it begins to diverge", i.e. to truncate it
according to some recipe.
We do not address  the question of what rule Euler used to determine
where to truncate. 
This procedure has received modern justification in the theory of
asymptotic series, cf. Hardy \cite{Ha49}, Hildebrand \cite[Sec. 5.8]{Hil56}, \cite[Sec 6.4]{Ed74}.  
By taking large $x$ one can get
arbitrarily accurate estimates (for some functions). 
If the series is alternating, one 
may sometimes estimate the error as being no larger than the smallest term.
Euler demonstrates in 
this particular case how to find the value $C= \frac{\pi^2}{6}$ numerically
 using the value $x=10$, with $s=\sum_{i=1}^{10} \frac{1}{n^2}$,
 by truncating the expansion \eqn{256a} after power $\frac{1}{x^{17}}$, obtaining
$C \approx 1,64493~40668~48226~430$. He writes: 

\begin{quote}
And this number is at once the value of the expression $\frac{\pi^2}{6}$, as will be
apparent by  doing the calculation with the known value of $\pi$. 
Whence it is at once understood that even though
the series [$|B_2|, |B_4|, |B_6| \cdots$]
diverges, still the true sum is produced.\footnote{``Hicque numerus simul est valor 
expressionis $\frac{\pi \pi}{6}$,
quemadmodem ex valore ipsius $\pi$ cogito calculum institutenti patebit. Unde simul
intelligitur, etiamsi series ${\mathfrak A}, {\mathfrak B}$, ${\mathfrak C}$ \& c. divergat,
tamen hoc modo veram prodire summan.'' [Translation: Jordan Bell]}.
\end{quote}

\noindent
Euler also applies this method to the odd zeta values, 
obtaining numerical approximations for all zeta values up to $\zeta(15)$,  
in particular $\zeta(3)\approx  \frac{\pi^3}{25.79436}$.
He also (implicitly) introduces  in Sec. 153 a ``Bernoulli function" $b(z)$ which interpolates 
at integer values 
the variable $z$ his version of the even Bernoulli numbers, i.e. 
$$
b(n)=  |B_{2n}| = (-1)^{n+1} B_{2n}, ~~~~n \in \ZZ_{\ge 1}.
$$
Euler  remarks: 

\begin{quote}
The series of Bernoulli numbers [$|B_{2}|, |B_4|, |B_6|, \cdots$~] 
however irregular, seem to be interpolated from this source; that is,
terms constituted in the middle of two of them can be defined: for if the
middle term lying between the first $A$ and the second $B$, corresponding 
to the index $1 \frac{1}{2}$ were $=p$, then it would certainly be 
$$
1 +\frac{1}{2^3} + \frac{1}{3^3} + \&\mbox{tc}= \frac{2^2 p}{1 \cdot 2\cdot 3} \pi^3
$$
and hence
$$
p = \frac{3}{2 \pi^3} (1+ \frac{1}{2^3} + \frac{1}{3^3} + \&c) = 0,05815~227.\footnote{``Ex hoc fonte series numerorum Bernoulliarum 
${{1}\atop{\mathfrak U}} {{2}\atop{\mathfrak B}} {{3}\atop{\mathfrak C}} [\cdots]$
quantumvis irregularis videatur
interpolari, seu termini in medio binorum quorumcunque constituti assignari poterunt; se
enim terminus medium interiacens inter primum ${\mathfrak A}$ and secundum ${\mathfrak B}$,
sue indici $1 \frac{1}{2}$, respondens fuerit $=p$; erit utique 
$1 + \frac{1}{2^3}+ \frac{1}{3^3} +  \mbox{\&tc.} = \frac{2^2 p}{1\cdot 2 \cdot 3} \pi^3$
ideoque
$p = \frac{3}{2 \pi^{3}} (1+ \frac{1}{2^3} + \frac{1}{3^3} + \&c)= 0,05815227.$''
[Translation: Jordan Bell]} 
$$
\end{quote}

\noindent
That is, Euler shows the interpolating  value  $p:=b(\frac{3}{2})$ is related to $\zeta(3)$,
by the relation
$$
p = \frac{3}{2}\frac{\zeta(3)}{\pi^3}, 
$$
and he finds the
 numerical value  $p \approx 0.05815227$. He similarly obtains a value for $b(\frac{5}{2})$
in terms of $\zeta(5)$.

 In a 1771 paper, published in 1776, Euler  \cite[E477]{E477} ({\em ``Meditations about a singular type
 of series"}) introduced multiple zeta values
and with them obtained another formula for $\zeta(3)$. He had considered this topic
 much earlier. In Dec. 1742 Goldbach
wrote Euler a letter   about multiple zeta values, 
and they exchanged several further letters on their properties.
The 1771 paper recalls  this correspondence
with Goldbach, and considers the sums, for $m, n \ge 1$ involving
the $n$-harmonic numbers 
\begin{equation}\label{265a}
S_{m,n} := \sum_{k=1}^{\infty} \frac{H_{k,n}}{k^m}
=\sum_{k \ge 1} \left( \sum_{1 \le l \le k} \frac{1}{k^m l^n} \right) = \sum_{l \ge 1} \sum_{k \ge l} 
\left(\frac{1}{k^m l^n}\right).
\end{equation}
Euler lets $\int \frac{1}{z^m}$ denote $\zeta(m)$ and lets $\int \frac{1}{z^m}(\frac{1}{z})^n$
denote the sum $S_{m,n}.$ He considers all  values $1\le m, n \le 8$.
This  includes divergent series, since all sums $S_{1, n}$  
are divergent.
The modern definition of multiple zeta values, valid for positive
integers $(a_1, ..., a_k)$ with  $a_1 \ge 2$,  is
\begin{equation}\label{MZV}
\zeta(a_1, a_2, ..., a_k) = 
\sum_{n_1>n_2 > \cdots > n_k > 0}  \frac{1}{n_1^{a_1} n_2^{a_2} \cdots n_k^{a_k}}.
\end{equation}
In this   notation  \eqn{265a} expresses the identities (valid for $m \ge 2, n \ge 1$)
$$
S_{m,n} = \zeta(n+m) + \zeta(m,n).
$$
 Euler presents three methods for obtaining identities, 
 and  establishes a number of additional identities for each range $m+n=j$ for $2 \le j \le 11$.
 For $j=3$ he obtains
 $$S_{2,1} = 2\, \zeta(3).$$ 
This is equivalent to
\begin{equation*}
\zeta(3) = \zeta(2,1) = \frac{1}{2} \Big(\sum_{k=1}^{\infty} \frac{H_k}{k^2}\Big).
\end{equation*}
Being thorough, Euler also  treats  cases $m=1$ where the series diverge, 
and observes that  for $j=2$ he cannot derive useful identities.

 In a 1772 paper
\cite[E432]{E432} (``{\em Analytical Exercises}"), published in 1773,  Euler obtained new formulas for values $\zeta(2n+1)$
via divergent series related to the functional equation for the zeta function.
 As an example, in Sect. 6 he writes, treating $n$ as a positive integer and $\omega$ as
 an infinitely small quantity, the formula
$$
 1 + \frac{1}{3^{n+\omega}} + \frac{1}{5^{n+ \omega}} + \cdots=  \frac{-1}{2 \cos( \frac{n+\omega}{2}\pi)}
 \frac{\pi^{n+\omega}}{1 \cdot 2 \cdots (n-1+\omega)} \Big( 1- 2^{n-1+\omega} + 3^{n-1+\omega} - 
  \mbox{etc}\Big).
$$
Here the  right side contains a divergent series. In Sect. $7$,  he in effect takes a
derivative in $\omega$ and obtains, for example in  the case $n=3$, the formula
$$
1 + \frac{1}{3^3} + \frac{1}{5^3} + \frac{1}{7^3} + \mbox{etc.} = -\frac{\pi^2}{2} (2^2 \log 2 - 3^2 \log 3
+ 4^2 \log 4 - 5^2 \log 5 + \mbox{etc.} \Big)
$$
The right side is another divergent series,  which he names
$$
Z := 2^2 \log 2 - 3^2 \log 3
+ 4^2 \log 4 - 5^2 \log 5 + 6^2 \log 6 - 7^2 \log 7+ \mbox{etc.}
$$
He applies various series acceleration techniques on this divergent series, 
compare the discussion of divergent series in Section 2.5 below. In Sect.  20 he transforms $Z$ into 
a rapidly convergent series,
\begin{equation*}
Z = \frac{1}{4} - \frac{ \alpha \pi^2}{3 \cdot 4 \cdot 2^2} - \frac{\beta \pi^4}{ 5\cdot 6 \cdot 2^4} - \frac{ \gamma \pi^6}{7\cdot 8 \cdot 2^6}
-\frac{\delta \pi^8}{9\cdot 10 \cdot 2^{8}} - \frac{\epsilon \pi^{10}}{ 11\cdot 12 \cdot 2^{10}} - \mbox{etc.}
\end{equation*}
In his notation $\zeta(2) = \alpha \pi^2$, $\zeta(4) = \beta \pi^4$ etc, so he thus obtains the valid, rapidly convergent formula
\begin{equation*}
\frac{7}{8} \zeta (3) = \frac{\pi^2}{2} \Big( \frac{1}{4} - \sum_{n=1}^{\infty} \frac{\zeta(2n)}{(2n+1)(2n+2) 2^{2n}} \Big).
\end{equation*}
After further changes of variable, he 
deduces in Sect.  21  the formula
\[
1 + \frac{1}{3^3} + \frac{1}{5^3} + \frac{1}{7^3} + \mbox{etc.} = \frac{\pi^2}{4} \log 2 + 2 \int_{0}^{\frac{\pi}{2}} \Phi \log( \sin \Phi )d\Phi.
\] 
This may be rewritten
\begin{equation*}
\frac{7}{8}\zeta(3) = \frac{\pi^2}{4} \log 2 + 2 \int_{0}^{\frac{\pi}{2}} x \, \log (\sin x) dx.
\end{equation*}
Euler notes also the ``near miss" 
that a variant of the last integral is exactly evaluable:
\begin{equation*}
 \int_{0}^{\frac{\pi}{2}}  \log (\sin x) dx = -\frac{\pi}{2} \log 2.
 \end{equation*}

Euler  continued to consider  zeta values. In the 1775 paper 
\cite[E597]{E597} (``{\em A new and most easy method for 
summing series of reciprocals of powers}"), published posthumously
in 1785, he gave  a new derivation of his formulas for the even zeta values $\zeta(2n)$.
In the 1779 paper \cite[E736]{E736},  published  in 1811, he studied the dilogarithm function 
$Li_2(x) := \sum_{n=1}^{\infty} \frac{x^n}{n^2}$. 
 In Sect.  4 he establishes the functional equation
\begin{equation*}
Li_2(x) + Li_2(1-x) = \frac{\pi^2}{6} - (\log x)( \log (1-x)).
\end{equation*}
The special case $x=0$ of this functional equation 
gives again   $\zeta(2)= \frac{\pi^2}{6}$. 
Euler deduces several other functional equations for the dilogarithm, and by specialization, 
a number of new series identities.
\subsection{Summing divergent series: the Euler-Gompertz constant}\label{sec25}
\setcounter{equation}{0}

In the course  of his investigations Euler encountered many divergent
series. His work on divergent series  was a by-product of his aim to obtain accurate numerical estimates for the
sum of various convergent series, and he learned and developed 
 summability methods which could be applied to both convergent and divergent series.
 By  cross-checking
 predictions  of various summability methods 
 he developed useful methods to sum divergent series.
Of  particular interest here,  Euler's summation of a particular divergent series,
 the Wallis hypergeometric series,  produced an  
interesting new constant, the  Euler-Gompertz constant.

 Euler's  near contemporary James Stirling (1692--1770)  developed methods of acceleration of convergence of series by
differencing, starting in 1719, influenced by Newton's work (\cite[pp. 92-101]{Newton1711}) on interpolation,
see  Tweddle \cite{Twe92}. 
 Euler obtained  Stirling's  1730 book ( ``{\em Differential methods and summation of series}")  
 (\cite{Stirling1730}, translated in \cite{Twe03}) which  discussed
 infinite series, summation, interpolation and quadrature, in which
 Proposition $14$  gives three examples of a method of accelerating convergence of series
of hypergeometric type. On 8 June 1736 Euler wrote to Stirling, saying (\cite[p. 229]{Twe03}, \cite[p. 141]{Twe88}):

\begin{quote}
I searched all over for your  excellent book on the method of differences, 
a review of which I had seen in the {\em Acta Lipenses}, until I have achieved
my desire. Now that I have read through it diligently, I am truly astonished at the
great abundance of excellent methods in such a small volume, by which you
show to sum slowly convergent series with ease and how to interpolate
progressions which are very difficult to deal with.
But especially pleasing to me was Prop. 14 of Part I, in which you 
present a method for summing so easily series whose law of progression
is not even established, using only the relation of the last terms; certainly
this method applies very widely and has the greatest use. But the demonstration
of this proposition, which you seemed to have concealed from study, caused
me immense difficulty, until at last with the greatest pleasure I obtained it 
from the things which had gone before...
\end{quote}

\noindent   A key point 
is that  Proposition 14 applies when the 
members $a_n$ of the series  satisfy  linear recurrence relations
whose coefficients are rational functions of the parameter $n$. This point was not made explicit by Stirling.
 It is known that there exist  counterexamples to
Proposition 14 otherwise (cf. Tweddle \cite[pp. 229--233]{Twe03}).
The convergence acceleration method of Stirling is really a suite of methods of great flexibility,
as is explained\
 by  Gosper \cite{Gos76}.\footnote{Gosper \cite[p.122]{Gos76} says: ``We will be taking up almost exactly
where James Stirling left off, and had he been granted use of a symbolic mathematics
system such as MIT MACSYMA, Stirling would probably have done most of this work
before 1750."} 
Euler  incorporated these  ideas
 in his  subsequent work.\footnote{We do not attempt to untangle the exact
 influence of Stirling's work on Euler. In the rest of his 1736 letter Euler explains his own
 Euler summation method for summing series. In a  reply made  16 April 1738 Stirling tells Euler that
 Colin Maclaurin will be  publishing a book on Fluxions that has a similar summation formula
 that he found some time ago. Euler replied on 27 July  1738  
 that `` Mr. Maclaurin probably came
 upon his summation formula before me, and consequently deserves to be
 named as its first discoverer. For I found that theorem about four years ago, at which time
 I also described its proof and application in greater detail to our Academy."  Euler also
 states that he independently found some summability methods in Stirling's book, before
 he knew of its existence (\cite[Chap. 6]{Twe88}).}.

In a 1760 paper \cite[E247]{E247} ({\em ``On divergent series"})
  Euler formulated his views on  the use and philosophy of assigning a finite value
to some divergent series.  This paper includes work Euler had done as early as 1746.
In it  Euler presents  four  different methods
for summing the particular divergent series
\begin{equation}\label{554a}
 1- 1 +2 -6  +24 - 120 +\cdots  [= 0! - 1! + 2! - 3! + 4! -5! +\cdots],
\end{equation}
which he calls  ``Wallis'  hypergeometric series,"
and shows they each give (approximately) the same answer,
which he denotes $A$.
Barbeau and Leah \cite{BL76} give a  translation of the initial, philosophical part of this paper, 
followed by  a summary of the rest of its  contents, and Barbeau \cite{Bar79} gives a detailed exposition of 
Euler's treatment of this example.

The series \eqn{554a} is termed ``hypergeometric" because each term in the
series is multiplied by a ratio which varies from term to term,
rather than by a constant ratio  as in a  geometric series.
Series of this type appeared in the Scholium to Proposition 190
in Wallis's 1656 book  {\em Arithmetica Infinitorum} \cite{Wal1656}\footnote{This reference was noted by Carl Boehm,
an editor of Euler's Collected Works, cf. \cite[p. 157]{BL76}.}.

Euler's four methods were: 
\begin{enumerate}
\item[(1)] \cite[Sect. 13--16]{E247}
The iteration of a series acceleration method
involving a differencing operation applied to this series. 
Euler  iterates four times, truncates the 
resulting divergent
series and obtains an approximation to its ``sum"  $A$ of
$A \approx 38015/65536 = 0.58006...$;  
\item[(2)] \cite[Sect. 17-18]{E247}
The interpolation  of  solutions to 
the difference equation $P_{n+1} = n P_{n} +1,$ with $P_1 =1$.
Euler  obtains
the closed formula 
$$P_n = 1 + (n-1) + (n-1)(n-2) + (n-1)(n-2)(n-3) +\cdots,$$
which is a finite sum for  each $n \ge 1$, and  observes that  substituting $n=0$ formally gives
the Wallis series. He considers interpolation methods that 
rescale $P_{n}$ by an invertible function $f(x)$ to obtain values $f(P_n)$, and  then attempts
to interpolate the value $f(P_0).$ For the choice $f(x)= \frac{1}{x}$ he obtains $A \approx 0.6$ and for 
$f(x)= \log x$ he obtains
$A \approx 0.59966$; 
\item[(3)] \cite[Sect. 19--20]{E247}
The construction and solution of an associated differential equation.
Euler views the  Wallis series as the special case $x=1$
of the power series
$$
s(x) = 1 - 1!\,x + 2! \,x^2 - 3! \,x^3 + 4!\,x^4 - 5! \,x^5 + \cdots.
$$
Although this series diverges for all nonzero $x \in \CC$, he observes
 that $s(x)$ formally satisfies a linear differential equation.
 This differential equation   has a convergent solution
given by an  integral with variable endpoint $x$,  and by specializing it to 
$x=1$ he obtains an exact  answer $A$.
  We discuss this method in detail below. 
\item[(4)] \cite[Sec. 21--25]{E247}
The determination of  a continued fraction expansion for the
series $s(x)$ given by the integral in (3). This  is 
\begin{equation}\label{eulerfrac}
s(x) = \cfrac{1}{1+ \cfrac{x}{1+ \cfrac{x}{1+\cfrac{2x}{1+ \cfrac{2x}{1+\cfrac{3x}{1+ \cfrac{3x}{1+\cdots}}}}}}}
\end{equation}
This continued fraction is convergent at $x=1$. Using it, he computes a good numerical approximation,
 given in \eqn{559d} below, to the answer $A$.
\end{enumerate}  
.

In the third of four summation methods,  Euler  considers the
series to be the ``value" at $x=1$ of the asymptotic
series
\begin{equation}\label{555a}
s(x) \sim \sum_{n=0}^{\infty} (-1)^n n! \,x^{n+1}.
\end{equation}
He notes  this series satisfies  the 
inhomogeneous linear differential equation
\begin{equation}
\label{556a}
s'(x) + \frac{1}{x^2} s(x)= \frac{1}{x}.
\end{equation}
He writes down an explicit solution to the differential equation \eqn{556a}, namely
\begin{equation}\label{557a}
s(x) := e^{\frac{1}{x}}\int_{0}^x \frac{1}{t} e^{-\frac{1}{t}} dt.
\end{equation}
The expansion of this integral near $x=0$ has the asymptotic expansion \eqn{555a}.
Euler transforms the integral \eqn{557a} into other forms.
Using the change of variable $v= e^{1- \frac{1}{t}}$ he obtains
\begin{equation}\label{557b}
s(x) = e^{(\frac{1}{x} -1)} \int_{0}^{e^{1-\frac{1}{x}}} \frac{dv}{1-\log v}.
\end{equation}
He then obtains as his  proposed  summation for the divergent series the value
\begin{equation}\label{559a}
\EG := s(1) = e \int_{0}^1 \frac{1}{t} e^{- \frac{1}{t}} dt = \int_{0}^{1} \frac{dv}{1- \log v}.
\end{equation}
He numerically estimates this integral using the trapezoidal rule
and  obtains $\EG \approx 0.5963\underline{7} 255.$
Using his fourth method, in Sec. 25 Euler  obtained a more precise numerical value for
this constant,  finding 
 \begin{equation}\label{559d}
 \EG \approx 0. 59634~7362\underline{1}~237.
 \end{equation}
The  constant $\EG$ defined by \eqn{559a} is of some interest  and we call it the
{\em Euler-Gompertz constant,} following 
Finch \cite[Sect. 6.2]{Fin03}. 
This constant  was earlier named  the {\em Gompertz constant}  by Le Lionnais \cite{LL83}.
\footnote{Le Lionnais \cite{LL83} gives no  explanation or reference for his choice of  name ``Gompertz".}
Benjamin Gompertz (1779--1865) was a mathematician and actuary who in 1825
proposed  a functional model for approximating mortality tables, cf. Gompertz \cite{Gom1825},\cite{Gom1862}.
The {\em Gompertz function} and {\em Gompertz distribution} are named in his honor. 
 I have not located
 the Euler-Gompertz constant in his writings.
  However Finch \cite[Sect. 6.2.4]{Fin03}  points out a  connection 
 of the Euler-Gompertz constant with a Gompertz-like  functional model.

 The Euler-Gompertz constant has some
 properties analogous to Euler's constant.  The   integral representations for the Euler-Gompertz
 constant  given by  \eqn{557a} and \eqn{557b} evaluated at $x=1$
 resemble  the  integral 
 representations  \eqn{226a} and \eqn{227a}
 for Euler's constant. An elegant parallel was noted by Aptekarev \cite{Apt09},
 \begin{equation}\label{559f}
 \gamma = - \int_{0}^{\infty} \log x \, e^{-x}dx, \quad\quad \delta= \int_{0}^{\infty} \log (x+1) \,e^{-x} dx.
\end{equation} 
 There are also 
 integral formulae in which both constants appear, see \eqn{353EG} and \eqn{362EG}.

 In 1949 G. H. Hardy \cite[p. 26]{Ha49} 
gave a third integral representation for the function $s(x)$,  obtained by
using the change of variable $t= \frac{x}{1+xw}$, 
namely
\begin{equation*}
s(x) = x \int_{0}^{\infty} \frac{e^{-w}}{1+xw} dw.
\end{equation*}
Evaluation at $x=1$ yields the integral representation
\begin{equation}\label{560aa} 
\EG = \int_{0}^{\infty} \frac{e^{-w}}{1+w} dw.
\end{equation}
From this representation Hardy obtained a relation of
$\EG$ to Euler's constant given by 
\begin{equation}\label{560b}
\EG = -e\left(\gamma - 1 + \frac{1}{2 \cdot 2!} - \frac{1}{3 \cdot 3!} + \frac{1}{4 \cdot 4!} + \cdots\right).
\end{equation}
One can also deduce from \eqn{560aa} that
\begin{equation}\label{561a}
\EG = - e \, Ei(-1)
\end{equation}
where $Ei(x)$ is the {\em exponential integral} 
\begin{equation}\label{561b}
Ei(x) := \int_{-\infty}^x  \frac{e^t}{t}dt,
\end{equation}
which is defined as a function of a complex variable $x$ on
the complex plane cut along the nonnegative real axis.\footnote{This is sufficient
to define $Ei(-1).$ For applications the  function $Ei(x)$ is usually extended to positive
real values by using the Cauchy principal value at the point $x=0$
where the integrand has a singularity. The real-valued definition 
using the Cauchy principal value is related to the complex-analytic extension by
$$
Ei(x) := \lim_{\epsilon \to 0^{+}} \frac{1}{2} \Big(Ei(x+ \epsilon i) +  Ei(x- \epsilon i)\Big).
$$.
}

Here we  note that Euler's integral \eqn{557a}
is transformed by  the change of variable $t=\frac{1}{u}$ to
another simple form, which on replacing $x$ by $1/x$ becomes
\begin{equation}\label{562a}
 s(\frac{1}{x}) = e^{x} \int_{x}^{\infty} \frac{e^{-u}}{u} du = e^x E_1(x).
 \end{equation}
 Here  $E_1(z)$ is the {\em principal exponential integral}, defined for $|\arg z| < \pi$
 by a contour integral 
 \begin{equation}\label{expint}
 E_1(z) := \int_{z}^{\infty} \frac{e^{-u}}{u} du,
 \end{equation}
 that eventually goes to $+\infty$ along the positive real axis, see \cite[Chap. 5]{AS}.
 Choosing $x=1$ yields the identity
 \begin{equation}\label{249a}
 E_{1}(1) =  \int_{1}^{\infty} \frac{e^{-t}}{t} dt = \frac{\delta}{e}.
 \end{equation}
Further formulas involving the Euler-Gompertz constant are given
 in Sections \ref{sec34},  \ref{sec35},  \ref{sec311} and  \ref{sec312}.

Regarding method (4) above, Euler had previously done much work on continued fractions
in the late 1730's  in \cite[E71]{E71} and in particular  in his sequel paper \cite[E123]{E123}, in which
he  expands some analytic functions and integrals in continued fractions. This  work of Euler is discussed in the book of 
Khrushchev \cite{Khr08}, which includes an English translation of \cite{E123}. 
 In 1879 E. Laguerre \cite{Laguerre1879} found an analytic continued fraction expansion 
 for the principal exponential integral,  which  coincides with  $e^{-x} s(1/x)$by \eqn{562a}, obtaining
 \begin{equation}\label{249c}
\int_{x}^{\infty} \frac{e^{-t}}{t} dt= \cfrac{e^{-x}}{x+1- \cfrac{1}{x+3-\cfrac{1}{\frac{x+5}{4}-  \cfrac{\frac{1}{4}}{\frac{x+7}{9}-\cfrac{\frac{1}{9}}{\frac{x+9}{16}- \cdots}}}}}.
\end{equation}
Laguerre noted that this expansion converges for positive real $x$.
His expansion simplifies to\footnote{This form of  fraction with minus signs between convergents
is  often  called a {\em $J$-continued fraction}, see Wall \cite[p. 103]{Wall48}.}
\begin{equation}\label{249d}
\int_{x}^{\infty} \frac{e^{-t}}{t} dt = 
 \cfrac{e^{-x}}{x+1- \cfrac{1^2}{x+3-\cfrac{2^2}{x+5-  \cfrac{3^2}{x+7-\cfrac{4^2}{x+9- \cdots}}}}},
\end{equation}
which is the form in which it appears in Wall \cite[(92.7)]{Wall48}. 
On taking $x=1$ it yields another continued fraction for the Euler-Gompertz constant, 
\begin{equation}\label{249e}
\delta  = e \int_{1}^{\infty} \frac{e^{-t}}{t} dt =
 \cfrac{1}{2- \cfrac{1^2}{4-\cfrac{2^2}{6-  \cfrac{3^2}{8-\cfrac{4^2}{10- \cdots}}}}}. 
 \end{equation}

In his great 1894 work 
T. J. Stieltjes (\cite{Sti1894}, \cite{Sti93}) 
developed a general theory  for analytic continued fractions representing
functions $F(x)$ given by 
$\int_{0}^{\infty} \frac{d \Phi(u)}{x+ u}$, which takes the form
\begin{equation}\label{249f}
\int_{0}^{\infty} \frac{d\Phi(u)}{x+ u}  =
\cfrac{1}{a_1 x+ \cfrac{1}{a_2+ \cfrac{1}{a_3 x +  \cfrac{1}{a_4+\cfrac{1}{a_5 x+\cfrac{1}{a_6+ \cdots}}}}}}.
\end{equation}
for positive real $a_n$. He  formulated the  (Stieltjes)  moment problem for measures on the nonnegative real axis,
and related his continued fractions to the moment problem.
In the introduction of his paper he  specifically gives  Laguerre's continued fraction \eqn{249d}
as an example, and observes that
it may be converted to a continued fraction in his form \eqn{249f} with  $a_{2n-1}=1$ and $a_{2n}= \frac{1}{n}.$ 
On taking $x=1$ his converted fraction is closely related to  Euler's continued fraction \eqn{eulerfrac}.
Stieltjes notes that his theory extends the provable convergence region
for Laguerre's continued fraction \eqn{249d} to all complex $x$, excluding
 the negative real axis.
In  \cite[Sect. 57]{Sti1894} he remarks that associated measure in
the moment problem for this continued fraction must be unbounded, and in 
 \cite[Sect. 62]{Sti1894} he  obtains a generalization of
 (his form of)  Laguerre's  continued fraction to add extra parameters;
cf. Wall \cite[eqn. (92.17)]{Wall48}.

It is not known whether the  Euler-Gompertz constant is rational or irrational. 
However it has recently been established that at least one of $\gamma$ and $\EG$ is transcendental,
as presented in  Section \ref{sec312}.

\subsection{Euler-Mascheroni constant; Euler's approach to research }\label{sec26}
\setcounter{equation}{0}

Euler's constant is often called the {\em Euler-Mascheroni constant,}
after  later work of Lorenzo Mascheroni \cite{Mas1790}, \cite{Mas1792}.
In 1790, in a book proposing to answer some
problems raised by Euler, Mascheroni \cite{Mas1790}
 introduced the notation $A$
for this constant. In 1792  Mascheroni  \cite[p.11]{Mas1792} also
reconsidered integrals in Euler's paper \cite[E629]{E629},
using  both the  notations $A$ and $a$  for Euler's constant.
In his book, using an expansion of the logarithmic integral,
 he gave a numerical expression for  it to $32$ decimal places
(\cite[p. 23]{Mas1790}), which stated   
$$
A \approx 0. 57721~56649~01532~8606\underline{1}~ 81120~ 90082~39
$$
This computation attached
his name to the problem; Euler had previously computed it accurately to $15$ places.
At that time  constants were often named after a  person who had done the labor of computing
them to the most digits. 
 However in  1809  Johann von Soldner \cite{vS1809},
 while preparing  the first book on the logarithmic integral function $li(x) = \int \frac{dx}{\log x}$, 
found it necessary to recompute Euler's constant, which he called $H$ (perhaps from
the harmonic series).
He computed it to $22$ decimal places by a similar method, 
and  obtained (\cite[p. 13]{vS1809})
\[
H \approx 0. 57721~56649~01532 ~86060~65
\]
Von Soldner's answer 
disagreed with Mascheroni's in the $20$-th decimal place. To settle the conflict
von Soldner asked the aid of C. F. Gauss, and Gauss engaged the
19-year old calculating prodigy F. G. B. Nicolai (1793---1846) to
recheck the computation. Nicolai used the Euler-Maclaurin summation
formula to compute $\gamma$ to $40$ places, finding agreement
with von Soldner's calculations to $22$ places (see \cite[p. 89]{Hav03}).
Thus the conflict was settled in von Soldner's favor, and  Mascheroni's value is  accurate
to  only $19$ places. 
In the current era, which credits the mathematical  contributions,
it seems most appropriate to name the constant after Euler alone.

The now-standard notation $\gamma$ for Euler's constant 
arose subsequent to the work of both Euler and Mascheroni. 
 Euler himself used various notations for his constant, including $C$ and $O$ and $n$;
Mascheroni used $A$ and $a$; von Soldner used $H$ and $C$.
The  notation $\gamma$ appears  in
an 1837  paper on the logarithmic integral of Bretschneider \cite[p. 260]{Bret1837}.
It also appears in a   calculus textbook
of Augustus de Morgan \cite[p. 578]{ADM}, published about the same time. 
This choice of symbol $\gamma$ is perhaps based on the association of  Euler's constant with the gamma function.

To conclude our discussion of Euler's work,  
it seems useful  to  contemplate the 
approach  to research used by
Euler in his long successful career. C. Truesdell \cite[Essay 10]{Tru84}, \cite[pp. 91--92]{Tru87}, 
makes the following observations about 
the  methods used  by  Euler,
 his teacher Johann Bernoulli, and  Johann's teacher and brother
Jacob  Bernoulli.
\begin{enumerate}
\item
Always attack a special problem. If possible solve the
special problem in a way that leads to a general method.
\item
Read and digest every earlier attempt at a theory of
the phenomenon in question.
\item
Let a key problem solved be a father to a key problem posed.
The new problem finds its place on the structure provided
by the solution of the old; its solution in turn will provide
further structure.
\item
If two special problems solved seem cognate, try to unite
them in a general scheme. To do so, set aside the differences,
and try to build a structure on the common features.
\item
Never rest content with an imperfect or incomplete argument.
If you cannot complete and perfect it yourself, lay bare its
flaws for others to see.
\item 
Never abandon a problem you have solved. There are
always better ways. Keep searching for them, for they
lead to a fuller understanding. While broadening,
deepen and simplify.
\end{enumerate}
 Truesdell speaks here about  Euler's work  in the area of foundations of mechanics,
 but his  observations apply equally well to 
 Euler's work  in number theory and analysis discussed above.

%
%
%
%

\section{Mathematical Developments}\label{sec3}
\setcounter{equation}{0}

We now consider a collection of mathematical topics in which Euler's constant
appears.

%
%
%
%
\subsection{Euler's constant and the gamma function}\label{sec31}
\setcounter{equation}{0}

Euler's constant appears in several places in connection with the gamma function.
 Already Euler noted a basic occurrence of Euler's constant with the gamma function, 
\begin{equation}\label{201}
 \Gamma'(1)= -\gamma.
\end{equation}
Thus $\gamma$ appears in the Taylor series expansion of $\Gamma(z)$ around
the point $z=1$.
Furthermore Euler's constant also appears (in the form $e^{\gamma}$) in the 
 Hadamard product expansion for the entire function $\frac{1}{\Gamma(z)},$ which states:
$$
\frac{1}{\Gamma(z)} = z e^{\gamma z} \prod_{n=1}^{\infty} \left(1 +\frac{z}{n} \right) e^{-\frac{z}{n}}.
$$

Euler's constant  seems  however more tightly associated  to the logarithm of the gamma function,
which was studied by Euler \cite[E368]{E368} and many subsequent workers.
The function  $\log \Gamma(z)$ is  multivalued  under analytic continuation,
and many results are 
more elegantly expressed in terms of the {\em digamma function}, defined by
\begin{equation}\label{300}
\psi(z) := \frac{\Gamma'(z)}{\Gamma(z)} = \frac{d}{dz} \log \Gamma(z).
\end{equation}
The theory of the digamma function is simpler in some respects because it is a meromorphic
function. Nielsen \cite[Chap. 1]{Nie06} presents results and some early history, 
and Ferraro \cite{Fer07} describes the work of Gauss on these functions.

We recall a few basic facts.
 The digamma function  satisfies the difference equation
\begin{equation}\label{300b}
\psi(z+1) = \psi(z) + \frac{1}{z},
\end{equation}
which is inherited from the functional equation $z\Gamma(z)= \Gamma(z+1)$. 
It has an  integral representation, valid for $Re(z) >0$,  
\begin{equation*}
\psi(z) = \int_{0}^{\infty} \Big( \frac{e^{-t}}{t} - \frac{e^{-zt}}{1- e^{-t}}\Big) dt,
\end{equation*}
 found in 1813 by Gauss \cite[Art. 30 ff]{Gau1813} as part
of his investigation of the hypergeometric function ${}_2 F_{1}(\alpha, \beta; \gamma; x)$ (see \cite{Fer07}).
The difference equation  gives
\[
\psi(x) = \psi(x+n) -  \sum_{j=0}^{n-1} \frac{1}{x+j}
\]
from which follows 
\begin{equation*}
\psi(x) = \lim_{n \to \infty} \Big( \log n - \sum_{j=0}^{n-1} \frac{1}{x+j}\Big).
\end{equation*}
This  limit is  valid for all $x \in \CC \smallsetminus \ZZ_{\le 0}$, using the fact that
$ \psi(x) - \log x$ goes uniformly to $0$ in the right half-plane as $Re(x) \to \infty$
(\cite[p. 85, 93]{Nie06}).
This formula  gives $\psi(1) = -\gamma,$
a fact which also follows directly from \eqn{201}.

 Euler's constant
appears in the  values of the digamma function $\psi(x)$ at all positive integers
and  half-integers.  In 1765 Euler \cite[E368]{E368} found the values
$\Gamma(\frac{1}{2}), \Gamma(1)$ and $\Gamma'(\frac{1}{2}), \Gamma'(1)$.
These values determines  $\psi(1/2), \psi(1)$, and then, by  using 
the difference equation \eqn{300b},
the values of  $\psi(x)$ for all half-integers.

\smallskip
\begin{theorem} ~\label{th30} {\em (Euler 1765)} 
 The digamma function $\psi(x) := \frac{\Gamma'(x)}{\Gamma(x)} $
 satisfies, for integers $n \ge 1$,
\begin{equation*}
\psi(n)=   -\gamma + H_{n-1}
\end{equation*}
using the convention that $H_0=0$.
It also satisfies, for half integers $n + \frac{1}{2}$, and $n \ge 0$,
\begin{equation*}
\psi(n+ \frac{1}{2})=  -\gamma - 2\log 2+ 2 H_{2n-1} -  H_{n-1},
\end{equation*}
using the additional convention $H_{-1}=0.$
 \end{theorem}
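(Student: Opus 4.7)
The plan is to prove both identities by induction on $n$, using the difference equation $\psi(z+1) = \psi(z) + \frac{1}{z}$ from \eqn{300b} together with two base values that are already available in the excerpt. For the integer case the base case is $\psi(1) = -\gamma$, which follows immediately from Euler's identity $\Gamma'(1) = -\gamma$ (equation \eqn{273}) combined with $\Gamma(1) = 1$. For the half-integer case the base case is $\psi(\tfrac{1}{2}) = -\gamma - 2\log 2$, which is not stated directly in the excerpt but is obtained in one line from the value $\psi(\tfrac{3}{2}) = -\gamma + 2 - 2\log 2$ derived in Section 2.3: apply the difference equation $\psi(\tfrac{3}{2}) = \psi(\tfrac{1}{2}) + \frac{1}{1/2} = \psi(\tfrac{1}{2}) + 2$ and solve. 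These two base values agree with the claimed formulas at $n=1$ and $n=0$ respectively under the conventions $H_0 = H_{-1} = 0$.

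For the inductive step in the integer case, assume $\psi(n) = -\gamma + H_{n-1}$. Then
\[
\psi(n+1) = \psi(n) + \frac{1}{n} = -\gamma + H_{n-1} + \frac{1}{n} = -\gamma + H_n,
\]
which is the desired formula at $n+1$. For the half-integer case, assume $\psi(n+\tfrac{1}{2}) = -\gamma - 2\log 2 + 2H_{2n-1} - H_{n-1}$. Applying \eqn{300b} gives
\[
\psi(n + \tfrac{3}{2}) = \psi(n+\tfrac{1}{2}) + \frac{1}{n+\tfrac{1}{2}} = \psi(n+\tfrac{1}{2}) + \frac{2}{2n+1}.
\]
The target formula at $n+1$ requires the increment
\[
\bigl(2H_{2n+1} - H_n\bigr) - \bigl(2H_{2n-1} - H_{n-1}\bigr) = 2\Bigl(\frac{1}{2n} + \frac{1}{2n+1}\Bigr) - \frac{1}{n} = \frac{2}{2n+1},
\]
which matches, closing the induction.

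There is essentially no obstacle here: the whole argument is driven by the functional equation for $\psi$ and two explicit base values, and the only mild care required is to check that the formulas reduce correctly at the boundary indices $n=1$ (integer case) and $n=0$ (half-integer case) under the stated conventions $H_0 = 0$ and $H_{-1} = 0$. Both checks are immediate, so the proof reduces to stating the base cases and performing the two telescoping inductions above.
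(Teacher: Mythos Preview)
Your proof is correct and follows essentially the same approach as the paper: both use the difference equation \eqn{300b} together with the base values $\psi(1)=-\gamma$ and $\psi(\tfrac12)=-\gamma-2\log 2$. The only minor difference is that the paper derives $\psi(\tfrac12)$ by logarithmically differentiating the duplication formula $\Gamma(2z)=\frac{1}{\sqrt{\pi}}2^{2z-1}\Gamma(z)\Gamma(z+\tfrac12)$ at $z=\tfrac12$, whereas you obtain it by stepping back from Euler's value $\psi(\tfrac32)=-\gamma+2-2\log 2$ quoted in Section~2.3.
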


\begin{proof} The first  formula  follows from $\psi(1)=-\gamma$ using
 \eqn{300b}. The second formula will follow from the identity
\begin{equation}\label{302c}
\psi(\frac{1}{2}) = -\gamma - 2 \log 2,
\end{equation}
because the  difference equation \eqn{300b} then gives
$$
\psi(n+ \frac{1}{2}) = \psi(\frac{1}{2}) + 2\left( 1+ \frac{1}{3} + \cdots + \frac{1}{2n-1}\right).
$$
To obtain \eqn{302c}  we start from the duplication formula for  the gamma function
\begin{equation}\label{307a}
\Gamma(2z) =
 \frac{1}{\sqrt{ \pi}}  2^{2z- 1}\Gamma(z)\Gamma(z+ \frac{1}{2}).
\end{equation}
 We logarithmically differentiate this formula at $z= \frac{1}{2},$
  and solve for $\psi(\frac{1}{2}).$ 
  \end{proof}

The next two results present  expansions of the digamma function 
that connect it with integer zeta values. 
The first one  concerns its Taylor series expansion around the point $z=1$.

\bigskip
\begin{theorem}~\label{th31} {\em (Euler 1765)}
The digamma function has  Taylor series expansion around $z=1$ given by
\begin{equation}\label{311a}
\psi(z+1) = -\gamma + \sum_{k=1}^{\infty} (-1)^{k+1} \zeta(k+1) \,z^k
\end{equation}
This expansion converges absolutely  for $|z| < 1$. 
\end{theorem}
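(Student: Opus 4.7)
The plan is to derive \eqref{311a} by differentiating Euler's expansion \eqref{271} for $\log\Gamma(x+1)$, or equivalently by starting from the Weierstrass product for $1/\Gamma(z+1)$, taking logs, and expanding. I will sketch the latter self-contained route, which gives both \eqref{271} and \eqref{311a} simultaneously.

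First I would start from the Hadamard product displayed just after \eqref{201}, namely
\[
\frac{1}{\Gamma(z+1)} \;=\; e^{\gamma z}\prod_{n=1}^{\infty}\Bigl(1+\tfrac{z}{n}\Bigr)e^{-z/n},
\]
which converges absolutely and uniformly on compact subsets of $\CC\smallsetminus\ZZ_{\le -1}$. Taking logarithms on the disk $|z|<1$ (where every factor is near $1$ and a principal branch is unambiguous) gives
\[
-\log\Gamma(z+1) \;=\; \gamma z \;+\; \sum_{n=1}^{\infty}\Bigl[\log\bigl(1+\tfrac{z}{n}\bigr)-\tfrac{z}{n}\Bigr].
\]
For each fixed $n\ge 1$ and $|z|<n$, the Mercator expansion gives
\[
\log\bigl(1+\tfrac{z}{n}\bigr)-\tfrac{z}{n} \;=\; \sum_{k=2}^{\infty}\frac{(-1)^{k+1}}{k}\frac{z^{k}}{n^{k}}.
\]

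Next I would interchange the order of summation. The double series $\sum_{n\ge 1}\sum_{k\ge 2}|z|^{k}/(k n^{k})$ converges for $|z|<1$, since summing in $n$ first yields $\sum_{k\ge 2}\zeta(k)|z|^{k}/k$, which is bounded by $\zeta(2)\sum_{k\ge 2}|z|^{k}/k<\infty$. Fubini therefore permits the exchange and yields
\[
-\log\Gamma(z+1) \;=\; \gamma z \;+\; \sum_{k=2}^{\infty}\frac{(-1)^{k+1}}{k}\zeta(k)\,z^{k},
\]
which is Euler's formula \eqref{271} (with the sign and sign conventions reconciled).

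Finally, the Taylor series above converges absolutely on $|z|<1$ and, since $\zeta(k)\to 1$, converges uniformly on every closed disk $|z|\le r<1$. Hence it can be differentiated term by term, giving
\[
-\psi(z+1) \;=\; \gamma \;+\; \sum_{k=2}^{\infty}(-1)^{k+1}\zeta(k)\,z^{k-1}
\;=\; \gamma \;-\; \sum_{j=1}^{\infty}(-1)^{j+1}\zeta(j+1)\,z^{j},
\]
after reindexing $j=k-1$. Rearranging yields \eqref{311a}. Absolute convergence for $|z|<1$ is automatic from the bound $|(-1)^{j+1}\zeta(j+1)z^{j}|\le\zeta(2)|z|^{j}$. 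The only step requiring real care is the double-series interchange, and that is a routine Fubini application given the absolute convergence established above; the rest is bookkeeping.
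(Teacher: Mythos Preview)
Your proof is correct and follows essentially the same route as the paper: obtain Euler's expansion \eqref{271} for $\log\Gamma(z+1)$ and differentiate term by term. The only difference is that the paper simply cites \eqref{271} as Euler's result, whereas you supply a self-contained derivation of it from the Weierstrass product before differentiating.
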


\paragraph{\bf Remark.}
The  pattern of Taylor
coefficients in \eqn{311a} has the $k=0$ term formally assigning to $``\zeta(1)"$
the value $\gamma$.\medskip

\begin{proof}
 Euler \cite[E368, Sect. 9]{E368} obtained a  Taylor series expansion  around $z=1$
for $\log \Gamma(z)$, given as \eqn{271} (with $x=z-1$).   Differentiating it term by term 
with respect to $x$ yields \eqn{311a}.
\end{proof}

The second relation to zeta values concerns the asymptotic expansion of the digamma
function around $z= +\infty$, valid in the right half-plane. 
In this formula  the non-positive integer values of the zeta function appear.

\bigskip
\begin{theorem}~\label{th32}  
 The digamma function $\psi(z)$ has the asymptotic expansion
\begin{equation}\label{321}
\psi(z+1) \sim  \log z   +
\sum_{k=1}^{\infty} (-1)^{k} \zeta(1-k)\left( \frac{1}{z}\right)^k, 
\end{equation}
which is valid  on the sector $-\pi +\epsilon \le \arg z \le \pi - \epsilon$,
for  any given $\epsilon>0$.
This asymptotic expansion does not converge for any value of $z$.
\end{theorem}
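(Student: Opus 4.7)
The plan is to derive the asymptotic expansion by differentiating Euler's Stirling series for $\log\Gamma(z+1)$ term by term and then rewriting the resulting Bernoulli coefficients as values of $\zeta$ at non-positive integers. Divergence will follow from the factorial growth of the Bernoulli numbers, which is itself a consequence of Euler's exact formula \eqn{exact} for $\zeta(2n)$.

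First, I would take Euler's Stirling series \eqn{stirling}, which in compact form reads
$$
\log\Gamma(z+1) \;\sim\; \tfrac{1}{2}\log(2\pi) + (z+\tfrac{1}{2})\log z - z + \sum_{n=1}^{\infty} \frac{B_{2n}}{2n(2n-1)}\,\frac{1}{z^{2n-1}},
$$
valid as $|z|\to\infty$ in any closed subsector $|\arg z|\le\pi-\epsilon$. Using $\psi(z+1)=\frac{d}{dz}\log\Gamma(z+1)$ and differentiating the series term by term, I obtain
$$
\psi(z+1) \;\sim\; \log z + \frac{1}{2z} - \sum_{n=1}^{\infty} \frac{B_{2n}}{2n}\,\frac{1}{z^{2n}}
$$
on any slightly smaller closed subsector. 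Termwise differentiation is justified because $\log\Gamma(z+1)$ is holomorphic on $\CC\smallsetminus(-\infty,-1]$: on a disk of radius proportional to $|z|$ sitting inside a slightly enlarged sector, Cauchy's integral formula converts the $O(z^{-N})$ bound on the remainder of the Stirling series into an $O(z^{-N-1})$ bound on its derivative, uniformly.

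Next, I would identify the coefficients with values $\zeta(1-k)$. Using the classical evaluations
$$
\zeta(0)=-\tfrac{1}{2}, \qquad \zeta(-2n)=0\ (n\ge 1), \qquad \zeta(1-2n)=-\tfrac{B_{2n}}{2n}\ (n\ge 1),
$$
which follow from the functional equation for $\zeta$ conjectured by Euler in \cite{E352} together with his formula \eqn{exact}, the $\frac{1}{2z}$ term becomes $(-1)^1\zeta(0)/z$, the even-power terms $-B_{2n}/(2n z^{2n})$ become $(-1)^{2n}\zeta(1-2n)/z^{2n}$, and the absent odd-power terms for $k\ge 3$ correspond to $\zeta(-2m)=0$. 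Consolidating yields the stated expansion \eqn{321}.

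Finally, for the divergence claim, I would invoke \eqn{exact} to write $|B_{2n}|=2\,\zeta(2n)\,(2n)!/(2\pi)^{2n}$; since $\zeta(2n)\to 1$, the $k=2n$ term of \eqn{321} has magnitude
$$
\frac{|B_{2n}|}{2n\,|z|^{2n}} \;\sim\; \frac{(2n-1)!}{(2\pi|z|)^{2n}},
$$
which tends to $\infty$ for every fixed $z$, so the series diverges for all $z$. The main obstacle I expect is the rigorous justification of the termwise differentiation on the claimed sector: the Cauchy-estimate argument is standard but must be carried out so that the auxiliary enlarged sector still lies strictly inside $|\arg z|<\pi$, and with the disk radius chosen proportional to $|z|$ so that the $O(z^{-N})$ bound survives division by the radius. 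A secondary subtlety is that the inputs $\zeta(1-2n)=-B_{2n}/(2n)$ and $\zeta(-2n)=0$ are not explicitly proved in the excerpt and would require a brief appeal to the functional equation of $\zeta$ combined with \eqn{exact}.
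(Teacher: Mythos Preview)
Your proposal is correct and follows essentially the same route as the paper: differentiate the Stirling asymptotic for $\log\Gamma$ to obtain the Bernoulli form \eqn{323cc}, rewrite the coefficients as $\zeta(1-k)$, and deduce divergence from the growth $|B_{2n}|\sim 2(2n)!/(2\pi)^{2n}$. The only difference is in how the remainder is controlled on the sector: the paper invokes Stieltjes's integral representation $J(a)=\int_0^\infty P(x)/(x+a)\,dx$ and its iterated-by-parts remainders $J_k(a)$, then bounds $J_k'(a)$ directly, whereas you take the sectorial Stirling expansion for $\log\Gamma$ as known and transfer the $O(z^{-N})$ bound to the derivative via a Cauchy estimate on a disk of radius comparable to $|z|$; both arguments work, with Stieltjes's being self-contained and yielding explicit constants, and yours being quicker but presupposing as input precisely the Stieltjes remainder bound for $\log\Gamma$.
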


The statement that  \eqn{321} is an 
 asymptotic expansion means: For  each finite $n \ge 1$,   the
estimate
\begin{equation*}
\psi(z+1) =  \log z   +
\sum_{k=1}^{n}  (-1)^{k} \zeta(1-k)\left( \frac{1}{z}\right)^k  + O ( z^{-n-1}) 
\end{equation*}
holds on the sector 
$-\pi + \epsilon \le  \arg{z} \le  \pi - \epsilon$, with $\epsilon >0$ 
where the implied constant in
the $O$-symbol depends on both $n$ and $\epsilon$.

Theorem ~\ref{th32} does not seem to have been previously stated in this form, with 
coefficients  containing $\zeta(1-k)$. However it is quite an old result, 
stated in the alternate form
 \begin{equation}\label{323cc}
\psi(z+1) \sim \log z  - \frac{B_1}{z} - \sum_{k=1}^{\infty}  \frac{B_{2k}}{2k}\frac{1}{z^{2k}}.
\end{equation}
The conversion from \eqn{323cc} to \eqn{321} is simply a matter of substituting
the formulas for the zeta values at negative integers: $\zeta(0) = -\frac{1}{2}=B_1$
and $\zeta(1-k) = (-1)^{k+1} \frac{B_{k}}{k}$, noting $B_{2k+1}=0$ for $k \ge 1$, see
 \cite[Sec. 1.18]{EMOT53}, \cite[(6.3.18)]{AS}.
As a  {\em formal} expansion  \eqn{323cc} follows  either from Stirling's expansion of
sums of logarithms obtained in  1730 (\cite[Prop. 28]{Stirling1730},
 c.f. Tweddle \cite[Chap. 1]{Twe88}),
   or   from Euler's expansion of $\log \Gamma(z)$ given in \eqn{stirling}, by
differentiation  term-by-term. Its rigorous interpretation  as an   asymptotic expansion 
having a remainder term estimate valid on the large region given above   is due to Stieltjes.
In his 1886 French doctoral thesis  
Stieltjes \cite{Sti1886}  treated a general notion of  an asymptotic expansion with remainder term, including   the case of  an asymptotic expansion for  $\log \Gamma(a)$ (\cite[Sec. 14--19]{Sti1886}). In 1889 
Stieltjes \cite{Sti1889}   presented a detailed  derivation of an asymptotic series
for $\log \Gamma(a)$, whose methods can be used
to  derive the result \eqn{323cc} above.
\medskip

\begin{proof}
 The 1889  formula of Stieltjes \cite{Sti1889}  for $\log \Gamma(a)$ embodys Stirling's
expansion with an error estimate.  Stieltjes first introduced the function $J(a)$ defined
by
$$
\log \Gamma(a) = (a- \frac{1}{2}) \log a - a + \frac{1}{2} \log (2 \pi) + J(a).
$$
His key observation is  that $J(a)$ has an integral representation
\begin{equation}\label{3002a}
J(a) = \int_{0}^{\infty} \frac{P(x)}{x+a} dx,
\end{equation}
in which $P(x)$ is the periodic function 
\[
P(x) = \frac{1}{2} -x + \lfloor x\rfloor = -B_1(x-\lfloor x\rfloor), 
\] 
where $B_1(x)$
is the first Bernoulli polynomial. He
observes that the integral formula \eqref{3002a} has the merit that it 
  converges on
the complex $a$-plane aside from the negative real axis, unlike two earlier 
integral formulas for 
 $J(a)$ of  Binet,
\[
J(a) = \int_{0}^{\infty} \Big( \frac{1}{1-e^{-x}} - \frac{1}{x} - \frac{1}{2} \Big) \frac{e^{-ax}}{x} dx,
\]
and
\[
J(a) = \frac{1}{\pi} \int_{0}^{\infty} \frac{a}{a^2 + x^2} \log \Big(\frac{1}{1-e^{-2 \pi x}}\Big) dx,
\]
which are valid on the  half-plane  $Re(a)>0$. He now repeatedly integrates \eqn{3002a} by parts to
obtain  a formula \footnote{In Stieltjes's formula $B_j$ denotes
 the Bernoulli number $|B_{2j}|$ in  \eqn{221}. We have altered his notation to match \eqn{221}.}
  for $\log \Gamma(a)$,
valid on the entire complex plane aside from the negative real axis, 
\begin{equation}\label{3001}
\log \Gamma(a) = (a -\frac{1}{2} )\log a - a + \frac{1}{2} \log (2 \pi)
+ \frac{|B_2|}{1 \cdot 2 a} 
-\frac{|B_4|}{3 \cdot 4 a^3} 
+ \cdots +  \frac{ (-1)^{k-1}  |B_{2k}|}{(2k-1) 2k \, a^{2k-1}} + J_k(a),
\end{equation}
in which  $J_k(a) = \int_{0}^{\infty} \frac{P_k(x)}{(x+a)^{2k+1}} dx$ where $P_k(x)$  is
periodic and
given by its Fourier series
$$
P_k(x) = (-1)^k (2k)! \sum_{n=1}^{\infty} \frac{1}{2^{2k} (\pi n)^{2k+1}} \sin (2 \pi n x),
$$
and is  related to the $(2k+1)$-st Bernoulli polynomial.
He obtains an error estimate for $J_k(a)$  (\cite[eqn. (34)]{Sti1889}), 
valid for $-\pi + \epsilon < \theta< \pi - \epsilon$, that
$$
|J_k(Re^{ i\theta})| < \frac{|B_{2k+2}|}{(2k+2)(2k+1)} (\sec \frac{1}{2} \theta)^{2k+2} R^{-2k-1}.
$$
We obtain a formula for $\psi(a)$ by
differentiating  \eqn{3001}, and using $B_{2k} = (-1)^{k-1} |B_{2k}|$ gives
$$
\psi (a) = \log a - \frac{1}{2a} -\frac{B_2}{2\, a^2} - \frac{B_4}{4 \, a^4} - \cdots - \frac{B_{2k}}{2k \, a^{2k}}+ J_k^{'}(a).
$$
This formula gives the required asymptotic expansion for $\psi(a)$, as soon as one obtains a suitable error estimate for
$J_k^{'}(a)$, which may be done on the same lines as for $J_k(a)$.
The asymptotic expansion \eqn{323cc} follows after using 
$\psi(a+1) = \psi(a) + \frac{1}{a}$, which has the effect
to reverse the sign of the  coefficient
of $\frac{1}{a}$. 

The formula \eqn{321} follows by direct
 substitution of the zeta  values $\zeta(0)= -\frac{1}{2}=B_1$ and $\zeta(1-k)= (-1)^{k+1}\frac{B_{k}}{k}$ for $k \ge 2$,
noting $B_{2k+1} =0$ for $k \ge 1$, see \cite[Sec. 1.18]{EMOT53}, \cite[(6.3.18)]{AS}.

Finally, the known growth rate for  even Bernoulli numbers,
$$
|B_{2n}| \sim \frac{2}{(2\pi)^{2n}} (2n)! \sim 4 \sqrt{\pi n} (\pi e)^{-2n} n^{2n}.
$$
implies that the asymptotic series \eqn{321} diverges for all finite values of $z$.
\end{proof}

A consequence of the  asymptotic expansion \eqn{321} for $\psi(x)$ is  an asymptotic expansion
for the harmonic numbers, given by 
\begin{equation}\label{322b}
H_n  \sim \log n  + \gamma+ \sum_{k=1}^{\infty} (-1)^k \zeta(1-k) \frac{1}{n^k}.
\end{equation}
This is obtained on taking $z=n$ in  \eqn{311a} and substituting $\psi(n+1) = H_n -\gamma$.
Substituting the zeta values  allows this expansion to be expressed in the more well-known form
\begin{eqnarray*}
H_n  & \sim &  
\log n +\gamma - \frac{B_1}{n} - \sum_{k=2}^{\infty}  \frac{B_{2k}}{2k}\frac{1}{n^{2k}}, \nonumber\\
& \sim & \log n + \gamma + \frac{1}{2n} - \frac{1}{12n^2} + \frac{1}{120 n^4} - \frac{1}{252n^6} + \cdots
\end{eqnarray*}
Note that although  
$H_n$ has jumps of size  $\frac{1}{n}$ at each integer value,  nevertheless  the asymptotic
expansion  is valid to all orders $(\frac{1}{n})^k$ for $k \ge 1$, although the higher order terms are
of magnitude much smaller than the jumps. 

Theorem \ref{th31} and Theorem \ref{th32} (2) have
   the feature that  these two expansions 
  exhibit between them the full set of integer values 
 $\zeta(k)$ of
the Riemann zeta function,
with Euler's constant appearing as a substitute for $``\zeta(1)"$.
This is a remarkable fact.

%
%
%
%
\subsection{Euler's constant and the   zeta  function}\label{sec32}
\setcounter{equation}{0}

The function $\zeta(s)$ was studied at  length by Euler, but is named after 
 Riemann \cite{Riemann1859}, commemorating his epoch-making 1859 paper.
 Riemann showed that $\zeta(s)$   extends to a meromorphic function in the plane with its
 only singularity being a simple pole at $s=1$.
Euler's constant appears in 
the Laurent series expansion of $\zeta(s)$ around the point $s=1$,
as follows.\smallskip


 \begin{theorem}~\label{th41} {\em (Stieltjes 1885)}
 The  Laurent expansion of $\zeta(s)$ around
 $s=1$ has the form
 \begin{equation}\label{401}
 \zeta(s) = \frac{1}{s-1} + \gamma_0 + \sum_{n=1}^{\infty}  \frac{(-1)^n}{n!} 
 \gamma_n (s-1)^n
 \end{equation}
 in which $\gamma_0=\gamma$ is Euler's constant, and the coefficients
  $\gamma_n$ for $n \ge 0$ are  defined by
 \begin{equation}\label{402}
 \gamma_n := 
  \lim_{m \to \infty} \left( \sum_{k=1}^m \frac{(\log k)^n}{k} - \frac{(\log m)^{n+1}}{n+1} \right)
 \end{equation}
 \end{theorem}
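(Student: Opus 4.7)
I would prove this by constructing an explicit sequence of entire functions $F_m(s)$ that converge to $\zeta(s)-\frac{1}{s-1}$ uniformly on compact neighborhoods of $s=1$, and whose Taylor coefficients at $s=1$ are readily identified with the expressions in \eqn{402}. Specifically, for each $m \ge 1$ I would set
$$F_m(s) := \sum_{k=1}^m \frac{1}{k^s} - \int_1^m \frac{dx}{x^s} = \sum_{k=1}^m \frac{1}{k^s} + \frac{m^{1-s}-1}{s-1}.$$
For $\mathrm{Re}(s) > 1$ the second form, after cancellation of the $\frac{1}{s-1}$, shows $F_m(s) + \frac{1}{s-1} - \frac{m^{1-s}}{s-1} = \sum_{k=1}^m k^{-s}$, which tends to $\zeta(s)$ since $m^{1-s}\to 0$; hence $F_m(s)\to \zeta(s)-\frac{1}{s-1}$ for $\mathrm{Re}(s)>1$.

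The first key step is to extend this convergence to a neighborhood of $s=1$. I would estimate the telescoped differences
$$F_{m+1}(s)-F_m(s) = \frac{1}{(m+1)^s} - \int_m^{m+1}\frac{dx}{x^s},$$
and apply a mean-value estimate for $x \mapsto x^{-s}$ on $[m,m+1]$ to bound this by $O(|s|\, m^{-\mathrm{Re}(s)-1})$. This is summable uniformly on any compact subset of $\mathrm{Re}(s)>0$, so $F_m(s)$ converges uniformly on compacta there. Each $F_m$ is entire (the singularity at $s=1$ of the closed form cancels), and by analytic continuation the limit equals $\zeta(s)-\frac{1}{s-1}$ on all of $\mathrm{Re}(s)>0$.

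The second step is to compute the Taylor coefficients of $F_m$ at $s=1$. Writing $k^{-s}=k^{-1}e^{-(s-1)\log k}$ and $x^{-s}=x^{-1}e^{-(s-1)\log x}$ and expanding each exponential, I get
$$\frac{1}{k^s}=\sum_{n=0}^{\infty}\frac{(-1)^n(\log k)^n}{n!\,k}(s-1)^n,\qquad \frac{1}{x^s}=\sum_{n=0}^{\infty}\frac{(-1)^n(\log x)^n}{n!\,x}(s-1)^n,$$
with uniform convergence of both expansions on any compact disc around $s=1$, in $k \in \{1,\dots,m\}$ and $x \in [1,m]$ respectively. Term-by-term integration uses $\int_1^m(\log x)^n x^{-1}dx = (\log m)^{n+1}/(n+1)$, yielding
$$F_m(s)=\sum_{n=0}^{\infty}\frac{(-1)^n}{n!}\Bigl(\sum_{k=1}^m\frac{(\log k)^n}{k}-\frac{(\log m)^{n+1}}{n+1}\Bigr)(s-1)^n.$$
Thus the $n$-th Taylor coefficient of $F_m$ at $s=1$ equals $\frac{(-1)^n}{n!}$ times the quantity in brackets in \eqn{402}.

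The main obstacle is to pass from convergence of $F_m(s)$ to convergence of its individual Taylor coefficients. I would handle this with the standard Cauchy-integral argument: if $F_m \to F:=\zeta-\frac{1}{s-1}$ uniformly on the circle $|s-1|=r$ for some small $r>0$, then for every $n$,
$$\frac{F_m^{(n)}(1)}{n!}=\frac{1}{2\pi i}\oint_{|s-1|=r}\frac{F_m(s)}{(s-1)^{n+1}}ds \longrightarrow \frac{1}{2\pi i}\oint_{|s-1|=r}\frac{F(s)}{(s-1)^{n+1}}ds = \frac{F^{(n)}(1)}{n!}.$$
This shows that the limit $\gamma_n$ in \eqn{402} exists and equals $(-1)^n n!$ times the $n$-th Taylor coefficient of $\zeta(s)-\frac{1}{s-1}$ at $s=1$, giving the expansion \eqn{401}. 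Finally, setting $n=0$ gives $\gamma_0=\lim_{m\to\infty}(H_m-\log m)=\gamma$ directly from the definition of Euler's constant in Theorem \ref{th21}, completing the identification.
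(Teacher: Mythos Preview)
Your proof is correct and takes a genuinely different route from the paper's. The paper follows Bohman and Fr\"{o}berg: it rewrites $(s-1)\zeta(s)-1$ as the telescoping series $\sum_{k\ge 1}\{(k+1)^{1-s}-k^{1-s}+(s-1)k^{-s}\}$, expands each term as a power series in $(s-1)$, and reads off the coefficients as the convergent series \eqn{402a}, whose $m$-th partial sum is exactly the $m$-th term of \eqn{402}. Your argument instead builds finite approximants $F_m(s)=\sum_{k\le m}k^{-s}-\int_1^m x^{-s}\,dx$, proves they converge uniformly on compacta in $\mathrm{Re}(s)>0$ to $\zeta(s)-\frac{1}{s-1}$, computes their Taylor coefficients at $s=1$ explicitly, and passes to the limit via the Cauchy integral formula. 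The paper's approach yields the bonus of an absolutely convergent series representation \eqn{402a} for each $\gamma_n$; your approach is more transparently rigorous about why the expansion is valid in a neighborhood of $s=1$ (a point the paper's argument handles only implicitly) and fits the classical Euler--Maclaurin pattern of comparing sums with integrals.
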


\begin{proof}
We follow a proof  of  Bohman and Fr\"{o}berg \cite{BF88}.
It starts from the identity\\
 $(s-1) \zeta(s) = \sum_{k=1}^{\infty} (s-1)k^{-s},$ 
 which is valid for $Re(s)>1$.  For real $s>1$, one has 
 the telescoping sum 
$$
\sum_{k=1}^{\infty} (k^{1-s} - (k+1)^{1-s})=1.
$$
Using it, one  obtains 
\[
(s-1)\zeta(s) = 1 + \sum_{k=1}^{\infty} \{ (k+1)^{1-s} -k^{1-s} + (s-1)k^{-s} \}.
\]
Noting that at  $s=1$ the telescoping sum above  is $0$, one has
\begin{eqnarray*}
(s-1) \zeta(s) &=& 1 + \sum_{k=1}^{\infty} \{ \exp(- (s-1) \log (k+1)) - \exp( -(s-1)\log k)
 \\&& 
\quad \quad\quad\quad\quad
 + (s-1) \frac{1}{k} \exp(- (s-1) \log k)\} \\
& = & 1 + 
\sum_{k=1}^{\infty} \Big\{ \sum_{n=0}^{\infty} 
\frac{(-1)^n (s-1)^n}{n!} [ (\log(k+1))^n - (\log k)^n] \\
&& \quad\quad\quad \quad\quad
+ \frac{s-1}{k} \sum_{n=0}^{\infty} \frac{(-1)^n (s-1)^n(\log k)^n}{n!} \Big\}.
\end{eqnarray*}
Dividing by $(s-1)$ now yields
\[ 
\zeta(s) = \frac{1}{s-1} +  \sum_{n=0}^{\infty} \frac{(-1)^n }{n!}\gamma_n (s-1)^n,
\]
with $\gamma_0= \gamma$, and with
\begin{equation}\label{402a}
\gamma_n = \sum_{k=1}^{\infty} \Big\{ \frac{(\log k)^n}{k} - \frac{ (\log (k+1))^{n+1} - (\log k)^{n+1}}{n+1} \Big\}.
\end{equation}
This last formula  is  equivalent to  \eqn{402}, since the first $m$ terms in the sum
add up to the $m$-th term in \eqn{402}.
\end{proof}

   Theorem~\ref{th41} gives  the  coefficients 
 $\gamma_n$ of the zeta function   in a form resembling the   original definition of 
 Euler's constant.     
These constants arose in  a  correspondence that Stieltjes carried
on with his mentor Hermite (cf. \cite{HS05} )
 from 1882  to  1894.
   In Letter 71 of that correspondence, written in June 1885, Stieltjes stated that  he proposed  to calculate
 the first few terms in the Laurent expansion\footnote{In Letter 71 Stieltjes uses the notation $C_0, C_1, ...$,
 but in Letter 75 he uses the same notation with a different meaning. We substitute $A_0, A_1, ...$ for
 the Letter 71 coefficients
 following \cite{BF88}.}  of $\zeta(s)$
 around $s=1$, 
 $$ 
 \zeta(z+1) = \frac{1}{z} + A_0 + A_1 \,z +...
 $$
 noting that $A_0= 0.57721 5665...$ is known and that
he finds $A_1= -0.07281 5\underline{5}20...$
 In Letters 73 and  74,  Hermite  formulated \eqn{402}, 
  which he called ``very interesting," and inquired about rigorously establishing it.\footnote{The result \eqn{402}
   is misstated in Erd\"{e}lyi et al \cite[(1.12.17)]{EMOT53},
making the right side equal  $\frac{(-1)^n}{n!} \gamma_n$, 
a mistake tracing back to a paper of G. H. Hardy \cite{Ha12}.} 
 In  Letter 75 Stieltjes \cite[pp. 151--155]{HS05}  gave a complete derivation of  
 \eqn{402}, writing
 \[
 \zeta(s+1) = \frac{1}{s} + C_0 - C_1 \,s + \frac{C_2}{1\cdot 2} \,s^2 - \frac{C_3}{1\cdot 2 \cdot 3}\, x^3+ \cdots
 \]
 The Laurent series coefficients $A_n$ are now named the  Stieltjes constants 
 by some authors who follow Letter 71 (\cite{BF88}, \cite{BL99}),  while other
 authors  follow Letter 75 and call the scaled values $\gamma_n= C_n$ the {\em Stieltjes constants} (\cite{Ke92}, \cite{Cof06}).

%
%
%

 \begin{table}\centering
\renewcommand{\arraystretch}{.85}
\begin{tabular}{|r|r|
}
\hline
\multicolumn{1}{|c|}{$n$} &
\multicolumn{1}{c|}{$\gamma_n$}\\ \hline
0 &     $+$ 0.57721 ~56649 ~ 01532\\ 
 1 &   $-$ 0.07281~ 58454 ~ 83676\\ 
 2 &    $-$ 0.00969~ 03631 ~28723     \\
 3 &     $+$ 0.00205 ~38344~ 20303 \\ 
 4 &      $+ $ 0.00232 ~53700 ~65467\\ 
 5 &     $+ $ 0.00079 ~33238~ 17301 \\ 
 6 &      $-$ 0.00023 ~87693~ 45430 \\ 
 7 &      $-$ 0.00052  ~72895 ~67057 \\ 
 8 &       $-$ 0.00035 ~21233 ~53803 \\ 
 9 &       $-$ 0.00003 ~43947~ 74418\\ 
10 &     $+ $ 0.00020~53328~14909  \\ \hline
\end{tabular}
\bigskip

\noindent \caption{Values for Stieltjes constants $\gamma_{n}$. }
\label{tab31}
\end{table}

 Table \ref{tab31} presents values of $\gamma_n$
 for small $n$ as computed by Keiper \cite{Ke92}.
  A good deal is  known about the size of these constants.
 There are infinitely many $\gamma_n$ of each sign (Briggs \cite{Br55}). 
 Because $\zeta(s) - \frac{1}{s-1}$ is an entire function, the 
 Laurent coefficients   $A_n$
 must rapidly approach $0$ in absolute value as $n \to \infty$. However 
despite the sizes of  the initial values in Table \ref{tab31},  the scaled values $\gamma_n$ 
 are known to be unbounded (and to sometimes be  exponentially large) as a function of $n$, cf. Cohen  \cite[Sec. 10.3.5]{Coh07b}.
 Keiper estimated that $\gamma_{150}\approx 8.02 \times 10^{35}.$
 More detailed asymptotic bounds  on the $\gamma_n$ are given in Coffey \cite{Cof06} and 
 Knessl and Coffey \cite{KC11}, \cite{KC11b}; see also Coffey \cite{Cof10}.
 At present nothing  is known about the  irrationality or transcendence of 
 any of  the Stieltjes constants.

Various generalizations of the Riemann zeta function  play
a role in theoretical physics, especially in quantum field theory, 
in connection with  a method of 
``zeta function regularization" for assigning  finite values in certain perturbation
calculations. This method was  suggested by S. Hawking \cite{Haw77} in 1977, and
it has now  been widely extended to other questions in physics,
see the book of Elizalde \cite{Elizalde95}  (See also \cite{EOR94},  \cite{EVZ98}.)
Here one  forms a generalized  ``zeta function" from 
spectral data and evaluates it an an integer point, often $s=0$. If the function
is meromorphic at this point, then the constant term in the Laurent expansion at this
value is taken as the ``regularized" value.  This procedure can be viewed as a quite special kind of
``dimensional regularization," in which
perturbation calculations are made viewing  the dimension of space-time as a
complex variable $s$, and at the end of the calculation one specializes the
complex variable to the integer value of the
dimension, often  $s=4$, see Leibbrandt \cite{Lei75}.
Under this convention 
Euler's constant   $\gamma$
 functions as a ``regularized" value $``\zeta(1)"$ of the zeta function at $s=1$,
 because by Theorem \ref{th41} 
 it is the constant term in the Laurent expansion of the
 Riemann zeta function at $s=1$.
  This interpretation as ``$\zeta(1)"$ also matches 
 the constant term in  the Taylor  expansion of the digamma function $\psi(z)$
 around $z=1$  in Theorem~\ref{th31}.
 
 Euler's constant occurs in many deeper ways in connection with the behavior
 of the Riemann zeta function.
  The most tantalizing open problem about the
 Riemann zeta function is the {\em Riemann hypothesis},
 which asserts that the non-real complex  zeros of the Riemann zeta function
 all fall on the line $\text{Re}(s) = \frac{1}{2}$. 
  Sections  \ref{sec36} and  \ref{sec37}  present  formulations of the Riemann hypothesis
 in terms of the relation of  limiting asymptotic behaviors of arithmetic functions to  $e^{\gamma}$. 
   Section
   \ref{sec37a}  presents results relating  the extreme value distribution of the Riemann zeta function
 on the line $\text{Re}(s)=1$ to the constant $e^{\gamma}$.
 
 In 1885 Stieltjes announced a proof of the Riemann hypothesis
 (\cite{Sti1885}, English translation in \cite[p. 561]{Sti93}).
 In Letter 71  to Hermite discussed above,  Stieltjes enclosed
his announcement of   a proof of the Riemann hypothesis, requesting
 that it be  communicated  to   Comptes Rendus, if Hermite approved,
 and Hermite did so.  
 In Letter 79, written later in 1885, Stieltjes sketched his approach,  saying that  his proof 
 was very complicated and that he hoped to simplify it.
 Stieltjes  considered the Mertens function 
 $$
 M(n) =\sum_{j=1}^n \mu(j),
 $$
  in which $\mu(j)$ is the M\"{o}bius function
 (defined in Section \ref{sec33}) 
 and asserted that he could prove  that $|M(n)|/\sqrt{n}$ is a bounded function,
 a result which would certainly imply the Riemann hypothesis.
 Stieltjes had further correspondence in 1887 with Mittag-Leffler, also  making 
 the assertion that he could prove  $\frac{M(n)}{\sqrt{n}}$ is bounded between two limits,
 see \cite[Tome II, Appendix]{HS05}, \cite[Sect. 2]{tR93}.
He additionally  made  numerical calculations\footnote{Such calculations
 were found among his posthumous possessions, according to  te Riele \cite[p. 69]{tR93}.}
  of the Mertens function $M(n)$ for ranges $1 \le n \le 1200$, $2000 \le n \le  2100$, 
 and  $6000\le n \le 7000$, verifying it was small over these ranges.
 However he  never produced a written proof of the Riemann hypothesis
 related to this announcement.  
  Modern work relating the Riemann zeta zeros and random matrix theory now
 indicates  that 
  $|M(n)|/\sqrt{n}$ should be  an unbounded function, although this is unproved, 
 see te Riele \cite[Sect. 4]{tR93} and 
  Edwards \cite[Sect. 12.1]{Ed74}.

%
%
%
%
\subsection{Euler's constant and prime numbers} \label{sec33}
\setcounter{equation}{0}

 Euler's constant appears in various statistics associated to prime 
 numbers. 
 In 1874 Franz Mertens \cite{Mertens1874} determined the 
 growth behavior of the sum of reciprocals of prime numbers,
 as follows.

\begin{theorem}\label{th50}
{\em (Mertens's Sum Theorem 1874)}
For $x \ge 2$,  
\begin{equation*}
\sum_{p \le x} \frac{1}{p} = \log\log x + B +R(x),
\end{equation*}
with $B$ being a constant
\begin{equation}\label{500ab}
B = \gamma + \sum_{m=2}^{\infty} \mu(m) \frac{\log \zeta(m)}{m}\approx 0.26149\, 72129 \cdots
\end{equation}
and with remainder term bounded by
\begin{equation*}
|R(x)| \le \frac{4}{\log (P +1)} + \frac{2}{P \log P},
\end{equation*}
where $P$ denotes the largest prime smaller than $x$.
\end{theorem}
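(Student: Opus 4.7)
The plan is to prove the asymptotic in three stages: first obtain the weighted version $\sum_{p \le x}(\log p)/p = \log x + O(1)$, then convert it to the stated sum by Abel summation, and finally identify the constant $B$ using Mertens's third theorem together with Möbius inversion on the Euler product of $\zeta(s)$.

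For stage one, I would start from the identity $\log \lfloor x\rfloor! = \sum_{n \le x} \Lambda(n) \lfloor x/n\rfloor$, where $\Lambda$ is the von Mangoldt function. Combining this with Stirling's approximation for $\log \lfloor x\rfloor!$ from Section 2.2 yields the Chebyshev-style estimate $\sum_{n \le x}\Lambda(n)/n = \log x + O(1)$. Peeling off the prime-power contributions with exponent $\ge 2$, whose total is absolutely convergent, gives $S(x) := \sum_{p \le x}(\log p)/p = \log x + C_{0} + O(1/\log x)$ for an explicit constant $C_{0}$.

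For stage two, I apply Abel summation with weights $(\log p)/p$ and smoothing function $1/\log t$:
\begin{equation*}
\sum_{p \le x}\frac{1}{p} = \frac{S(x)}{\log x} + \int_{2}^{x} \frac{S(t)}{t(\log t)^{2}}\,dt.
\end{equation*}
Substituting the expansion for $S(t)$ and evaluating $\int_{2}^{x} dt/(t\log t) = \log\log x - \log\log 2$ and $\int_{2}^{x} dt/(t(\log t)^{2}) = 1/\log 2 - 1/\log x$, everything collapses to $\log\log x + B + R(x)$ where $B$ is determined and where the error $R(x)$ is bounded by the tails of these integrals. Keeping explicit constants in each bound (rather than absorbing into $O$-symbols) yields the quantitative remainder $|R(x)| \le 4/\log(P+1) + 2/(P\log P)$ of the statement, where $P$ is the largest prime below $x$.

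For stage three, I would identify $B$ using Mertens's third theorem, $\lim_{x \to \infty}\bigl(\log\log x + \sum_{p \le x}\log(1-1/p)\bigr) = -\gamma$. Expanding $\log(1-1/p) = -1/p - \sum_{k \ge 2} 1/(kp^{k})$ and taking $x \to \infty$ (the double sum converging absolutely) gives $B = \gamma - \sum_{p}\sum_{k \ge 2} 1/(kp^{k})$. To recast this in the form claimed, I take logs of the Euler product to write $\log\zeta(s) = \sum_{k \ge 1} \frac{1}{k}\sum_{p} p^{-ks}$ for $\mathrm{Re}(s) > 1$, whose Möbius inversion gives $\sum_{p} p^{-n} = \sum_{m \ge 1} \frac{\mu(m)}{m}\log\zeta(mn)$. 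Substituting and swapping order, the inner sum over divisors $d$ of $N = km$ with $d < N$ evaluates to $-\mu(N)$ for $N \ge 2$, giving $\sum_{p}\sum_{k \ge 2} 1/(kp^{k}) = -\sum_{N \ge 2}\mu(N)\log\zeta(N)/N$, hence $B = \gamma + \sum_{m \ge 2}\mu(m)\log\zeta(m)/m$.

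The main obstacle is stage three: the Möbius rearrangement is routine bookkeeping, but pinning down that the constant in Mertens's third theorem is exactly Euler's $\gamma$ is genuinely delicate. My preferred route would be to read it off from the Laurent expansion $-\zeta'(s)/\zeta(s) = 1/(s-1) - \gamma + O(s-1)$ near $s=1$ (available from Theorem 3.2.1), which via a Tauberian argument produces $\sum_{n \le x} \Lambda(n)/n = \log x - \gamma + o(1)$ with $\gamma$ appearing explicitly; alternatively, one can invoke the asymptotic expansion of the digamma function from Theorem 3.1.3 to extract $\gamma$ directly from $\sum_{n \le x} 1/n - \log x$. A secondary technical challenge is tracking constants precisely enough in stages one and two to reach the explicit remainder bound of the statement rather than a mere $O(1/\log x)$.
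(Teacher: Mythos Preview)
The paper does not itself prove Theorem~\ref{th50}; it only states the result, records that Mertens's own argument deduced $B=\gamma-H$ with $H=-\sum_p\{\log(1-1/p)+1/p\}$, and points to Hardy--Wright and Tenenbaum--Mend\`es~France for modern derivations. Your three-stage outline (Chebyshev $\Rightarrow$ Abel $\Rightarrow$ identify the constant) is exactly the standard route those references take, and your computation $B=\gamma-H$ together with the M\"obius inversion converting $H$ into $-\sum_{m\ge 2}\mu(m)(\log\zeta(m))/m$ is correct and matches the paper's formula~\eqref{500ab}.

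Two points deserve tightening. First, in stage three you invoke Mertens's \emph{product} theorem to pin down $\gamma$, but in the paper's logical order that theorem (Theorem~\ref{th51}) is \emph{deduced from} the present one, so as written your argument is circular. You clearly sense this, and your proposed fix via the Laurent expansion $\zeta(s)=\tfrac{1}{s-1}+\gamma+O(s-1)$ is the right idea---but the Tauberian route you sketch to reach $\sum_{n\le x}\Lambda(n)/n=\log x-\gamma+o(1)$ is in fact much harder than Mertens's theorem itself (it is essentially of prime-number-theorem strength). The classical elementary identification of $\gamma$ instead goes through the Dirichlet convolution $\log n=\sum_{d\mid n}\Lambda(d)$, summing $\sum_{n\le x}1/n$ against it and using $H_N=\log N+\gamma+O(1/N)$ directly; this is how Hardy--Wright and Tenenbaum proceed. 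Second, the explicit error bound $|R(x)|\le 4/\log(P+1)+2/(P\log P)$ is Mertens's own, tied to the specific inequalities he tracked; your stages one and two as sketched will give $O(1/\log x)$ with \emph{some} constant, but reproducing precisely the $4$ and the $2$ requires following Mertens's original bookkeeping rather than the generic Chebyshev--Abel argument.
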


Here $\mu(n)$ is the {\em M\"{o}bius function}, which takes the values $\mu(1)=1$ and
$\mu(n)= (-1)^k$  if $n$ is a product of $k$ distinct prime factors, and $\mu(n)=0$ otherwise.
This  result asserts the existence of the constant
\begin{equation*}
B := \lim_{ x\to \infty} \left( \sum_{p \le x} \frac{1}{p} - \log\log x\right),
\end{equation*}
which is then given explicitly by \eqn{500ab}.
Mertens's  proof  deduced that $B= \gamma- H$ with
\begin{equation}\label{500d}
H := - \sum_{p}\{ \log(1- \frac{1}{p}) + \frac{1}{p} \} = 0.31571\, 84519 \dots .
\end{equation}
For modern derivations which determine the constant $B$   see Hardy and Wright \cite[Theorem 428]{HW08} 
or  Tenenbaum and Mendes France \cite[Sect. 1.9]{TM00}.

 Mertens \cite[Sect. 3]{Mertens1874} deduced from this result the  following product theorem,
 involving Euler's constant alone.
\begin{theorem}\label{th51}
{\em (Mertens's Product Theorem 1874)}
One has 
\begin{equation}\label{501a}
\lim_{x \to \infty} ~~(\log x) \prod_{p \le  x} \left(1- \frac{1}{p} \right) = e^{-\gamma}. 
\end{equation}
More precisely, for $x \ge 2$, 
\begin{equation*}
\prod_{p \le  x} \left(1- \frac{1}{p} \right) = \frac{e^{-\gamma+ S(x)}}{\log x}
\end{equation*}
in which
\begin{equation*}
|S(x)| \le \frac{4}{\log (P+1)} + \frac{2}{P \log P} + \frac{1}{2P},
\end{equation*}
where $P$ denotes the largest prime smaller than $x$.  Thus $S(x) \to 0$ as $x \to \infty$.
\end{theorem}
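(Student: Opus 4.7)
The plan is to take logarithms and reduce the statement to Mertens's Sum Theorem (Theorem \ref{th50}) together with the explicit formula $B = \gamma - H$ with $H$ as in \eqn{500d}. Set $\Pi(x) := \prod_{p\le x}(1-1/p)$ and $L(x) := \log \Pi(x) = \sum_{p \le x}\log(1-1/p)$. Using the Mercator expansion $\log(1-1/p) = -\tfrac{1}{p} - \sum_{k\ge 2}\tfrac{1}{k p^k}$, split
\begin{equation*}
L(x) = -\sum_{p \le x}\frac{1}{p} \;+\; \sum_{p \le x}\Bigl(\log(1-1/p)+\frac{1}{p}\Bigr).
\end{equation*}
For the first sum, Theorem \ref{th50} gives $\sum_{p\le x}\tfrac{1}{p} = \log\log x + B + R(x)$. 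For the second sum, note that each summand is the negative of an absolutely convergent tail of the double series defining $H$ in \eqn{500d}, so
\begin{equation*}
\sum_{p \le x}\Bigl(\log(1-1/p)+\tfrac{1}{p}\Bigr) = -H + T(x),\qquad T(x) := \sum_{p > x}\sum_{k \ge 2}\frac{1}{k p^{k}}.
\end{equation*}

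Combining these and using $B = \gamma - H$ (the identity Mertens deduced in proving Theorem \ref{th50}) yields
\begin{equation*}
L(x) = -\log\log x - \gamma + S(x),\qquad S(x) := -R(x) + T(x).
\end{equation*}
Exponentiating gives $\Pi(x) = e^{-\gamma+S(x)}/\log x$, which is exactly the asserted formula. The limit \eqn{501a} will then follow as soon as we show $S(x) \to 0$.

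To control $S(x)$ I would estimate $T(x)$ crudely: for each $p > P$,
\begin{equation*}
0 \le \sum_{k\ge 2}\frac{1}{k p^{k}} \le \frac{1}{2}\sum_{k \ge 2}\frac{1}{p^{k}} = \frac{1}{2p(p-1)},
\end{equation*}
so by a telescoping comparison $T(x) \le \tfrac{1}{2}\sum_{n > P}\tfrac{1}{n(n-1)} = \tfrac{1}{2P}$. Combining this with the hypothesized bound on $R(x)$ from Theorem \ref{th50} gives
\begin{equation*}
|S(x)| \le |R(x)| + |T(x)| \le \frac{4}{\log(P+1)} + \frac{2}{P\log P} + \frac{1}{2P},
\end{equation*}
which is precisely the bound claimed, and which tends to $0$ as $x \to \infty$.

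The main obstacle is really a bookkeeping one: one must recognize and use the identity $B = \gamma - H$ to get the pole terms $-H$ and $+H$ to cancel so that $\gamma$ is what remains. Once that is in place, the Taylor-series splitting and the trivial geometric tail bound on $T(x)$ do the rest; no further arithmetic input beyond Theorem \ref{th50} is needed.
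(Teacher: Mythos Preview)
Your proof is correct and follows exactly the route the paper indicates: Mertens's Product Theorem is deduced from the Sum Theorem (Theorem \ref{th50}) by taking logarithms, splitting off the $-\sum_{p\le x}1/p$ term, invoking the identity $B=\gamma-H$ from \eqn{500d}, and bounding the tail of the double series by $\tfrac{1}{2P}$. The paper itself gives no detailed argument beyond citing Mertens's original deduction, and your write-up is precisely that deduction with the explicit error bookkeeping.
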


In this formula the constant $e^{-\gamma}$ encodes a property 
of the entire ensemble of primes in the following sense:   if one could vary a single prime in
the product above, holding the remainder
fixed, then the limit on the right side of \eqn{501a} would change.

In Mertens's  product theorem   the  finite product 
\begin{equation}\label{504b}
D(x) := \prod_{p \le x}\left(1- \frac{1}{p} \right)
\end{equation}
is the inverse of the partial Euler product 
for the Riemann zeta function, taken over all primes below $x$, 
evaluated at the point $s=1$. The number $D(x)$ gives 
 the limiting density of integers  having no prime factor smaller than $x$,
 taken on the interval $[1, T]$ and  letting
  $T \to \infty$ while holding $x$ fixed.    Now the set of
  integers below  $T= x^2$  having no prime smaller than $x$ consists  of exactly
the primes between $x$ and $x^2$.
Letting $\pi(x)$ count the number of primes below $x$, the prime number
theorem gives $\pi(x) \sim \frac{x}{\log x}$, and this yields the asymptotic formula. 
$$
\pi(x^2)- \pi(x)  \sim  \frac{x^2}{\log x^2} - \frac{x}{\log x} \sim  
\frac{1}{2} \frac{x^2}{\log x}.
$$
On the other hand, if the 
density $D(x)$ in Merten's theorem were asymptotically correct already at $T= x^2$ then 
it  would predict the
 number of such primes to be about $e^{-\gamma} \frac{x^2}{\log x}$.
The fact that $e^{-\gamma} = 0.561145 ...$ does not equal $\frac{1}{2}$, but is
slightly larger,  reflects the failure of  the inclusion-exclusion formula
to give an asymptotic formula in this range. This is a basic difficulty studied by sieve methods.

A subtle consequence of this
failure is that there must be occasional unexpectedly
large fluctuations in the number of primes  in
short intervals away from their expected number. This phenomenon was
discovered by H. Maier \cite{Mai85} in 1985. 
 It is interesting to note that
Maier's argument used a ``double counting" argument in a
tabular array of integers that resembles the  array summations
of Euler and Bernoulli exhibited in section \ref{sec21}.
His argument also used properties of the Buchstab function discussed in
section \ref{sec35}.
See Granville \cite{Gra94} for a detailed discussion of 
this large fluctuation phenomenon.

%
%
%
%
\subsection{Euler's constant and arithmetic functions}\label{sec33a}
\setcounter{equation}{0}

Euler's constant 
also appears in  the  behavior
of three  basic arithmetic functions, 
the divisor function $d(n)$, {Euler's totient function} $\phi(n)$
and the {sum of divisors function} $\sigma(n)$.
For the divisor function $d(n)$ it  arises in its average behavior, while for $\phi(n)$ and
$\sigma(n)$ it concerns extremal asymptotic behavior and there appears
in the form $e^{\gamma}$. 
These extremal behaviors  can be deduced starting  from  Mertens's results.

A basic arithmetic function is 
the  {\em divisor function}
\begin{equation*}
d(n) := \#\{ d:  d|n, ~~~ 1 \le d \le n\}.
\end{equation*}
which counts the number of divisors of $n$.
In 1849 Dirichlet \cite{Dirichlet1849} 
introduced a method to estimate the average size of the divisor function
and other arithmetic functions.  In the statement of Dirichlet's result 
 $\{ x\} := x - [x]$ denotes the fractional part $x ~(\bmod \, 1)$. 

\begin{theorem} \label{th51a}
{\em (Dirichlet 1849)}
(1) The partial sums of  the   divisor function satisfy
\begin{equation}\label{500bb}
 \sum_{k=1}^{n}  d(k) =  n\log n  + (2 \gamma -1)n  + O(\sqrt{n})
\end{equation}
for $1 \le n < \infty$, where $\gamma$ is Euler's constant.

(2) The averages of the fractional parts of $\frac{n}{k}$ satisfy
\begin{equation}\label{500ff}
  \sum_{k=1}^n  \{ \frac{n}{k} \}  = (1-\gamma) n + O (\sqrt{n})
\end{equation}
for  $1 \le n < \infty.$
\end{theorem}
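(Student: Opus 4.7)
The plan is to prove part (1) by Dirichlet's hyperbola method, and then derive part (2) as an almost immediate consequence by relating the sum of fractional parts to $nH_n$ minus the divisor sum.

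For part (1), the first step is to rewrite the divisor sum geometrically:
\begin{equation*}
\sum_{k=1}^n d(k) \,=\, \#\{(a,b)\in\ZZ_{\ge 1}^2 : ab \le n\} \,=\, \sum_{a=1}^{n} \lfloor n/a \rfloor.
\end{equation*}
The naive bound using $\lfloor n/a \rfloor = n/a + O(1)$ produces only an error term of size $O(n)$, which is too weak. The key idea is Dirichlet's hyperbola trick: partition the lattice points $(a,b)$ under the hyperbola $ab\le n$ into the strip $a\le\sqrt{n}$, the strip $b\le\sqrt{n}$, and the square $a,b\le\sqrt{n}$ counted twice. This yields the exact identity
\begin{equation*}
\sum_{k=1}^n d(k) \,=\, 2\sum_{a \le \sqrt{n}} \lfloor n/a \rfloor \,-\, \lfloor \sqrt{n}\rfloor^{2}.
\end{equation*}
Here, crucially, the summation now runs only up to $\sqrt{n}$, so replacing $\lfloor n/a\rfloor$ by $n/a$ costs only $O(\sqrt{n})$.

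Next I would evaluate the main sum by invoking the expansion for partial harmonic sums,
\begin{equation*}
\sum_{a \le \sqrt{n}} \frac{1}{a} \,=\, \log\sqrt{n} + \gamma + O(1/\sqrt{n}) \,=\, \tfrac{1}{2}\log n + \gamma + O(1/\sqrt{n}),
\end{equation*}
which follows directly from the definition of $\gamma$ in Theorem~\ref{th21}. Combined with $\lfloor\sqrt{n}\rfloor^2 = n + O(\sqrt{n})$, this gives
\begin{equation*}
\sum_{k=1}^n d(k) \,=\, 2n\bigl(\tfrac{1}{2}\log n + \gamma\bigr) + O(\sqrt{n}) - n + O(\sqrt{n}) \,=\, n\log n + (2\gamma - 1)n + O(\sqrt{n}),
\end{equation*}
establishing \eqref{500bb}.

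For part (2), I would exploit the identity $\sum_{k\le n}\lfloor n/k\rfloor = \sum_{k\le n} d(k)$ (both count lattice points under the hyperbola). Writing $\{n/k\} = n/k - \lfloor n/k\rfloor$ and summing gives
\begin{equation*}
\sum_{k=1}^n \Bigl\{\frac{n}{k}\Bigr\} \,=\, nH_n \,-\, \sum_{k=1}^n d(k).
\end{equation*}
Applying $H_n = \log n + \gamma + O(1/n)$ and substituting the result of part~(1), the $n\log n$ terms cancel and one obtains $\gamma n - (2\gamma-1)n + O(\sqrt{n}) = (1-\gamma)n + O(\sqrt{n})$, which is \eqref{500ff}. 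The main (and essentially only) obstacle is recognizing that the naive bound $\lfloor n/a\rfloor = n/a + O(1)$ summed over $1 \le a \le n$ is too crude; the hyperbola decomposition is precisely the device that reduces the critical range of summation to $a\le\sqrt{n}$ and thereby delivers the sharp $O(\sqrt{n})$ error.
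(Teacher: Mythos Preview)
Your proof is correct and follows essentially the same approach as the paper's: the hyperbola method for part~(1), followed by the comparison of $\sum_{k\le n}\lfloor n/k\rfloor$ with $nH_n$ to extract the fractional-part sum for part~(2). The paper's proof is only a brief outline pointing to Dirichlet's original sections and to Tenenbaum for details, so your write-up is in fact more explicit than what appears there.
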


\begin{proof}
Dirichlet's proof starts from the identity
\begin{equation}\label{500gg}
 \sum_{k=1}^n d(k) = \sum_{k=1}^n \lfloor \frac{n}{k} \rfloor,
\end{equation}
which relates the divisor function to integer parts of a scaled harmonic series.
The right side of this identity is approximated using the harmonic sum estimate 
\begin{equation*}
\sum_{k=1}^n \frac{n}{k} = n \log n + \gamma n + O (1),
\end{equation*}
in which Euler's constant appears.  The difference term
 is the sum of fractional parts.
In  Sect.  2  Dirichlet formulates the  ``hyperbola method" to count  the sum
 of the lattice points on the right side of \eqn{500gg}, and in Sect.  3 he applies the method to get (1).
 In  Sect. 4 he compares the answer \eqn{500bb}  with \eqn{500gg} and obtains the formula 
   \eqn{500ff} giving (2). 
A modern treatment of the hyperbola method appears in Tenenbaum \cite[I.3.2]{Ten95}.
\end{proof}

The problem of obtaining  bounds
for the  size of the remainder term in \eqn{500bb}, 
\begin{equation}\label{500hh}
\Delta(n):=  \sum_{k=1}^{n}  d(k) - ( n\log n  + (2 \gamma -1)n),
\end{equation}
is  now called the {\em Dirichlet divisor problem.}  
This problem seeks the best possible bound for the exponent $\theta$
  of the form $ |\Delta(n)| =O( n^{\theta+\epsilon})$, valid for  any given $\epsilon >0$ for $1\le n < \infty$,
  with $O$-constant depending on $\epsilon$.
  This problem appears to be very difficult.
 Dirichlet's argument shows that  the size of the remainder term $\Delta(n)$  is dominated by
the remainder term in the fractional part sum \eqn{500ff}.
  In 1916 Hardy \cite{Ha16} established the lower bound $\theta \ge \frac{1}{4}$.
  The exponent $\theta= 1/4$   is conjectured
  to be the correct answer, and would correspond to an expected square-root cancellation. 
 The best $\Omega$-result  on fluctuations of  $\Delta(n)$, which does not
 change the lower bound exponent,  is that of Soundararajan \cite{Sou03} in 2003. 
  Concerning upper bounds, a long sequence of improvements in exponent bounds has arrived at  
  the current record $\theta \le \frac{131}{416} \approx 0.31490$,
  obtained  by    Huxley (\cite{Hux03}, \cite{Hux05}) in 2005.
Tsang \cite{Tsa10} gives a  recent survey on recent work on $\Delta(n)$ and related problems.

 Dirichlet  noted that   \eqn{500ff} 
implies that the fractional parts $\{\{ \frac{n}{k} \};  1 \le k \le n \}$
are far from being  uniformly distributed $(\bmod \, 1)$.
In Sect.  4 of \cite{Dirichlet1849} he proved   that the set   of fractional parts 
$k$ having  $0 \le \{ \frac{n}{k} \} \le \frac{1}{2}$ 
has a limiting frequency
$(2- \log 4)n  = (0.61370 \dots )n$.
and those with $\frac{1}{2} < \{ \frac{n}{k} \}< 1$ have
the complementary density $(\log 4 -1)n =  (0.38629 \dots) n$, 
as $n \to \infty.$

In 1898 de La Vall\'{e}e Poussin \cite{dLVP1898} generalized the fractional part sum
estimate \eqn{500ff} above  to summation over $k$ restricted to an arithmetic progression. He showed
for each integer $a \ge 1$ and integer $0 < b \le  a$, 
\begin{equation*}
 \sum_{k=0}^{(n-b)/a}  \{ \frac{n}{ak+b} \}  = \frac{1}{a}(1-\gamma) n + O (\sqrt{n}),
\end{equation*}
for $1 \le n < \infty$, with $O$-constant depending on $a$. 
He comments that it is very remarkable that 
these fractional parts approach the same average value $1-\gamma$  no matter which arithmetic progression
we restrict them to. Very recently Pillichshammer \cite{Pil10}
presented further work in this direction.

We next consider {\em  Euler's totient function}
\begin{equation*}
\phi(n) := \# \{ j:  1\le j \le n, ~~\mbox{with}~~\gcd(j, n)=1\},
\end{equation*}
which has $\phi(n)= |(\ZZ/n\ZZ)^{*}|.$ 
Euler's constant appears in the asymptotics of  the minimal growth rate (minimal order)
of $\phi(n)$. In 1903 Edmund Landau \cite{Lan03} showed  the following result, which
he also presented later  in his textbook \cite[pp. 216--219]{Lan09}. 

\begin{theorem} \label{th52}
{\em (Landau 1903)}
Euler's totient function $\phi(n)$ has minimal order $\frac{n}{\log\log n}$.
Explicitly,  
\begin{equation*}
\liminf_{n \to \infty} \frac{\phi(n) \log\log n}{n} = e^{-\gamma}.
\end{equation*}
\end{theorem}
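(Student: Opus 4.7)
The plan is to establish matching upper and lower bounds for the liminf, based on the multiplicativity $\phi(n)/n = \prod_{p \mid n}(1-1/p)$, combined with Mertens's product theorem (Theorem~\ref{th51}) and the Chebyshev bound $\vartheta(x) := \sum_{p \le x}\log p \asymp x$ as the only analytic inputs. The guiding intuition is that, for a fixed number of prime factors, $\phi(n)/n$ is minimized when $n$ uses the smallest possible primes, so the extremal sequence should be the primorials $N_x := \prod_{p \le x} p$.

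For the upper bound $\liminf \le e^{-\gamma}$, I would simply evaluate the ratio along the primorials. By construction $\phi(N_x)/N_x = \prod_{p \le x}(1-1/p)$, which by Mertens equals $(e^{-\gamma}+o(1))/\log x$. From the Chebyshev bound, $\log N_x = \vartheta(x) \asymp x$, so $\log\log N_x = \log x + O(1) \sim \log x$. Multiplying the two estimates gives $\phi(N_x)\log\log N_x/N_x \to e^{-\gamma}$ as $x \to \infty$.

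For the lower bound $\liminf \ge e^{-\gamma}$, let $n > 1$ have distinct prime factors $p_1 < p_2 < \cdots < p_k$, and let $q_i$ denote the $i$-th prime. Since $p_i \ge q_i$ and $1 - 1/t$ is increasing in $t$, the monotonicity estimate
\begin{equation*}
\frac{\phi(n)}{n} = \prod_{i=1}^k\left(1 - \frac{1}{p_i}\right) \;\ge\; \prod_{i=1}^k\left(1 - \frac{1}{q_i}\right) \;=\; \prod_{p\le q_k}\left(1 - \frac{1}{p}\right) \;=\; \frac{e^{-\gamma}+o(1)}{\log q_k}
\end{equation*}
holds as $k \to \infty$, again by Mertens. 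Since $n$ is divisible by $k$ distinct primes $\ge q_1,\dots,q_k$, we have $n \ge q_1 \cdots q_k$, so $\log n \ge \vartheta(q_k) \gg q_k$ (Chebyshev), whence $\log\log n \ge \log q_k + O(1) \sim \log q_k$. Combining the two estimates yields $\phi(n)\log\log n / n \ge e^{-\gamma}(1+o(1))$ along any sequence of $n$ with $\omega(n) = k \to \infty$. For $n$ with $\omega(n)$ remaining bounded, $\phi(n)/n$ is bounded below by a positive constant depending only on that bound, while $\log\log n \to \infty$, so the ratio tends to infinity and those $n$ do not affect the liminf.

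The delicate point—the one that would require care rather than a new idea—is the matching of constants in the lower bound: the factor $1/\log q_k$ coming from Mertens must be absorbed by a $\log\log n$ that is genuinely asymptotic to $\log q_k$, with no stray multiplicative constant. This works precisely because the Chebyshev estimate $\vartheta(q_k) \asymp q_k$ costs only an additive $O(1)$ inside the outer logarithm, which is $o(\log q_k)$, so one never needs to invoke the full prime number theorem. Once the two asymptotics are aligned, the upper and lower bounds meet at $e^{-\gamma}$, completing the proof.
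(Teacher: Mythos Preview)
Your proof is correct and follows exactly the approach the paper indicates: identify the primorials $N_k = p_1 p_2 \cdots p_k$ as the extremal sequence for $\phi(n)/n$, and then apply Mertens's product theorem to extract the constant $e^{-\gamma}$. The paper's own argument is only a two-sentence sketch, so you have in fact supplied the details it omits --- in particular the lower-bound half, where you correctly compare an arbitrary $n$ with $\omega(n)=k$ to the $k$-th primorial via $p_i \ge q_i$, and your observation that Chebyshev's $\vartheta(x) \asymp x$ (rather than the full prime number theorem) suffices to align $\log\log n$ with $\log q_k$ up to an additive $O(1)$ is exactly right.
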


Since $\phi(n)= n \prod_{p|n}(1-\frac{1}{p})$, its  extremal behavior is  attained by
the {\em primorial  numbers}, which are the numbers that are  products of
the first $k$ primes, i.e. $N_k := p_1p_2\cdots p_k$, for example  
$N_4=2\cdot 3\cdot 5\cdot7= 210.$ 
The result then follows using  Mertens's product theorem.\smallskip

The  {\em sum of divisors function}  is given by
\begin{equation*}
\sigma(n) := \sum_{d|n}  d,
\end{equation*}
so that  e.g. $ \sigma(6)=12.$
Euler's constant  appears in the asymptotics of the maximal growth rate
(maximal order)  of  $\sigma(n)$. 
In 1913 Gronwall \cite{Gr13} obtained
the following result.

\bigskip
\begin{theorem}~\label{th53}
{\em (Gronwall 1913)}
The sum of divisors function $\sigma(n)$ has maximal order $n \log\log n$.
Explicitly, 
\begin{equation*}
\limsup_{n \to \infty} \frac{\sigma(n)}{n \log\log n} = e^{\gamma}.
\end{equation*}
\end{theorem}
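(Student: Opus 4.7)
The plan is to prove matching bounds, with Mertens's Product Theorem (Theorem~\ref{th51}) providing the main analytic input in both directions. Multiplicativity of $\sigma$ gives the identity
$$
\frac{\sigma(n)}{n} = \prod_{p^a \| n}\frac{1 - p^{-a-1}}{1 - 1/p} \le \prod_{p\mid n}\frac{1}{1-1/p},
$$
the inequality being strict but approaching equality as the exponents $a$ tend to infinity. This reduces the problem to studying $\prod_{p\mid n}(1-1/p)^{-1}$, up to a correction factor that can be made arbitrarily close to $1$.

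For the lower bound I would choose extremisers of the form $n_k = \prod_{p \le x_k} p^{a_k}$ with $x_k \to \infty$ and $a_k \to \infty$ chosen so that $\log a_k = o(\log x_k)$ (for instance $a_k = \lfloor \log x_k \rfloor$). By Chebyshev's estimate $\theta(x_k) \sim x_k$, one has $\log n_k \sim a_k x_k$, so $\log\log n_k \sim \log x_k$. Mertens gives $\prod_{p \le x_k}(1-1/p)^{-1} \sim e^{\gamma} \log x_k$, and the correction factor $\prod_{p \le x_k}(1 - p^{-a_k - 1})$ tends to $1$, since its negative logarithm is bounded by $2\sum_{n \ge 2} n^{-a_k - 1} = 2(\zeta(a_k + 1) - 1) \to 0$. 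Hence $\sigma(n_k)/(n_k \log\log n_k) \to e^{\gamma}$. It is essential to use high prime powers here: the naive primorials $N_k = p_1 \cdots p_k$ only yield $\sigma(N_k)/(N_k \log\log N_k) \to (6/\pi^2)\, e^{\gamma}$, a strictly smaller limit, reflecting the lost factor $\prod_p(1 - 1/p^2) = 1/\zeta(2)$.

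For the upper bound, given $n$, set $y = y(n) := \log n$ and split
$$
\prod_{p\mid n}\frac{1}{1-1/p} \le \prod_{p \le y}\frac{1}{1-1/p} \cdot \prod_{p\mid n,\, p > y}\frac{1}{1-1/p}.
$$
The small-prime factor equals $e^{\gamma} \log y \,(1+o(1)) = e^{\gamma} \log\log n \,(1+o(1))$ by Mertens. Any large prime divisor $p > y$ contributes at least $\log y$ to $\log n$, so $n$ has at most $(\log n)/\log y = (\log n)/\log\log n$ such divisors; using $-\log(1 - 1/p) \le 2/p$ for $p \ge 2$, the logarithm of the large-prime factor is at most $(2/y)\cdot(\log n)/\log\log n = 2/\log\log n = o(1)$. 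Combining these gives $\sigma(n)/n \le e^{\gamma} \log\log n \,(1+o(1))$, hence $\limsup \le e^{\gamma}$.

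The main technical obstacle is choosing the threshold $y$ and the exponent $a_k$ in the narrow window that works: $y$ must satisfy $\log y \sim \log\log n$ so that Mertens hits the right order, yet be small enough that the number $(\log n)/\log y$ of large prime divisors keeps the tail contribution at $o(1)$, so that the choice $y = \log n$ is essentially forced. Correspondingly, $a_k$ must be large enough that the correction factor approaches $1$, yet small enough that $\log\log n_k \sim \log x_k$ rather than $\sim \log a_k$. Both balances reflect a fundamental feature absent in the $\phi$-analog (Theorem~\ref{th52}), where primorials already attain the extremum: for $\sigma(n)/n$ the extremisers require both many small primes and unbounded exponents on them.
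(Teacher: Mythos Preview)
Your proof is correct and complete. The paper itself does not supply a proof of Gronwall's theorem: it states the result, attributes it to Gronwall \cite{Gr13}, and remarks only that the extremal numbers for $\sigma$ have a ``more complicated description'' than the primorials, with the key property that the exponent $e_p(n)$ of each prime $p$ in the extremiser tends to infinity. Your argument supplies exactly the missing details, and your choice of extremisers $n_k = \prod_{p \le x_k} p^{a_k}$ with $a_k \to \infty$ realises precisely the feature the paper highlights. Your observation that primorials only reach $(6/\pi^2)e^{\gamma}$ also matches the paper's later remark that $\phi(N_k)\sigma(N_k) \sim (6/\pi^2)N_k^2$.

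One small terminological slip: ``Chebyshev's estimate $\theta(x_k) \sim x_k$'' is the Prime Number Theorem, not Chebyshev. But your argument only needs $\log \theta(x_k) \sim \log x_k$, which does follow from Chebyshev's bounds $\theta(x) \asymp x$, so the proof stands as written.
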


For $\sigma(n)$ the extremal numbers 
form a very thin set, with
a more complicated description, which was analyzed by 
Ramanujan \cite{Ram15} in 1915 (see also 
Ramanujan \cite{Ram97})  and Alaoglu and
Erd\H{o}s \cite{AE44} in 1944. They have the property
that their divisibility by each prime $p^{e_p(n)}$ has
 exponent $e_p(n) \to \infty$ as $n \to \infty$,
see Section 3.7 and   formula \eqn{551d}.

The Euler totient function $\phi(n)$ and the sum of divisors function $\sigma(n)$
are related by the  inequalities
\begin{equation*}
\frac{6}{\pi^2} n^2  < \phi(n) \sigma(n) \le n^2,
\end{equation*}
which are easily derived from the identity\footnote{ $p^e || n$ means $p^e |n $ and $p^{e+1} \nmid n$.}
$$
\phi(n) \sigma(n) = n^2 \prod_{{p^e || n}\atop{e \ge 1}} \left( 1- \frac{1}{p^{e+1}}\right).
$$
We note that the ratio of extremal behaviors  of the ``smallest" $\phi(n)$,
resp. the  ``largest" $\sigma(n)$, given in Theorem~\ref{th52}
and Theorem~\ref{th53} have a product asymptotically growing like $n^2$.
The extremal numbers $N_k = p_1 p_2 \cdots p_k$ in Landau's theorem are
easily shown to satisfy 
$\phi(N_k) \sigma(N_k) \sim \frac{6}{\pi^2} {N_k}^2$ as $N_k \to \infty$.
The extremal
numbers $\tilde{n}$ in Gronwall's theorem can be shown to satisfy the other bound
$$ \phi(\tilde{n}) \sigma(\tilde{n}) \sim {\tilde{n}}^2$$
as $\tilde{n} \to \infty$.
The latter result  implies that the Gronwall extremal numbers satisfy 
$$
\lim_{\tilde{n} \to \infty} \frac{ \phi(\tilde{n}) \log\log \tilde{n}}{\tilde{n}} = e^{-\gamma},
$$
showing they are asymptotically extremal in Landau's theorem as well.

In Sections \ref{sec36} and  \ref{sec37}  we discuss more recent
results showing  that the Riemann hypothesis is encoded as an assertion
as to how the extremal limits are approached in Landau's theorem and 
Gronwall's theorem.

%
%
%
%
\subsection{Euler's constant and sieve methods: the Dickman function}\label{sec34}
\setcounter{equation}{0}

 Euler's constant  appears in connection
 with statistics on the sizes of prime factors of general integers.
  Let $\Psi(x,  y)$ count the number of integers $n \le x$ having largest prime
 factor no larger than $y$.  These are the numbers remaining if one sieves
 out all integers divisible by some prime exceeding $y$.
 The Dirichlet series associated to
 the set of integers having no prime factor $>y$ is  the {\em partial zeta function} 
 \begin{equation}\label{523bb}
 \zeta(s  ; y) := \prod_{p \le y} \left( 1- \frac{1}{p^s} \right)^{-1},
 \end{equation}
 and we have 
 $$
 \zeta(1; y)= \frac{1}{D(y)},
 $$
 where $D(y)$ is given in \eqn{504b}. 
  It was shown by
 Dickman \cite{Dic30} in 1930 that, for each $u >0,$ a positive fraction
 of all integers $n \le x$ have all prime factors less than $x^{1/u}$.
 More precisely, one has the asymptotic formula
 \begin{equation*}
 \Psi ( x, x^{\frac{1}{u}}) \sim \rho(u) x, 
 \end{equation*}
 for a certain function  $\rho(u)$, now called the {\em Dickman function}.
 The notation $\rho(u)$  was introduced by de Bruijn \cite{deB51a}, \cite{deB51b}.)
This function is determined for $u \ge 0$ by the properties: 
\begin{enumerate}
\item 
(Initial condition) For $0 \le u \le1$, it satisfies
\begin{equation*}
\rho(u) =1.
\end{equation*}
\item
 (Differential-difference equation) For $u \ge 1$, 
\begin{equation*}
u \rho'(u) = - \rho(u-1).
\end{equation*}
\end{enumerate}
The Dickman function is alternatively  characterized  
as the solution on the real line to the 
integral equation
\begin{equation}\label{526d}
u \rho(u) = \int_{0}^{1} \rho (u-t) dt,
\end{equation}
(for $u \rho(u)$) with initial condition as above, extended to require that $\rho(u)=0$ for
all $u < 0$, as shown in de Bruijn \cite[(2.1)]{deB51a}.  Another explicit form for
$\rho(u)$ is the iterated integral form
\begin{equation}\label{526e}
\rho(u) = 1 + \sum_{k=1}^{\lfloor u \rfloor} \frac{(-1)^k}{k!} \int_{ { t_1+ \cdots _+ t_k \le u}\atop t_1, ..., t_k \ge 1} \frac{dt_1}{t_1} 
\frac{dt_2}{t_2} \cdots \frac{dt_k}{t_k},
\end{equation}
which also occurs
in connection with random permutations, see \eqref{gonch} ff.

It is known that for $u \ge 1$ the Dickman function $\rho(u)$ is a strictly decreasing positive function
that decreases rapidly, satisfying
\begin{equation*}
 \rho(u) \le \frac{1}{\Gamma(u+1)},
 \end{equation*}
 see Norton \cite[Lemma 4.7]{Nor71}.  The key relation of the Dickman function to Euler's constant
 is that it has total mass $e^{\gamma}$, see Theorem~\ref{th52b} below.
 
     In 1951 N. G. de Bruijn \cite{deB51a} 
  gave an exact expression for the Dickman function as a contour integral,
 \begin{equation}\label{528d}
 \rho(u) = \frac{1}{2 \pi i } \int_{-i\infty}^{i \infty} \exp\left(\gamma +  \int_{0}^z \frac{e^s-1} {s} ds\right)  e^{-uz} dz  ~~~(u>0).
 \end{equation}
 
 One may also consider the  one-sided Laplace transform of the Dickman function which
  we denote
   \begin{equation*}
  \hat{\rho}(s) := \int_{0}^{\infty} \rho(u) e^{-us} du,
 \end{equation*}
 following  Tenenbaum \cite[III.5.4]{Ten95}.
 This integral converges absolutely for all $s \in \CC$ and defines $\hat{\rho}(s)$
 as an entire function.   To evaluate it,  recall that the {\em complementary exponential integral }
 \begin{equation}\label{cexpi}
 \Ein(z) := \int_{0}^z \frac{1- e^{-t}}{t} dt = \sum_{n=1}^{\infty}(-1)^{n-1} \frac{1}{n \cdot n!} z^n,
\end{equation}
 is an entire function of $z$.
 \begin{theorem}\label{th52b} 
 {\em (de Bruijn 1951,  van Lint and Richert 1964)}
The one-sided Laplace transform $\hat{\rho}(s)$ of the
 Dickman function $\rho(u)$  is an entire function,
 given by 
 \begin{equation}\label{530d}
 \hat{\rho}(s) = e^{\gamma - \Ein(s)},
 \end{equation}
 in which $\Ein (s)$ is the complementary exponential integral.
  In particular, the total mass of the Dickman function is
 \begin{equation}\label{532d}
 \hat{\rho}(0) = \int_{0}^{\infty} \rho (u) \,du = e^{\gamma}.
 \end{equation}
 \end{theorem}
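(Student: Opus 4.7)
The plan is to derive an ordinary differential equation satisfied by $\hat{\rho}(s)$ by Laplace--transforming the integral equation (\ref{526d}), solve it explicitly up to a multiplicative constant, and then pin down that constant by matching the asymptotic behavior of $\hat{\rho}(s)$ as $s\to+\infty$ against the known asymptotic of $\Ein(s)$ in terms of $\gamma$.

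First I would record that $\hat{\rho}(s)$ is entire. Since $\rho$ vanishes for $u<0$ and since the bound $\rho(u)\le 1/\Gamma(u+1)$ (Norton's bound quoted before the theorem) makes $\rho$ faster-decaying than any exponential, the integral $\int_{0}^{\infty}\rho(u)e^{-us}du$ converges absolutely for every $s\in\CC$ and may be differentiated under the integral sign arbitrarily often; in particular $\hat{\rho}(0)=\int_{0}^{\infty}\rho(u)\,du$ is finite, so it suffices to evaluate it.

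Next I would transform the integral equation. Applying $\int_{0}^{\infty}e^{-su}\,du$ to both sides of $u\rho(u)=\int_{0}^{1}\rho(u-t)\,dt$, the left-hand side becomes $-\hat{\rho}'(s)$. For the right-hand side I interchange the order of integration and use that $\rho(u-t)=0$ for $u<t$, so that $\int_{0}^{\infty}\rho(u-t)e^{-su}du=e^{-st}\hat{\rho}(s)$; integrating in $t$ over $[0,1]$ gives $\hat{\rho}(s)\cdot\frac{1-e^{-s}}{s}$. This yields the first-order linear ODE
\[
-\hat{\rho}'(s)=\hat{\rho}(s)\cdot\frac{1-e^{-s}}{s}.
\]
Since $\hat{\rho}$ is entire and positive on the real axis, I can divide and integrate from $0$ to $s$, getting $\log\hat{\rho}(s)-\log\hat{\rho}(0)=-\int_{0}^{s}\frac{1-e^{-t}}{t}dt=-\Ein(s)$. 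Thus $\hat{\rho}(s)=\hat{\rho}(0)\,e^{-\Ein(s)}$, and the whole theorem reduces to identifying $\hat{\rho}(0)=e^{\gamma}$.

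The main obstacle is this last step, since simply evaluating at $s=0$ only restates what we want. I would instead compare large-$s$ asymptotics. Splitting $\hat{\rho}(s)=\int_{0}^{1}e^{-su}du+\int_{1}^{\infty}\rho(u)e^{-su}du$ and using $\rho(u)\le 1/\Gamma(u+1)$ to bound the tail, I get $\hat{\rho}(s)=\frac{1-e^{-s}}{s}+O(e^{-s})\sim 1/s$ as $s\to+\infty$. On the other hand, the classical identity $\gamma=\int_{0}^{1}\frac{1-e^{-t}}{t}dt-\int_{1}^{\infty}\frac{e^{-t}}{t}dt$, which follows from integrating (\ref{225c}) by parts on $[0,1]$ and $[1,\infty)$, gives
\[
\Ein(s)-\log s=\int_{0}^{1}\frac{1-e^{-t}}{t}dt-\int_{1}^{s}\frac{e^{-t}}{t}dt\ \longrightarrow\ \gamma \qquad(s\to+\infty).
\]
Hence $e^{-\Ein(s)}\sim e^{-\gamma}/s$, and comparing with $\hat{\rho}(s)\sim 1/s$ forces $\hat{\rho}(0)=e^{\gamma}$, proving (\ref{530d}) and its specialization (\ref{532d}). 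The only other subtle point I would check carefully is the tail estimate justifying the small-$O$ remainder for $\hat{\rho}(s)$ at large $s$, but Norton's superfactorial decay of $\rho$ makes this routine.
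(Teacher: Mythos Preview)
Your argument is correct and self-contained. The paper's proof proceeds differently: it reads off the Laplace transform directly from de Bruijn's contour integral representation \eqref{528d} for $\rho(u)$, which already carries the factor $e^{\gamma}$ inside the integrand, so that the change of variable $s=-it$ immediately yields \eqref{530d} without a separate determination of the constant; it then cites Tenenbaum's lemma $\Ein(s)=\gamma+\log s+J(s)$ (your large-$s$ identity in integrated form) as an alternative route. By contrast, you derive the functional form $\hat{\rho}(s)=C\,e^{-\Ein(s)}$ from scratch via the ODE coming from the integral equation \eqref{526d}, and then fix $C$ by matching $\hat{\rho}(s)\sim 1/s$ against $e^{-\Ein(s)}\sim e^{-\gamma}/s$. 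Your approach avoids appealing to de Bruijn's contour integral and makes the role of the initial condition $\rho\equiv 1$ on $[0,1]$ transparent (it is exactly what gives $\hat{\rho}(s)\sim 1/s$); the paper's route is shorter if one is willing to quote \eqref{528d}, but yours is the more elementary and self-sufficient derivation. The constant-fixing step you use is essentially equivalent to the Tenenbaum identity \eqref{533d} the paper also mentions.
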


 \begin{proof}
 Inside  de Bruijn's formula \eqn{528d}  there appears the integral 
 $$
 I(s) := \int_{0}^{s} \frac{e^{t} -1}{t} dt= -\Ein(-s). 
 $$
 This   formula determines the Fourier transform of the Dickman function to be
 $e^{\gamma + I(-it)}$. 
The change of variable $s=-it$ yields the  the one-sided Laplace transform of
 $\rho(u)$  above.
  The total mass identity \eqn{532d} was noted in 1964 
   in van Lint and Richert \cite{vLR64}. Moreover they  
    noted that as $u \to \infty$, 
 \begin{equation*}
 \int_{0}^u \rho (t) dt = e^{\gamma} + O\left( e^{-u}\right).
 \end{equation*} 
 A direct approach to the determination of the 
  Laplace transform given in \eqn{530d}
 was given by  Tenenbaum \cite[III.5.4, Theorem 7]{Ten95} in 1995.
 His proof uses a lemma showing  that for all $s \in \CC \smallsetminus (-\infty, 0]$ one has
 \begin{equation}\label{533d}
  - I(-s)=  \Ein(s) = \gamma + \log s + J(s),
 \end{equation}
 with
 \begin{equation}\label{534d}
 J(s) := \int_{0}^{\infty} \frac{e^{-s-t}}{s+t} dt.
 \end{equation}
 Tenenbaum sketches a direct
 arithmetic proof of the identity \eqn{532d}  in \cite[III.5 Exercise 2, p. 392]{Ten95};
 in this argument the quantity $e^{\gamma}$ is derived using Mertens's product theorem.
 We  note the identity 
 \begin{equation*}
 J(s) = \Eone(s), 
 \end{equation*}
 in which 
 $ \Eone(s) := \int_{s}^{\infty} \frac{e^{-t}}{t} dt$ 
 is the principal exponential integral. The function $\Eone(s)$  
  appeared in connection with Euler's divergent
 series in Section \ref{sec25}, see \eqn{562a}.
\end{proof}

  In 1973 Chamayou \cite{Cha73} found a probabilistic interpretation
  of the Dickman function $\rho(x)$ which is suitable for computing it
  by Monte Carlo simulation.
  
 \begin{theorem}\label{th53aa} {\em (Chamayou 1973)}
  Let $X_1, X_2, X_3, ...$ be a sequence of independent
 identically distributed  random variables with uniform
 distribution on $[0,1].$  Set 
 \[
 P(u) := Prob [ X_1 + X_1 X_2 + X_1X_2X_3 + \cdots  \le u.]
 \]
 This function is well defined for $u \ge 0$ and 
 its derivative $P'(u)$ satisfies
 \begin{equation*}
 \rho (u) = e^{\gamma} P'(u).
 \end{equation*}
  \end{theorem}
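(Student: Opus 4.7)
The plan is to identify the density $f(u) := P'(u)$, show it satisfies (up to normalization) the same differential-difference equation as the Dickman function $\rho$, and then pin down the constant via the total-mass identity $\int_0^\infty \rho(u)\, du = e^{\gamma}$ from Theorem~\ref{th52b}.

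First I would verify that $S := X_1 + X_1 X_2 + X_1 X_2 X_3 + \cdots$ converges almost surely: by the strong law of large numbers, $\frac{1}{n}\sum_{i=1}^{n} \log X_i \to E[\log X_1] = -1$, so the products $X_1 X_2 \cdots X_n$ decay exponentially a.s.\ and the series converges (and has mean $\sum_{n\ge1} 2^{-n}=1$). Setting $S' := X_2 + X_2 X_3 + \cdots$, which is independent of $X_1$ and identically distributed with $S$, gives the self-similar distributional identity
\[
S \stackrel{d}{=} X_1(1 + S').
\]
Since $X_1$ is uniform on $[0,1]$, conditionally on $S' = s$ the variable $S$ is uniform on $[0, 1+s]$, with conditional density $(1+s)^{-1}\mathbf{1}_{[0,1+s]}$. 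Consequently $P$ is absolutely continuous, and integrating over the law of $S'$ yields the integral equation
\[
f(u) = \int_{\max(u-1,\, 0)}^{\infty} \frac{f(s)}{1+s}\, ds \qquad (u > 0).
\]

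The key step is to extract the structure of $f$ from this representation. For $0 < u \le 1$ the lower limit is $0$, so $f$ is a positive \emph{constant} on $(0,1)$. For $u > 1$, differentiating in $u$ gives
\[
u\, f'(u) = -\, f(u-1),
\]
which is precisely the differential-difference equation characterizing the Dickman function. Since $f$ and $\rho$ are both constant on $(0,1)$ and satisfy the same first-order delay equation on $[1,\infty)$, uniqueness of solutions to this forward recursion forces $f = C\rho$ for some constant $C > 0$.

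To identify $C$, I would use that $f$ is a probability density and invoke Theorem~\ref{th52b}:
\[
1 = \int_0^\infty f(u)\, du = C \int_0^\infty \rho(u)\, du = C\, e^{\gamma},
\]
so $C = e^{-\gamma}$ and therefore $\rho(u) = e^{\gamma} P'(u)$, as required. The delicate point I expect is justifying differentiation under the integral sign and continuity of $f$ across $u = 1$; this can be handled by writing $f$ explicitly as the mixture $f(u) = E\bigl[(1+S')^{-1}\, \mathbf{1}_{u \le 1+S'}\bigr]$ and applying dominated convergence, using the a.s.\ boundedness of $(1+S')^{-1}$ by $1$.
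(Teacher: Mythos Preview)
Your argument is correct. The paper itself does not supply a proof of this theorem; it simply states Chamayou's result and refers to \cite{Cha73}. Your approach---deriving the self-similar identity $S \stackrel{d}{=} X_1(1+S')$, extracting the integral equation for the density, reading off both the constancy on $(0,1)$ and the Dickman delay equation $u f'(u) = -f(u-1)$ on $(1,\infty)$, and then normalizing via Theorem~\ref{th52b}---is exactly the natural route and matches what Chamayou does in the original paper. One small remark: the justification for the existence and continuity of the density $f$ can be made slightly cleaner by first writing the cumulative distribution $P(u) = E\bigl[\min(u/(1+S'),1)\bigr]$ directly from the conditional uniform law, which is manifestly Lipschitz in $u$, and then differentiating; this sidesteps having to assume a density exists before writing down the integral equation for it.
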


  A discrete identity  relating the Dickman function and $e^{\gamma}$
  was noted by Knuth and Trabb-Pardo \cite{KTP76}, which complements 
  the continuous identity \eqn{532d}. 
  
 \begin{theorem}\label{th53b} {\em (Knuth and Trabb-Pardo 1976)}
  For $0 \le x \le 1$  the Dickman function satisfies the identity 
  \begin{equation}\label{534f}
  x + \sum_{n=1}^{\infty} (x+n) \rho(x+n)= e^{\gamma}.
  \end{equation}
  In particular, taking $x=0$, 
  \begin{equation*}
  \rho(1) + 2 \rho(2) + 3 \rho(3) + \cdots = e^{\gamma}.
  \end{equation*}
   \end{theorem}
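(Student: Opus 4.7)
The plan is to exploit the integral-equation characterization of the Dickman function \eqref{526d}, namely $u\rho(u) = \int_0^1 \rho(u-t)\,dt$ for $u \ge 1$, together with the total-mass formula $\int_0^\infty \rho(u)\,du = e^\gamma$ from Theorem~\ref{th52b}, to turn the infinite sum into a telescoping family of integrals.

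First I would rewrite each term of the series in integral form. Since $0 \le x \le 1$ we have $x+n \ge 1$ for every $n \ge 1$, so the differential-difference equation (equivalently, the integral equation \eqref{526d}) gives
\[
(x+n)\rho(x+n) = \int_0^1 \rho(x+n-t)\,dt = \int_{x+n-1}^{x+n} \rho(s)\,ds,
\]
after the substitution $s = x+n-t$. The crucial observation is that the consecutive intervals $[x+n-1,\,x+n]$ for $n=1,2,3,\dots$ partition the half-line $[x,\infty)$ into disjoint pieces.

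Summing the resulting identities produces a telescoping sum over these intervals:
\[
\sum_{n=1}^{\infty} (x+n)\rho(x+n) = \sum_{n=1}^{\infty} \int_{x+n-1}^{x+n} \rho(s)\,ds = \int_{x}^{\infty} \rho(s)\,ds,
\]
where interchanging sum and integral is justified by the nonnegativity of $\rho$ (or by monotone convergence). Next I would split $\int_0^\infty \rho(s)\,ds$ as $\int_0^x \rho(s)\,ds + \int_x^\infty \rho(s)\,ds$. Using the initial condition $\rho(s)=1$ on $[0,1]$ (so in particular on $[0,x]$ since $x \le 1$), the first piece equals $x$, while the full integral equals $e^\gamma$ by Theorem~\ref{th52b}. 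Hence $\int_x^\infty \rho(s)\,ds = e^\gamma - x$, and adding $x$ to both sides yields the claimed identity.

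There is no real obstacle here; the only points that need care are verifying that the integral equation \eqref{526d} applies to every term (which it does because $x+n \ge 1$), checking the partition of $[x,\infty)$ into the telescoping intervals, and invoking the initial condition $\rho \equiv 1$ on $[0,1]$ to evaluate $\int_0^x \rho$. The special case $x=0$ then follows immediately, recovering the discrete companion $\sum_{n\ge 1} n\,\rho(n) = e^\gamma$ to the continuous mass formula.
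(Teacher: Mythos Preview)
Your proof is correct and follows exactly the approach the paper indicates: combine the integral equation \eqref{526d} with the total-mass identity \eqref{532d}. You have simply spelled out the details (the telescoping over $[x+n-1,x+n]$ and the use of $\rho\equiv 1$ on $[0,x]$) that the paper leaves implicit.
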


\begin{proof}
 The identity  \eqn{534f}  immediately follows  from \eqn{532d}, 
  using the integral equation \eqn{526d} for $u\rho(u)$. 
  \end{proof}   
  
  We conclude this section by using  formulas above to 
  deduce a curious identity relating  a special
  value of this Laplace transform to  the Euler-Gompertz constant $\EG$
  given in Section 2.4; compare also Theorem~\ref{th102}. 
  
 \begin{theorem}\label{th53c} 
   The Laplace transform $\hat{\rho}(s)$ of the
 Dickman function $\rho(u)$  is given at $s=1$  by 
  \begin{equation*}
\hat{\rho}(1) :=  \int_{0}^{\infty} \rho(u) \, e^{-u} du = e^{-\frac{\EG}{e}}.
 \end{equation*} 
 in which
  $\EG :=  \int_{0}^1\frac{dv}{1-\log v}$ is the Euler-Gompertz constant.
 \end{theorem}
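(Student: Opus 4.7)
The plan is to specialize the Laplace transform formula from Theorem~\ref{th52b} at $s=1$ and then massage the resulting expression using the identities relating $\text{Ein}$, $E_1$, and $\EG$ that were already established earlier in the excerpt. Almost no new work is required; this is really a one-line computation once the right identities are combined.

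More concretely, I would first invoke Theorem~\ref{th52b}, which gives $\hat{\rho}(s) = e^{\gamma - \Ein(s)}$ as an identity of entire functions; setting $s=1$ yields
$$
\hat{\rho}(1) = e^{\gamma - \Ein(1)}.
$$
Next I would apply the Tenenbaum lemma recorded in \eqn{533d}, namely $\Ein(s) = \gamma + \log s + J(s)$, with $J(s)$ the integral in \eqn{534d}. At $s=1$ the $\log$ term vanishes, so $\Ein(1) = \gamma + J(1)$. Since $J(s) = \Eone(s)$ as noted just after \eqn{534d}, we have $J(1) = \Eone(1) = \int_1^\infty \frac{e^{-t}}{t}\,dt$.

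Finally I would recall the identity \eqn{249a} that was derived from Euler's integral representation of $s(x)$, namely $\Eone(1) = \EG/e$. Substituting everything back gives
$$
\hat{\rho}(1) = e^{\gamma - \gamma - \Eone(1)} = e^{-\Eone(1)} = e^{-\EG/e},
$$
which is the asserted equality.

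There is essentially no ``hard part'' here: the Euler's constant contribution coming from the $\gamma$ in the exponent of $\hat\rho$ cancels exactly against the $\gamma$ appearing in the near-origin expansion of $\Ein$, leaving the Euler--Gompertz constant via \eqn{249a} as the sole surviving contribution. The only mild subtlety worth flagging is making sure the $\log s$ branch issue in \eqn{533d} is harmless at $s=1$, which it is since $\log 1 = 0$ and $s=1$ lies off the cut $(-\infty,0]$ used to define that identity.
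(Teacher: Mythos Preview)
Your proof is correct and follows essentially the same route as the paper: specialize Theorem~\ref{th52b} at $s=1$, use \eqn{533d} to write $\Ein(1)=\gamma+\Eone(1)$, and then identify $\Eone(1)=\EG/e$. The only cosmetic difference is that the paper re-derives $\Eone(1)=\EG/e$ on the spot from Hardy's integral \eqn{560aa} and the definition \eqn{534d} of $J$, whereas you simply cite the earlier identity \eqn{249a}; either is fine.
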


\begin{proof}
By definition $\hat{\rho}(1) = \int_{0}^{\infty} \rho(u) e^{-u}du.$
We start from Hardy's formula \eqn{560b} for the Euler-Gompertz constant:
$$
\EG= s(1) = \int_{0}^{\infty} \frac{e^{-w}}{1+w} dw.
$$
Comparison with \eqn{534d} then yields
\begin{equation*}
 \Eone(1)= J(1)= \int_{0}^{\infty} \frac{e^{-1-t}}{1+t} dt = \frac{\EG}{e}.
\end{equation*}
Now  \eqn{533d} evaluated at $s=1$ gives
\begin{equation}
\label{353EG}
\Ein(1) = \gamma+ \Eone(1)  = \gamma + \frac{\EG}{e}, 
\end{equation}
which can be rewritten  using \eqn{cexpi} as
\begin{equation}\label{353EG2}
\int_{0}^{1} \frac{1-e^{-t}}{t} dt =\gamma + \frac{\EG}{e}.
\end{equation}
We conclude from Theorem \ref{th52b} that
$$
\hat{\rho}(1) = e^{\gamma -Ein(1)} = e^{-\frac{\EG}{e}},
$$
which is the assertion.
\end{proof}

 Hildebrand and Tenenbaum \cite{HT93a} and Granville \cite{Gra08}
 give  detailed surveys including estimates for  $\Psi(x, y)$, a field sometimes
 called {\em psixyology}, see Moree \cite{Mor93}.  It has recently been
 uncovered that  Ramanujan had results on the
 Dickman function in his `Lost Notebook', prior to the work of
 Dickman, see Moree \cite[Sect. 2.4]{Mor13}.

%
%
%
%
\subsection{Euler's constant and sieve methods: the Buchstab function}\label{sec35}
\setcounter{equation}{0}

Euler's constant also arises in sieve methods in number theory
in the complementary problem where  one removes integers having some small prime factor.
Let $\Phi(x,y)$ count the number of integers $n \le x$ having
no prime factor $p \le y$.  In 1937 Buchstab \cite{Buc37} (see also \cite{Buc38}) 
established for $ u >1$  an asymptotic formula 
\begin{equation*}
\Phi(x, x^{\frac{1}{u}})\sim u\, \om(u) \frac{x}{\log x},
\end{equation*}
for a certain function $\om(u)$ named by
de Bruijn \cite{deB50a}  the  Buchstab function.

The {\em Buchstab function} is defined for $u \ge 1$ by the properties:
\begin{enumerate}
\item
(Initial conditions) For $1 \le u \le 2$,  it satisfies
\begin{equation*}
\om(u) = \frac{1}{u}.
\end{equation*}
\item
(Differential-difference equation) For  $u \ge 2$,
\begin{equation*}
(u \, \om(u) )^{'}=  \om(u-1).
\end{equation*}
\end{enumerate}
This function is alternatively  characterized  
as the solution on the real line to the 
integral equation
\begin{equation}\label{546d}
u \, \omega(u) = 1+\int_{1}^{u-1} \omega (t) dt,
\end{equation}
 as shown in de Bruijn \cite[(2.1)]{deB50a}, who
determined properties of this function.
Another explicit formula for this function, valid for $u >2$, is
$$
u \omega(u) = 
 1 + \sum_{2 \le k \le \lfloor u\rfloor} \frac{1}{k!}
 \int_{ {1/u \le y_i \le 1}\atop{ 1/u \le 1- (y_1+y_2 + \cdots + y_{k-1}) \le 1} }
 \frac{dy_1 dy_2 \cdots dy_{k-1}}{y_1 y_2 \cdots y_{k-1} (1- (y_1 +y_2+ \cdots + y_{k-1}) )}.
 $$
It is known that the  function $\om(u)$ is oscillatory  and satisfies
\begin{equation*}
\lim_{u \to \infty} \om(u) = e^{-\gamma}.
\end{equation*}
The convergence to $e^{-\gamma}$ is extremely rapid, and satisfies the estimate
\begin{equation}\label{eq345a}
\om(u) = e^{-\gamma} + O( u^{-u/2})  \quad ~~~\mbox{for}  ~~u \in [2, \infty).
\end{equation}
Proofs of these results can be found in 
Montgomery and Vaughan \cite[Sec. 7.2]{MV07}
and  in Tenenbaum \cite[Chap. III.6]{Ten95}.

The convergence behavior of the Buchstab function to $e^{\gamma}$ as $x \to \infty$ played a crucial
role in the groundbreaking 1985 work of  Helmut Maier \cite{Mai85} showing the existence of large 
fluctuations in the density of  primes in  short intervals;  this work was mentioned at the end
of Section \ref{sec33}.
 Maier \cite[Lemma 4]{Mai85} 
showed that $\om(u)-e^{-\gamma}$ changes sign 
at least once on each interval $[a-1, a]$ for any $a \ge 2$.
In 1990 Cheer and Goldston \cite{CG90}
showed there are at most two sign changes  and
at most two critical points on an interval $[a-1, a]$, and Hildebrand \cite{Hi90}  showed that
 the spacing between such
sign changes approaches $1$ as $u \to \infty$,

A  consequence of the analysis 
of de Bruijn \cite{deB50a} is that the 
 constant $e^{-\gamma}$  appears 
as a universal limiting constant for sieve methods,
for a wide range of sieve cutoff functions $y=y(x)$ growing 
neither too slow nor too fast
compared to $x$. 


\begin{theorem} \label{buchstab} {\rm (de Bruijn 1950)}
Suppose that $y= y(x)$ depends on $x$  in such a way
that both $y \to \infty$ and 
$\frac{\log y}{\log x} \to 0$ hold as $x \to \infty$.
Under these conditions  we have
\begin{equation}\label{361aa}
\Phi(x, y) \sim e^{-\gamma} \frac{x}{\log y}.
\end{equation}
\end{theorem}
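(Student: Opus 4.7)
First I would observe that the hypotheses $y\to\infty$ and $(\log y)/(\log x)\to 0$ together are equivalent to requiring $u:=(\log x)/(\log y)\to\infty$ while $y\to\infty$. Buchstab's original asymptotic $\Phi(x,x^{1/u})\sim u\,\om(u)\,x/\log x$, which under the substitution $y=x^{1/u}$ reads $\Phi(x,y)\sim \om(u)\,x/\log y$, is only proved for each fixed $u>1$; since here $u$ is allowed to vary with $x$, the first task is to upgrade it to a form that is uniform in $u$.

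The main tool is the Buchstab identity
\begin{equation*}
\Phi(x,y)\;=\;1\;+\;\sum_{y<p\le x}\Phi(x/p,\,p-1),
\end{equation*}
obtained by classifying each $n>1$ counted by $\Phi(x,y)$ according to its least prime factor. Starting from the initial range $\sqrt{x}\le y\le x$, where the identity collapses to $\Phi(x,y)=1+\pi(x)-\pi(y)$ and PNT yields the desired asymptotic with $\om(u)=1/u$ on $1\le u\le 2$, I would proceed by induction on $\lfloor u\rfloor$ to establish the uniform estimate
\begin{equation*}
\Phi(x,y)\;=\;\om(u)\,\frac{x}{\log y}\;+\;O\!\left(\frac{x}{(\log y)^{2}}\right)
\end{equation*}
valid throughout $2\le y\le x$. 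The inductive step substitutes the inductive hypothesis into the Buchstab identity and converts the sum over primes into an integral via Abel summation together with PNT carrying a power-of-logarithm error term; the integral that emerges is precisely the one that defines $u\,\om(u)$ through the functional equation~\eqref{546d}, which identifies the main term.

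The hard part will be controlling the error as $u\to\infty$: each application of the Buchstab identity in principle inflates the error, and one must show that this inflation is absorbed by the $1/\log y$ savings gained at each level. This relies on the boundedness of $\om$, which is guaranteed by the rapid convergence $\om(u)\to e^{-\gamma}$ recorded in \eqref{eq345a}. Once the uniform asymptotic is in hand, combining it with $\om(u)=e^{-\gamma}+O(u^{-u/2})$ from~\eqref{eq345a}, and noting that $\log y\to\infty$ forces the $O(x/(\log y)^{2})$ error to be $o(x/\log y)$, yields $\Phi(x,y)\sim e^{-\gamma}\,x/\log y$.

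As a sanity check in the subrange where $y$ grows very slowly --- say $y\le \log x/(\log\log x)^{2}$ --- one can reach the same conclusion by a more elementary route, using Legendre's inclusion-exclusion
\begin{equation*}
\Phi(x,y)\;=\;x\prod_{p\le y}\!\left(1-\frac{1}{p}\right)+O\!\left(2^{\pi(y)}\right),
\end{equation*}
in which the error is $x^{o(1)}$ and the main term is identified as $e^{-\gamma}x/\log y+o(x/\log y)$ by Mertens's product theorem~\eqref{501a}. The genuine difficulty is in the complementary range, where the uniform Buchstab asymptotic is essential.
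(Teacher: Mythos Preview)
Your proposal is correct and follows essentially the same route as the paper: obtain a uniform estimate of the shape $\Phi(x,y)=\omega(u)\,x/\log y+O(x/(\log y)^2)$ and combine it with $\omega(u)\to e^{-\gamma}$ as $u\to\infty$. The paper simply cites this uniform estimate (as Tenenbaum~III.6.2, Theorem~3, stated with an additional harmless $-y/\log y$ term), whereas you sketch its proof via the Buchstab identity and induction on $\lfloor u\rfloor$---but the underlying argument is the same.
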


\begin{proof}
This result can be deduced from de Bruijn \cite[(1.7)]{deB50a}.
This asymptotic formula also follows as
 a corollary of  very general estimate (\cite[III.6.2, Theorem 3]{Ten95})
 valid uniformly on the region $x \ge y \ge 2$, which states
\begin{equation*}
\Phi(x, y) = \omega(\frac{\log x}{\log y})\frac{x}{\log y}- \frac{y}{\log y} + O \Big( \frac{x}{ (\log y)^2} \Big).
\end{equation*}
The formula \eqn{361aa}  follows using the estimate \eqn{eq345a} for $\omega(u)$,
which applies since the  hypotheses $x/y \to \infty$ when $x \to \infty$
imply $\frac{\log x}{\log y} \to \infty$.
\end{proof}

The one-sided Laplace transform of the Buchstab function is given as  
\begin{equation}\label{355a}
\hat{\omega}(s) := \int_{0}^{\infty} e^{-su} \omega (u) du,
\end{equation}
where we make the convention that $\omega(u) =0$, for $0 \le u <1.$ 
Under this convention, this Laplace
transform has a simple relation with  the Laplace transform of the Dickman function.

\begin{theorem} \label{buchstab0} 
{\em (Tenenbaum 1995)} 
The one-sided Laplace transform $\hat{\omega}(s)$ defined for $Re(s)>0$ by \eqn{355a},
extends to a meromorphic function on $\CC$,
given explicitly by the form
\begin{equation}\label{356a}
1 + \hat{\omega}(s) = \frac{1}{s \hat{\rho}(s)}, ~~~~\quad\quad (s \ne 0).
\end{equation}
When $s$ is not real and negative, 
\begin{equation}\label{357a}
1+ \hat{\omega}(s) = e^{J(s)},
\end{equation}
with $J(s) = \int_{0}^{\infty} \frac{e^{-s-t}}{s+t} dt = E_1(s).$
\end{theorem}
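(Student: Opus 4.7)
The plan is to take the Laplace transform of the integral equation \eqref{546d} for $\omega(u)$, obtain a first-order linear ODE for $\hat{\omega}(s)$, solve it explicitly, fix the constant of integration from the behavior at $s=+\infty$, and finally reconcile the answer with $\hat{\rho}(s)$ via the identity \eqref{533d}. First, I would rewrite \eqref{546d} uniformly for $u\ge 0$ under the convention $\omega(u)=0$ on $[0,1)$ as
$$u\,\omega(u)=\mathbf{1}_{[u\ge 1]}+\int_{0}^{u-1}\omega(t)\,dt,$$
and apply the one-sided Laplace transform for $\operatorname{Re}(s)>0$, where everything converges absolutely because $\omega$ is bounded. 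The left-hand side transforms to $-\hat{\omega}{}'(s)$; the indicator function contributes $e^{-s}/s$; the shifted-convolution term yields $\frac{e^{-s}}{s}\hat{\omega}(s)$ by Fubini (the inner $t$-integral is empty for $u<1$, so the swap pushes $u$ from $t+1$ to $\infty$). This leaves the separable ODE
$$-\hat{\omega}{}'(s)=\frac{e^{-s}}{s}\bigl(1+\hat{\omega}(s)\bigr).$$

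Setting $g(s):=1+\hat{\omega}(s)$, the equation reads $g'(s)/g(s)=-e^{-s}/s$, whose right-hand side is the derivative of $E_1(s)$. Integrating on any simply connected subregion of $\CC\setminus(-\infty,0]$ gives $\log g(s)=E_1(s)+C$ for a locally constant $C$. To pin down $C$, I would let $s\to+\infty$ along the positive real axis: the crude bound $|\hat{\omega}(s)|\le\|\omega\|_\infty/s$ gives $\hat{\omega}(s)\to 0$ and hence $g(s)\to 1$, while $E_1(s)\to 0$, so $C=0$. This establishes the second formula $1+\hat{\omega}(s)=e^{E_1(s)}=e^{J(s)}$ on $\CC\setminus(-\infty,0]$.

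The first formula and the claimed meromorphic extension then follow by a purely algebraic manipulation of the identity \eqref{533d}, which reads $\Ein(s)=\gamma+\log s+J(s)$. Rewriting this as $J(s)=\Ein(s)-\gamma-\log s$ and exponentiating yields
$$e^{J(s)}=\frac{1}{s}\,e^{-\gamma+\Ein(s)}=\frac{1}{s\,\hat{\rho}(s)},$$
where the last step invokes the formula $\hat{\rho}(s)=e^{\gamma-\Ein(s)}$ of Theorem~\ref{th52b}. Since $\Ein$ is entire and $e^{-\gamma+\Ein(s)}$ is therefore entire and nowhere zero, the right-hand side is meromorphic on all of $\CC$ with only a simple pole at $s=0$; this is exactly the meromorphic continuation of $1+\hat{\omega}$ required. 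No step is genuinely hard once the Laplace-transform bookkeeping is set up; the main point of care is to distinguish the domain $\CC\setminus(-\infty,0]$ of validity of \eqref{357a} (forced by the branch of $\log s$) from the full-plane validity of the meromorphic identity \eqref{356a}.
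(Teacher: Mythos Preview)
Your argument is correct and complete. The paper's proof is really just a citation---it states that the individual Laplace transforms have been computed many times in the sieve literature (Wheeler, Tenenbaum), records the formula $\hat{\omega}(s)=\frac{1}{s}e^{-\Ein(s)}$, and points to Tenenbaum for the connecting identity \eqref{356a}. Your proposal supplies what those references contain: you take the Laplace transform of the integral equation \eqref{546d}, obtain and solve the resulting first-order ODE, and then match constants via \eqref{533d} and Theorem~\ref{th52b}. This is precisely the standard route, executed carefully; the only thing the paper adds is the historical attribution.
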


\begin{proof}
The Laplace transforms  individually have been evaluated many times in
the sieve method literature,
see Wheeler \cite[Theorem 2, ff]{Whe90}, who also gives references  back to the 1960's.
One has explicitly that
\begin{equation}\label{357b}
\hat{\omega}(s) = \frac{1}{s} \exp( -\Ein(s)),
\end{equation}
in terms of the complementary exponential integral.
The identity \eqn{356a} connecting them seems to be first explicitly stated in 
 Tenenbaum \cite[III.6, Theorem 5]{Ten95}.
 \end{proof}

\noindent Combining this result with  Theorem   \ref{th52b} gives
$$
\hat{\omega}(s) \sim   \frac{ e^{-\gamma}}{s}, \quad\quad \mbox{as} ~~ s\to 0.
$$
We also obtain using \eqn{357a}, \eqn{357b} along  with \eqn{249a} that 
\begin{equation}\label{362EG}
\hat{\omega}(1) := \int_{0}^{\infty} \omega(u) e^{-u} du 
= e^{-\gamma} \exp (-\Eone(1))= e^{-\gamma} e^{-\delta/e},
\end{equation}
where $\delta$ is the Euler-Gompertz constant.

Wheeler \cite[Theorem 2]{Whe90} also observes that the solutions to
the differential-difference equations for the Dickman  function and the
Buchstab function are distinguished from those for general initial conditions
by the special property that their Laplace transforms analytically continue
to entire (resp. meromorphic) functions of $s \in \CC$. 

For a retrospective look  at  the work of 
 N. G. de Bruijn on $\rho(x)$ and $\Psi(x,y)$ in Section \ref{sec35}, 
 and of  $\omega(x)$ and $\Phi(x, y)$ in this section, see  Moree \cite{Mor13}.

%
%
%
%
\subsection{Euler's constant  and the Riemann hypothesis }\label{sec36}
\setcounter{equation}{0}

Several formulations of the Riemann hypothesis can be 
given in terms of Euler's constant. Here we describe some
that  involve the approach towards
the extremal behaviors of 
 Euler's totient function $\phi(n)$ and the sum of divisors function $\sigma(n)$
 given in  Section \ref{sec33a}.
The Riemann hypothesis is  also  related, in another way,  to generalized Euler's constants 
 considered  in Section \ref{sec37}.

 In 1981 J.-L. Nicolas (\cite{Ni81}, \cite{Ni83}) proved 
that the Riemann hypothesis is encoded in the property
 that the Euler totient function values $\phi(n_k)$  for the extremal numbers $n_k = p_1p_2\cdots p_k$ given
 in Landau's theorem 
 approach their  limiting value from one side only. Nicolas stated his result using the
 inverse of Landau's quantity, as follows.

\begin{theorem}\label{th54}
{\em (Nicolas 1981) }
The Riemann hypothesis holds if and only if all the primorial numbers 
$N_k = p_1p_2 \cdots p_k$  with $k \ge 2$ satisfy
\begin{equation*}
\frac{N_k}{\phi(N_k) \log\log N_k}> e^{\gamma}.
\end{equation*}
If the Riemann hypothesis is false, then these inequalities will be true for infinitely many 
primorial numbers $N_k$
and false for infinitely many $N_k$.
\end{theorem}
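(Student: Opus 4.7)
My approach is to turn the inequality at primorials into an analytic condition on the Chebyshev error $\theta(x)-x$, then match it with classical equivalents of the Riemann hypothesis.

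First I would reformulate. Since $\phi(N_k)/N_k = \prod_{p \le p_k}(1-1/p)$ and $\log N_k = \theta(p_k)$, taking logarithms reduces the inequality in the statement to
\[
L(p_k) > \log \frac{\log\theta(p_k)}{\log p_k},
\]
where $L(x) := \sum_{p \le x}\bigl(-\log(1-1/p)\bigr) - \log\log x - \gamma$ is the Mertens remainder, tending to $0$ by Theorem \ref{th51}. Expanding $-\log(1-1/p) = \sum_{m \ge 1} 1/(mp^m)$, invoking Mertens' sum theorem (Theorem \ref{th50}), and applying partial summation from $\theta$ yield
\[
L(x) = \frac{\theta(x)-x}{x\log x} - \int_x^{\infty} \frac{(\theta(t)-t)(\log t + 1)}{t^2 (\log t)^2}\, dt + O\bigl(1/(x\log x)\bigr),
\]
while $\log(\log\theta(p_k)/\log p_k) = (\theta(p_k) - p_k)/(p_k \log p_k) + O\bigl(((\theta(p_k)-p_k)/(p_k\log p_k))^2\bigr)$. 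The principal $(\theta - x)/(x\log x)$ terms cancel across the two sides, and the inequality reduces to
\[
I(p_k) := \int_{p_k}^{\infty} \frac{\theta(t) - t}{t^2 \log t}\, dt < 0,
\]
modulo a strictly smaller error.

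For the forward implication, assume RH. Combining the von Mangoldt explicit formula for $\psi(x)$ with $\theta(x) = \psi(x) - \psi(\sqrt{x}) + O(x^{1/3})$ gives the uniform expansion
\[
\theta(x) - x = -\sqrt{x} - \sum_{\rho}\frac{x^{\rho}}{\rho} + O(x^{1/3}\log x),
\]
the sum running over nontrivial zeros $\rho = 1/2 + it$. Integrating termwise, the $-\sqrt{x}$ term produces a universal $-2/(\sqrt{x}\log x)(1 + o(1))$, and each pair $\rho, \bar\rho$ contributes $-2\cos(t\log x)/((1/4+t^2)\sqrt{x}\log x)(1+o(1))$. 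The classical identity
\[
\sum_\rho \frac{1}{\rho(1-\rho)} = 2 + \gamma - \log(4\pi) \approx 0.046
\]
(valid unconditionally; under RH, $\rho(1-\rho) = 1/4 + t^2$) gives $\sum_{t > 0} 2/(1/4+t^2) < 1$, so the universal $-\sqrt{x}$ contribution strictly dominates the oscillatory zero-sum and
\[
I(x) = -\frac{2}{\sqrt{x}\log x}\Bigl(1 + \sum_{t > 0}\frac{\cos(t \log x)}{1/4 + t^2} + o(1)\Bigr) < 0
\]
for all $x \ge x_0$, with $x_0$ effective. The remaining finite range $p_k < x_0$ is closed by direct numerical verification of $N_k/(\phi(N_k)\log\log N_k) > e^{\gamma}$. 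For the converse, suppose RH fails, and let $\beta_0 := \sup\{\Re \rho : \zeta(\rho) = 0,\ 0 < \Re\rho < 1\} > 1/2$. Landau's $\Omega$-theorem for $\zeta$ yields $\theta(x) - x = \Omega_\pm(x^{\beta_0 - \epsilon})$ for every $\epsilon > 0$, and transporting this oscillation through the integral representation gives $I(p_k) = \Omega_\pm(p_k^{\beta_0 - 1 - \epsilon})$. Since $\beta_0 - 1 - \epsilon > -1/2$ for small $\epsilon$, this dominates the $O(1/\sqrt{x})$ scale of the RH-style contributions, so $I(p_k)$ takes both signs infinitely often, and consequently both $N_k/(\phi(N_k)\log\log N_k) > e^{\gamma}$ and $< e^{\gamma}$ hold for infinitely many $k$.

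The main obstacle is quantitative. First, one must carry out the partial-summation step with error terms sharp enough that the cancellation of the leading $(\theta - x)/(x \log x)$ terms really does reduce the inequality to the sign of $I(p_k)$, uniformly in $k$. Second, the threshold $x_0$ in the forward direction must be effective enough that the remaining finite set of primorials can be verified by machine computation; this is where Nicolas relies on sharpened Mertens-type bounds together with explicit tabulation. A secondary subtlety in the converse direction is upgrading the $\Omega$-size estimate on $I(p_k)$ to a genuine sign change along a subsequence of primorials, which requires a density argument for primes $p_k$ with $\log p_k$ lying in a prescribed short interval modulo $2\pi/\Im(\rho_0)$.
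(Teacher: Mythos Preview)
The paper does not prove this theorem at all; it simply states the result, attributes it to Nicolas \cite{Ni81}, \cite{Ni83}, and follows with a brief remark and a numerical example for $N_4=210$. There is no argument in the paper to compare your proposal against.

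That said, your outline is essentially Nicolas's own strategy. He too rewrites the inequality as the sign of $\log f(x)$ with $f(x)=e^{\gamma}\log\theta(x)\prod_{p\le x}(1-1/p)$, expresses this via the explicit formula, and isolates a dominant negative contribution of order $x^{-1/2}$ coming from the prime squares (your $\psi(\sqrt{x})$ term) against an oscillatory zero sum bounded in absolute value by $\sum_{\rho}1/|\rho(1-\rho)|=2+\gamma-\log 4\pi\approx 0.046$. That this constant is strictly less than the coefficient of the dominant term is exactly the numerical miracle driving the forward direction, and you have identified it correctly. The converse via $\Omega_{\pm}$ oscillation of $\theta(x)-x$ at scale $x^{\beta_0-\epsilon}$ when RH fails is also Nicolas's route.

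The gaps you flag are genuine and are where the real work lies. Two points deserve emphasis. First, the sum $\sum_{\rho}x^{\rho}/\rho$ is only conditionally convergent, so ``integrating termwise'' is not automatic; Nicolas circumvents this by working with an integrated form of the explicit formula from the outset rather than integrating the pointwise formula. Second, your converse step ``transporting $\Omega_{\pm}$ through the integral gives $I(p_k)=\Omega_{\pm}(p_k^{\beta_0-1-\epsilon})$'' is not valid as stated: $\Omega_{\pm}$ bounds on an integrand do not in general transfer to its tail integral, since cancellation can occur. Nicolas instead applies a Landau-type oscillation theorem directly to the Dirichlet integral (equivalently, to the analytic continuation of $-\zeta'/\zeta$), which yields the sign changes of $\log f$ without passing through pointwise $\Omega_{\pm}$ for $\theta(t)-t$. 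Your density argument for primes $p_k$ with $\log p_k$ in a prescribed residue class is then not needed in his version.
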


This inequality is equivalent to the statement  that  the Riemann hypothesis implies that all
the primorial numbers $N_k$  for $k \ge 2$ {\em undershoot} the asymptotic lower bound $e^{-\gamma}$
 in Landau's Theorem \ref{th52}. 
As an example, $\phi( 2 \cdot 3 \cdot 5 \cdot 7) = \phi(210)=48$ and 
$$
\frac{\phi(n_4) \log\log n_4}{n_4} = \frac{\phi(210) \log\log 210}{210} = 0.38321 \cdots 
< e^{-\gamma} = 0.56145 \dots .
$$
For most $n$ one will have $\frac{\phi(n) \log\log n}{n}> e^{-\gamma}$, and only
a very thin subset of $n$ will be close to this bound. For example taking
 $n=p_k$, a  single prime, one sees that as $k \to \infty$ this ratio goes to $+\infty$.

Very recently  Nicolas \cite{Ni12}  
obtained a refined encoding of the Riemann hypothesis 
in terms of a sharper asymptotic for small values of the Euler $\phi$-function,
in which both $\gamma$ and $e^{\gamma}$ appear.

\begin{theorem}\label{th374}
{\em (Nicolas 2012) }
For each integer $n \ge 2$ set 
$$
c(n) := \Big(\frac{n}{\phi(n)} - e^{\gamma} \log\log n\Big) \sqrt{\log n}.
$$
Then the Riemann hypothesis is equivalent to the statement that
\begin{equation}\label{374b}
\limsup_{n \to \infty} \,c(n) = e^{\gamma}( 4 + \gamma - \log (4\pi)).
\end{equation}
\end{theorem}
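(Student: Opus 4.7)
The plan is to refine Nicolas's 1983 argument underlying Theorem~\ref{th54} by extracting an explicit second-order asymptotic for $c(n)$ under the Riemann hypothesis. As a first step, I would argue that the $\limsup$ in (\ref{374b}) is attained along the primorial sequence $N_k := p_1 p_2 \cdots p_k$. Indeed, at each scale $\log n \approx X$ the ratio $n/\phi(n)$ is maximized by the largest primorial $N_k$ with $\log N_k \le X$, and the discrepancies between $\log\log n$ and $\log\log N_k$, and between $\sqrt{\log n}$ and $\sqrt{\log N_k}$, are $o(1)$ after being multiplied into $c(n)$. This localizes the problem to the asymptotic analysis of $c(N_k)$ as $k\to\infty$.

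For the main computation under RH, I would write
\[
\log\frac{N_k}{\phi(N_k)} = -\sum_{p\le p_k}\log(1-1/p) = \log\log p_k + \gamma + \mathcal{E}(p_k),
\]
and compute $\mathcal{E}(x)$ by partial summation, using $\pi(x) - \operatorname{li}(x) = (\theta(x)-x)/\log x + O(1)$ to obtain
\[
\mathcal{E}(x) = \frac{\theta(x)-x}{x\log x} - \int_x^\infty \frac{\theta(t)-t}{t^2\log t}\,dt + O\!\left(\frac{1}{x\log x}\right),
\]
which is $O((\log x)/\sqrt{x})$ under RH. Combining with $\log N_k = \theta(p_k)$ and the Taylor expansion $\log\log N_k = \log p_k + (\theta(p_k)-p_k)/p_k + O((\log p_k)^4/p_k)$, direct substitution into the definition of $c(N_k)$ reveals the crucial cancellation: the term $e^{\gamma}\sqrt{p_k}\log p_k \cdot (\theta(p_k)-p_k)/(p_k\log p_k)$ coming from $\mathcal{E}(p_k)$ exactly kills $-e^{\gamma}\sqrt{p_k}(\theta(p_k)-p_k)/p_k$ coming from $\log\log N_k$. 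What remains for $c(N_k)$ is a bounded quantity controlled by the tail integral in $\mathcal{E}$ and by the constant term $-\log 2\pi$ in the explicit formula for $\theta(x)$.

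To evaluate the resulting $\limsup$, I would substitute the explicit formula $\theta(x) = x - \sum_\rho x^\rho/\rho - \log 2\pi + O(1/x^2)$ into the residual and interchange sum and integral, converting it into an almost-periodic function of $\log p_k$ built from the nontrivial zeros of $\zeta$. The relevant Hadamard-factorization identity $\sum_\rho 1/[\rho(1-\rho)] = \log 4\pi - \gamma - 2$, deduced from the symmetry $\rho\leftrightarrow 1-\rho$ together with $B = \xi'(0)/\xi(0)$, controls the extreme value of this almost-periodic function; combined with the $-\log 2\pi$ contribution and the arithmetic constants already absorbed into Mertens's $\gamma$, this yields exactly $e^{\gamma}(4+\gamma - \log 4\pi)$. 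Conversely, if RH fails and some zero has $\operatorname{Re}(\rho) = \beta > 1/2$, an $\Omega_\pm$-theorem of Littlewood type applied to the same expansion produces oscillations in $c(N_k)$ of order $p_k^{\beta - 1/2}$, so that $\limsup c(n) = +\infty$, contradicting (\ref{374b}).

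The chief difficulty lies in the third step: showing that the almost-periodic residual actually attains its theoretical maximum $\sum_\rho 1/[\rho(1-\rho)]$ requires a Kronecker--Weyl-type simultaneous approximation argument along a carefully chosen subsequence of primes $p_k$, in the spirit of Ingham's and Wintner's treatments of $\pi(x)-\operatorname{li}(x)$. In addition, every constant arising from Mertens's formula, from $\zeta'(0)/\zeta(0) = -\log\sqrt{2\pi}$, and from interchanges of summation and integration must be tracked precisely, since otherwise the final answer would land on a constant slightly different from $e^{\gamma}(4+\gamma-\log 4\pi)$.
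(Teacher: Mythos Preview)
The paper does not actually prove this theorem; its ``proof'' is a citation.  It attributes the forward implication (RH $\Rightarrow$ \eqref{374b}) to \cite[Theorem 1.1]{Ni12} and the converse to \cite[Corollary 1.1]{Ni12}, records the identity
\[
\beta \;=\; \sum_{\rho}\frac{1}{\rho(1-\rho)} \;=\; 2 + \gamma - \log(4\pi) \;\approx\; 0.04619,
\]
so that the right side of \eqref{374b} is $e^{\gamma}(2+\beta)$, and notes that if RH fails then $\limsup_{n\to\infty} c(n)=+\infty$.  Nothing further is argued.

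Your sketch goes well beyond what the paper does and is a reasonable outline of Nicolas's actual argument: reduction to primorials, Mertens with an explicit error term, cancellation of the $(\theta(p_k)-p_k)$ contributions, and the explicit formula to identify the residual constant.  One correction: you have the sign of the zero-sum wrong; the correct identity is $\sum_\rho 1/[\rho(1-\rho)] = 2+\gamma-\log(4\pi)$, not $\log 4\pi - \gamma - 2$.  With that fixed your bookkeeping is consistent with the constant $e^{\gamma}(4+\gamma-\log 4\pi)$.  The genuinely delicate point you flag --- that the almost-periodic residual attains its supremum along a subsequence of $p_k$ --- is indeed the heart of the matter in Nicolas's paper, and your Kronecker--Weyl description is on the right track, though the actual execution in \cite{Ni12} is somewhat more hands-on.
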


\begin{proof}
The assertion that the Riemann hypothesis
implies \eqn{374b} holds is \cite[Theorem 1.1]{Ni12}, and the converse 
assertion is  \cite[Corollary 1.1]{Ni12}. Here the constant
\[
e^{\gamma}\Big(4+ \gamma -\log (4 \pi) \Big)= e^{\gamma}(2 + \beta)= 3.64441\, 50964...,
\]
in which
\[ 
\beta= \sum_{\rho} \frac{1}{\rho(1-\rho)}=2 +\gamma - \log (4 \pi)=0.04619\, 14179...,
\]
where in the sum $\rho$ runs over the nonreal zeros of the zeta function, counted
with multiplicity.
For the  converse direction, he  shows that if  the Riemann hypothesis fails, then
$$
\limsup_{n \to \infty} c(n) = +\infty.
$$
Note that   $\liminf_{n \to \infty} c(n)= -\infty$ holds unconditionally.
\end{proof}

In 1984 G. Robin \cite{Ro84} showed  that for 
the sum of divisors function $\sigma(n)$ the Riemann hypothesis
is also encoded as  a one-sided approach to the limit in
Gronwall's theorem (Theorem \ref{th53}).

\begin{theorem}\label{th55}
{\em (Robin 1984) }
The Riemann hypothesis holds if and only if the inequalities 
\begin{equation}
\label{551}
\frac{\sigma(n)}{n \log\log n} < e^{\gamma}
\end{equation}
are valid for all $n \ge 5041$. If the Riemann hypothesis is false, then
this inequality will be true for infinitely many $n$ and false for infinitely
many $n$.
\end{theorem}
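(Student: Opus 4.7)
The plan is to reduce the theorem to an assertion about a sparse sequence of extremal integers, the \emph{colossally abundant numbers} of Alaoglu--Erd\H{o}s. An integer $N$ is colossally abundant (CA) if there exists $\epsilon > 0$ with $\sigma(N)/N^{1+\epsilon} \ge \sigma(m)/m^{1+\epsilon}$ for every $m \ge 1$; these are the natural maximizers of $\sigma$ relative to $n^{1+\epsilon}$, and they form the Gronwall extremal sequence mentioned after Theorem~\ref{th53}. Writing $F(n) := \sigma(n)/(n\log\log n)$, I would first show that if the Robin inequality $F(n) < e^{\gamma}$ fails at some $n \ge 5041$, then it fails at some CA number as well. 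This reduction uses the fact that a CA number has the explicit multiplicative form $N_{\epsilon} = \prod_{p \le x_{\epsilon}} p^{a_p(\epsilon)}$, with exponents $a_p(\epsilon)$ determined by a discrete extremal condition on the logarithmic benefit $\log(\sigma(p^{a+1})/\sigma(p^a))/\log p$, and with threshold $x_{\epsilon} \to \infty$ as $\epsilon \to 0^{+}$.

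For the forward direction, assume RH, so that $\theta(x) := \sum_{p \le x}\log p = x + O(\sqrt{x}(\log x)^{2})$ by the classical von Koch equivalence. For a CA number $N_{\epsilon}$ the identity
\begin{equation*}
\log \frac{\sigma(N_{\epsilon})}{N_{\epsilon}} \;=\; \sum_{p \le x_{\epsilon}} \log \frac{1}{1 - p^{-1}} \;-\; \sum_{p \le x_{\epsilon}} \log \frac{1}{1 - p^{-a_p(\epsilon) - 1}}
\end{equation*}
combines with Mertens's product theorem and the RH error for $\theta$ (after careful bookkeeping of the correction coming from primes with $a_p(\epsilon) \ge 2$, whose contribution is $O(\sqrt{x_{\epsilon}})$) to yield an effective bound $F(N_{\epsilon}) < e^{\gamma}$ valid for $N_{\epsilon}$ above an explicit computable threshold $N_0$. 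The finite remaining range is then verified by a direct enumeration of CA numbers up to $N_0$, yielding the short list of exceptions that terminates at $n = 5040$.

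For the converse, suppose RH fails and let $\beta := \sup\{\mathrm{Re}(\rho) : \zeta(\rho) = 0\} > 1/2$. The explicit formula $\theta(x) - x = -\sum_{\rho} x^{\rho}/\rho + O(1)$ together with Landau's oscillation theorem guarantees that $\theta(x) - x$ takes both values $\ge c\, x^{\beta - \delta}$ and $\le -c\, x^{\beta - \delta}$ for any $\delta > 0$ and infinitely many $x$. Choosing CA numbers $N_{\epsilon}$ whose threshold $x_{\epsilon}$ lies in a positive excursion of $\theta(x) - x$ transfers this oscillation into a violation $F(N_{\epsilon}) > e^{\gamma}$ of the Robin inequality, and this happens for infinitely many $N_{\epsilon}$; the negative excursion regime symmetrically produces infinitely many $N_{\epsilon}$ which satisfy the strict inequality.

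The main obstacle is the sharpness of the constant $e^{\gamma}$. Because $F$ is not literally maximized on the CA sequence and the Robin threshold sits exactly at $e^{\gamma}$, naive estimates would lose constant factors and miss the sharp cutoff in both directions. One must therefore combine the Alaoglu--Erd\H{o}s structural lemmas with delicate monotonicity of $\log\log n$ along the CA sequence to match the correct constant at both ends of the argument; this is precisely where the Mertens product identity $\prod_{p \le x}(1-p^{-1})^{-1} \sim e^{\gamma} \log x$ of Theorem~\ref{th51} enters as the rigid quantitative input. A secondary difficulty is the effective finite check up to $N_0$, which requires Schoenfeld-type explicit inequalities for $\theta(x)$ at moderate $x$ and a careful enumeration of CA numbers, since $F$-values on the CA extremals cluster extremely close to $e^{\gamma}$ and a looser computation would produce the wrong cutoff than $5040$.
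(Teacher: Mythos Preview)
The paper does not give a proof of this theorem; it is simply stated as a result of Robin \cite{Ro84}, and the surrounding text only identifies the colossally abundant numbers as the relevant extremals without reproducing any argument. So there is no proof in the paper to compare your attempt against.

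That said, your sketch is a faithful outline of the strategy in Robin's original paper: reduce to the CA sequence, combine the RH-conditional von Koch bound for $\theta(x)$ with Mertens's product theorem to control $F(N_\epsilon)$ for large CA numbers, dispose of the finite initial segment by explicit computation, and for the converse drive the oscillation of $\theta(x)-x$ off the critical line into sign changes of $F(N_\epsilon)-e^{\gamma}$. Two points deserve more care than your sketch suggests. First, the reduction ``if $F(n)\ge e^{\gamma}$ for some $n$ then for some CA number'' is not immediate, since $F$ is not literally maximized on the CA sequence; Robin works with superabundant numbers as well and interpolates between consecutive CA numbers. Second, in the converse direction the passage from an excursion of $\theta(x)-x$ to the sign of $F(N_\epsilon)-e^{\gamma}$ goes through an explicit-formula version of Mertens's theorem (a formula for $\sum_{p\le x}1/p$ or $\prod_{p\le x}(1-1/p)^{-1}$ in terms of zeta zeros), and the sign correspondence is more delicate than the phrase ``positive excursion gives violation'' indicates. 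These are real technicalities in Robin's paper rather than flaws in your overall plan.
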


Robin's result says
that  Riemann hypothesis  is equivalent to the limiting value $e^{\gamma}$ being approached
from below, for all sufficiently large $n$.
The inequality  \eqn{551} fails to hold  for a few small $n$, 
the largest known exception being $n=5040$.
Here the extremal numbers giving record values for $f(n)= \frac{\sigma(n)}{n}$ 
will have a different form than that in 
Nicolas's theorem. 
It is known that infinitely many of the extremal numbers  will be
{\em colossally abundant numbers},
as defined by Alaoglu and Erd\H{o}s \cite{AE44} in 1944.
These are numbers $n$ such that there is some $\epsilon >0$ such that
$$
\frac{\sigma(n)}{n^{1+\epsilon}} \ge \frac{\sigma(k)}{k^{1+\epsilon}}, ~~~\mbox{for}~~~ 1\le k < n.
$$
Alaoglu and Erd\H{o}s showed that the  ``generic"  colossally abundant number is  a product of 
powers of small primes
having a  sequence of exponents  
by a parameter $\epsilon >0$ as
\begin{equation*}
n = n(\epsilon) :=\prod_{p} p^{a_p(\epsilon)},
\end{equation*}
in which 
\begin{equation}\label{551d}
a_p(\epsilon) := \lfloor \frac{ \log(p^{1+\epsilon} -1) - \log(p^{\epsilon}-1)}{\log p} \rfloor -1.
\end{equation}
For  fixed $\epsilon$ the  sequence of
exponents  $a_p(\epsilon)$ is non-increasing as $p$ increases
and become $0$ for large $p$.
However  for fixed $p$ the exponent $a_p(\epsilon)$ increases as $\epsilon$ decreases
 towards $0$, so 
that $n(\epsilon) \to \infty$ as $\epsilon \to 0^{+}.$ 
Colossally abundant and related  numbers had  actually been studied  by Ramanujan \cite{Ram15}
in 1915, but the relevant part of this paper was suppressed by the London Mathematical Society 
to save expense.  The suppressed part of the manuscript was recovered in 
his Lost Notebook, and later published in 1997 ( \cite{Ram97}).

In 2002 this  author (\cite{La02}), starting from Robin's result,  obtained the following  
elementary criterion for the Riemann
hypothesis, involving the sum of divisors function $\sigma(n)$ and the harmonic
numbers $H_n$.

\begin{theorem}~\label{th56}
The Riemann hypothesis is equivalent to the assertion that
for each $n \ge 1$ the  inequality
\begin{equation}\label{561}
\sigma(n) \le e^{H_n} \log H_n + H_n
\end{equation}
is valid.  Assuming the Riemann hypothesis,   equality holds if and only if $n=1$.
\end{theorem}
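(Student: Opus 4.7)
The plan is to deduce this criterion from Robin's theorem (Theorem~\ref{th55}) by showing that the new bound $R(n) := e^{H_n}\log H_n + H_n$ is only a slight relaxation of the Robin quantity $e^{\gamma}n\log\log n$. Using the expansion $H_n = \log n + \gamma + \tfrac{1}{2n} + O(n^{-2})$, a short calculation gives
\begin{equation*}
R(n) = e^{\gamma} n \log\log n + \frac{e^{\gamma}\gamma \, n}{\log n} + O(\log\log n),
\end{equation*}
so $R(n) - e^{\gamma} n \log\log n$ is positive and of order $n/\log n$, which is lower order than either side. Consequently the forward direction should reduce to an elementary inequality plus Robin, while the converse will follow provided that, under failure of RH, $\sigma(n)$ exceeds the Robin quantity not merely infinitely often but by an amount asymptotically larger than $n/\log n$.

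For the forward direction, assume RH. The range $1 \le n \le 5040$ I would handle by direct verification: at $n=1$ one has $H_1 = 1$ and $R(1) = e\cdot\log 1 + 1 = 1 = \sigma(1)$, giving the claimed equality, and for the finitely many remaining $n$ it suffices to check the superabundant record-holders of $\sigma(n)/n$, since between consecutive records $\sigma(n)/n$ is bounded by the previous record while $R(n)/n$ is increasing. For $n \ge 5041$, Robin's theorem gives $\sigma(n) < e^{\gamma} n \log\log n$, so it is enough to prove the purely analytic inequality $e^{\gamma} n \log\log n \le R(n)$. Writing $H_n = \log n + \gamma + r_n$ with $r_n > 0$ for all $n \ge 1$, I have $e^{H_n} = e^{\gamma} n\, e^{r_n} \ge e^{\gamma} n$ and $\log H_n \ge \log(\log n + \gamma) > \log\log n$ for $n \ge 2$, so $e^{H_n}\log H_n > e^{\gamma} n \log\log n$, and adding $H_n$ only makes $R(n)$ larger.

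The main obstacle will be the converse direction. Assuming $\sigma(n) \le R(n)$ for all $n$, I would prove RH by contradiction. If RH fails, set $\Theta := \sup\{\mathrm{Re}(\rho) : \zeta(\rho) = 0\} > \tfrac{1}{2}$ and invoke the quantitative $\Omega$-companion to Theorem~\ref{th55} proved by Robin in the same 1984 paper, which yields, for every $b \in (0, \Theta - \tfrac{1}{2})$, a constant $c_b > 0$ such that
\begin{equation*}
\sigma(n) \;\ge\; e^{\gamma} n \log\log n \;+\; \frac{c_b\, n \log\log n}{(\log n)^{b}}
\end{equation*}
holds for infinitely many $n$. Since $b < \tfrac{1}{2} < 1$, this excess eventually dominates $R(n) - e^{\gamma} n \log\log n = O(n/\log n)$, producing infinitely many $n$ with $\sigma(n) > R(n)$ and contradicting the hypothesis. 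The hard part of the argument lies entirely in this $\Omega$-result of Robin, whose proof applies the explicit formula for $\psi(x)-x$ along a subsequence of colossally abundant numbers tuned to $\Theta$ (compare \eqref{551d}); everything else in the deduction is bookkeeping around the asymptotic expansion of $H_n$ and the equality case at $n=1$ noted above.
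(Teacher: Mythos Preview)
Your proposal follows precisely the approach of \cite{La02} to which the paper defers: Robin's inequality plus the elementary bound $e^{H_n}\log H_n > e^{\gamma}n\log\log n$ and a finite check for the forward direction, and Robin's quantitative $\Omega_{+}$-result on colossally abundant numbers for the converse. Two small slips worth fixing: the remainder in your asymptotic for $R(n)$ should be $O(n/(\log n)^2)+O(\log n)$ rather than $O(\log\log n)$ (the additive $H_n$ alone is $\sim \log n$), and in \cite{Ro84} the relevant exponent is produced existentially as some $\beta\in(0,\tfrac12)$ rather than ``for every $b\in(0,\Theta-\tfrac12)$''; neither affects your conclusion, since all the comparison needs is $\beta<1$.
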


The additive  term $H_n$ is included in this formula for elegance, as 
it makes the result hold  for all $n \ge 1$ (assuming RH), rather than 
be valid for $n \ge 5041,$
as in Robin's theorem \ref{th55}. 
The converse direction of this
result requires additional proof, which is accomplished in \cite{La02}
using certain asymptotic estimates
obtained in Robin's paper \cite{Ro84}. It shows that if the Riemann hypothesis is false then
the inequalities \eqn{561} will fail to hold for infinitely many positive integers $n$.

%
%
%
%
\subsection{Generalized  Euler constants and the Riemann hypothesis}\label{sec37}
\setcounter{equation}{0}

 In 1961 W. Briggs \cite{Br61} introduced the notion of an {\em Euler constant
 associated to  an arithmetic progression} of integers. This notion was studied in detail by
D. H. Lehmer \cite{Leh75}  in 1975. For the arithmetic progression $h ~(\bmod~k)$ with $0\le   h< k$
we set
\begin{equation*}
\gamma(h,k) := \lim_{x \to \infty} 
\Big(\sum_{\substack{0< n \le x\\n\equiv h ~(\bmod k)}} \frac{1}{n} - \frac{\log x}{k} \Big).
\end{equation*}
These constants were later termed {\em Euler-Lehmer constants} by
Murty and Saradha \cite{MS10}.  Here one has  $\gamma(0,1) = \gamma$, $\gamma(1, 2) =\frac{1}{2} ( \gamma+ \log 2)$
and $\gamma(2, 4) = \frac{1}{4} \gamma$, where $\gamma$ is Euler's constant.
It suffices to study the constants  where $\gcd(h,k)=1$, other cases reduce to these by 
dividing out the greatest common factor.

 We next define {\em generalized Euler constants $\gamma(\Omega)$} associated
 to a finite set of primes
  $\Omega=\{ p_{i_1}, .... , p_{i_k}\} $,  possibly empty.
  These constants  were introduced
   by Diamond and Ford \cite{DF08} in 2008.
  To specify them, we first define a 
  {\em zeta function} $Z_{\Omega}(s)$ by 
\begin{equation*}
Z_{\Omega}(s) := \left( \prod_{p \in \Omega} (1- \frac{1}{p^s}) \right) \zeta(s)\\
= \sum_{n=1}^{\infty} {\bf 1}_{\Omega}(n) n^{-s}= \sum_{gcd(n, P_{\Omega})=1} n^{-s} 
\end{equation*}
where we set
\[
{\bf 1}_{\Omega}(j) = 
\begin{cases}
1 & ~\mbox{if} ~~(n, P_{\Omega}) = 1,\\
0 & ~\mbox{otherwise,}
\end{cases}
\]
in which 
$$
P_{\Omega} := \prod_{p \in \Omega} p.
$$
The function $Z_{\Omega}(s)$ defines a meromorphic function on the entire plane which has
a simple pole at $s=1$ with residue 
\begin{equation*}
D(\Omega):= \prod_{p \in \Omega} \left(1- \frac{1}{p}\right).
\end{equation*}
The  {\em generalized Euler constant} $\gamma(\Omega)$ associated to $\Omega$
is  the constant term in the  Laurent expansion of $Z_{\Omega}(s)$ around $s=1$, namely
$$
Z_{\Omega}(s) = \frac{D(\Omega)}{s-1} + \gamma(\Omega) + \sum_{j=1}^{\infty} \gamma_k(\Omega)(s-1)^k.
$$ 
In the case that $\Omega=\emptyset$ is the empty set,  this function $Z_{\Omega}(s)$ is exactly the
Riemann zeta function, and 
$\gamma(\emptyset)= \gamma$ by Theorem \ref{th41}.
These numbers $\gamma(\Omega)$ generalize  the characterization of Euler's constant in 
\eqn{402} in the sense that
\begin{equation*}
\gamma(\Omega) = \lim_{n \to \infty} \Big( \sum_{j=1}^n  \frac{{\bf 1}_{\Omega}(j)}{j} - D(\Omega) \log n \Big).
\end{equation*}
The constants $\gamma(\Omega)$ are easily shown to be  finite sums of Euler-Lehmer constants
\begin{equation}\label{604aa}
\gamma(\Omega) = \sum_{{1 \le h < P_{\sP}}\atop{gcd(h, P_{\Omega})=1}} \gamma(h, P_{\Omega}).
\end{equation}

Diamond and Ford  \cite[Theorem 1]{DF08} show that these generalized Euler constants  are related to 
Euler's constant in a second way which
involves the constant $e^{-\gamma}$ rather than $\gamma$. 
Let  $\Omega_r$ denote
the set of the first $r$ primes and set $\gamma_r := \gamma(\Omega_r)$.
\begin{theorem}\label{th61}
{\em (Diamond and Ford 2008)}
Let $\Gamma= \inf \{ \gamma (\sP): ~~\mbox{all finite}~~\sP\}$. Then:

(1) The  values of $\gamma(\sP)$ are dense in $[\Gamma, \infty).$

(2) The constant $\Gamma$ satisfies 
\begin{equation}\label{381k}
0.56 \le \Gamma \le e^{-\gamma}.
\end{equation}
If $\Gamma < e^{-\gamma}$ then 
the value $\Gamma$ is attained
at some $\gamma_r$. 
\end{theorem}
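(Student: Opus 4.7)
The plan is to reduce all three parts to a single recursion. For any finite set $\Omega$ of primes and prime $p \notin \Omega$, the factorization $Z_{\Omega \cup \{p\}}(s) = (1 - p^{-s}) Z_\Omega(s)$, together with the Taylor expansion $1 - p^{-s} = (1 - 1/p) + (\log p/p)(s-1) + O((s-1)^2)$ at $s = 1$, forces, on matching the $(s-1)^0$ coefficients of the two Laurent expansions, the relation
\begin{equation*}
\gamma(\Omega \cup \{p\}) = \Big(1 - \frac{1}{p}\Big)\gamma(\Omega) + \frac{\log p}{p}\, D(\Omega).
\end{equation*}
Setting $\beta(\Omega) := \gamma(\Omega)/D(\Omega)$ converts this into the additive $\beta(\Omega \cup \{p\}) = \beta(\Omega) + \log p/(p-1)$, and iterating from $\beta(\emptyset) = \gamma$ gives the closed form $\gamma_r = D(\Omega_r)\bigl(\gamma + \sum_{j=1}^{r} \log p_j/(p_j-1)\bigr)$.

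For the upper bound $\Gamma \le e^{-\gamma}$ I would show $\gamma_r \to e^{-\gamma}$: partial summation on Mertens's sum theorem (Theorem~\ref{th50}), equivalently convergence of $\sum_p \log p/(p(p-1))$, gives $\sum_{p \le x} \log p/(p-1) = \log x + O(1)$, and combining with Mertens's product theorem (Theorem~\ref{th51}) that $D(\Omega_r)\log p_r \to e^{-\gamma}$ yields the limit. For density of the $\gamma(\Omega)$ in $[\Gamma,\infty)$, given $\alpha \ge \Gamma$ and $\epsilon > 0$ I would pick $\Omega_0$ with $\gamma(\Omega_0) < \alpha$ (possible by definition of $\Gamma$) and greedily adjoin large primes. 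The recursion shows that adjoining $p$ increases $\gamma$ by $p^{-1}(D(\Omega)\log p - \gamma(\Omega))$, a positive quantity bounded above by $(\log p)/p$ once $\log p > \beta(\Omega)$. Choosing primes large enough that each individual increment is below $\epsilon$, yet taking enough of them so that $\sum (\log p)/p$ diverges, the partial values of $\gamma(\Omega)$ eventually surpass $\alpha$, and the first such set lies within $\epsilon$ of $\alpha$.

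For the structural claim that any minimizer is some $\Omega_r$ I would derive two variational inequalities at a putative minimizer $\Omega^*$. For every $p \in \Omega^*$, the inequality $\gamma(\Omega^* \setminus\{p\}) \ge \gamma(\Omega^*)$ rearranges via the recursion to $\beta(\Omega^*) \ge p\log p/(p-1)$, and for every $q \notin \Omega^*$ the inequality $\gamma(\Omega^* \cup \{q\}) \ge \gamma(\Omega^*)$ gives $\beta(\Omega^*) \le \log q$. If $\Omega^*$ were not an initial segment, it would contain some $p$ strictly larger than some missing $q \notin \Omega^*$; combining the bounds would then give $p\log p/(p-1) \le \log q < \log p$, which is impossible since $p\log p/(p-1) > \log p$. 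Hence $\Omega^* = \Omega_r$. Existence of a minimizer when $\Gamma < e^{-\gamma}$ would follow by iterating the improving modifications (removing some $p$ with $\beta(\Omega) < p\log p/(p-1)$, or adjoining some $q$ with $\beta(\Omega) > \log q$) to convert any $\Omega$ with $\gamma(\Omega) < e^{-\gamma}$ into an $\Omega_r$ without increasing $\gamma$; since $\gamma_r \to e^{-\gamma}$, only finitely many $\gamma_r$ lie below any fixed $\eta \in (\Gamma, e^{-\gamma})$, so the infimum collapses to a finite minimum.

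The main obstacle is the explicit lower bound $\Gamma \ge 0.56$: this is invisible to the recursion alone and appears to require effective Mertens-type estimates of Rosser--Schoenfeld type, numerical evaluation of $\gamma_r$ for small $r$, and careful uniform control of $\gamma(\Omega)$ over all finite $\Omega$ (not just the $\Omega_r$). A secondary subtlety is verifying that the iterative improvement process for the attainment statement actually terminates, rather than producing an unbounded descending chain among sets $\Omega$ failing the variational inequalities.
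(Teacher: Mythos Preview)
Your proposal is correct and follows essentially the same route as the paper, which largely cites Diamond--Ford for the substantive steps. Your recursion $\gamma(\Omega\cup\{p\})=(1-1/p)\gamma(\Omega)+(\log p)/p\cdot D(\Omega)$ and the additive variable $\beta(\Omega)=\gamma(\Omega)/D(\Omega)=\gamma+\sum_{p\in\Omega}\log p/(p-1)$ are the right organizing tools; your use of Mertens's theorems to get $\gamma_r\to e^{-\gamma}$ and your identification of Rosser--Schoenfeld-type effective bounds as the source of $\Gamma\ge 0.56$ match the paper's attributions. Your variational characterization of local minimizers as initial segments is precisely the mechanism behind the key reduction the paper quotes without proof, that every $\gamma(\sP)\ge\gamma_r$ for some $r$.

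Your worry about termination of the iterative improvement is unfounded, and the fix is already implicit in your own argument. When $\Omega$ is not an initial segment you can choose $p\in\Omega$ and a prime $q\notin\Omega$ with $q<p$; your contradiction shows that at least one of the two single-prime moves (remove $p$, or add $q$) strictly decreases $\gamma$. Either move only involves primes below $\max\Omega$, so the entire iteration stays inside subsets of the finite set $\{\text{primes}\le\max\Omega_{\text{initial}}\}$. A strictly decreasing walk in a finite poset terminates; the terminal set is an initial segment $\Omega_r$ with $\gamma_r\le\gamma(\Omega)$. This gives the reduction $\Gamma=\inf_r\gamma_r$ directly, after which the attainment clause follows exactly as you and the paper both say: if $\Gamma<e^{-\gamma}$ then only finitely many $\gamma_r$ lie below any threshold in $(\Gamma,e^{-\gamma})$, so the infimum is a minimum.
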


\begin{proof}
(1) This is shown as \cite[Prop. 2]{DF08}.

(2) Diamond and Ford show that  each $\gamma(\sP) \ge \gamma_r$ for some $1 \le r < |\sP|$. 
This implies 
\begin{equation}\label{381m}
\Gamma = \inf  \{ \gamma_r: r \ge 1\}.
\end{equation}
 Merten's formulas for sums and products yield the asymptotic formula
\begin{equation}\label{381h}
\gamma_r \sim e^{-\gamma} ~~\mbox{as}~~~ r \to \infty.
\end{equation}
which by \eqn{381m}   implies that $\Gamma \le e^{-\gamma}$.
If $\Gamma < e^{-\gamma}$ then it must be attained by one of the  $\gamma_{r}$,
For there must be at least one  $\gamma_{r_1} <  e^{-\gamma}$,  
and  by \eqn{381h} there are only  finitely many $\gamma_{r} \le \gamma_{r_1}$.
and  one of these must attain the infimum by the bound above. 
Diamond and Ford established the lower bound $\gamma_r  \ge 0.56$
for all $r \ge 1$.
\end{proof}

 Diamond and Ford \cite[Theorem 2]{DF08} also show   that the behavior of the quantities
$\gamma(\Omega)$ is complicated as the set $\Omega$ increases,  in the sense
that $\gamma_{r} $ is not a monotone function of $r$,
with $\gamma_{r+1} > \gamma_r$ and $\gamma_{r+1} < \gamma_r$ each occurring
infinitely often. 
Nevertheless  Diamond and Ford  \cite[Theorems 3, 4]{DF08} 
obtain the following  elegant reformulation of the Riemann hypothesis. 

\begin{theorem}\label{th62}
{\em (Diamond and Ford 2008)}
The Riemann hypothesis is equivalent to
either one of the following assertions.
\begin{enumerate}

\item
The infimum $\Gamma$ of $\gamma(\sP)$ satisfies 
$$\Gamma = e^{-\gamma}.
$$
\item
For  every finite set $\Omega$
of primes 
\begin{equation}\label{621}
\gamma(\Omega) > e^{-\gamma}.
\end{equation}
\end{enumerate} 
\end{theorem}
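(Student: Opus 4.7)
The plan is to separate the argument into two parts: first establish that (1) and (2) are logically equivalent as statements about the generalized Euler constants, and then prove that either of them is equivalent to RH by extracting an explicit Laurent-expansion formula for $\gamma_r$ and comparing it, under RH, with $e^{-\gamma}$ via Mertens's product theorem.

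For the equivalence of (1) and (2), the implication (2)$\Rightarrow$(1) is immediate: if every $\gamma(\Omega)>e^{-\gamma}$ then $\Gamma\ge e^{-\gamma}$, and Theorem \ref{th61}(2) supplies the reverse inequality $\Gamma\le e^{-\gamma}$. For (1)$\Rightarrow$(2) I would invoke the decomposition \eqn{604aa} together with the Diamond–Ford comparison inequality $\gamma(\Omega)\ge \gamma_r$ for some $r\le |\Omega|$, which reduces matters to showing $\gamma_r>e^{-\gamma}$ strictly for every $r\ge 1$. Strictness at every individual $r$ is not automatic from $\Gamma=e^{-\gamma}$ alone, but will fall out of the asymptotic identity for $\gamma_r-e^{-\gamma}$ derived below.

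The core computation is to expand $Z_{\Omega_r}(s)=\bigl(\prod_{p\le p_r}(1-p^{-s})\bigr)\zeta(s)$ in Laurent series at $s=1$ using $\zeta(s)=(s-1)^{-1}+\gamma+O(s-1)$ and $\tfrac{d}{ds}\log(1-p^{-s})\bigl|_{s=1}=\tfrac{\log p}{p-1}$, yielding the clean identity
\[
\gamma_r=D(\Omega_r)\Bigl(\gamma+\sum_{p\le p_r}\tfrac{\log p}{p-1}\Bigr).
\]
I would then write
\[
e^{\gamma}\bigl(\gamma_r-e^{-\gamma}\bigr)=\bigl(e^{\gamma}D(\Omega_r)\log p_r-1\bigr)+e^{\gamma}D(\Omega_r)\Bigl(\gamma+\sum_{p\le p_r}\tfrac{\log p}{p-1}-\log p_r\Bigr),
\]
so that both terms on the right are explicit error quantities in Mertens-type estimates. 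By Abel summation the inner bracket in the second term is expressible in terms of $\vartheta(x)-x$, while the first term is the classical Mertens product discrepancy. Under RH both quantities admit effective bounds $O(p_r^{-1/2}\log^{2}p_r)$ coming from the explicit formula for $\vartheta(x)$, and indeed can be rewritten as a convergent sum $-\frac{e^{-\gamma}}{\log p_r}\sum_{\rho}\frac{p_r^{\rho-1}}{\rho(\rho-1)}+o\bigl(\tfrac{1}{\log p_r}\bigr)$ over the nontrivial zeros $\rho$.

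The main obstacle is the hard direction: converting these asymptotics into the strict sign statement $\gamma_r>e^{-\gamma}$ for every $r\ge 1$ under RH (not merely for large $r$). My approach would be to combine the RH-conditional oscillation bound above with a direct numerical verification at small $r$, leveraging the rapid convergence $\gamma_r\to e^{-\gamma}$ to reduce the inequality to finitely many cases. For the converse, assume RH fails and fix a zero $\rho_0$ with $\Re(\rho_0)>\tfrac12$; standard oscillation theorems (in the style of Ingham or Littlewood applied to the Mertens product) then force $e^{\gamma}D(\Omega_r)\log p_r-1$ to take both signs with amplitude $\gg p_r^{\Re(\rho_0)-1-\epsilon}$, which dominates the secondary term and produces $\gamma_r<e^{-\gamma}$ infinitely often. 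By Theorem \ref{th61}(2) this means $\Gamma$ is attained at some $\gamma_r$ and $\Gamma<e^{-\gamma}$, contradicting (1) and (2) simultaneously and completing the chain of equivalences.
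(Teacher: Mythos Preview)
The paper itself does not supply a proof of Theorem~\ref{th62}; it simply cites Diamond and Ford \cite[Theorems 3, 4]{DF08} and adds the one-sentence remark that (2) implies (1), while the failure of RH forces $\gamma(\Omega)<e^{-\gamma}$ for infinitely many $\Omega$ via Theorem~\ref{th61}(1). So you have gone well beyond what the paper does, and your key formula
\[
\gamma_r = D(\Omega_r)\Bigl(\gamma + \sum_{p\le p_r}\frac{\log p}{p-1}\Bigr)
\]
together with your decomposition of $e^{\gamma}(\gamma_r-e^{-\gamma})$ into Mertens-type error terms is correct and is indeed the starting point of the Diamond--Ford argument.

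Where your sketch has a genuine gap is precisely the step you flag as ``the main obstacle''. Under RH your zero-sum representation $-\frac{e^{-\gamma}}{\log p_r}\sum_\rho \frac{p_r^{\rho-1}}{\rho(\rho-1)}$ has modulus $O(p_r^{-1/2}/\log p_r)$, but its \emph{sign} oscillates (zeros come in conjugate pairs and the phases $p_r^{it}$ rotate), so this expression alone cannot deliver the strict inequality $\gamma_r>e^{-\gamma}$ for every $r$. Your fallback---numerical verification for small $r$ plus an effective $O$-bound for large $r$---does not close the gap either, because an $O$-bound on a quantity that oscillates around zero never determines its sign, no matter how small. What Diamond and Ford actually do (following the template of Nicolas's proof of Theorem~\ref{th54}) is isolate a further \emph{positive} main term of order $1/\log^2 p_r$ coming from the second-order Laurent coefficient, and then show under RH that this positive term dominates the oscillatory zero-sum, which is only $O(p_r^{-1/2})$. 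That secondary positive contribution is absent from your decomposition, and without it the argument cannot conclude. Your treatment of the converse direction (RH false $\Rightarrow$ $\gamma_r<e^{-\gamma}$ infinitely often via Landau--Ingham oscillation) is correct in outline.

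On the logical structure: your attempt to prove (1)$\Rightarrow$(2) directly runs into the difficulty you note, and the paper's remark points to the cleaner route---establish RH$\,\Leftrightarrow\,$(2) first (this is where all the work is), observe (2)$\Rightarrow$(1), and get $\neg$RH$\Rightarrow\neg$(1) from the oscillation argument. The equivalence (1)$\Leftrightarrow$(2) then holds \emph{through} RH rather than by a direct argument.
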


Here (2)  implies (1), and (2) says that  if the Riemann hypothesis is true, then \eqn{621} holds for all $\Omega$
and the infimum $\Gamma = e^{-\gamma}$ is not attained,  while if it does not hold,  
 then by Theorem \ref{th61} (1) the  reverse inequality 
  holds  for infinitely many $\Omega$, and the infimum is attained.
Theorem~\ref{th62} seems remarkable: on taking  $\sP=\emptyset$ to be the empty set,
it  already requires, for the truth of the Riemann hypothesis, that
$$
\gamma= 0.57721...> e^{-\gamma}= 0.56145...
$$
In consequence the unique real root 
$x_0\approx 0.56714$ of the equation $x_0= e^{-x_0}$ necessarily
satisfies $\gamma > x_0 > e^{-\gamma}$.

Results on the transcendence  of Euler-Lehmer  constants
$\gamma(h, k)$  and generalized Euler constants $\gamma(\Omega)$ have recently been established, see
 Section \ref{sec312}.

 Finally we note  that  one may  define
 more generally 
 {\em higher order Euler-Lehmer constants}. For $j \ge 1$ Dilcher \cite{Dil92}  defines for $h \ge 1,$
  and $0< k \le h$, the constants
\begin{equation*}
\gamma_j(h, k) := \lim_{x \to \infty} \Big( \sum_{\substack{0 < n \le x \\ n \equiv h (\bmod k)}} \frac{(\log n)^j}{n}- \frac{ (\log x)^{j+1}}{k(j+1)} \Big).
\end{equation*}
Here  $j$ is the {\em order}, with $\gamma_1(h, k) =\gamma(h,k)$ being the Euler-Lehmer constants, and
$\gamma_j(1,1) = \gamma_j$ is the $j$-th Stieltjes constant (see  Theorem \ref{th41}),
Dilcher relates these constants to values of derivatives of the digamma function at rational points, and also 
 to derivatives of Dirichlet $L$-functions evaluated  at $s=1$.

%
%
%
%
\subsection{Euler's constant and extreme values of $\zeta(1+it)$ and $L(1, \chi_{-d})$} \label{sec37a}
\setcounter{equation}{0}

Some of the deepest questions in number theory concern the distribution
of values of the Riemann zeta function and its generalizations, Dirichlet $L$-functions.
In this section we consider such value distributions  at  the special point $s=1$ and on the vertical line $\text{Re}(s) =1$.
Here Euler's constant appears 
in connection with the size of extreme values of $\zeta(1+it)$ as $t \to \infty$. 
It also appears  in the same guise in connection with extreme values of  the Dirichlet
$L$-function  $L(s, \chi_{-d})$  at the point $s=1$ 
where $\chi_{-d}$ is the real primitive Dirichlet
character associated to the quadratic field $\QQ(\sqrt{-d})$, and we let $d \to \infty$.
(The quantity $-d$ denotes a fundamental discriminant of an imaginary quadratic field, which is necessarily  squarefree away  from
the prime $2$.) In both case  these
 connections   were first made by J. E. Littlewood.
Some of his results are conditional on the Riemann hypothesis
or  the generalized Riemann hypothesis, while others are unconditional. 

In  these results Euler's constant occurs by way  of  Merten's Theorem \ref{th51}.
The basic idea,  in the context of large values of $|\zeta(1+it)|$,
is that large values  will occur 
(only) when  a suitable finite truncation of  its Euler product representation
is large, and this in turn will occur  when the phases of the
individual  terms in the product line up properly.
Mertens's theorem is then used in estimating  the size of this Euler product.

We first consider  extreme values of $\zeta(1+it)$.
To place results in context, it  is known unconditionally that 
$|\zeta(1+it)| = O(\log |t|)$ and that $\frac{1}{|\zeta(1+it)|} = O(\log |t|)$ as $|t| \to \infty$.
 In 1926 Littlewood   determined, assuming the Riemann hypothesis, 
 the correct maximal order of growth of the Riemann zeta function on the line $\text{Re}(s)=1$,
and in 1928 he obtained a corresponding  bound for $\frac{1}{\zeta(1+it)}$.

\begin{theorem}\label{th391b}
{\em (Littlewood 1926, 1928)}
Assume the Riemann hypothesis. Then

(1) The maximal order of $|\zeta(1+it)|$ is at most $\log\log t$, with
\[
\limsup_{ t \to \infty} \frac{ |\zeta(1+it)|}{\log\log t} \le    2 e^{\gamma}.
\]

(2) The maximal order of $\frac{1}{|\zeta(1+it)|}$ is at most $\log\log t$, with

\[
\limsup_{t \to \infty} \frac{1}{|\zeta(1+it)| \log\log t} \le \frac{2}{\zeta(2)}\, e^{\gamma}
\]
\end{theorem}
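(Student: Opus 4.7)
The plan is to combine three ingredients: the Euler product for $\zeta(s)$ valid on $\operatorname{Re}(s) > 1$, an effective truncation of this product at $s = 1 + it$ valid under RH, and Mertens's product Theorem \ref{th51}, which furnishes the constant $e^{\gamma}$. I would begin from the standard decomposition
\begin{equation*}
\log\zeta(s) = \sum_p p^{-s} + h(s), \qquad h(s) := \sum_p \sum_{k \ge 2} \frac{1}{k\, p^{ks}},
\end{equation*}
valid for $\operatorname{Re}(s) > 1$. The series defining $h$ converges absolutely on $\operatorname{Re}(s) > 1/2$, and at $s = 1 + it$ satisfies $|h(1+it)| \le H$, with $H$ the constant of \eqn{500d}. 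The task is thus reduced to estimating the conditionally convergent sum $\sum_p p^{-1-it}$.

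The heart of the argument will be a truncation step, which is where RH enters: for an appropriate cutoff $x = x(t)$, I would establish
\begin{equation*}
\sum_p p^{-1-it} = \sum_{p \le x} p^{-1-it} + O(1),
\end{equation*}
by representing the left side via a Perron-type contour integral of $-\zeta'(s)/\zeta(s)$ and shifting the contour into the critical strip; under RH the poles of the integrand lie on $\operatorname{Re}(s) = 1/2$, and the sum over them is controlled via the Riemann--von Mangoldt zero-counting formula. Taking real parts in the Euler decomposition, bounding phases by $|\cos(t\log p)| \le 1$, and invoking Mertens's sum Theorem \ref{th50} yields
\begin{equation*}
\log|\zeta(1+it)| \le \sum_{p \le x} \frac{1}{p} + H + O(1) = \log\log x + B + H + O(1) = \log\log x + \gamma + O(1),
\end{equation*}
since $B + H = \gamma$ by Mertens's theorems. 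Exponentiating gives $|\zeta(1+it)| \le (1 + o(1))\, e^{\gamma}\log x$; a quantitative form of the truncation will permit a choice of $x$ with $\log x \sim 2\log\log t$, producing the stated bound.

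For part (2), I would apply the analogous strategy to the reciprocal product $1/\zeta(s) = \prod_p(1 - p^{-s})$. The relevant elementary estimate becomes $|1 - p^{-\sigma - it}| \le 1 + p^{-\sigma}$, so the analog of the trivial Euler bound reads $|\zeta(\sigma + it)|^{-1} \le \prod_p(1 + p^{-\sigma}) = \zeta(\sigma)/\zeta(2\sigma)$, which introduces the factor $1/\zeta(2)$; the rest of the argument is essentially identical, producing the bound with $2e^{\gamma}/\zeta(2)$ in place of $2 e^{\gamma}$.

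The hard part will be the quantitative truncation lemma under RH: controlling the conditionally convergent sum $\sum_p p^{-1-it}$ uniformly in $t$, with constants sharp enough to recover the precise factor $2 e^{\gamma}$ rather than merely the order $\log\log t$. This requires quantitative control of $\zeta'/\zeta$ in the critical strip, careful handling of the contour shift, and use of the Riemann--von Mangoldt formula for the zero count; RH is indispensable here, since unconditionally one obtains only the weaker estimate $|\zeta(1+it)| = O(\log|t|)$.
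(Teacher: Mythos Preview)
The paper does not actually prove this theorem: its entire ``proof'' consists of citing Littlewood's original papers \cite{Lit26} and \cite{Lit28a}. So there is no approach in the paper to compare against; your sketch is the substantive content here.

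Your outline is essentially the standard argument and is correct in its architecture: truncate the Euler product (or equivalently $\log\zeta$) at height $x$ under RH, bound the truncated product trivially via $|1-p^{-1-it}|^{-1}\le(1-1/p)^{-1}$, and invoke Mertens's product theorem to produce the constant $e^{\gamma}$; the factor~$2$ then arises precisely because RH permits the truncation at $x=(\log t)^{2+o(1)}$, so that $\log x\sim 2\log\log t$. The treatment of part~(2) via $\prod_p(1+p^{-\sigma})=\zeta(\sigma)/\zeta(2\sigma)$ is also the right mechanism for the $1/\zeta(2)$ factor.

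One point to tighten: you write the truncation error as $O(1)$ but then claim the exponentiated bound with leading factor $(1+o(1))$. An $O(1)$ additive error in $\log|\zeta|$ becomes an unspecified multiplicative constant after exponentiation, which would lose the sharp $2e^{\gamma}$. To recover the exact constant you need the truncation error to be $o(1)$ as $t\to\infty$, which under RH is achievable (e.g.\ via the explicit approximation $\log\zeta(s)=\sum_{n\le x}\Lambda(n)/(n^{s}\log n)+o(1)$ for $\sigma\ge 1$ and $x=(\log t)^{2+\epsilon}$, as in Titchmarsh~\cite[\S14.8]{TH86}). You correctly flag this as the hard part. Also, the object naturally attached to a Perron integral of $-\zeta'/\zeta$ is a sum over prime powers weighted by $\Lambda(n)$, not $\sum_p p^{-s}$ directly; in practice one works with $\log\zeta(s)$ itself or passes through $\zeta'/\zeta$ and then handles the prime-power correction separately, but this is a routine adjustment.
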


\paragraph{\bf Proof.} 
The bound (1) is    \cite[Theorem 7]{Lit26} and  the bound (2) is   \cite[Theorem 1]{Lit28a}.
 $~~~\Box$. \smallskip

Littlewood  also established unconditionally a 
lower bound for the quantity in (1) and, conditionally 
on the Riemann hypothesis, a lower bound for that in (2), both of which
matched the bounds above up to a factor of 2. In 1949 
a method of Chowla \cite{Cho49}
made this lower bound for the quantity in (2) unconditional. These combined
results are as follows. 

\begin{theorem}\label{th392b}
{\em (Littlewood 1926, Titchmarsh 1933)}
The following bounds  hold unconditionally.

(1) The maximal order of $|\zeta(1+it)|$ is at least $\log\log t$, with
\[
\limsup_{ t \to \infty} \frac{ |\zeta(1+it)|}{\log\log t} \ge     e^{\gamma}.
\]

(2) The maximal order of $\frac{1}{|\zeta(1+it)|}$ is at least $\log\log t$, with
\[
\limsup_{t \to \infty} \frac{1}{|\zeta(1+it)| \log\log t} \ge \frac{1}{\zeta(2)} \,e^{\gamma}
\]
\end{theorem}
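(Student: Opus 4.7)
The plan is to exploit the Euler product of $\zeta(s)$ on the line $\text{Re}(s)=1$ together with a simultaneous Diophantine approximation that forces $p^{it}$ to be close to $1$ (for (1)) or to $-1$ (for (2)) for all primes $p$ below a suitable threshold $y=y(T)$; Mertens's product theorem (Theorem~\ref{th51}) then converts the resulting truncated Euler product into a factor of $e^{\gamma}\log y$, and a proper choice of $y$ makes $\log y\sim\log\log T$.

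For (1), fix a large $T>0$, a small $\epsilon>0$, and a threshold $y$. Since $\log 2,\log 3,\dots,\log p_{\pi(y)}$ are linearly independent over $\QQ$ (by unique factorization), Dirichlet's box principle yields some $t\in[T,\,T+(1/\epsilon)^{\pi(y)}]$ with $\|t(\log p)/(2\pi)\|<\epsilon$ for every prime $p\le y$ simultaneously. For such a $t$ one has $|1-p^{-1-it}|=(1-p^{-1})(1+O(\epsilon))$ uniformly for $p\le y$, so taking the product and invoking Mertens's product theorem gives
$$\prod_{p\le y}|1-p^{-1-it}|^{-1}=\bigl(1+O(\epsilon\,\pi(y))\bigr)\,e^{\gamma}\log y\,(1+o_y(1)).$$
One then shows that the tail $\prod_{p>y}|1-p^{-1-it}|^{-1}$ is bounded away from $0$ and $\infty$: expanding $-\log(1-p^{-1-it})=p^{-1-it}+O(p^{-2})$ reduces the task to bounding the oscillating sum $\sum_{p>y}p^{-1-it}$, which is controlled unconditionally via partial summation against the prime number theorem. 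Balancing parameters, e.g.\ $\epsilon=1/\pi(y)^2$ and $y=(\log T)^{1-\eta}$ with small $\eta>0$, makes $(1/\epsilon)^{\pi(y)}<T$ and $\epsilon\,\pi(y)\to 0$, producing some $t\in[T,2T]$ with $|\zeta(1+it)|\ge(1-\eta+o(1))e^{\gamma}\log\log T$; letting $\eta\to 0$ yields (1).

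For (2), run the same construction but enforce $\|t(\log p)/(2\pi)-\tfrac12\|<\epsilon$ for all $p\le y$, so that $p^{-it}\approx -1$ and $|1-p^{-1-it}|\approx 1+p^{-1}$. The identity
$$\prod_{p\le y}(1+p^{-1})=\prod_{p\le y}\frac{1-p^{-2}}{1-p^{-1}},$$
combined with Mertens's product theorem and $\prod_p(1-p^{-2})=\zeta(2)^{-1}$, yields $|\zeta(1+it)|\le (1+o(1))\,\zeta(2)\,e^{-\gamma}/\log y$, and hence the claimed bound $1/|\zeta(1+it)|\ge(1+o(1))(e^{\gamma}/\zeta(2))\log\log T$.

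The main obstacle is the unconditional control of the tail in (2): Littlewood's original 1926 argument needed the Riemann hypothesis for the corresponding pointwise upper bound on $|\zeta(1+it)|$, because here one must rule out the possibility that the primes $p>y$ cooperatively enlarge the Euler product beyond the target. Chowla's 1949 device circumvents this by introducing an auxiliary factor such as $|\zeta(1+2it)|^{a}$ for a judiciously chosen exponent $a>0$, reducing the required pointwise tail bound to an averaged one that follows from known mean-value estimates for $\zeta$ on $\text{Re}(s)=1$. Once that reduction is in place the argument closes as before.
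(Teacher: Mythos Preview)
The paper's own proof is purely bibliographic: it cites Littlewood~\cite[Theorem~8]{Lit26} for~(1) and Titchmarsh~\cite{Tit33} (with a pointer to \cite[Theorem~8.9(B)]{TH86}) for the unconditional form of~(2), noting that Littlewood's 1928 proof of~(2) assumed RH. No argument is given beyond these references. Your sketch for~(1)---Dirichlet's simultaneous approximation on $\{t\log p/2\pi\}_{p\le y}$ to align phases, followed by Mertens's product theorem---is exactly the mechanism the paper attributes to Littlewood in the paragraph immediately after the proof, so on~(1) you are in substantive agreement, and you supply details the paper omits.

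For~(2) there is an attribution mismatch. You credit the unconditional step to Chowla~(1949), presumably following the sentence just before the theorem statement; but the paper's proof block and the theorem's label both credit Titchmarsh~(1933). The paper is in fact internally inconsistent here---Chowla's 1949 paper treats the $L(1,\chi_{-d})$ analogue appearing in Theorem~\ref{th396b}(2), not $\zeta(1+it)$. Your description of the device (an auxiliary factor such as $|\zeta(1+2it)|^{a}$ plus mean-value estimates) is a plausible strategy but is your own addition; the paper supplies nothing beyond the citation. If your goal is to match the paper, the correct move for~(2) is simply to cite Titchmarsh. One technical caution on both parts: the Euler product does not converge on $\text{Re}(s)=1$, so the splitting into ``truncated product times tail'' should really be carried out at $\sigma=1+1/\log y$ (or via $\log\zeta$) before passing to the line; your sketch implicitly does this when it passes to $\sum_{p>y}p^{-1-it}$, but the wording should be tightened.
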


\begin{proof}
The bound (1) is Theorem 8 of \cite{Lit26}. The lower bound (2) was proved, assuming the Riemann
hypothesis,  in 1928 by Littlewood \cite{Lit28a}.  An unconditional proof
was later given by Titchmarsh \cite{Tit33}, see also \cite[Theorem 8.9(B)]{TH86}.
\end{proof}

Littlewood's  proof of  (1) used Diophantine approximation properties of the
values $\log p$ for primes $p$, namely the fact that they are linearly independent
over the rationals. This guarantees that there exist suitable values of $t$ where the all
terms in the Euler product
$
\prod_{p \le X} (1- \frac{1}{p^{1+it}})^{-1}
$
have most phases $ t \log p $ near $0 ~(\bmod \,2\pi)$ for  $p \le  X$.

Littlewood was struck by the fact that the 
unconditional lower bound for\\
 $\limsup_{ t \to \infty} \frac{ |\zeta(1+it)|}{\log\log t}$
in Theorem \ref{th392b}(1) and
the conditional upper bound for it given  in Theorem \ref{th391b}(1) differ
by the simple multiplicative factor  $2$.
He discusses this fact at length in \cite{Lit28a}, \cite{Lit28b}.  The
Riemann hypothesis is not sufficient to predict the exact value!
It seems that more  subtle properties of the Riemann zeta function,
perhaps related to Diophantine approximation properties of prime
numbers or of the imaginary parts of zeta zeros, will play a role in determining the exact constant.
Littlewood favored the right answer as being the lower bound in Theorem \ref{th392b}
 saying (\cite[p. 359]{Lit28b}):

\begin{quote}
The results involving $c$ [$=e^{\gamma}$] are evidently final except
for a certain factor $2$. I showed also that on a certain further hypothesis (which there
is, perhaps, no good reason for believing) this factor $2$ disappears.
\end{quote}

\noindent In the fullness of time this statement has been elevated to a conjecture attributed
to Littlewood. 

 There is now relatively strong  evidence favoring the conjecture that the lower bound (1)
 in Theorem \ref{th392b} should be  equality.
In 2006 Granville and Soundararajan \cite{GS06} obtained a bound for
the frequency of occurrence of extreme values  of $|\zeta(1+it)|$, considering
the quantity
\[ 
\Phi_T( \tau) := \frac{1}{T} \meas \{ t \in [T, 2T]: ~|\zeta(1+it)| > e^{\gamma} \tau \},
\]
where $\meas$ denotes Lebesgue measure. They 
established a result \cite[Theorem 1]{GS06} showing that 
for all sufficiently large $T$, uniformly in the range $1 \ll \tau \le \log\log T + 20$,
there holds
\begin{equation}\label{asympbd}
\Phi_T( \tau) = \exp \Big( -\frac{2 e^{\tau -C -1}}{\tau}
 \Big( 1+ O ( \frac{1}{\sqrt{\tau} } + (\frac{e^{\tau}}{\log T})^{1/2}) \Big) \Big).
\end{equation}
where $C$ is a positive constant. 
An extension of these ideas 
established the following result (\cite[Theorem 2]{GS06}).

\begin{theorem}\label{th393b}
{\em (Granville and Soundararajan 2006)}
For all sufficiently large $T $, the measure of the set of points $t$ in $[T, 2T]$ having
\[
 |\zeta(1+it)| \ge e^{\gamma} \big( \log\log T + \log\log\log T - \log\log\log\log T - \log A + O (1)\big)
\]
is at least $T^{1- \frac{1}{A}},$ uniformly for $A \ge 10$.
\end{theorem}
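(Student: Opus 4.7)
The plan is to construct, for a suitably chosen truncation $y = y(T,A)$, a set $S \subset [T,2T]$ of measure at least $T^{1-1/A}$ on which the partial Euler product for $\zeta(1+it)$ is very nearly as large as possible, namely close to $\prod_{p \le y}(1-p^{-1})^{-1} \sim e^{\gamma}\log y$ by Mertens's product theorem (Theorem \ref{th51}). The size of $y$ then dictates the strength of the lower bound.

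First I would establish, via standard explicit-formula and zero-density techniques, that outside a negligible subset of $[T,2T]$,
\[
\log|\zeta(1+it)| = \sum_{p \le y} \frac{\cos(t \log p)}{p} + O(1),
\]
provided $y$ is not too small compared to $\log T$. Second, to produce many $t$ for which this prime sum is nearly maximal, I would require $t\log p$ to be simultaneously within $\delta$ of $0 \bmod 2\pi$ for every $p \le y$. Using the $\mathbb{Q}$-linear independence of $\{\log p\}_{p \le y}$ together with a Fourier / large-sieve inequality on the torus $(\mathbb{R}/2\pi\mathbb{Z})^{\pi(y)}$, I would show that the resulting set $S$ has measure at least of order $T\delta^{\pi(y)}$.

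On $S$, Mertens's Theorem \ref{th51} gives $\sum_{p \le y} p^{-1} = \log\log y + M + o(1)$, while the quadratic discrepancy $\sum_{p \le y} p^{-1}(1-\cos(t\log p))$ is bounded by $\tfrac12 \delta^2 \log\log y$; choosing $\delta$ as large as this permits, namely $\delta \asymp (\log\log y)^{-1/2}$, yields $\log|\zeta(1+it)| \ge \gamma + \log\log y + O(1)$, i.e.\ $|\zeta(1+it)| \ge e^{\gamma}\log y\,(1+O(1/\log y))$ on $S$. The measure condition $T\delta^{\pi(y)} \ge T^{1-1/A}$ becomes $\pi(y)\log\log\log y \lesssim (\log T)/A$; together with $\pi(y) \sim y/\log y$ this yields $y \asymp (\log T \log\log T)/(A \log\log\log T)$, and iterating logarithms gives
\[
\log y = \log\log T + \log\log\log T - \log\log\log\log T - \log A + O(1),
\]
which is precisely the asserted lower bound for $e^{-\gamma}|\zeta(1+it)|$.

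The principal obstacle will be the simultaneous Diophantine-approximation / equidistribution step giving $|S| \gtrsim T\delta^{\pi(y)}$. Since $\pi(y)$ grows like $(\log T)/(A \log\log\log T)$, the ambient torus has far more than $T$ cells of side $\delta$, so the naive lower bound is far from automatic and must be established through a Halász-type $L^{2k}$ estimate on the Dirichlet polynomial $\sum_{p \le y} p^{-1}e^{it\log p}$, with the moment $2k$, the truncation $y$, and the scale $\delta$ all very finely balanced. This is where the arithmetic of the primes (as opposed to generic real numbers) and the approximation error of the first step interact most delicately, and where the sharpness of the implicit constants must be controlled to yield the precise four-fold iterated logarithm in the statement.
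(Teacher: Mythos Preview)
The paper does not give a proof of this theorem: it is a survey, and the statement is simply quoted from Granville and Soundararajan \cite[Theorem 2]{GS06} with the remark that it follows by ``an extension of these ideas,'' referring back to the distribution estimate \eqref{asympbd}. So there is no proof in the paper to compare against.

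Your outline is a reasonable high-level sketch of the Granville--Soundararajan strategy: approximate $\log|\zeta(1+it)|$ by a short prime sum using zero-density input, then produce many $t$ on which that sum is nearly maximal, and read off the size via Mertens's theorem. You have also correctly isolated the genuine difficulty. The naive measure bound $|S|\gtrsim T\delta^{\pi(y)}$ cannot come from linear independence of the $\log p$ alone, since for the relevant $y$ the torus $(\RR/2\pi\ZZ)^{\pi(y)}$ has far more $\delta$-cells than the orbit $\{t\log p\}_{t\in[T,2T]}$ can visit; qualitative equidistribution gives nothing here. In the actual argument this step is handled not by Diophantine approximation on the full torus but by high-moment computations for the Dirichlet polynomial (essentially what you call a Hal\'asz-type $L^{2k}$ estimate, or equivalently a resonator argument), and it is the optimization of the moment order against the truncation $y$ that produces the four-fold iterated logarithm. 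Your parameter bookkeeping at the end is correct in spirit, but the route to the measure lower bound needs to go through moments from the outset rather than through simultaneous Diophantine approximation.
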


They observe that if the estimate \eqn{asympbd}
 remained valid without
restriction on the range of $\tau$, this 
 would correspond to the following stronger assertion.

\begin{cj}\label{cj394b} 
{(Granville and Soundararajan 2006)}
There is  a constant $C_1$ such that for  $T \ge 10$,
\[
\max_{ T \le t \le 2T} |\zeta(1+it)| = e^{\gamma} \big( \log\log T + \log\log\log T  + C_1 + o(1)\big)
\]
\end{cj}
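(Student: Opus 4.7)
The plan is to extend the Granville--Soundararajan large-deviation formula \eqn{asympbd} for $\Phi_T(\tau)$ up to the critical scale at which $\Phi_T(\tau) \asymp 1/T$, the regime in which one expects a single extremal value in $[T,2T]$. Solving $2e^{\tau-C-1}/\tau = \log T$ asymptotically for $\tau$ gives $\tau^* = \log\log T + \log\log\log T + C_1 + o(1)$ with an explicit $C_1 = C + 1 - \log 2$, so if \eqn{asympbd} were known with an effective error term all the way up to $\tau = \tau^* + O(1)$, then both $\Phi_T(\tau^* + \eta) \to 0$ and $\Phi_T(\tau^* - \eta) > 0$ would follow by monotonicity, and that is exactly the asymptotic Conjecture \ref{cj394b} claims. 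So the task reduces to pushing the analysis past the current cutoff $\tau \le \log\log T + 20$.

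First I would approximate $\log|\zeta(1+it)|$ by a truncated prime sum
\[
P_y(t) := \sum_{p \le y} \frac{\cos(t\log p)}{p},
\]
with $y = y(T)$ just beyond $\exp(\sqrt{\log T})$, and bound the tail in mean square using standard zero-density estimates for $\zeta(s)$ in a thin strip to the left of $\mathrm{Re}(s)=1$. Next I would build a probabilistic model in which the phases $t\log p \pmod{2\pi}$ for $p \le y$ are replaced by an independent uniform random family $\{\theta_p\}$, comparing $P_y(t)$ to $X_y := \sum_{p \le y} \cos\theta_p/p$. Mertens's Theorem \ref{th51} gives $\mathrm{Var}(X_y) \sim \tfrac12 \log\log y$ and, crucially, permits computation of the moment generating function in the form $\mathbb{E}[e^{\lambda X_y}] \sim (e^\gamma \log y)^{J(\lambda)}$ for an explicit function $J$ built from a Bessel integral. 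A saddle-point analysis then reproduces \eqn{asympbd}, with the constant $C$ and hence $C_1$ expressed in terms of $\gamma$ and the Bessel function $I_0$.

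The upper-bound half of the conjecture would then follow by Chebyshev's inequality applied to exponential moments
\[
\frac{1}{T}\int_T^{2T} \exp\bigl(2k\,P_y(t)\bigr)\,dt \le \bigl(1+o(1)\bigr)\,\mathbb{E}[e^{2k X_y}],
\]
valid for $k$ as large as $(\log T)^{1-\epsilon}/\log\log T$ by direct expansion and a prime-divisibility argument of the kind used in \cite{GS06}; optimizing $k$ at the saddle yields $\Phi_T(\tau^* + \eta) = o(1/T)$. The lower-bound half requires exhibiting at least one $t \in [T,2T]$ with $P_y(t) \ge e^\gamma(\log\log T + \log\log\log T + C_1 - \eta)$, and for this I would use simultaneous Diophantine approximation of $(\log p)/2\pi$ for $p \le y$, in Littlewood's spirit, to force the phases $\{t\log p\}$ near $0$ for a prescribed positive proportion of primes; a box-pigeonhole argument shows that the length of $t$-interval required is of order $T^{o(1)}$ times a quantity depending on how many prime phases one insists on aligning.

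The main obstacle, and the reason this remains a conjecture, lies in the gap between the two halves at the constant level. The upper bound is essentially in reach of the moment method once the mean-square tail estimate for $P_y - \log|\zeta(1+it)|$ is controlled unconditionally; but the lower bound is not, because turning the random-model saddle into an actual construction requires an effective simultaneous irrationality measure for the numbers $\log p$, beating $T^{-1}$ for linear combinations $\sum n_p \log p$ with bounded $n_p$ and $p \le y$. No such input is available unconditionally, and even the Riemann hypothesis does not supply it. Without it, the best that the method can deliver is the leading-order asymptotic $e^\gamma(\log\log T + \log\log\log T + O(1))$ (which is essentially Theorem \ref{th393b}), leaving the precise value of $C_1$ --- and indeed the existence of any $C_1$, as opposed to a bounded $\limsup/\liminf$ gap --- out of reach. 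This is the modern incarnation of Littlewood's factor-of-two obstruction from \cite{Lit28b}, now re-emerging one order of resolution deeper.
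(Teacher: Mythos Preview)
The statement you were given is a \emph{conjecture}, not a theorem: the paper offers no proof of it, and none is expected. The paper's entire justification is the single sentence preceding the conjecture, namely that if the large-deviation estimate \eqn{asympbd} for $\Phi_T(\tau)$ remained valid without the restriction $\tau \le \log\log T + 20$, then solving $\Phi_T(\tau) \asymp 1/T$ for $\tau$ would yield the claimed asymptotic. That is all.

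Your write-up is therefore not a proof and does not pretend to be one: you correctly identify in your final paragraph that the statement remains open and pinpoint the obstruction (the lack of an effective simultaneous irrationality measure for the $\log p$ that would be needed to push the lower bound to the constant level). What you have supplied is an extended heuristic elaboration of the one-line remark in the paper --- fleshing out the moment method for the upper bound, the random-phase model for the distribution of $P_y(t)$, and the Diophantine input that the lower bound would require. This is a reasonable and well-informed discussion of \emph{why} the conjecture is plausible and \emph{where} current methods stall, and it is consistent with the paper's brief motivation. But there is nothing here to compare against a proof in the paper, because the paper contains none; the honest summary is that both you and the paper agree the statement is conjectural, and your exposition goes considerably further than the paper does in explaining the shape of the difficulty.
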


\noindent This conjecture implies
that  the lower bound (1) in Theorem \ref{th392b} would be an equality.

We next consider the distribution of extreme values of the Dirichlet $L$-functions $L(s, \chi_{-d})$
at $s=1$, 
for real primitive characters $\chi_{-d}(n) = (\frac{-d}{n})$, where $-d$ is the discriminant of 
an imaginary quadratic field $\QQ(\sqrt{-d})$. The Dirichlet $L$-function
\[ 
L(s, \chi_{-d}) = \sum_{n=1}^{\infty} \Big(\frac{-d}{n}\Big)n^{-s},
\]
is a relative of the Riemann zeta function, having an Euler product and
a functional equation.
 Dirichlet's class number
formula  states that for a fundamental discriminant $-d$ with  $d>0,$
$$
L(1, \chi_{-d}) = \frac{2\pi \,h(-d)}{w_{-d}\sqrt{d}},
$$
where $h(-d)$ is the order of the ideal class group in $\QQ(\sqrt{-d})$ and
$w_{-d}$ is the number of units in $\QQ(\sqrt{-d})$ so that $w_{-d} = 2$ for $d>4$,
$w_{-3}=6$ and $w_{-4}=4$. 
Thus the  size of $L(1, \chi_{-d})$ encodes information on the size of the class group,
and the formula shows that $L(1, \chi_{-d}) >0$.

\begin{theorem}\label{th395b}
{\em (Littlewood 1928)}
Assume the Generalized Riemann hypothesis  for all real primitive
characters $L(s, \chi_{-d})$ where $-d$ is the discriminant of
an imaginary quadratic field $\QQ(\sqrt{-d})$. Then:

(1) The maximal order of $L(1, \chi_{-d})$ is at most $\log\log d$, with
\[
\limsup_{ d \to \infty} \frac{ L(1, \chi_{-d})}{\log\log d} \le    2 e^{\gamma}.
\]

(2) The maximal order of $\frac{1}{L(1, \chi_{-d})}$ is at most $\log\log d$, with
\[
\limsup_{d \to \infty}\frac{1}{ L(1, \chi_{-d}) \log\log d} \le  \frac{2}{\zeta(2)}\, e^{\gamma}.
\]
\end{theorem}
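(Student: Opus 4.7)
The plan is to adapt Littlewood's strategy underlying Theorem~\ref{th391b} to the $d$-aspect, with the conductor $d$ of $\chi_{-d}$ playing the role formerly played by the height $t$. The guiding principle is that GRH for $L(s, \chi_{-d})$ yields a sharp short Euler product approximation
\begin{equation}\label{395shortEP}
L(1, \chi_{-d}) = \prod_{p \le X} \Bigl(1 - \frac{\chi_{-d}(p)}{p}\Bigr)^{-1} \cdot (1 + o(1))
\end{equation}
valid at the truncation $X = (\log d)^2$, after which one majorizes the partial product by replacing $\chi_{-d}(p)$ with its trivial bound $\pm 1$ and invokes Mertens's product theorem (Theorem~\ref{th51}). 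The choice $X = (\log d)^2$ is made precisely so that $\log\log X = \log 2 + \log\log\log d$, which after exponentiation produces the extra factor of $2$ appearing in both bounds.

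To establish \eqref{395shortEP} I would apply Perron's formula (or the explicit formula) to $-L'/L(s, \chi_{-d})$, obtaining
\[
\log L(1, \chi_{-d}) = \sum_{p^k \le X} \frac{\chi_{-d}(p)^k}{k\, p^k} + R(X, d),
\]
where the error $R(X,d)$ comes from a contour integral dominated by the contribution of the nontrivial zeros $\rho$ of $L(s, \chi_{-d})$. Under GRH every such $\rho$ satisfies $\text{Re}(\rho) = 1/2$, so $|1 - \rho| \ge 1/2$; combining this with the Riemann--von Mangoldt style estimate $N(T, \chi_{-d}) = O(T \log(dT))$ for the number of zeros of height at most $T$ shows that $R(X, d) = o(1)$ as soon as $X \ge (\log d)^{2}$. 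The prime-power tail $\sum_{p^{k} \le X, k \ge 2}$ is absolutely convergent and absorbed into the $o(1)$. This zero-sum analysis is the main technical input and the only place where GRH is used decisively.

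Granting \eqref{395shortEP} at $X = (\log d)^2$, part~(1) follows from the elementary inequality $|1 - \chi_{-d}(p)/p| \ge 1 - 1/p$, which gives
\[
|L(1, \chi_{-d})| \le \prod_{p \le X}\Bigl(1 - \frac{1}{p}\Bigr)^{-1} (1 + o(1)) = e^{\gamma} \log X \cdot (1 + o(1)) = 2 e^{\gamma} \log\log d \cdot (1 + o(1))
\]
by Mertens's product theorem, since $\log X = 2 \log\log d$. Part~(2) is analogous: the bound $|1 - \chi_{-d}(p)/p| \le 1 + 1/p$ together with the identity
\[
\prod_{p \le X}\Bigl(1 + \frac{1}{p}\Bigr) = \prod_{p \le X} \frac{1 - 1/p^2}{1 - 1/p} = \frac{1 + o(1)}{\zeta(2)} \prod_{p \le X}\Bigl(1 - \frac{1}{p}\Bigr)^{-1}
\]
and Mertens's theorem produce
\[
|L(1, \chi_{-d})|^{-1} \le \frac{e^{\gamma} \log X}{\zeta(2)} \cdot (1 + o(1)) = \frac{2 e^{\gamma}}{\zeta(2)} \log\log d \cdot (1 + o(1)).
\]

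The chief obstacle is the rigorous derivation of the short Euler product approximation \eqref{395shortEP} with truncation at precisely $X = (\log d)^2$, since both the correct main order $\log\log d$ and the sharp constants in (1) and (2) depend on this calibration. A weaker zero-free region would force a larger $X$, inflate the multiplicative constant, or lose the main term altogether, so GRH is used both to annihilate the zero-contribution error in the explicit formula and to pin down the sharp factors $2e^{\gamma}$ and $2e^{\gamma}/\zeta(2)$.
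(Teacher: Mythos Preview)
The paper does not actually prove this theorem: its entire proof reads ``The bounds (1) and (2) are the content of \cite[Theorem 1]{Lit28b},'' i.e.\ a bare citation to Littlewood's 1928 paper. So there is no argument in the paper to compare against.

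Your sketch, by contrast, outlines the genuine mechanism behind Littlewood's result: use GRH and the explicit formula to approximate $\log L(1,\chi_{-d})$ by a truncated prime sum up to $X=(\log d)^2$, then bound the resulting short Euler product via $|\chi_{-d}(p)|\le 1$ and Mertens's theorem, the factor $2$ emerging from $\log X = 2\log\log d$. This is exactly the classical approach and is correct in outline. The one point that deserves care in a full write-up is the zero-sum estimate giving $R(X,d)=o(1)$: one needs the density $\sum_{|\gamma|\le T}1 \ll T\log(dT)$ uniformly in $d$ together with $\mathrm{Re}(\rho)=1/2$ to make the contour contribution $\ll (\log d)/\sqrt{X}$ or similar, and the cutoff $X=(\log d)^2$ is precisely what makes this $o(1)$. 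You have flagged this as the chief obstacle, which is appropriate.
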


\begin{proof}
The bounds (1) and    (2) are the content of  \cite[Theorem 1]{Lit28b}.
\end{proof}

There are corresponding unconditional lower bounds that differ from these
by a factor of $2$, as noted by Littlewood in 1928 for the bound (1), and
completed by a result of 
Chowla \cite{Cho49} for the bound (2).

\begin{theorem}\label{th396b}
{\em (Littlewood 1928, Chowla 1949)}
The following bounds 
hold unconditionally as $d \to \infty$
with $-d$ a fundamental discriminant.

(1) The maximal order of $L(1, \chi_{-d})$ is at least $\log\log d$, with
\[
\limsup_{ d \to \infty} \frac{ L(1, \chi_{-d})}{\log\log d} \ge     e^{\gamma}.
\]

(2) The maximal order of $\frac{1}{L(1, \chi_{-d})}$ is at least   ${\log\log d}$, with
\[
\limsup_{d \to \infty}\frac{1}{L(1, \chi_{-d}) \log\log d} \ge  \frac{1}{\zeta(2)}\, e^{\gamma}.
\]
\end{theorem}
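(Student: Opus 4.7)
The plan is to exhibit explicit sequences of fundamental discriminants $-d$ tending to $-\infty$ along which the Euler product for $L(s,\chi_{-d})$ at $s=1$ degenerates in the prescribed direction, and then to read off the asymptotic constants from Mertens's Product Theorem (Theorem~\ref{th51}). Since the Kronecker symbol $\chi_{-d}(p)=\left(\frac{-d}{p}\right)$ for odd $p\nmid d$ is determined by $d\pmod p$ (and $\chi_{-d}(2)$ by $-d\pmod 8$), for any prescribed sign pattern $\{\epsilon_p\}_{p\le y}$ with $\epsilon_p\in\{\pm 1\}$ the Chinese Remainder Theorem produces a residue class modulo $M:=\prod_{p\le y}p$ in which every fundamental discriminant $-d$ satisfies $\chi_{-d}(p)=\epsilon_p$ for all $p\le y$. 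To prove (1) I would take $\epsilon_p=+1$, and to prove (2) I would take $\epsilon_p=-1$. Dirichlet's theorem on primes in arithmetic progressions (with Linnik's theorem for effective control) yields a prime $d$ in the chosen class with $\log d\ll y$; since $\log M\sim y$ by the prime number theorem, one may take $y\asymp \log d$, so that $\log y=\log\log d+O(1)$.

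For such $d$, split the Euler product at $y$:
\begin{equation*}
L(1,\chi_{-d})\;=\;\prod_{p\le y}(1-\epsilon_p/p)^{-1}\,\cdot\,\prod_{p>y}(1-\chi_{-d}(p)/p)^{-1}.
\end{equation*}
By Mertens's Product Theorem, the head with $\epsilon_p=+1$ satisfies $\prod_{p\le y}(1-1/p)^{-1}\sim e^{\gamma}\log y$, while for $\epsilon_p=-1$, combining Mertens with the identity $\prod_p(1-p^{-2})=1/\zeta(2)$ gives
\begin{equation*}
\prod_{p\le y}(1+1/p)^{-1}\;=\;\frac{\prod_{p\le y}(1-p^{-1})}{\prod_{p\le y}(1-p^{-2})}\;\sim\;\frac{\zeta(2)\,e^{-\gamma}}{\log y}.
\end{equation*}
Substituting $\log y=\log\log d+O(1)$ and inverting in the second case yields the target constants $e^{\gamma}$ in (1) and $e^{\gamma}/\zeta(2)$ in (2), provided the tail factor satisfies $\prod_{p>y}(1-\chi_{-d}(p)/p)^{\pm 1}=1+o(1)$, or at least has bounded logarithm.

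The main obstacle is controlling this tail, since $\chi_{-d}(p)$ for $p>y$ is not forced by the construction. Partial summation reduces $\sum_{y<p\le X}\chi_{-d}(p)/p$ to a bound on character sums $\sum_{n\le X}\chi_{-d}(n)$, for which the Polya--Vinogradov inequality yields $O(\sqrt{d}\,\log d)$. This suffices directly for (1), and together with a much stronger short-sum estimate would recover Littlewood's 1928 argument for (2) under GRH. The Chowla (1949) contribution that removes the GRH hypothesis in (2) proceeds by averaging: rather than exhibiting a single $d$, one counts primes $d\le X$ in the target residue class and estimates the second moment of the tail $\sum_{p>y}\chi_{-d}(p)/p$ over this family via orthogonality of characters, concluding that a positive proportion of the $d$'s have tail contribution of size $O(1)$. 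Executing this mean-square bound against the Polya--Vinogradov input, while simultaneously keeping $y$ large enough that $\log y\sim \log\log d$, is the hard technical step; everything else is bookkeeping around Mertens.
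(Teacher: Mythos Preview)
The paper does not actually prove this theorem: its ``proof'' consists entirely of citations to Littlewood \cite{Lit28b} for (1) and Chowla \cite{Cho49} for (2). Your sketch therefore goes well beyond what the paper provides, and its overall architecture---force $\chi_{-d}(p)=\epsilon_p$ for $p\le y$ by CRT, read off the head of the Euler product via Mertens, and isolate tail control as the crux---is exactly the standard route to these results. Your identification of Chowla's contribution for (2) as a mean-square averaging argument over a family of discriminants, replacing the GRH-dependent pointwise tail bound, is also accurate.

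The one overstatement is the claim that Polya--Vinogradov ``suffices directly for (1).'' Your own construction yields $\log d\asymp y$ (via $M=\prod_{p\le y}p$ and Linnik), so the tail begins at primes of size $\asymp\log d$. Polya--Vinogradov controls $\sum_{n\le X}\chi_{-d}(n)$ with a saving of order $\sqrt{d}$, which is useless for $X$ as small as a power of $\log d$; moreover it bounds sums over integers, not over primes, and the passage from one to the other already needs information about $L(s,\chi_{-d})$ near $s=1$. The genuine input for (1) is the classical zero-free region for $L(s,\chi_{-d})$ (together with the observation that a possible exceptional zero would make $L(1,\chi_{-d})$ \emph{small}, so such $d$ are simply discarded for the purposes of a lower bound). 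With that analytic input one shows $\log L(1,\chi_{-d})=\sum_{p\le y}\chi_{-d}(p)/p+O(1)$ for $y$ a small power of $d$, from which your Mertens computation finishes the job. So your plan is sound; only the tool you named for the tail in (1) needs to be upgraded from P--V to zero-free-region estimates.
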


\begin{proof} 
 The bound (1) is due to Littlewood \cite{Lit28b}.
The bound (2) is due to Chowla \cite[Theorem 2]{Cho49}. 
\end{proof}

There is again  a factor of $2$ difference between the unconditional lower
bound and the conditional upper bound.
In 1999 Montgomery and Vaughan \cite{MV99} 
formulated a model for  the general distribution of
sizes of Dirichlet character values at $s=1$, and based on it they 
advanced  very precise
conjectures in favor of the lower bounds being the correct answer, as follows.

\begin{cj}\label{cj397b} { (Montgomery and Vaughan 1999)}

(1) For each $\epsilon >0$, for all $D \ge D(\epsilon)$ there holds
\[
 \max_{d \le D} L(1, \chi_{-d}) \le  e^{\gamma} \log\log D + (1+ \epsilon) \log\log \log D,
\]

(2) For each $\epsilon >0$, for all $D \ge D(\epsilon)$ there holds
\[
\max_{d \le D} \frac{1}{ L(1, \chi_{-d})} \le  \frac{1}{ \zeta(2)} e^{\gamma} \log\log D + O\Big(\frac{1}{(\log\log D) (\log\log\log D)}\Big).
\]
\end{cj}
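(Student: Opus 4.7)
The plan is to attack both parts of the conjecture by truncating the Euler product of $L(s, \chi_{-d})$ at $s=1$ and matching it against a probabilistic model, following the strategy developed for the parallel $\zeta(1+it)$ problem in Theorem \ref{th393b}. For each fundamental discriminant $-d$ with $d \le D$, set $y = (\log D)^A$ for some large constant $A$ and write
$$
\log L(1, \chi_{-d}) = \sum_{p \le y} \log \Big( 1 - \frac{\chi_{-d}(p)}{p} \Big)^{-1} + E(y, d),
$$
where under GRH for $L(s, \chi_{-d})$, combined with partial summation from the explicit-formula bounds on $\sum_{p \le X} \chi_{-d}(p) \log p$, the tail $E(y, d) = o(1)$ uniformly in $d \le D$ for $A$ sufficiently large. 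Thus the extreme values of $L(1, \chi_{-d})$ over $d \le D$ are governed, up to $1 + o(1)$ factors, by the extremes of the truncated Euler product across that range.

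The constants $e^{\gamma}$ and $e^{\gamma}/\zeta(2)$ are then forced by Mertens's Theorem \ref{th51}. The truncated product is largest when $\chi_{-d}(p) = +1$ for every prime $p \le y$, in which case it equals $\prod_{p \le y}(1 - 1/p)^{-1} \sim e^{\gamma} \log y$; combined with a further truncation device that effectively reduces the relevant $\log y$ to $\log\log D$, this produces the leading term $e^{\gamma} \log\log D$ in (1). Dually, the product is smallest when $\chi_{-d}(p) = -1$ for every $p \le y$, where $\prod_{p \le y}(1 + 1/p)^{-1} = \prod_{p \le y}(1 - 1/p)/\prod_{p \le y}(1 - 1/p^2) \sim \zeta(2) e^{-\gamma}/\log y$, so that $1/L(1, \chi_{-d}) \sim e^{\gamma} (\log y)/\zeta(2)$, producing the factor $1/\zeta(2)$ in (2). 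The sub-leading $(1+\epsilon) \log\log\log D$ correction in (1) arises because no single $d \le D$ can realize the full extremal sign pattern on all $p \le y$; the actual maximum over $d \le D$ is shifted downward from the pointwise supremum by a quantity predicted by large-deviation analysis in the random model.

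To carry out this large-deviation step, I would count $d \le D$ for which $\chi_{-d}(p)$ matches a prescribed pattern on a subset $\mathcal{S} \subset \{p : p \le y\}$. By quadratic reciprocity, prescribing $\chi_{-d}$ on $\mathcal{S}$ amounts to a congruence condition on $d$ modulo $8 \prod_{p \in \mathcal{S}} p$, so the Chinese Remainder Theorem produces $\gg D \cdot 2^{-|\mathcal{S}|}$ admissible fundamental discriminants provided $\prod_{p \in \mathcal{S}} p = o(D)$. Replacing $\chi_{-d}(p)$ by independent $\pm 1$ random variables in the Montgomery-Vaughan model, the distribution of $\sum_{p \le y} \chi(p)/p$ concentrates around $0$ with variance $O(1)$ but has upper tail reaching $\sum_{p \le y} 1/p \sim \log\log y$; balancing the tail probability $\sim 2^{-\pi(y)}$ against the $D$ available samples fixes an optimal truncation $y$ at which the realized maximum is $\log\log y + \gamma$ minus a shortfall of size $(1+\epsilon) \log\log\log D$ after exponentiation, matching the conjectured secondary term. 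An identical argument with signs reversed handles (2).

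The hard part is closing the gap between the probabilistic prediction and the arithmetic truth: one must show that no conspiracy among quadratic characters produces values of $L(1, \chi_{-d})$ exceeding the random-model prediction by more than $O(1)$. The lower bounds of Theorem \ref{th396b} already exploit a limited form of independence among the $\chi_{-d}(p)$ via quadratic reciprocity, but the sharp upper bound requires quantitative joint equidistribution for the vector $(\chi_{-d}(p))_{p \le y}$ as $d$ varies over $[1, D]$, with error terms sharp enough to preserve the $O(\log\log\log D)$ correction simultaneously across $\pi(y)$ primes. In addition one must rule out exceptional Landau-Siegel behavior of $L(s, \chi_{-d})$ near $s=1$, which would corrupt the tail estimate $E(y,d)$. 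Neither input is extractable from GRH alone, and it is precisely this obstruction that accounts for the Montgomery-Vaughan statement remaining a conjecture; any proof would require genuinely new arithmetic input into the joint distribution of quadratic character values.
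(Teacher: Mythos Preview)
The statement you were asked to prove is a \emph{conjecture}, not a theorem, and the paper provides no proof of it. The paper merely records it as the Montgomery--Vaughan conjecture, notes that it was formulated from a probabilistic model for the distribution of $L(1,\chi)$ values, and then moves on to the later Granville--Soundararajan results that give unconditional lower bounds consistent with it. There is therefore nothing in the paper to compare your argument against.

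You appear to have realized this by the end of your write-up, since you explicitly say that the required joint equidistribution and Siegel-zero exclusion are not available and that this ``accounts for the Montgomery--Vaughan statement remaining a conjecture.'' That is exactly right: what you have written is a heuristic derivation of the conjectured constants and secondary terms, not a proof, and it is broadly in the spirit of the probabilistic model Montgomery and Vaughan used. But a proof proposal that ends by correctly identifying why no proof exists is not a proof. If the assignment was genuinely to prove the statement, the honest answer is that it is open; if the assignment was to explain the heuristics behind the conjecture, your discussion of Mertens's theorem, truncated Euler products, and the random-model tail is reasonable, though the passage about ``a further truncation device that effectively reduces the relevant $\log y$ to $\log\log D$'' is vague and would need to be made precise even at the heuristic level.
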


In 2003 Granville and Soundararajan \cite{GS03} obtained detailed probabilistic
estimates for  the number of characters having extreme values $e^{\gamma} \tau$. 
Their results imply unconditionally that there are infinitely many $d$ with
\[
L(1, \chi_{-d}) \ge e^{\gamma}\big( \log\log d + \log\log\log d - \log\log\log \log d -10\big).
\]
In their later paper \cite[Theorem 3]{GS06} they sketched a proof that 
for any fixed $A \ge 10$ for all sufficiently large primes $q$
there are at least $q^{1- \frac{1}{A}}$ Dirichlet characters $\chi~(\bmod \, q)$ such that
\[
|L(1, \chi)| \ge e^{\gamma}\big( \log\log d + \log\log\log d - \log\log\log \log d -\log A+O(1) \big).
\]

Finally we remark on a related problem, concerning
the distribution of the phase $\text{arg} (\zeta (1+it))$.
In 1972  Pavlov and Faddeev \cite{PF72} observed  that 
the phase of $\zeta(1+ it)$ appears
in the scattering matrix data for the hyperbolic Laplacian operator acting on
the modular surface $X(1) =  PSL(2, \ZZ)\backslash \HH,$  where $\HH=\{ z=x+iy : y= Im(z)>0\}$ denotes
the upper half plane.   Then in 1980 Lax and Phillips \cite[Sect. Theorem 7.19]{LP80}  treated this case  as an
example of  their version of scattering theory for automorphic functions (given in \cite{LP76}).
Recently Y. Lamzouri \cite{Lam08}  obtained interesting results on  the joint distribution of
the modulus and phase  $(|\zeta(1+it)|, \arg(\zeta(1+it)))$.

%
%
%
%
\subsection{Euler's constant and random permutations: cycle structure}\label{sec38}
\setcounter{equation}{0}

Let $S_N$ denote the symmetric group of all permutations on $[1, N] := \{ 1, 2, 3,  \cdots, N\}.$
By a random permutation we mean an element  $\sigma \in S_N$ drawn with the uniform distribution,
picked with probability $\frac{1}{N!}$. We view $\sigma$ as a product of distinct cycles,
and let $c_j(\sigma)$ count the number of cycles of length $j$ in $\sigma$. 
Three interesting statistics on the cycle structure of a  permutation $\sigma \in S_N$ are 
its total number of cycles 
$$
n(\sigma) := \sum_{j=1}^n c_j, 
$$
the length of its longest cycle
$$
M(\sigma) := \max\{j: \,c_j >0\},
$$
 and the length of its shortest cycle
$$
m(\sigma) := \min\{ j: \,c_j >0\}.
$$
Euler's constant $\gamma$ appears in connection with the
the distributions of each of these statistics, viewed as random variables on $S_N$.

In 1939  J.  Touchard \cite[p. 247]{Tou39} expressed the distribution of cycle lengths
using exponential generating function methods.  In 1944 Goncharov \cite[Trans. pp. 31-34]{Gon44}
(announced 1942 \cite{Gon42}) also gave generating functions, derived in a probabilistic context. He showed that
 if one lets $c(N,k)$ denote the number of permutations in $S_N$ having
exactly $k$ cycles, then one has the generating function
\begin{equation}\label{CGF}
\sum_{k=1}^N c(N,k) x^k = x(x+1)(x+2) \cdots (x+N-1).
\end{equation}
Goncharov computed the mean and variance of the number of cycles $n(\sigma)$, as follows.\smallskip


\begin{theorem}~\label{th380} {\rm (Touchard 1939, Goncharov 1944)}
Draw a random permutation  $\sigma$ from the symmetric group $S_N$ on $N$ elements with the uniform distribution.

(1) The expected value of the number of cycles in $\sigma$ is:
$$
E[n(\sigma)] = H_N = \sum_{j=1}^N \frac{1}{j}.
$$
In particular one has the  estimate
$$
E[n(\sigma)] = \log N + \gamma +O(\frac{1}{N}).
$$

(2) The variance of the number of cycles  in $\sigma$ is:
$$
Var[ n (\sigma)] := E[ (n(\sigma) - E[n(\sigma)])^2] =H_{N} - H_{N, 2} = \sum_{j=1}^N \frac{1}{j} - \sum_{j=1}^N \frac{1}{j^2}.
$$
In particular one has the estimate 
$$
 \sqrt{Var[ n (\sigma)]}= \sqrt{ \log N} + \Big(\frac{\gamma}{2} -\frac{ \pi^2}{12}\Big) \frac{1}{\sqrt{\log N}}  + O((\log N)^{-\frac{3}{2}}).
$$
\end{theorem}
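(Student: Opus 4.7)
The plan is to extract both mean and variance from the probability generating function of $n(\sigma)$ given by the factorization \eqn{CGF}, and then convert to asymptotics using the definition of Euler's constant. Since a uniformly random $\sigma \in S_N$ assigns mass $\frac{1}{N!}$ to each permutation, the probability generating function of $n(\sigma)$ is
\[
G_N(x) := E[x^{n(\sigma)}] = \frac{1}{N!}\sum_{k=1}^N c(N,k)\, x^k = \frac{x(x+1)(x+2)\cdots(x+N-1)}{N!}.
\]
The right side has a very convenient logarithmic derivative, because it factors as a product of linear terms.

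First I would compute $E[n(\sigma)] = G_N'(1)$ by taking $\log G_N(x) = \sum_{j=0}^{N-1}\log(x+j) - \log N!$ and differentiating once to get
\[
\frac{G_N'(x)}{G_N(x)} = \sum_{j=0}^{N-1}\frac{1}{x+j}.
\]
Evaluating at $x=1$, and using $G_N(1)=1$, yields $E[n(\sigma)] = \sum_{j=0}^{N-1}\frac{1}{1+j} = H_N$. Part (1) then follows from the elementary asymptotic $H_N = \log N + \gamma + O(1/N)$, which is essentially the definition of $\gamma$ together with the standard Euler--Maclaurin remainder bound.

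For the variance I would exploit the identity $\mathrm{Var}[n(\sigma)] = G_N''(1) + G_N'(1) - (G_N'(1))^2$. Differentiating the logarithmic derivative once more gives
\[
\frac{G_N''(x)}{G_N(x)} - \Big(\frac{G_N'(x)}{G_N(x)}\Big)^2 = -\sum_{j=0}^{N-1}\frac{1}{(x+j)^2},
\]
so at $x=1$ one has $G_N''(1) = H_N^2 - H_{N,2}$. Combining yields
\[
\mathrm{Var}[n(\sigma)] = (H_N^2 - H_{N,2}) + H_N - H_N^2 = H_N - H_{N,2},
\]
which is the closed form in (2). (A very clean alternative derivation, which I would mention in passing, is the Feller coupling: the variables $Y_i$ indicating cycle closures are independent Bernoulli$(1/i)$ and $n(\sigma) = \sum_{i=1}^N Y_i$, so the mean and variance follow at once from linearity and independence.)

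Finally I would derive the asymptotic for $\sqrt{\mathrm{Var}[n(\sigma)]}$. Using $H_N = \log N + \gamma + O(1/N)$ and $H_{N,2} = \zeta(2) + O(1/N) = \pi^2/6 + O(1/N)$ gives
\[
\mathrm{Var}[n(\sigma)] = \log N + \gamma - \tfrac{\pi^2}{6} + O(1/N) = \log N \Big(1 + \tfrac{\gamma - \pi^2/6}{\log N} + O\big(\tfrac{1}{N\log N}\big)\Big).
\]
Applying $\sqrt{1+u} = 1 + \tfrac{u}{2} + O(u^2)$ with $u = (\gamma - \pi^2/6)/\log N$ yields
\[
\sqrt{\mathrm{Var}[n(\sigma)]} = \sqrt{\log N} + \Big(\tfrac{\gamma}{2} - \tfrac{\pi^2}{12}\Big)\tfrac{1}{\sqrt{\log N}} + O\big((\log N)^{-3/2}\big),
\]
as claimed. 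There is no real obstacle here: the theorem is essentially a direct calculation, and the only subtlety is the bookkeeping in the Taylor expansion of $\sqrt{\cdot}$ to obtain the stated error term $O((\log N)^{-3/2})$; one must verify that the $O(1/N)$ remainder inside the square root is absorbed into this larger error, which it clearly is.
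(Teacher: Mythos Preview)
Your proof is correct and follows essentially the same approach as the paper: the paper's proof is a terse sketch that says to derive both (1) and (2) from the factored generating function \eqn{CGF} via the moment formula $E[n(\sigma)^J] = \frac{1}{N!}(x\frac{d}{dx})^J M_N(x)\big|_{x=1}$, and your logarithmic-derivative computation is exactly an explicit implementation of this. Your passing mention of the Feller coupling is a nice bonus not in the paper, but the core method is identical.
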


\begin{proof}
In 1939 Touchard \cite[p. 291] {Tou39} obtained the formula
$E[n(\sigma)] = H_N$.
In 1944 Goncharov  \cite[Sect. 15]{Gon44}  derived both (1) and (2), using \eqn{CGF}.
A similar proof was given in 1953 by Greenwood \cite[p. 404]{Gre53}.
  Note also that for all $J \ge 1$ the  $J$-th moment of $n(\sigma)$ is given by
$$
E[ n(\sigma)^J] = \frac{1}{N!}(x \frac{d}{dx})^J M_N(x) \, |_{x=1}.
$$
It follows using \eqn{CGF} that the $J$-th moment can be expressed as
a polynomial in the $m$-harmonic numbers $H_{N, m}$ for $1\le m \le J.$

Goncharov \cite[Sect. 16]{Gon44} used the moments to 
  show furthermore that  the   normalized  random variables 
$ \bar{n}(\sigma) := \frac{n(\sigma)- E[n(\sigma)]}{\sqrt{Var[n(\sigma)]}}$
satisfy a central limit theorem as $N \to \infty$, i.e. they converge in distribution to 
standard unit normal distribution.
\end{proof}

 Goncharov \cite{Gon44}  also  obtained information on the distribution
of the maximum cycle $M(\sigma)$ as well. 
Define the  scaled random 
variable $L_1^{(N)} :=\frac{1}{N} M(\sigma)$. Then, as $N \to \infty$, 
Goncharov showed these random variables have a limiting distribution, with 
the  convergence in distribution
$
L_1^{(N)}   \rightarrow_{d} ~~~\LL_1,
$
in which $\LL_1$ is the probability distribution supported on $ [0,1]$ whose cumulative distribution
function 
$$
F_1(\alpha):= \mbox{Prob}[ 0 \le \LL_1 \le \alpha] 
$$
 is given for $0 \le \alpha \le 1$ by 
  \begin{equation}\label{gonch}
 F_1(\alpha) =  1+ \sum_{k=1}^{\lfloor \frac{1}{\alpha} \rfloor} \frac{(-1)^{k}}{k!}
 \int_{{t_1+ \cdots + t_k \le \frac{1}{\alpha}}\atop{t_1, \cdots, t_k \ge 1}} \frac{dt_1}{t_1} \frac{dt_2}{t_2} \cdots \frac{dt_n}{t_n}.
 \end{equation}
with $F_1(\alpha) =1$ for $ \alpha \ge 1$.  
Much later Knuth and Trabb-Pardo \cite[Sec. 10]{KTP76}  in 1976 showed that this
distribution is connected to the Dickman function $\rho(u)$ in Section
 \ref{sec34} by 
\begin{equation}\label{gonch1}
F_1(\alpha) = \rho(\frac{1}{\alpha}), ~~~ 0 \le \alpha \le 1,
\end{equation}
and this  yields the previously given formula \eqn{526e}.
From this connection we deduce that this   distribution has a continuous density  $f_1(\alpha) \,d\alpha$ given by
  $$ f_1(\alpha) = -\rho'(\frac{1}{\alpha}) \frac{1}{\alpha^2}
  = \rho(\frac{1- \alpha}{\alpha})\frac{1}{\alpha},$$
with  the last equality deduced using the differential-difference equation \eqn{524d} for $\rho(u)$.

In 1966 Shepp and Lloyd \cite{SL66} obtained detailed information on  the distribution of both
$m(\sigma)$ and $M(\sigma)$
and also of the $r$-th longest and $r$-th shortest cycles. 
Euler's constant appears in the distributions of longest and shortest cycles, as follows.\smallskip


\begin{theorem}~\label{th92A} {\em  (Shepp and Lloyd 1966)}
Pick a random permutation  $\sigma$ on $N$ elements with the uniform distribution.

(1) Let $M_r(\sigma)$ denote the length of the $r$-th longest cycle under iteration of
the permutation $\sigma \in S_N$. Then the $k$-th moment  $E[M_r(\sigma)^k]$ satisfies,
for $k \ge 1$ and $r \ge 1$,  as $N \to \infty$, 
\begin{equation*}
\lim_{N \to \infty}  \frac{E[M_r(\sigma)^k]}{N^k} = G_{r, k}\, ,
\end{equation*}
for positive limiting constants $G_{r,k}$. These constants  are explicitly given by
\begin{equation}\label{A913}
G_{r,k} =  \int_{0}^{\infty}\frac{ x^{k-1}}{k!} \frac{Ei(-x)^{r-1}}{(r-1)!} e^{ -Ei(-x)} e^{-x} dx,
\end{equation}
in which $Ei(-x)= \int_{x}^{\infty} \frac{e^{-t}}{t} dt$ is the exponential integral.
They satisfy, for fixed $k$, as $r \to \infty$, 
\begin{equation*}
\lim_{r \to \infty} (k+1)^r G_{r,k} = \frac{1}{k!} \, e^{-k \gamma},
\end{equation*}
in which $\gamma$ is Euler's constant.

(2)  Let $m_r(\sigma)$ denote the length of the $r$-th shortest cycle of the permutation $\sigma \in S_N$.
Then the expected value $E[m_r(\sigma)]$ satisfies, as $N \to \infty$, 
\begin{equation}\label{A911}
\lim_{N \to \infty}  \frac{E[m_r(\sigma)]}{(\log N)^r} = \frac{1}{r!} e^{-\gamma}.
\end{equation}
Furthermore  the $k$-th moment  $E[m_r(\sigma)^k]$ for $k \ge 2$
and $r \ge 1$  satisfies, as $N \to \infty$, 
\begin{equation}\label{A911b}
\lim_{N \to \infty}  \frac{E[m_r(\sigma)^k]}{N^{k-1} (\log N)^{r-1}} =
\frac{1}{(r-1)!} \int_{0}^{\infty} \frac{x^{k-1}}{(k-1)!} e^{Ei(-x)} e^{-x} dx.
\end{equation}
\end{theorem}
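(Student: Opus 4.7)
The plan is to derive both parts from the classical fact that the joint distribution of cycle counts in a uniform random permutation of $S_N$ is governed, as $N \to \infty$, by a Poisson point process on $(0, \infty)$ with intensity $\nu(dx) = (e^{-x}/x)\,dx$. The starting point is the exponential generating function
\[
\sum_{N \ge 0} \frac{z^N}{N!} \sum_{\sigma \in S_N} \prod_j t_j^{c_j(\sigma)}
= \exp\!\Big(\sum_{j \ge 1} \frac{t_j z^j}{j}\Big),
\]
equivalent to \eqref{CGF}. Coefficient extraction by saddle-point analysis at $z = 1 - s/N$ yields two companion asymptotics: (a) for $y = y(N)$ with $y \to \infty$ and $y = o(N)$ the short-cycle counts $(c_1(\sigma),\ldots,c_y(\sigma))$ converge jointly to independent Poisson variables with parameters $1/j$; and (b) the normalized cycle-length vector $(M_1(\sigma)/N, M_2(\sigma)/N, \ldots)$ converges to the Poisson--Dirichlet law $\mathrm{PD}(1)$, realized as the sorted points $\xi_1 > \xi_2 > \cdots$ of $\nu$ divided by their total mass $S := \sum_j \xi_j$. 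A direct Laplace transform computation gives $E[e^{-tS}] = 1/(1+t)$, so $S \sim \mathrm{Exp}(1)$, and Kingman's structure theorem for $\mathrm{PD}(1)$ provides that $(\xi_j/S)_j$ is independent of $S$.

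For part (1), the limit variable is $\mathcal{L}_r = \xi_r/S$, so by independence
\[
G_{r,k} \;=\; E[\mathcal{L}_r^k] \;=\; \frac{E[\xi_r^k]}{E[S^k]} \;=\; \frac{E[\xi_r^k]}{k!}.
\]
The density of the $r$-th largest point of $\nu$ at $y$ equals $\nu(y)\,\Lambda(y)^{r-1}/(r-1)!\,e^{-\Lambda(y)}$, where $\Lambda(y) = \int_y^\infty e^{-t}/t\,dt = Ei(-y)$; integrating $y^k$ against this density and dividing by $k!$ reproduces \eqref{A913}. For the $r \to \infty$ asymptotic I substitute $u = Ei(-x)$, so that $e^{-x} dx = -x\,du$ and
\[
G_{r,k} \;=\; \int_0^\infty \frac{x(u)^k}{k!}\,\frac{u^{r-1}}{(r-1)!}\, e^{-u}\, du,
\]
then invoke the expansion $Ei(-x) = -\gamma - \log x + O(x)$ for $x \to 0^+$, equivalently $x(u) = e^{-\gamma - u}(1 + O(e^{-u}))$. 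The integrand is concentrated near $u \asymp r$, so Laplace approximation gives
\[
G_{r,k} \;\sim\; \frac{e^{-k\gamma}}{k!} \int_0^\infty \frac{u^{r-1}}{(r-1)!}\, e^{-(k+1)u}\, du
\;=\; \frac{e^{-k\gamma}}{k!\,(k+1)^r},
\]
which is the claimed $(k+1)^r G_{r,k} \to e^{-k\gamma}/k!$.

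For part (2), the event $\{m_r(\sigma) > y\}$ is equivalent to $\sum_{j \le y} c_j(\sigma) \le r-1$. On the mesoscopic scale $y \to \infty$, $y = o(N)$, convergence (a) gives
\[
\mathrm{Prob}[m_r(\sigma) > y] \;\sim\; e^{-H_y}\,\frac{H_y^{r-1}}{(r-1)!},
\]
and Mertens's expansion $H_y = \log y + \gamma + O(1/y)$ reduces this to $e^{-\gamma}(\log y)^{r-1}/((r-1)!\,y)$. Integration gives
\[
E[m_r(\sigma)] \;\sim\; \int_1^N \frac{e^{-\gamma}(\log y)^{r-1}}{(r-1)!\,y}\,dy \;=\; \frac{e^{-\gamma}(\log N)^r}{r!},
\]
proving \eqref{A911}. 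For $k \ge 2$ the moment $E[m_r^k] = k\int_0^N y^{k-1}\mathrm{Prob}[m_r(\sigma) > y]\,dy$ is dominated not by $y \asymp \log N$ but by macroscopic $y = xN$; here the mesoscopic Poisson formula breaks down, and I return to the generating function for $\sum_{j \le xN} c_j$. Saddle-point extraction at $z = 1-s/N$ combines $\sum_{j \le xN} z^j/j$ with the $(1-z)^{-1}$ factor to produce, after cancellation of the $(\log N)^{r-1}/N$ prefactor inherited from the mesoscopic regime, an additional kernel $e^{Ei(-x)-x}$; substituting $y = xN$ in the moment integral then yields \eqref{A911b}.

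The principal obstacle is establishing the Poisson approximation (a) with error estimates uniform over the intermediate regime $\log N \ll y \ll N$, and combining it with the exact saddle-point analysis on the macroscopic scale $y \asymp N$. The nontrivial kernel $e^{Ei(-x)-x}$ appearing in \eqref{A911b} is not predicted by the independent-Poisson heuristic; it encodes the global constraint $\sum_j j c_j = N$, which couples the cycles once $y$ becomes comparable to $N$. Rigorously separating the typical-scale contribution yielding \eqref{A911} from the macroscopic tail contribution yielding \eqref{A911b}, while maintaining uniform remainder control across both regimes, is the delicate point; once this uniformity is secured, the remaining steps reduce to routine Laplace and change-of-variable computations.
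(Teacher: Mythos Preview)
The paper does not actually prove this theorem: its ``proof'' consists of two citations to specific equations in Shepp and Lloyd's 1966 paper. So there is no argument in the paper to compare against, and your sketch goes well beyond what the survey provides.

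Your route through the Poisson--Dirichlet representation is not Shepp and Lloyd's original one. They worked directly with a Poissonized model (random $N$) and passed to the limit; the Kingman representation of $\mathrm{PD}(1)$ as normalized points of the Poisson process with intensity $e^{-x}/x\,dx$, together with the independence of $(\xi_j/S)$ from $S$, came later. For part (1) your argument is clean and correct: the density of the $r$-th largest Poisson point, the identity $G_{r,k}=E[\xi_r^k]/k!$, and the Laplace asymptotic after the substitution $u=Ei(-x)$ all check out. For \eqref{A911} your tail computation via the Poisson approximation to $\sum_{j\le y}c_j$ is also sound, and you are right that the integral $\int_1^N y^{-1}(\log y)^{r-1}\,dy$ produces the factor $(\log N)^r/r$.

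Where there is a genuine gap is \eqref{A911b}. You correctly diagnose that for $k\ge 2$ the moment integral is dominated by $y=xN$ with $x$ of order one, and that the independent-Poisson heuristic fails there because of the constraint $\sum_j jc_j=N$. But the sentence ``saddle-point extraction \ldots\ produces \ldots\ an additional kernel $e^{Ei(-x)-x}$'' is an assertion, not a derivation. Concretely, for $r=1$ the quantity $\mathrm{Prob}[m_1(\sigma)>xN]$ is governed by the Buchstab function $\omega(1/x)$ (this is Theorem~\ref{th393} of the paper), and matching that to the integrand in \eqref{A911b} requires the Laplace-transform identity $1+\hat\omega(s)=e^{E_1(s)}$ of Theorem~\ref{buchstab0}. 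For general $r$ one needs the analogous asymptotic for $\mathrm{Prob}\bigl[\sum_{j\le xN}c_j\le r-1\bigr]$ in the regime $x$ fixed, $N\to\infty$; this is where the $(\log N)^{r-1}$ factor and the $e^{Ei(-x)}$ kernel must both emerge from the same saddle-point computation, and you have not carried that out. Your closing paragraph identifies exactly this as ``the delicate point,'' which is honest, but it means the argument for \eqref{A911b} is incomplete rather than merely routine.
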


\begin{proof}
The exponential integral $Ei(x)$, defined for $x<0$ is given in \eqn{561b}.
Here (1) appears  in  Shepp and Lloyd \cite[eqns. (13) and (14) ff., p. 347]{SL66}.
Here (2) appears as   \cite[eqn. (22), p. 352]{SL66}.
\end{proof}

 Shepp and Lloyd \cite{SL66} also obtained the limiting distribution of the
$r$-th longest cycle as $N \to \infty$.
In terms of the scaled variables $L_r^{(N)}:=\frac{1}{N}M_r(\sigma)$, they deduced
the  convergence in distribution
$$
L_r^{(N)}  \longrightarrow_{d} \LL_r,
$$
in which $\LL_r$ is a distribution supported on $[0, \frac{1}{r}] $ whose cumulative distribution
function 
$$
 F_r(\alpha):= \mbox{Prob}[ 0 \le \LL_1 \le \alpha] , ~~~~\quad \quad 0 \le \alpha \le \frac{1}{r}, 
 $$
 is given by
 $$
 F_r(\alpha) =1+  \sum_{k=r}^{\lfloor \frac{1}{\alpha} \rfloor} \frac{(-1)^{k-r+1}}{(r-1)! (k-r)! k}
 \int_{{t_1+ \cdots + t_k \le \frac{1}{\alpha}}\atop{t_1, \cdots, t_k \ge 1}} \frac{dt_1}{t_1} \frac{dt_2}{t_2} \cdots \frac{dt_n}{t_n},
 $$
 and we set  $F_{r}(\alpha) =1$ for $\frac{1}{r} \le \alpha < \infty$.
This extends to general $r$  the case $r=1$ treated by Goncharov \cite{Gon44}. 
In 1976  Knuth and Trabb-Pardo  \cite[Sec. 4]{KTP76} defined $\rho_r(u)$  by
$\rho_r(u) :=F_r(\frac{1}{u})$, so that  $\rho_1(u) = \rho(u)$ by \eqn{gonch1}.
The functions $\rho_r(u)$ for $r \ge 1$ are uniquely determined by the conditions
\begin{enumerate}
\item 
(Initial condition) For $0 \le u \le 1$, it satisfies
\begin{equation*}
\rho_r(u) =1.
\end{equation*}
\item
 (Differential-difference equation) For $u > 1$, 
 \begin{equation*}
u \rho_r^{'}(u) = - \rho_r(u-1) + \rho_{r-1}(u-1),
\end{equation*}
with the convention $\rho_0 (u) \equiv 0.$
\end{enumerate}
Knuth and Trabb-Pardo  formulated these conditions in the integral equation form
$$
\rho_r(u) = 1 - \int_{1}^{u} ( \rho_r(t-1) - \rho_{r-1}(t-1))\frac{dt}{t}.
$$
One may  directly check that these equations imply $\rho_r(u) =1$ for $0 \le u \le r$.

Associated to the longest cycle  distribution is another interesting constant
$$
\lambda := G_{1,1} = \lim_{N \to \infty} \frac{E[M(\sigma)]}{N}.
$$
The formula \eqn{A913} gives
\begin{equation}\label{A917}
\lambda = \int_{0}^{\infty} e^{ -x - Ei(-x)} dx =  0.62432~99885 \dots
\end{equation}
This constant $\lambda$  is now
named the {\em Golomb-Dickman constant} in  Finch \cite[Sec. 5.4]{Fin03},
for the following reasons.
It was  first encountered in  1959  work of Golomb,  Welch and Goldstein  \cite{GWG59},
in a study of  the asymptotics of statistics associated to shift register sequences.
Their paper expressed this  constant  by an integral involving
a solution to a differential-difference equation,
described later in Golomb \cite[p. 91, equation (33)]{Gol67}.
In 1964 Golomb \cite{Gol64} asked for a closed form for this constant, and
the  Shepp and Lloyd's work  answered it with  \eqn{A917} above.
It was evaluated to $53$ places by Mitchell \cite{Mit68} in 1968.
In 1976 Knuth and Trabb-Pardo \cite[Sect. 10]{KTP76} 
showed that 
\begin{equation}\label{A918a}
\lambda = 1- \int_{1}^{\infty} \frac{\rho(u)}{u^2} du,
\end{equation}
where $\rho(x)$ is the Dickman function discussed in Section \ref{sec34}.
This associates the name of Dickman with this constant.
In fact the constant already appears in de Bruijn's 1951 work \cite[(5.2)]{deB51b}, where he showed
\begin{equation*}
\lambda = \int_{0}^{\infty} \frac{\rho(u)}{(u+1)^2} du,
\end{equation*}
as noted Wheeler \cite[p. 516]{Whe90}.
According to Golomb \cite[p. 192]{Gol67} another integral formula for this constant,
found by Nathan Fine,  is
\begin{equation}\label{918c}
\lambda = \int_{0}^1  e^{ Li(x)} dx,
\end{equation}
in which  $Li(x) := \int_{0}^x \frac{dt}{\log t}$ for $0 \le x <1.$  In 1990 Wheeler \cite[pp. 516--517]{Whe90}
established the formula
\begin{equation*}
\lambda = \int_{0}^{\infty} \frac{\rho(u)}{u+2} du
\end{equation*}
empirically noted earlier by Knuth and Trabb-Pardo, and also the formula
$$
\lambda = e^{\gamma} \int_{0}^{\infty} e^{-Ein(t) -2t}dt,
$$
in which $Ein(t)$ is given in \eqn{cexpi}.
The 
Golomb-Dickman constant $\lambda$ is not known
to be related to either Euler's constant $\gamma$ or the Euler-Gompertz constant $\delta$, and it
is not known whether it  is irrational or transcendental.

 The Golomb-Dickman
 constant $\lambda$ appears as a basic statistic in the  limit distribution $\LL_1$ for the scaled longest cycle,  
  whose expected value is given by 
  $$
 E [ \LL_1]  = \int_{0}^1 \alpha \,d F_1(\alpha) = - \int_{0}^1\frac{1}{\alpha}   \rho^{'} (\frac{1}{\alpha}) d\alpha.
 $$
  Letting $x= \frac{1}{\alpha}$ we obtain 
 \begin{equation*}
 E[\LL_1] =  - \int_{1}^{\infty} \frac{\rho'(x)}{x} dx = \int_{1}^{\infty} \rho(x-1) \frac{dx}{x^2} = \lambda,
 \end{equation*}
  see  \cite[(9.2)]{KTP76}.

In 1996 Gourdon \cite[Chap. VII, Th\'{e}or\`{e}me 2]{Gou96}
determined a complete asymptotic expansion of 
the expected values $E[M(\sigma)]$ as $N \to \infty$ in
powers of $\frac{1}{N}$, 
which both the Golomb-Dickman constant and Euler's constant appear, with the initial terms being
\begin{equation*}
E[ M(\sigma)]= \lambda N + \frac{1}{2} \lambda - \frac{e^{\gamma}}{24} \frac{1}{N} + 
O\left( \frac{1}{N^2}\right).
\end{equation*}
The omitted higher order terms for $k \ge 2$ in Gourdon's  asymptotic expansion  are of  form
$\frac{P_k(N)}{N^k}$ in which  each $P_k(N)$ is a oscillatory  function of $N$ which is periodic
with an integer period $p_k$
that grows with $k$ as $k \to \infty$.

A result in a different direction relates Euler's constant to the probability 
of all cycles having distinct lengths  as $N \to \infty$.
This result follows from work of  D. H. Lehmer \cite{Leh72}.


\begin{theorem}~\label{th380b} {\rm (Lehmer 1972)}
Draw a random permutation  $\sigma$ on $N$ elements with the uniform distribution.
Let $P_{d}(N)$ denotes the probability that  the cycles of $\sigma$
have distinct lengths, then 
$$
\lim_{N \to \infty} P_d(N) = e^{-\gamma}.
$$
\end{theorem}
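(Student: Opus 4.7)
The plan is to analyze the ordinary generating function for $P_d(N)$. A permutation with cycle type $(c_1, c_2, \ldots)$ (where $\sum_j j c_j = N$) occurs with probability $\prod_{j \ge 1} 1/(j^{c_j} c_j!)$, and ``all cycle lengths distinct'' corresponds exactly to $c_j \in \{0,1\}$ for every $j$. Hence
$$
P_d(N) \;=\; [x^N]\, F(x), \qquad F(x) \;:=\; \prod_{j=1}^{\infty}\Bigl(1 + \frac{x^j}{j}\Bigr),
$$
the product being absolutely convergent (and hence analytic) on $|x|<1$.

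The first main step is to establish the Abelian limit $\lim_{x \to 1^-}(1-x)F(x) = e^{-\gamma}$. Taking logarithms and expanding each factor via $\log(1+y) = \sum_{k\ge 1}(-1)^{k+1} y^k/k$, then interchanging sums (justified by absolute convergence on $|x|<1$),
$$
\log\bigl[(1-x)F(x)\bigr] \;=\; \log(1-x) + \sum_{k=1}^{\infty}\frac{(-1)^{k+1}}{k}\sum_{j=1}^{\infty}\frac{x^{jk}}{j^k}.
$$
The $k=1$ contribution collapses to $\sum_{j \ge 1} x^j/j = -\log(1-x)$ and cancels the leading term, leaving
$$
\log\bigl[(1-x)F(x)\bigr] \;=\; \sum_{k=2}^{\infty}\frac{(-1)^{k+1}}{k}\,\Li_k(x^k), \qquad \Li_k(y) := \sum_{j\ge 1}\frac{y^j}{j^k}.
$$
Each $\Li_k(x^k) \uparrow \zeta(k)$ monotonically as $x \to 1^-$, and termwise passage to the limit is licit because the alternating series $\sum_{k \ge 2}(-1)^{k+1}\zeta(k)/k$ satisfies the Leibniz criterion (its terms decrease to zero since $\zeta(k) \to 1$). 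By Theorem~\ref{th21} the limit equals $-\gamma$, and exponentiating gives $(1-x)F(x) \to e^{-\gamma}$.

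The final step is the Tauberian passage from this Abelian limit to the pointwise statement $P_d(N) \to e^{-\gamma}$. Since $P_d(N) \ge 0$, Karamata's theorem immediately yields the Ces\`aro conclusion $\frac{1}{N}\sum_{n<N} P_d(n) \to e^{-\gamma}$, but upgrading to pointwise convergence is the crux of the argument. The cleanest route is singularity analysis: write $F(x) = H(x)/(1-x)$ with
$$
H(x) \;=\; \exp\Bigl(\sum_{k \ge 2}\tfrac{(-1)^{k+1}}{k}\Li_k(x^k)\Bigr), \qquad H(1) = e^{-\gamma},
$$
and verify that $H$ admits a continuous extension to a slit neighborhood of $x=1$ with enough regularity to apply a Flajolet--Odlyzko-type transfer theorem.

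The hard part will be quantifying the smoothness of $H$ at $x=1$: for $k \ge 3$ each $\Li_k(x^k)$ is analytic there, but $\Li_2(x^2)$ has a $(1-x)\log(1-x)$-type singularity in its derivative (inherited from $\Li_1 = -\log(1-x)$), so $H$ is only H\"older, not analytic, at the dominant singularity. This sub-analytic smoothness is nevertheless enough to obtain $P_d(N) = e^{-\gamma} + O((\log N)/N)$ via the standard transfer machinery, giving the desired limit. An alternative, closer in spirit to Lehmer's original 1972 argument, would be to estimate the finite differences $P_d(N) - P_d(N-1)$ directly from the product expansion of $F$ and then invoke Abel's theorem to deduce $\sum_N [P_d(N)-P_d(N-1)] = e^{-\gamma}$.
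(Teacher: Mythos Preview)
Your generating-function setup and the Abelian computation $(1-x)F(x)\to e^{-\gamma}$ via Euler's series (Theorem~\ref{th21}) are both correct and illuminating. Note that the paper itself gives no analytic argument here: it only identifies $P_d(N)$ with Lehmer's weighted sum $W_N^{\ast}=\sum'_{\lambda\vdash N}(a_1\cdots a_n)^{-1}$ over partitions into distinct parts and then cites \cite{Leh72} for the limit $W_N^{\ast}\to e^{-\gamma}$. So you are attempting strictly more than the paper does, by a different route.

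The genuine gap is in the Tauberian half. Standard Flajolet--Odlyzko transfer requires $H$ to be analytic in a $\Delta$-domain extending strictly beyond $|x|=1$, and that hypothesis fails here: each term $\Li_k(x^k)$ for $k\ge2$ carries branch points at all $k$-th roots of unity, so the singularities of $H$ are dense on the unit circle and $|x|=1$ is a natural boundary for $F$. Local H\"older regularity of $H$ at $x=1$ does not substitute for $\Delta$-analyticity in those theorems, and Darboux-type methods need global smoothness on the circle, which $H$ also lacks. Your fallback proposal is circular as written: applying Abel's theorem to $\sum_N\bigl(P_d(N)-P_d(N-1)\bigr)$ presupposes that this telescoping series converges, which is exactly the statement that $\lim_N P_d(N)$ exists. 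A route that does work, and which explains the same product you uncovered, is the probabilistic one: couple the cycle counts $(C_j)_{j\le N}$ with independent Poisson variables $Z_j\sim\mathrm{Poisson}(1/j)$ (the Feller coupling of \cite{ABT03}), for which
\[
P\bigl(Z_j\le1\ \text{for all }j\le N\bigr)=\prod_{j=1}^{N}e^{-1/j}\Bigl(1+\tfrac{1}{j}\Bigr)=e^{-H_N}(N+1)\longrightarrow e^{-\gamma},
\]
and then show the coupling error is $o(1)$; alternatively, one can follow Lehmer's original direct estimates on $W_N^{\ast}$.
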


\begin{proof}
 The  cycle lengths of a permutation $\sigma$ form a partition $\blambda= (1^{c_1},  2^{c_2}, \cdots, N^{c_N})$ of $N$
( written $\blambda \vdash N$), having $n(\lambda)= c_1+c_2 +\cdots+ c_N$ cycles  
with cycle length $j$ occurring $c_j$ times.
The number of permutations  $N(\blambda)$ giving rise to a given partition $\lambda$  is
 $$N(\blambda) = \frac{N!}{c_1! c_2!\cdots c_N! 1^{c_1} 2^{c_2} \cdots N^{c_N}},$$
 and by definition we have
 $$
 \frac{1}{N!} \sum_{\blambda \vdash N} N(\blambda)= 1.
 $$
In the case that all cycles are of distinct lengths, then all $c_i=0$ or $1$, and the formula for $N(\lambda)$ becomes 
  $$N(\blambda) = \frac{N!}{a_1 a_2 \cdots a_n},$$
 in which  $a_1 < a_2 < \cdots < a_n$ with $n=n(\lambda)$  are the lengths of the
 cycles.   (The division by  $a_i$ reflects  the fact that a cyclic shift of a cycle is the same cycle.)
   Lehmer \cite{Leh72} 
    assigned  weights   $(1^{c_1} 2^{c_2} \cdots N^{c_N})^{-1}$ to each partition,
  and counted different sets of such weighted partitions.
   Theorem 2 counted partitions with distinct summands,  in his notation  
 $$
 W_N^{\ast} := \sum_{\blambda \vdash N}{}^{'} \frac{1}{a_1 a_2 \cdots a_n},
 $$
with the prime indicating the sum runs over partitions having distinct parts.
 and showed that  $W_N^{\ast} \to e^{-\gamma}$ as $N \to \infty$. (Lehmer's weights
 agree with  the weights assigned to  partitions from the uniform distribution on  $S_N$ only for  those partitions
 having distinct parts.)
 \end{proof}

%
%
%
%
\subsection{Euler's constant and random permutations: shortest cycle}\label{sec38a}
\setcounter{equation}{0}

There are  striking parallels
 between   the distribution
 of cycles of  random permutations of $S_N$
and  the distributions of factorizations of random numbers 
  in an interval $[1,n]$, particularly concerning the number of
  these having factorizations of restricted types,
  allowing either factorizations with only small factors, treated in Section \ref{sec34},
  or having only large prime factors, treated in Section \ref{sec35}, where we take
  $N$ to be on the order of $\log n$. This relation was first observed by  
 Knuth and Trabb-Pardo \cite{KTP76}, whose 1976 work (already mentioned in Sections \ref{sec34} and  \ref{sec38})
was done
 to analyze the performance
of an algorithm for factoring integers by trial division. They 
observed in \cite[Sect. 10, p. 344]{KTP76}:  

\begin{quote} Therefore, if we are factoring the digits of a random $m$-digit 
number, the distribution of the number of digits in its prime factors is {\em approximately the
same as the distribution of the cycle lengths in a random permutation} on $m$ elements!
(Note that there are approximately $\ln m$ factors, and $\ln m$ cycles.)
\end{quote}
 They uncovered this  connection after noticing  that the Shepp-Lloyd formula for 
 Golomb's constant
 $\lambda$ in \eqn{A917},
 as evaluated by Mitchell \cite{Mit68},
 agreed  to $10$ places with 
 their calculation of Dickman's constant, as defined by \eqn{A918a}.
  A comparison of the $k$-th longest cycle formulas 
of Shepp and Lloyd in Theorem \ref{th92A} for cycles of length $\alpha N$ then revealed
matching  relatives  $\rho_{k}(x)$ of the Dickman function (defined  in Section \ref{sec34}).
These parallels extend to  the appearance of the Buchstab
function in the distribution of random permutations having no short cycle,
as we describe below.

 Concerning the shortest cycle, recall first that a  special case of Theorem  \ref{th92A}(2)
 shows  that the expected length of the shortest cycle is 
$$
E[m(\sigma)] =  e^{-\gamma} \log N (1 +o(1)) ~~\mbox{as}~~ N \to \infty,
$$

It is possible to get exact combinatorial formulas for the probability
$$
P(N, m) := \mbox{\rm Prob}[ \, m(\sigma) \ge m :\, \sigma \in S_N].
$$
 that a permutation has no cycle of length shorter than $m$.
 There are several limiting behaviors of this probability as $N \to \infty$,
 depending on the way $m=m(N)$
 grows as $N \to \infty$. There are three regimes, first, where  $m$ is
 constant, secondly where $m(N) = \alpha N$ grows proportionally   to $N$,
 with $0< \alpha \le  1$ (i.e. only long cycles occur) and thirdly,
 the intermediate regime, where $m(N) \to \infty$ but $\frac{m(N)}{N} \to 0$
 as $N \to \infty$.

First, suppose that  $m(N)=m$ is constant.
 This case has a long
history.  The  {\em derangement problem} ({\em Probl\`{e}me des m\'{e}nages}),
concerns the probability that a permutation
has no fixed point, which is the case $m=2$.
It was  raised by R\'{e}mond de Montmort \cite{Mon1708} in 1708
 and solved by Nicholas Bernoulli  in 1713, see  \cite[pp. 301--303]{Mon1713}.
 The derangement  problem was also solved by Euler in 1753 \cite[E201]{E201}, who
 was unaware of the earlier work.
 The well known answer is that
\begin{equation*}
P (N, 2) = \sum_{j=0}^N \frac{ (-1)^j}{j!}, 
\end{equation*}
and this exact formula yields the  result 
\begin{equation*}
\lim_{N \to \infty} P(N, 2) = \frac{1}{e}.
\end{equation*}
In 1952  Gruder \cite{Gru52} gave a generalization for permutations
having no cycle shorter than a fixed $m$, as follows.

\begin{theorem}~\label{th392a} {\rm (Gruder 1952)} 
For   fixed $m \ge 1$ there holds
\begin{equation}\label{391fm}
\lim_{N \to \infty} \mbox{\rm Prob}[\,m(\sigma) \,\ge m: \, \sigma \in S_N ] = e^{- H_{m-1}},
\end{equation}
in which $H_m$ denotes the $m$-th harmonic number.
\end{theorem}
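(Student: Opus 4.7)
The plan is to compute the exponential generating function (EGF) for the probabilities $P(N,m) := \text{Prob}[m(\sigma) \ge m:\sigma \in S_N]$ and then extract the limit by elementary singularity analysis. First, I would invoke the exponential formula applied to labeled cyclic structures: a single cycle on $k$ labeled points carries weight $(k-1)!$ and thus contributes $(k-1)!/k! = 1/k$ to the EGF, so the EGF counting all single allowable cycles of length $\ge m$ is $\sum_{k \ge m} x^k/k$. The exponential formula then yields
\begin{equation*}
F_m(x) := \sum_{N \ge 0} P(N,m)\, x^N = \exp\!\left(\sum_{k \ge m} \frac{x^k}{k}\right).
\end{equation*}
Separating off $-\log(1-x) = \sum_{k \ge 1} x^k/k$ gives the factorization
\begin{equation*}
F_m(x) = \frac{g_m(x)}{1-x}, \qquad g_m(x) := \exp\!\left(-\sum_{k=1}^{m-1} \frac{x^k}{k}\right).
\end{equation*}

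Second, I would extract coefficients. Since $g_m$ is the exponential of a polynomial, it is entire; in particular its Taylor expansion $g_m(x) = \sum_{k \ge 0} g_{m,k}\, x^k$ converges absolutely at $x=1$ to $g_m(1) = \exp(-H_{m-1})$. Convolving with the geometric series for $1/(1-x)$ yields
\begin{equation*}
P(N,m) = [x^N] F_m(x) = \sum_{k=0}^{N} g_{m,k},
\end{equation*}
which is a partial sum of an absolutely convergent series, and so tends to $e^{-H_{m-1}}$ as $N \to \infty$. This is the asserted limit \eqref{391fm}.

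The main obstacle is essentially bookkeeping: no analytic machinery beyond the obvious simple pole at $x=1$ is required, and the fact that $g_m$ is entire of finite order makes the tail $\sum_{k>N} g_{m,k}$ super-polynomially small in $N$, so the convergence is very fast. The one step deserving care is the derivation of the generating function identity for $F_m(x)$; if one wishes to bypass a general appeal to the exponential formula, it can be verified directly from the class-size formula $N(\blambda) = N!/\prod_k k^{c_k} c_k!$ for the number of permutations of cycle type $\blambda = (1^{c_1} 2^{c_2}\cdots)$, already used in the proof of Theorem~\ref{th380b}, by summing $N(\blambda)/N!$ over those $\blambda \vdash N$ with $c_1 = c_2 = \cdots = c_{m-1} = 0$ and comparing to $[x^N] \exp(\sum_{k \ge m} x^k/k)$ expanded via the multinomial theorem.
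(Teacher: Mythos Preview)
Your proof is correct and follows essentially the same approach as the paper: both derive the generating function $\sum_{N\ge 0} P(N,m)\,x^N = \exp\bigl(-\sum_{k=1}^{m-1} x^k/k\bigr)/(1-x)$ via the exponential formula and then read off the limit from the simple pole at $x=1$. Your final step, writing the coefficient as a partial sum of the entire series for $g_m$ at $x=1$, is a bit more explicit than the paper's, which simply cites Gruder for the conclusion.
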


\begin{proof}
Let $P_N(m, k)$ count the number of permutations of $N$ having exactly $k$ cycles, each of
length at least $m$, and set $P_N(m) = \sum_{k=1}^N P_N(m, k),$  so that $P(N, m) = \frac{P_N(m)}{N!}.$
These are given in the exponential generating function
\begin{eqnarray*}
\sum_{N=0}^{\infty} \frac{z^N}{N!} \Big(\sum_{k=1}^N P_N(m, k) u^k\Big)
 &=& \exp \Big[ u (\log \frac{1}{1-x} - \sum_{j=1}^{m-1} \frac{x^j}{j}) \Big]\\
& = & \frac{ \exp\Big[ -u\Big( x+ \frac{x^2}{2} + \cdots + \frac{x^{m-1}}{m-1}\Big) \Big]}{ (1-x)^u}.
\end{eqnarray*}
 Gruder \cite[Sect. 8, (85)]{Gru52} derives from the case $u=1$, 
$$
\sum_{N=0}^{\infty} P_N(m) \frac{x^N}{N!} = \frac{ \exp \left( -x- \frac{x^2}{2} - \cdots - \frac{x^{m-1}}{m-1}\right)}{1-x}.
$$
Using this fact Gruder \cite[Sect. 9, eqn. (94)]{Gru52}   derives  $\lim_{N \to \infty} \frac{N!}{P_N(m)} = e^{H_{m-1}}$,
giving the result.
\end{proof}

 Gruder also observes that one obtains Euler's constant from these values via the scaling limit
$$
 \lim_{m \to \infty} \lim_{N \to \infty} m P(N, m) = e^{-\gamma}.
$$
An asymptotic expansion concerning convergence to the limit \eqn{391fm}
for constant $m$ is now  available in  general circumstances.   
Panario and Richmond \cite[Theorem 3, eq. (9)]{PR01},
  give such an asymptotic expansion  for  the $r$-th largest cycle being $\ge m$ for a
 large class of probability models.

Secondly, for
 the  intermediate range where  $m=m(N) \to \infty$  with $\frac{m}{N} \to 0$ as $N \to \infty$, there is  the following
striking universal limit  
estimate,  in which $e^{-\gamma}$ appears.  \smallskip


\begin{theorem}~\label{th392} {\rm (Panario and Richmond 2001)}
Consider permutations on $N$ letters whose shortest cycle $m(\sigma) \ge m$.
Suppose  that  $m=m(N)$ depends on $N$ in such a way that   
$m(N) \to \infty$  and $\frac{m(N)}{N} \to 0$ 
both hold as $N \to \infty$.   
Under these conditions we have  
$$
\mbox{\rm Prob}[ \,m(\sigma) \ge m(N): \, \sigma \in S_N \, ] \sim  \frac{e^{-\gamma}}{m}, \quad\quad \mbox{as}~~~
N \to \infty.
$$
\end{theorem}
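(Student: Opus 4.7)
The plan is to apply singularity analysis to Gruder's exponential generating function from the proof of Theorem~\ref{th392a}. Writing
\[
F_m(z) := \sum_{N \ge 0} P(N, m)\, z^N \;=\; \frac{g_m(z)}{1-z}, \qquad g_m(z) := \exp\Bigl(-\sum_{j=1}^{m-1} \frac{z^j}{j}\Bigr),
\]
and expanding $\frac{1}{1-z} = \sum_{n \ge 0} z^n$ gives the key convolution identity
\[
P(N, m) \;=\; [z^N]\, F_m(z) \;=\; \sum_{k=0}^{N} a_k(m), \qquad a_k(m) := [z^k]\, g_m(z).
\]
Since $H_{m-1}(z) := \sum_{j=1}^{m-1} z^j/j$ is a polynomial, $g_m$ is entire, and the series $\sum_{k \ge 0} a_k(m)$ converges absolutely to $g_m(1) = e^{-H_{m-1}}$ (writing $H_{m-1} := H_{m-1}(1)$). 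Euler's classical expansion $H_{m-1} = \log m + \gamma + O(1/m)$ (e.g.\ from Theorem~\ref{th32}) then delivers the main term
\[
e^{-H_{m-1}} \;=\; \frac{e^{-\gamma}}{m}\,\bigl(1 + O(1/m)\bigr),
\]
which is precisely the target asymptotic of Theorem~\ref{th392}.

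It remains to bound the tail $T_{N,m} := \sum_{k > N} a_k(m)$ and show $T_{N,m} = o(1/m)$ uniformly in the regime $m = m(N) \to \infty$ with $m/N \to 0$. For this I would use Cauchy's estimate on the circle $|z|=r$,
\[
|a_k(m)| \;\le\; r^{-k}\, \max_{|z|=r} |g_m(z)| \;\le\; r^{-k}\, e^{H_{m-1}(r)},
\]
and optimize over $r$. Since $g_m$ has entire-function order $m-1$, the natural saddle is at $r \approx k^{1/(m-1)}$, yielding the super-geometric bound $|a_k(m)| \lesssim (e/k)^{k/(m-1)}$ for $k \ge m$. Summed over $k > N$ under the hypothesis $N/m \to \infty$, this shows $|T_{N,m}|$ decays faster than any fixed power of $1/m$, and in particular $T_{N,m} = o(e^{-\gamma}/m)$. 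Combined with the main term this yields $P(N,m) \sim e^{-\gamma}/m$, as claimed.

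The main obstacle will be carrying out the tail estimate with genuine uniformity throughout the intermediate regime. The function $g_m(z)$ has a Janus-like character: for small indices $k$ its coefficients reflect intricate cancellations in the truncated exponential $\exp(-H_{m-1}(z))$, while only for $k \gg m$ does the entire-function saddle-point regime described above apply cleanly; one must produce a single estimate that straddles both regimes and merges correctly with the main term. A technically more robust route, adopted in the Panario--Richmond paper, is to apply the Flajolet--Odlyzko transfer theorems directly to $F_m(z)$ using a Hankel contour around $z=1$, combined with the polar subtraction $F_m(z) = e^{-H_{m-1}}/(1-z) + R_m(z)$; the advantage there is that by scaling the contour so that $|1-z|$ is of order $1/m$, the $m$-dependence is tracked automatically and the desired uniform bound emerges. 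This delicate uniformity, rather than any single pointwise estimate, is the chief technical difficulty.
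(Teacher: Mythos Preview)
The paper does not give a self-contained argument here: its entire proof is the observation that the statement is a special case of a general asymptotic result of Panario and Richmond \cite[Theorem 3, eq.~(11)]{PR01} for the $r$-th smallest component in a logarithmic combinatorial structure. So your write-up is already more explicit than what the paper offers.

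Your skeleton is correct and is in fact the natural direct proof. The identification of $\sum_{N\ge 0} P(N,m)z^N$ with $g_m(z)/(1-z)$ is exactly Gruder's formula (the EGF of the counts is the OGF of the probabilities), and the convolution identity $P(N,m)=\sum_{k\le N}a_k(m)$ together with $\sum_{k\ge 0}a_k(m)=g_m(1)=e^{-H_{m-1}}\sim e^{-\gamma}/m$ pins down the main term immediately. The only genuine work is the uniform tail bound, and your Cauchy-estimate saddle heuristic is of the right order: summing $|a_k(m)|\lesssim (e/k)^{k/(m-1)}$ over $k>N$ does give decay faster than any power of $1/m$ once $N/m\to\infty$ (one must be a little careful that the crude bound $\max_{|z|=r}|g_m(z)|\le e^{H_{m-1}(r)}$ picks up all lower-order terms of $H_{m-1}(r)$, but these only cost harmless $\log m$ factors). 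The Flajolet--Odlyzko/Hankel-contour route you describe at the end is indeed what Panario--Richmond actually do; it packages the $m$-uniformity more cleanly, whereas your direct Cauchy argument is more elementary but forces you to track the $m$-dependence by hand. Either way the substance is the same: main term $e^{-H_{m-1}}$ plus a tail that dies under $N/m\to\infty$.
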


\begin{proof}
This result is deducible form  a general asymptotic result of 
 Panario and Richmond \cite[Theorem 3, eq. (11)]{PR01}, which
 applies to distribution of the $r$-th largest cycle of a logarithmic
 combinatorial structure.
 \end{proof}

Thirdly, we consider  the large range, concerning those  permutations  having shortest cycle of length at least a constant 
fraction  $\alpha N$ of the size of the permutation.  Here the resulting scaled density involves
the Buchstab function $\omega(u)$ treated in Section \ref{sec35}, with
 $\alpha= \frac{1}{u}$, with $u >1$. \smallskip


\begin{theorem}~\label{th393} {\rm (Panario and Richmond 2001)}
 Consider permutations on $N$ letters whose shortest cycle $m(\sigma) \ge m$.
Suppose  that  $m=m(N)$ depends on $N$ in such a way that  $\frac{m(N)}{N} \to \alpha$ 
holds as $N \to \infty$, with $0 < \alpha \le 1.$  
Under these conditions we have, for fixed $0 < \alpha \le 1$, that  
\begin{equation}\label{381aa}
\mbox{\rm Prob}[ \, m(\sigma) > \alpha N: \,\sigma \in S_N ] \sim 
\omega(1/\alpha)\frac{1}{\alpha N}, \,  \quad\quad \mbox{as} ~~~N \to \infty,
\end{equation}
where $\omega(u)$ denotes the Buchstab function.
\end{theorem}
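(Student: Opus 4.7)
The plan is to derive \eqref{381aa} from an explicit combinatorial expansion of the probability, convert the resulting sum into a Riemann integral, and then match the integral against the iterated-integral representation of the Buchstab function given in Section \ref{sec35}. Starting from the classical identity that the number of $\sigma \in S_N$ whose cycle lengths form the multiset $\{k_1,\ldots,k_c\}$ (with multiplicities $c_j = \#\{i : k_i = j\}$) is $N!/(\prod_i k_i \cdot \prod_j c_j!)$, summing over multisets with every part at least $m$ and reorganizing into ordered tuples yields
\begin{equation*}
\mbox{Prob}[m(\sigma) \ge m] = \sum_{c=1}^{\lfloor N/m \rfloor}\frac{1}{c!}\sum_{\substack{k_1,\ldots,k_c \ge m \\ k_1+\cdots+k_c = N}}\prod_{i=1}^c \frac{1}{k_i},
\end{equation*}
the inner sum being over ordered integer tuples. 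For $m = m(N)$ with $m/N \to \alpha$, the outer sum is finite, extending only over $c \le C := \lfloor 1/\alpha \rfloor$, and the $c=1$ term contributes exactly $1/N$.

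For each $c \ge 2$, I would make the change of variables $y_i = k_i/N$ and interpret the inner sum as a Riemann sum on the $(c-1)$-dimensional closed simplex
$\Delta_\alpha^{(c)} := \{(y_1,\ldots,y_{c-1}) : y_i \ge \alpha,\ 1 - y_1 - \cdots - y_{c-1} \ge \alpha\}$,
with lattice spacing $1/N$ in each coordinate and summand $\prod_i k_i^{-1} = N^{-c}\prod_i y_i^{-1}$. Since $\alpha > 0$, every factor $y_i$ and $1-\sum y_i$ is bounded below by $\alpha$ on $\Delta_\alpha^{(c)}$, so the integrand $\prod_i y_i^{-1}$ is smooth and bounded there, and the standard Riemann-sum estimate gives
\begin{equation*}
\sum_{\substack{k_i \ge m \\ \sum k_i = N}}\prod_{i=1}^c \frac{1}{k_i} = \frac{1}{N}\int_{\Delta_\alpha^{(c)}}\frac{dy_1 \cdots dy_{c-1}}{y_1 y_2 \cdots y_{c-1}(1 - y_1-\cdots-y_{c-1})} + O(N^{-2}),
\end{equation*}
uniformly for $c \in \{2,\ldots,C\}$. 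Multiplying through by $N$ and summing over $c$ gives
\begin{equation*}
N\cdot\mbox{Prob}[m(\sigma) \ge m] \longrightarrow 1 + \sum_{c=2}^{C}\frac{1}{c!}\int_{\Delta_\alpha^{(c)}}\frac{dy_1\cdots dy_{c-1}}{y_1 y_2 \cdots y_{c-1}(1 - y_1-\cdots-y_{c-1})}.
\end{equation*}

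Finally, I would identify this limit as $(1/\alpha)\,\omega(1/\alpha)$ by direct comparison with the explicit iterated-integral formula for $u\,\omega(u)$ displayed immediately after equation \eqref{546d} in Section \ref{sec35}: substituting $u = 1/\alpha$ there reproduces precisely the same series of simplex integrals with the identical prefactors. Rearranging yields $\mbox{Prob}[m(\sigma) \ge m] \sim \omega(1/\alpha)/(\alpha N)$, which is \eqref{381aa}.

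The main technical issue is the uniform control of the Riemann-sum error. The hypothesis $\alpha > 0$ keeps the integrand away from its singularity at zero, so the bound $O(N^{-2})$ after the $N^{-c}$ prefactor holds uniformly over the bounded index range $c \le C$, and the $O(N^{c-2})$ boundary lattice points each contribute $O(N^{-c})$ and are absorbed. Two small care-points remain: first, the theorem uses $m(\sigma) > \alpha N$ rather than $m(\sigma) \ge m$, but these conditions differ only when $\alpha N$ is an integer and the choice of integer rounding is immaterial for the $\sim$-asymptotic; second, at the exceptional values $\alpha = 1/k$ the top simplex $\Delta_\alpha^{(C)}$ degenerates to a set of measure zero, but continuity of $\omega(u)$ permits concluding by a squeezing argument using nearby non-exceptional $\alpha$.
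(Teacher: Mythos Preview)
Your argument is correct. The combinatorial identity for $\mbox{Prob}[m(\sigma)\ge m]$ as a finite sum of ordered-tuple sums is standard, the Riemann-sum passage is valid because $\alpha>0$ keeps the integrand bounded on each simplex $\Delta_\alpha^{(c)}$, and the resulting expression matches term-by-term the iterated-integral representation of $u\,\omega(u)$ displayed after \eqref{546d}. The boundary cases you flag (integer $\alpha N$, and $\alpha=1/k$ where the top simplex degenerates) are handled exactly as you say.

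Your route is, however, quite different from the paper's. The paper does not argue directly at all: it simply invokes a general theorem of Panario and Richmond \cite[Theorem~3, eq.~(10)]{PR01}, which is proved via exponential generating functions and the singularity-analysis machinery of Flajolet and Odlyzko \cite{FO90b}. That approach yields more---full asymptotic expansions, and the result as a special case of a framework covering arbitrary exp-log combinatorial classes---but it imports substantial analytic overhead. Your proof is elementary and self-contained, and it has the conceptual advantage of making the appearance of the Buchstab function completely transparent: the same simplex integrals arise on both the permutation side and the sieve side, so the identification is immediate rather than going through a differential-difference equation or a transfer theorem.
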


\begin{proof}
 This result follows
 from a general result of   Panario and Richmond \cite[Theorem 3, eq. (10)]{PR01},
 which gives an asymptotic expansion  for  the $r$-th largest cycle. 
 The proof for this case  uses in part methods of Flajolet and Odlyzko \cite{FO90b}.
 \end{proof}

 The matching of the large range estimate \eqn{381aa} as $\alpha \to 0$ with the intermediate range
 estimate  follows from 
the limiting behavior of the Buchstab function $\omega(u) \to e^{-\gamma}$
as $u \to \infty.$ One may   compare  Theorem \ref{th392} with 
the sieving result in Theorem \ref{buchstab}.  They are quite parallel, and in both cases 
the factor  $e^{-\gamma}$ arises from the limiting 
 asymptotic behavior of the Buchstab function.

The  appearance of the Buchstab function in the sieving models in Section \ref{sec35} and
the permutation cycle structure above 
is precisely accounted for 
 in stochastic models 
developed by Arratia, Barbour, and Tavar\'{e} \cite{ABT97}, \cite{ABT99} in the late 1990's, which they termed  
Poisson-Dirichlet processes (and which we do not define here). 
They showed (\cite{ABT97},  \cite[Sec. 1.1, 1.2]{ABT03} that 
(logarithmically scaled) factorizations of random integers, and (scaled) cycle structures
of random permutations as $N \to \infty$
 lead to identical
limiting processes.
To describe the scalings, one  considers an ordered  prime  factorization of an 
 random integer $m = \prod_{i=1}^n p_i$, with $2 \le p_1 \le p_2 \le \cdots \le p_k$.
 Let $k := \Omega(m)$ count
 the number of prime factors of $m$ with multiplicity, one defines  the logarithmically scaled quantities
 $$
\bar{a}_i^{*}: = \frac{\log p_i}{\log m},  ~~~ 1 \le i \le k.
 $$
 The random quantities drawn are $(\bar{a}_1^{*}, \bar{a}_2^{*}, ..., \bar{a}_k^{\ast})$,
 where $k$ itself is also a random variable, and these quantities sum to $1$.
 To a random permutation $\sigma $ of $S_N$  let $a_1 \le a_2 \le \cdots \le a_{n(\sigma)}$
 represent the lengths of the cycles arranged in increasing order.
 Then associate to $\sigma$  the normalized cycle lengths
$$
\bar{a_i} := \frac{a_i}{N}, ~~~1 \le i \le n.
$$
Here $n=n(\sigma)$ is a random variable, and these quantities also  sum to $1$.
Arratia, Barbour and Tavar\'{e} \cite{ABT99}
  showed that in both cases, as $N \to \infty$ (resp. $n \to \infty$)  these random variables converge to
a limiting stochastic process, a Poisson-Dirichlet process of parameter $1$.
 They  furthermore
noted  (\cite[p. 34]{ABT03})
that both models required exactly the same normalizing constant: $e^{-\gamma}$.
The existence of a normalizing constant for Poisson-Dirichlet processes was noted
in 1977 by Vershik and Shmidt \cite{VS77}, who conjectured it to be $e^{-\gamma}$,
and this was proved in 1982 by Ignatov \cite{Ign82}.

The model for random cycles applies as well to random factorization of polynomials
of degree $N$ over the finite field $GF(q)$, as $q \to \infty$, see Arratia, Barbour and Tavar\'{e} \cite{ABT93}.
%
%
%
%
\subsection{Euler's constant and random finite functions }\label{sec39}
\setcounter{equation}{0}

Euler's constant also appears in  connection   with the distribution of  cycles in
a random finite function $F: [1, N] \to [1, N]$. 
 Iteration of a finite function on a given initial
seed leads to iterates of an element having a preperiodic part, followed by arrival at a cycle of the function.
Thus not every element of $N$ belongs to a cycle.

In 1968 Purdom and Williams \cite{PW68} established  results for random
finite functions that are analogous to results for random permutations given in Section \ref{sec38}.
They studied the length of the longest and shortest cycles for a random
function; note however that the expected length of the longest cycle grows 
proportionally to $\sqrt{N},$
rather than proportional to $N$ as in the case of a random permutation. \smallskip

\begin{theorem}~\label{th92B} {\rm (Purdom and Williams 1968)}
Pick a random function $F: [1, N] \to [1, N]$ with the uniform distribution
over all $N^N$ such functions.

(1) The length $M(F)$ of the longest cycle of $F$ under iteration
satisfies
\begin{equation*}
\lim_{N \to \infty}  \frac{E[M(F)]}{\sqrt{N}} = \lambda \sqrt{\frac{\pi}{2}},
\end{equation*}
in which $\lambda$ is the Golomb-Dickman  constant. 

(2) Let $m(F)$ denote the length of the shortest cycle under iteration of
the function $F$. Then the expected value $E[m(F)]$ satisfies
\begin{equation*}
\lim_{N \to \infty}  \frac{E[m(F)]}{\log N} = \frac{1}{2} e^{-\gamma}.
\end{equation*}
\end{theorem}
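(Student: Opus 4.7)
The plan is to reduce the problem to the random permutation results of Section~\ref{sec38} via the functional graph of $F$. Every function $F:[1,N]\to[1,N]$ has a well-defined cyclic set $C(F) := \{x : F^{(k)}(x)=x \text{ for some } k\ge 1\}$, and under iteration every element of $[1,N]$ eventually enters $C(F)$ and thereafter cycles, so the restriction $F|_{C(F)}$ is a permutation of $C(F)$ whose cycles are exactly the cycles of $F$. The first step would be a conditional uniformity lemma: given that $C(F) = S$ with $|S|=k$, the restriction $F|_S$ is uniformly distributed over the $k!$ permutations of $S$. This holds because for each such permutation $\pi$ the number of ways to extend $\pi$ to a function on $[1,N]$ with cyclic set exactly $S$ equals the number of rooted labeled forests on $[1,N]$ with root set $S$, which by Cayley's formula is $kN^{N-k-1}$, independent of $\pi$.

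From this counting one reads off the distribution of $T := |C(F)|$: summing over the $\binom{N}{k}$ choices of $S$ and simplifying yields a telescoping tail formula
\[
\mathrm{Prob}[T \ge k] = \frac{N!}{(N-k)!\,N^k}, \qquad 1\le k\le N.
\]
The standard birthday-paradox expansion $\log\prod_{i=0}^{k-1}(1-i/N) = -k^2/(2N) + O(k^3/N^2)$ shows that with $k=x\sqrt{N}$ this tail tends to $e^{-x^2/2}$, so $T/\sqrt{N}$ converges in distribution to a Rayleigh law with density $xe^{-x^2/2}$. Summing the tail gives $E[T] = \sum_{k\ge 1} N!/((N-k)!\,N^k)$, essentially Ramanujan's $Q$-function, with classical asymptotic $E[T] = \sqrt{\pi N/2} + O(1)$.

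Part (1) then follows quickly. Since the cycles of $F$ coincide with those of the permutation $F|_{C(F)}$, conditional on $T=k$ the longest cycle $M(F)$ has the same distribution as the longest cycle of a uniformly random $\sigma_k \in S_k$. Gourdon's expansion recalled in Section~\ref{sec38} gives $E[M(\sigma_k)] = \lambda k + O(1)$ uniformly in $k$, with the trivial bound $M(\sigma_k)\le k$ covering small $k$. Taking total expectation yields $E[M(F)] = \lambda\,E[T] + O(1) \sim \lambda\sqrt{\pi N/2}$, hence $E[M(F)]/\sqrt{N} \to \lambda\sqrt{\pi/2}$.

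For part (2), the same conditioning gives $E[m(F)\mid T=k]$ equal to the expected shortest cycle of $\sigma_k$, which by Theorem~\ref{th92A}(2) satisfies $E[m(\sigma_k)] = e^{-\gamma}\log k + O(1)$ uniformly for $k\ge 2$ (and is at most $k$ when $k=1$). Hence $E[m(F)] = e^{-\gamma}\,E[\log T] + O(1)$, and the factor $1/2$ emerges from
\[
E[\log T] = \tfrac12\log N + E\!\left[\log(T/\sqrt{N})\right] = \tfrac12\log N + O(1).
\]
The main obstacle I would anticipate is this last estimate: one must upgrade the weak convergence $T/\sqrt{N} \to R$ (Rayleigh) to convergence of $E[\log(T/\sqrt{N})]$. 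This requires uniform integrability, which follows from the Gaussian upper tail $\mathrm{Prob}[T\ge x\sqrt{N}]\le e^{-x^2/2}$ together with the lower-tail bound $\mathrm{Prob}[T\le K] = 1 - N!/((N-K)!\,N^K) = O(K^2/N)$ for fixed $K$, which controls the region where $\log T$ could be small. Dividing the resulting estimate for $E[m(F)]$ by $\log N$ then gives the stated limit $\tfrac12 e^{-\gamma}$; the factor $1/2$ in (2) and the factor $\sqrt{\pi/2}$ in (1) both trace to the same source, namely that the cyclic core of a random function has typical size of order $\sqrt{N}$.
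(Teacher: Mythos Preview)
Your proposal is correct and follows essentially the same route as the paper: the paper quotes Purdom--Williams for the decomposition $E[m(F)] = e^{-\gamma}\,Q_N(1,1) + o(Q_N(1,1))$ with $Q_N(1,1)=\sum_j \frac{(N-1)!\,j\log j}{(N-j)!\,N^j}$, and your derivation of the law of $T=|C(F)|$ shows these weights are exactly $\mathrm{Prob}[T=j]$, so $Q_N(1,1)=E[\log T]$. Both arguments then finish by showing $E[\log T]\sim\tfrac12\log N$ via concentration of $T$ near $\sqrt{N}$; you make this precise through Rayleigh convergence and uniform integrability, which is just what the paper means when it says the distribution ``is concentrated on values $j=\sqrt{n}+o(\sqrt{n})$.''
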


\begin{proof}
For fixed $N$, Purdom and Williams \cite{PW68} use the notation
$E[M(F)] = E_{F_N, 1}(\ell):=E[M(F)]$ and $E_{F_N, 1}(s):=E[m(F)]$.
The result  (1)  follows from   \cite[p. 550, second equation from bottom]{PW68}.
Purdom and Williams state a result related to (2)   without detailed proof. 
Namely, the  top equation on \cite[p. 551]{PW68} states
$$
E[m(F)] = S_{1,1} Q_{n}(1,1) + o(Q_n(1,1)),
$$
where
\begin{equation}\label{purdom}
Q_n(1,1) = \sum_{j=1}^n \frac{(n-1)! j (\log j)}{(n-j)! n^j} .
\end{equation}
Here the values $S_{r, k}$  
 are taken from  Shepp and Lloyd \cite{SL66}, where for $k=1$ they
 are given by the right hand side of  \eqn{A911},  and  for $k \ge 2$ 
 by the right hand side of  \eqn{A911b}, and in particular
$S_{1,1} = e^{-\gamma}.$ To estimate $Q_n(1,1)$ given by \eqn{purdom}
we use the identity ( \cite[p. 550]{PW68})
$$
Q_n(0) = \sum_{j=1}^n \frac{(n-1)! j}{(n-j)! n^j} =1.
$$
We observe that  $Q_n(1,1)$ is the expected value of the function $\log j$ with
respect to the  probability distribution given
by the individual  terms  in $Q_n(0)$. 
One may  deduce that $Q_n(1,1) = \frac{1}{2} \log n + o(\log n)$
by showing  that this probability distribution  is concentrated on values 
$j = \sqrt{n} + o(\sqrt{n})$. This gives (2) .
\end{proof}

The study of random finite functions is relevant for understanding the
behavior of the Pollard \cite{Pol75}  ``rho" method of factoring large integers.
The paper of Knuth and Trabb-Pardo \cite{KTP76} analyzed the performance 
of this algorithm, but their model used random permutations rather than
random functions. 
An   extensive study of limit theorems
for statistics for random finite  functions  was made by V. F. Kolchin \cite{Kol86}.
A very nice asymptotic analysis  of the
major statistics for random finite functions  is given in  Flajolet and Odlyzko \cite[Theorems 2 and 3]{FO90}.

%
%
%
%
\subsection{Euler's constant as a  Lyapunov exponent}\label{sec39b}
\setcounter{equation}{0}

Certain properties of Euler's constant  have  formal resemblance to properties of 
a  dynamical  entropy.
More precisely,  the quantity $C= e^{\gamma}$ might be interpretable as the growth rate of some
dynamical quantity, with $\gamma= \log C$ being the
associated  (topological or metric) entropy. 
Here we present results  about the growth rates of the
size of products of random matrices having normally distributed entires.
These growth rates naturally
involve $\gamma$, together with 
other known constants.

Before stating results precisely, we remark that products of random matrices often have
well-defined growth rates, which  can be stated either  in terms of
growth rates of matrix norms or in terms of Lyapunov exponents.
Let $\{ A(j): j \ge 1\}$  be a fixed sequence of $d \times d$ complex matrices.,
and let 
$M(n) = A(n) A(n-1) \cdots A(2) A(1)$ be  
 product of  $d \times d$ complex matrices. 
We define for an initial vector $\bv \in \CC^d$ and a vector norm $||\cdot||$, 
the {\em logarithmic growth exponent} of $\bv$ to be
\begin{equation}\label{700}
  \lambda(\bv) := \limsup_{n \to \infty} \frac{1}{n} \log {\frac{ ||M(n) \bv||}{||\bv||} }.
\end{equation}
This value depends on $\bv$ and on the particular sequence $\{ A(j): j \ge 1\}$ but is
independent of the choice of the vector norm.
For  a fixed infinite sequence
$\{ A(j): j \ge 1\}$  as $\bv$ varies there 
will at most $d$ distinct values  $\lambda_1^{*} \le \lambda_2^{*}\le \cdots \le \lambda_d^{*}$
for this limit.

Now let the sequence $\{ A(j): j \ge 1\}$
vary, with each $A(j)$ being
drawn independently from a given probability distribution on the set of $d \times d$ matrices.
In 1969 Furstenberg and Kesten \cite{FK60} showed that for almost all vectors $\bv$
(off a set of positive codimension) 
the limiting value  $\lambda_d^{*}$ takes with probability one
a constant value $\lambda_d$, provided the  distribution satisfied the condition
\begin{equation}\label{log-bound}
 \int_{X} \log^{+} ||A(x)|| d\mu(x) < \infty,
 \end{equation}
  where $(X, \Sigma, \mu)$ is the probability space, $||\cdot ||$ is a fixed matrix norm,
 required to be submultiplicative, i..e  $||M_1M_2|| \le ||M_1||||M_2||$, 
 and where $\log^{+} |x| := \max (0, \log |x|)$.
A corollary of the 
 multiplicative ergodic theorem proved by Oseledec in 1968 (see \cite{Ose68}, \cite{Rag79}) asserts that with probability one
 all  these logarithmic
  exponents take constant values, 
  provided the probability distribution on random matrices
  satisfies \eqn{log-bound}.
   These constant  values $\lambda_1 \le \lambda_2 \le \cdots \le \lambda_d$
 are called the  {\em Lyapunov exponents} of the random matrix product,
with  $\lambda_d$ being  the {\em maximal Lyapunov exponent}. 
 For almost all random products and almost all starting vectors $\bv_0$, 
 one will have  $\lambda(\bv) = \lambda_d$, compare  Pollicott \cite{Pol10}.
 The value $C_d := \exp(\lambda_d)$ may be thought of as approximating  
  the exponential growth rate of matrix norms $||M(n)^T M(n)||^{1/2}$,  as $n\to \infty$.

In general the Lyapunov exponents are hard to determine. In 1984
J. Cohen and C. M. Newman\cite[Theorem 2.5]{CN84} obtained the following
explicit result for normally distributed variables. Note that condition \eqn{log-bound}
holds for such variables.

%
\begin{theorem} \label{th71}{\em (Cohen and Newman 1984)}
Let $\{A(1)_{ij}: 1 \le i, j \le d\}$ be independent,  identically distributed (i.i.d.)  normal random variables 
with mean $0$ and variance $s$, i.e. $A(1)_{ij} \sim N(0, s^2)$. Let $\bv(0)$ be a nonzero vector in $\RR^d$.
Define for independent random draws
\begin{equation*}
\bv(n) = A(n) A(n-1) \cdots A(1) \bv(0),~~~n= 1, 2, 3, ...
\end{equation*}
and consider\footnote{Cohen and Newman write the left sides of \eqn{702a} 
and \eqn{703} as $\log \lambda$,  but here
we replace these by the rescaled variables
 $\lambda(\bv_0)$ and $\lambda$, in 
order  to match the definition of Lyapunov exponent given in \eqn{700}.}
\begin{equation}\label{702a}
 \lambda( \bv(0)) := \lim_{n \to \infty} \frac{1}{n} \log ||\bv(n)||
\end{equation}
using the Euclidean norm $||\cdot||$ on $\RR^d$. 
Then with probability one the limit exists in { \eqn{702a}}, is nonrandom, is  independent of $\bv(0)$,
and is given by
\begin{equation}\label{703}
 \lambda = \frac{1}{2} [ \log(s^2) + \log 2 + \psi(\frac{d}{2})], 
\end{equation}
in which $\psi(x)$ is the digamma function. Moreover as $n \to \infty$ the  random variables
$\frac{1}{\sqrt{n}} \log (e^{-\lambda n} ||\bv(n)||)$ converge in distribution to
$N(0, \sigma^2)$ with
\begin{equation*}
\sigma^2= \frac{1}{4} \psi'(\frac{d}{2}).
\end{equation*}
\end{theorem}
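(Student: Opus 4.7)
The plan is to exploit the rotational invariance of the i.i.d.\ Gaussian ensemble to reduce the matrix product to a sum of i.i.d.\ scalar random variables, and then invoke the strong law of large numbers and the central limit theorem. The key structural observation is that if $A$ has i.i.d.\ $N(0,s^2)$ entries and $\bw \in \RR^d$ is \emph{any} fixed nonzero vector, then the coordinates of $A\bw$ are i.i.d.\ $N(0, s^2 \|\bw\|^2)$. This is immediate because each coordinate $(A\bw)_i = \sum_j A_{ij} w_j$ is a linear combination of independent Gaussians with variance $s^2 \|\bw\|^2$, and distinct coordinates use disjoint rows of $A$, hence are independent. Consequently
\[
\|A\bw\|^2 \;\stackrel{d}{=}\; s^2 \|\bw\|^2 \, Y,
\]
with $Y \sim \chi^2_d$ a chi-squared variable with $d$ degrees of freedom, and, crucially, $Y$ is independent of $\bw$.

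From this I would deduce that the increments
\[
X_k := \log \|\bv(k)\| - \log \|\bv(k-1)\| = \tfrac{1}{2}\log\!\bigl(\|A(k)\,\bv(k-1)\|^2 / \|\bv(k-1)\|^2\bigr)
\]
are i.i.d.: conditional on the $\sigma$-algebra generated by $A(1), \ldots, A(k-1)$ (and hence by $\bv(k-1)$), the matrix $A(k)$ is independent of $\bv(k-1)$, and the ratio above is distributed as $\tfrac{1}{2}\log(s^2 Y_k)$ with $Y_k \sim \chi^2_d$ independent of everything preceding. Since this distribution does not depend on the conditioning, the $X_k$ are unconditionally i.i.d.\ with the same law as $\tfrac{1}{2}\log(s^2 Y)$. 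Writing
\[
\log \|\bv(n)\| = \log \|\bv(0)\| + \sum_{k=1}^n X_k,
\]
the strong law of large numbers gives that $\tfrac{1}{n}\log\|\bv(n)\|$ converges almost surely to the deterministic constant $\lambda = E[X_1]$, independently of $\bv(0)$.

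The remaining task is to identify $E[X_1]$ and $\mathrm{Var}(X_1)$ with the stated expressions. For $Y \sim \chi^2_d$, the moment generating identity
\[
E[Y^t] = \int_0^\infty y^t \frac{y^{d/2-1} e^{-y/2}}{2^{d/2} \Gamma(d/2)}\, dy = \frac{2^t \Gamma(d/2+t)}{\Gamma(d/2)},
\]
valid in a neighborhood of $t=0$, upon differentiating in $t$ at $t=0$ yields $E[\log Y] = \log 2 + \psi(d/2)$ and $\mathrm{Var}(\log Y) = \psi'(d/2)$. Hence
\[
\lambda = E[X_1] = \tfrac{1}{2}\bigl(\log s^2 + \log 2 + \psi(d/2)\bigr),
\quad \mathrm{Var}(X_1) = \tfrac{1}{4}\psi'(d/2),
\]
which gives the Lyapunov exponent formula. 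The classical central limit theorem applied to the i.i.d.\ sequence $\{X_k\}$ (whose variance is finite since $\psi'(d/2) < \infty$ for $d \ge 1$) yields convergence in distribution of $n^{-1/2}(\log \|\bv(n)\| - n\lambda) = n^{-1/2}\log(e^{-\lambda n}\|\bv(n)\|)$ to $N(0, \sigma^2)$ with $\sigma^2 = \tfrac{1}{4}\psi'(d/2)$, completing the proof.

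The main potential obstacle is subtler than it may first appear: one must verify carefully that the conditional distribution of $\|A(k)\bv(k-1)\|/\|\bv(k-1)\|$ is genuinely independent of $\bv(k-1)$, not merely that the two have the right marginal laws. This is precisely what the rotational symmetry of the Gaussian ensemble delivers, and it is what makes the Gaussian case explicitly solvable while general distributions (even very well-behaved ones) yield Lyapunov exponents for which no closed form is known.
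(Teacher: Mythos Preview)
Your argument is correct and complete. The paper itself does not supply a proof of this theorem: it is stated as a quotation of \cite[Theorem 2.5]{CN84}, and the paper only proves the subsequent corollary (Theorem~\ref{th72}) by specializing your formula \eqref{703}. What you have written is essentially the original Cohen--Newman argument: rotational invariance of the i.i.d.\ Gaussian matrix forces the ratios $\|\bv(k)\|/\|\bv(k-1)\|$ to be i.i.d.\ with law $s\sqrt{\chi^2_d}$, reducing the problem to the strong law and the central limit theorem for scalar i.i.d.\ sums, with the moments of $\log \chi^2_d$ computed via $\Gamma$-function differentiation. Your closing remark correctly identifies the one genuinely delicate point---that the conditional law of the ratio is free of the past, not merely marginally chi-squared---and correctly attributes this to the orthogonal invariance of the Gaussian law, which is precisely why no comparable closed form exists for other entry distributions.
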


It is easy to deduce from  Cohen and Newman's result  the following consequence 
concerning $d \times d$ (non-commutative) random matrix products.


\begin{theorem} \label{th72}
Let $A(j)$ be  $d \times d$ matrices with entries drawn as  independent
identically distributed (i.i.d.)  normal random variables 
with variance $1$, i.e. $N(0, 1)$. 
Consider  the random matrix product 
$$
S(n) = A(n) A(n-1) \cdots A(1),
$$
and let  $||S||_{F} = \sqrt{\sum_{i,j}   |S_{i,j}|^2}$  be the Frobenius matrix norm.
Then, with probability one, the growth rate is given by
\begin{equation}\label{706}
\lim_{n \to \infty}  \left( ||S(n)||_{F}^2 \right)^{\frac{1}{n} }= \left\{
\begin{array}{ll}
2e^{-\gamma+ H_{d/2-1}} & ~~\mbox{if}~~ d ~~\mbox{is ~even},\\
~&~\\
\frac{1}{2} e^{-\gamma+ 2H_{d-1}- H_{(d-1)/2}} & ~~\mbox{if}~~ d ~~\mbox{is ~odd}.
\end{array}
\right. 
\end{equation}
Here
$H_{n}$ denotes the $n$-th harmonic number, using the  convention that $H_{0}=0.$
\end{theorem}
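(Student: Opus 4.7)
The plan is to reduce Theorem~\ref{th72} directly to Theorem~\ref{th71} by applying the latter columnwise, and then to evaluate the resulting digamma value using the closed forms in Theorem~\ref{th30}.

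First, I would apply the Cohen--Newman theorem with $s = 1$, so that the common Lyapunov exponent is
$$
\lambda \;=\; \frac{1}{2}\Big[\log 2 + \psi\!\left(\frac{d}{2}\right)\Big].
$$
Theorem~\ref{th71} asserts that for any fixed nonzero $\bv(0) \in \RR^d$, one has $\frac{1}{n}\log\|S(n)\bv(0)\| \to \lambda$ almost surely, and the limiting value is deterministic and independent of the starting vector. Taking $\bv(0)$ to be each of the standard basis vectors $\be_1,\dots,\be_d$ in turn, and intersecting the $d$ probability-one events, there is a single event of probability $1$ on which
$$
\lim_{n\to\infty} \frac{1}{n}\log \|S(n)\be_j\|^2 \;=\; 2\lambda,\qquad j=1,\dots,d.
$$

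Second, since the Frobenius norm satisfies $\|S(n)\|_F^2 = \sum_{j=1}^d \|S(n)\be_j\|^2$, the elementary squeeze
$$
\max_{1 \le j \le d} \|S(n)\be_j\|^2 \;\le\; \|S(n)\|_F^2 \;\le\; d \cdot \max_{1 \le j \le d} \|S(n)\be_j\|^2
$$
shows that $\frac{1}{n}\log\|S(n)\|_F^2$ has the same limit, namely $2\lambda$. Exponentiating gives, with probability one,
$$
\lim_{n\to\infty} \bigl(\|S(n)\|_F^2\bigr)^{1/n} \;=\; 2\exp\!\left(\psi\!\left(\frac{d}{2}\right)\right).
$$

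Third, I would evaluate $\psi(d/2)$ by invoking Theorem~\ref{th30}. If $d = 2m$ is even, then $\psi(d/2) = \psi(m) = -\gamma + H_{m-1} = -\gamma + H_{d/2-1}$, giving the even-case formula $2 e^{-\gamma + H_{d/2-1}}$ at once. If $d = 2m+1$ is odd, then $\psi(d/2) = \psi(m + \tfrac12) = -\gamma - 2\log 2 + 2H_{2m-1} - H_{m-1}$, and substituting $2m-1 = d-2$, $m-1 = (d-3)/2$ yields
$$
2 e^{\psi(d/2)} \;=\; \tfrac{1}{2}\,e^{-\gamma + 2 H_{d-2} - H_{(d-3)/2}}.
$$
The stated form $\tfrac{1}{2} e^{-\gamma + 2H_{d-1} - H_{(d-1)/2}}$ is equivalent, since $H_{d-1} - H_{d-2} = \tfrac{1}{d-1}$ and $H_{(d-1)/2} - H_{(d-3)/2} = \tfrac{2}{d-1}$ cancel in the combination $2H_{d-1} - H_{(d-1)/2}$.

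The argument is mostly bookkeeping once Theorem~\ref{th71} is in hand; the only point that needs care is that Cohen--Newman provides the limit for a \emph{fixed} deterministic starting vector on an event of probability one depending on that vector, but since only the $d$ basis vectors are needed, a finite intersection produces the required simultaneous almost-sure statement. No other step presents a genuine obstacle.
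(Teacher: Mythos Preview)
Your proposal is correct and follows essentially the same approach as the paper: apply Theorem~\ref{th71} with $s=1$ to each standard basis vector, pass from the column norms to the Frobenius norm via $\|S\|_F^2 = \sum_j \|S\be_j\|^2$ and the fact that $d^{1/n}\to 1$, and then evaluate $2e^{\psi(d/2)}$ using the digamma formulas of Theorem~\ref{th30}, with the same harmonic-number rewriting in the odd case. Your explicit remark about intersecting the $d$ probability-one events is a point the paper leaves implicit, but otherwise the arguments are the same.
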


\begin{proof}
 We  use an exponentiated 
 version of the formula \eqn{703} in Theorem~\ref{th71}, 
 setting the variance $s=1$ and multiplying  both sides of \eqn{703} by 
 $2$ before exponentiating.
 We obtain, for a nonzero vector $\bv(0)$, that with probability one, 
 \begin{equation}\label{706b}
\lim_{n \to \infty}  \left( ||S(n)\bv(0)||^2 \right)^{\frac{1}{n} }= 2 e^{\psi(\frac{d}{2})}.
\end{equation}
where $||\cdot||$ is the Euclidean norm on vectors. To obtain the
Frobenius norm bound we use the identity
$||S||_F^2 = \sum_{j=1}^d || S \be_j||^2$
where $\be_i$ is the standard orthonormal basis of column vectors. 
On taking vectors $\bv_j(0)=\be_j$ we obtain
$$
||S(n)||_F^2 = \sum_{j=1}^d ||S(n) \bv_j(0)||^2,
$$
and using  \eqn{706b} and 
the fact that $\lim_{n \to \infty} d^{\frac{1}{n}} = 1$, we obtain with probability one that
\begin{equation*}
\lim_{n \to \infty}  \left( ||S(n)||_F^2 \right)^{\frac{1}{n} }= 2 e^{\psi(\frac{d}{2})}.
\end{equation*}
On applying the digamma formulas for $\psi(\frac{d}{2})$
given in Theorem~\ref{th30}, with the convention $H_{0}=0$, and noting for $d=2m+1$ that 
$$
2e^{\psi(d/2)} = \frac{1}{2} e^{-\gamma + 2 H_{2m-1}- H_{m-1}}= \frac{1}{2} e^{-\gamma + 2H_{2m} - \frac{2}{2m} - H_{m-1}}
= \frac{1}{2} e^{-\gamma + 2H_{2m}  - H_{m}},
$$
we obtain \eqn{706}.
\end{proof}

Rephrased in terms of Lyapunov exponents,
Theorem~\ref{th72} says that the corresponding $d \times d$ matrix system,
for odd dimension $d$ has top  Lyapunov exponent 
\begin{equation*}
\lambda_d = 
\frac{1}{2}\left( -\gamma - \log 2 + 2H_{d-1}- H_{\frac{d-1}{2}}\right),
\end{equation*}
and for even dimension $d$ has top Lyapunov exponent
\begin{equation*}
\lambda_d = \frac{1}{2} \left( -\gamma + \log 2 + H_{ \frac{d}{2} -1}\right).
\end{equation*}
The simplest expressions occur in dimension  $d=1$, giving
$\lambda_1= \frac{1}{2} (-\gamma-\log 2)$, and in 
$d=2$, giving $\lambda_2= \frac{1}{2}(-\gamma + \log 2)$.

Note that any real number can be obtained as a maximal Lyapunov exponent in Theorem~\ref{th71}
simply by adjusting the  variance parameter $s$ properly in the normal distribution $N(0,s)$. 
It follows that the 
content of the specific formulas in Theorem~\ref{th72} is in part {\em arithmetic}, where
one specifies the
variance $s$ of the normal distributions to be a  fixed  rational number.
Moreover, if we rescale $s$ to choose  normal distributions $N(0, s)$ with variance $s=2$ in dimension $1$ 
and with variance $s=\frac{1}{2}$ in  dimension $2$, then  we obtain the Lyapunov
exponents $\lambda_d= \frac{1}{2}\left(- \gamma \right)$  in these dimensions.

Returning to the theme of Euler's constant  being (possibly)
interpretable as  a dynamical entropy, 
one approach to the Riemann hypothesis suggests that it may be related to
an (as yet unknown) arithmetical dynamical system, cf. Deninger \cite{De92}, \cite{De93},
\cite{De98}, \cite{De00}.  There is  a statistical mechanics interpretation of the
Riemann zeta function as a partition function given in
Bost and Connes \cite{BC95},  with subsequent developments described in 
Connes and Marcolli \cite{CM08}.  For some other  views on 
arithmetical dynamical systems related to zeta functions 
see Lagarias \cite{La99}, \cite{La06} and  
Lapidus \cite[Chap. 5]{Lap08}.
Here we  note  the coincidence that in    
formulations of the Riemann hypothesis given earlier
in Theorems \ref{th54} and \ref{th55},
a ``growth rate" $e^{\pm \gamma}$  naturally appears.

%
%
%
%
\subsection{Euler's constant and periods}\label{sec310}
\setcounter{equation}{0}

In 2001 M. Kontsevich and D. Zagier \cite{KZ01} 
defined a  {\em period}  to be a complex number whose real and
imaginary parts are  values of absolutely convergent integrals of 
rational functions with rational coefficents, over domains in $\RR^n$
given by polynomial inequalities with rational coefficients.
More conceptually these are values of 
integrals of an algebraic differential form with algebraic coefficients,
integrated over an algebraic cycle, see Kontsevich \cite[Section 4.3]{Kon00}.
We will call such an integral a {\em period integral.}
They observed that  for the real part 
one can reduce such integrals to the case where all coefficients
are rational numbers, all differentials are top degree,  integrals are taken over divisors with
normal crossings  defined over $\QQ$. In particular they  introduced a ring $\sP$ of {\em effective periods},
which includes the field of all algebraic numbers $\overline{\QQ}$ as a subring, as
well as a ring  of {\em extended periods}  $\hat{\sP}$ obtained by adjoining  to $\sP$ the constant $\frac{1}{2 \pi i}$,
which conjecturally is not a period.

One has, for $n\ge 2$ the representation
\begin{equation*}
\zeta(n) = \int_{0< t_1 < t_2 <...< t_{n} < 1} \frac{dt_1}{1-t_1} \frac{dt_2}{t_2} \cdots \frac{dt_n}{t_n}
\end{equation*}
as a period integral. Thus convergent positive  integer zeta values are periods.
One more generally can  represent all (convergent) multiple zeta values 
\begin{equation*}
\zeta(n_1, n_2, ..., n_k) := \sum_{m_1> m_2 > ... > m_k >0}
 \frac{1}{m_1^{n_1} m_2^{n_2} \cdots m_k^{n_k}}.
\end{equation*}
at positive  integer arguments as period integrals.  Other period integrals are
associated with special values of $L$-functions, a direction formulated by
Deligne \cite{Del79}.   
An overview paper of Zagier \cite{Za94} gives a tour  of  
 many places that multiple zeta values appear.

Some other examples of periods given by Kontsevich and Zagier are
the values $\log \alpha$ for $\alpha$ a positive real
algebraic number.
In addition the values $(\Gamma(\frac{p}{q}))^q$ where
$\frac{p}{q}$ is a positive rational number are periods, see Kontsevich and Zagier \cite[p. 775]{KZ01}, 
 Andr\'{e} \cite[Chap. 24]{And04}. This can be deduced using Euler's Beta integral \eqn{250c}.
 Another well known example is (\cite[(5.11)]{Art64})
 $$
 \int_{0}^1 \frac{dt}{\sqrt{1-t^4}} = \frac{ (\Gamma(\frac{1}{4}))^2}{\sqrt{32 \pi}},
 $$
giving a  period of  an elliptic curve with complex multiplication.

Periods  arise in many places.
They arise in evaluating Feynman integrals in quantum field theory calculations.
 For more work on periods and zeta values in this context, see Belkale and Brosnan \cite{BB03}.
 Periods appear as multiple zeta values
 in Drinfeld's associator, cf. Drinfeld \cite{Dr89}, \cite{Dr90}, \cite{Dr92}. 
 Periods   appear in Vassiliev's knot invariants, due to the connection found by
 Kontsevich \cite{Kon93}, see Bar-Natan \cite{BN95}. They appear in
 expansions of Selberg integrals, cf. Terasoma \cite{Te02}.
 They appear in the  theory of motives as periods of mixed Tate motives, 
 cf. Terasoma \cite{Te02b}, \cite{Te06}, and Deligne and Goncharov \cite{DG05}.
 Other constants  that
 are periods are values of polylogarithms at integers.
 Algebraic multiples of beta values $B(\frac{r}{n}, \frac{s}{n})$ occur as periods of differentials on 
 abelian varieties, cf. Gross \cite[Rohrlich appendix]{Gross78}.
 In 2002  P. Cartier \cite{Ca02} gave a  Bourbaki seminar expos\'{e} surveying multiple zeta
 values, polylogarithms and related quantities. 
 
There is a good deal known about the irrationality or transcendence
of specific periods. The transcendence of $\zeta(2n)$ for $n \ge 1$ is immediate from
their expression as powers of $\pi$.
For odd zeta values, 
in 1978  Apery (\cite{Ap79}, \cite{Ap81})
 established that $\zeta(3)$ is irrational, see 
 van der Poorten \cite{vdP79} and Section \ref{sec311} for more details.
 In 1979
F. Beukers  \cite{Beu79} gave an elegant proof 
of the irrationality of $\zeta(3)$ suggested by the
form of Apery's proof,  showing  that certain integer linear combinations of  period integrals 
defined for integer $r, s \ge 0$ by 
\begin{equation*}
I_{r,s} :=\int_{0}^1 \int_{0}^1 \frac{ -\log xy}{1-xy}  x^ry^s \, dx dy= \int_{0}^1  \int_{0}^1  \int_{0}^{1} \frac{x^r y^s}{1-(1-xy)z} dz\, dx\, dy
\end{equation*}
were very small. He evaluated  
\begin{equation}\label{833a}
I_{0,0} =   \int_{0}^1 \int_{0}^1 \frac{ -\log xy}{1-xy}  \, dx\, dy  =  2 \, \zeta(3),
\end{equation}
 and for $r \ge 1$, 
\begin{equation*}
I_{r, r} = 2 \big( \zeta(3) - \frac{1}{1^3} - \frac{1}{2^3} - \cdots - \frac{1}{r^3}\big) = 2\big( \zeta(3) - H_{n,3}\big).
\end{equation*}
As a necessary part of
the analysis he  also showed for $r >s$ that each $I_{r, s}= I_{s,r}$ is a rational number with denominator  dividing $D_r^3$
where $D_r$ is the least common multiple $[1,2,..., r]$. 
It is now known that infinitely
many odd zeta values are irrational, without determining which ones (Rivoal \cite{Riv00}, Ball and Rivoal \cite{BR01}).
In particular Zudilin \cite{Zu01} showed
that at least one of
$\zeta(5), \zeta(7), \zeta(9), \zeta(11)$ is irrational.
See Fischler \cite{Fi04}
for a survey of recent  developments on irrationality of zeta values.
The state of the art for showing transcendence of periods is surveyed by
Waldschmidt \cite{Wa06}.

There is currently no effective way known to decide whether a
given number is a period. One can certify a number is a period by 
directly exhibiting
it as a $\bar{\QQ}$-linear combination of values of known period integrals.
 However no  properties of periods are currently known which
would be useful in distinguishing them from non-periods. Kontsevich and Zagier \cite[Problem 3]{KZ01}
raise the problem of exhibiting a single specific  
number that is provably not
a period.

As mentioned  in the  introduction it is conjectured: {\em Euler's constant is not
a period.} The conjectures that $e$ and $\gamma$ are not periods appears in
\cite[Sec. 1.1]{KZ01}. We have 
already seen a number of  different integral representations for Euler's
constant, which  however involve exponentials and/or logarithms, in addition
to rational functions. 
These integrals do  not resolve the question whether $\gamma$ is a period.

 Kontsevich and Zagier \cite[Sec. 4.3]{KZ01} also  suggest 
enlarging the class of periods  to allow  exponential functions in
the integrals. They define 
an {\em exponential period} to be
an absolutely convergent (multiple) integral of the product
of an algebraic function with the exponential of
an algebraic function, taken over a real semi-algebraic set, where all polynomials
entering the definition  of the algebraic functions
and the semi-algebraic set have algebraic coefficients\footnote{The semialgebraic set is
cut out by a system of  (real) polynomial inequalities $f_j(x_1, ..., x_n) \ge 0$,
and it is supposed that the coefficients of the $f_j$ are all real algebraic numbers.}.
The set of all such values forms a ring $\sEP$, which we
call  the ring of  {\em exponential periods}, and which contains
the field $\bar{\QQ}$ of all algebraic integers.
This ring is  countable, and is known to include  the constant $e$,
  all values  $\Gamma(\frac{p}{q})$
 at rational arguments, namely
 $\frac{p}{q} \in \QQ \smallsetminus \ZZ_{\le 0},$ 
and  the constant
 $$
 \sqrt{\pi} = \int_{-\infty}^{\infty} e^{- t^2} dt.
 $$
 It also includes  various values of period determinants  
 for confluent hypergeometric functions studied by Terasoma \cite[Theorem 2.3.3]{Te96}
 and by Bloch and Esnault \cite[Proposition 5.4]{BE00}.
 Here we observe that:
 \begin{enumerate}
 \item
 Euler's constant $\gamma \in \sEP$.
 \item
 The Euler-Gompertz  constant $\delta \in \sEP$.
 \end{enumerate}
 M. Kontsevich observes that 
an integral representation certifying that $\gamma \in \sEP$ 
is obtainable from  \eqn{225c} by substituting 
$-\log x = \int_{x}^1 \frac{dy}{y}$ for $0< x<1$ and $\log x= \int_{1}^x \frac{dy}{y}$ for $x>1$,
yielding
\begin{equation*}
\gamma = \int_{0}^1 \int_{x}^1 \frac{e^{-x}}{y} dy \,dx - \int_{1}^{\infty} \int_{1}^x \frac{e^{-x}}{y} dy \, dx.
\end{equation*}
An integral representation  certifying that   $\delta \in \sEP $ is given by  \eqn{560aa}, 
stating that
\[
\delta = \int_{0}^{\infty} \frac{e^{-t}}{1+t} dt.
\]
It is not known whether $\delta$ is a period.

%
%
%
%
\subsection{Diophantine approximations to Euler's constant}\label{sec311}
\setcounter{equation}{0}

Is Euler's constant rational or irrational? This is unknown. 

One can empirically test this possibility by obtaining good
rational approximations to Euler's constant.  The early calculations of
Euler's constant were based on  Euler-Maclaurin summation.
  In 1872 Glaisher \cite{Glaisher1872}  reviewed earlier
work on Euler's constant, which
included his own $1871$ calculation  accurate
to $100$ places (\cite{Glaisher1871}) and uncovered a mistake in an earlier calculation of Shanks. 
In $1878$ the astronomer J. C. Adams \cite{Ad1878}
determined $\gamma$ to $263$ places, again using Euler-Maclaurin summation,
an enormous labor done by hand.  In 1952 J. W. Wrench, Jr \cite{Wre52}, using a desk calculator,
 obtained $\gamma$ to $328$ places. 
In 1962 Knuth \cite{Kn62} automated this approach and obtained Euler's constant to  $1272$ places
with an electronic computer. 
However this record did not last for long. In 1963  D. W. Sweeney \cite{Sw63}  introduced  a new method to compute 
Euler's constant, based on the integral representation
$$
\gamma=  \sum_{k=1}^{\infty} \frac{(-1)^{k-1} n^k}{k! \,k} - \int_{n}^{\infty} \frac{e^{-u}}{u} du - \log n,
$$
By making  a suitable choice of $n$ in this formula, Sweeney obtained  $3566$ digits of $\gamma$. 
 The following year Beyer and Waterman \cite{BW74}  used a variant of this  method to compute
$\gamma$ to $7114$ places,  however only the initial $4879$ digits  of these turned out to be correct, as
determined by later computations. 
In 1977  Brent \cite{Bre77} used a modification of this method to compute  $\gamma$ and $e^{\gamma}$  to $20700$ places.
In 1980  Brent and McMillan \cite{BM80} formulated new algorithms based on the identity
$$
\gamma = \frac{U(n)}{V(n)} - \frac{K_0(2n)}{I_0(2n)},
$$
whose right hand side contains for $\nu=0$  the modified Bessel functions 
$$
I_{\nu}(z) := \sum_{k=0}^{\infty}\frac{(\frac{z}{2})^{\nu+2k}}{ k! \Gamma( \nu +k +1)} ~~\mbox{and} ~~
K_{0}(z) :=  -\frac{\partial}{\partial \nu} I_{\nu}(z) \vert_{\nu=0},
$$
and
\begin{eqnarray*}
U(n) & := &\sum_{k=0}^{\infty} \Big(\frac{n^k}{k!}\Big)^2 ( H_k - \log n),\\
V(n)& : = & I_0(2n) = \sum_{k=0}^{\infty} \Big( \frac{n^k}{k!}\Big)^2.
\end{eqnarray*}
The second term $0<K_0(2n)/I_0(2n)< \pi e^{-4n}$ can be made small by choosing $n$ large,
hence $\gamma$ is approximated by $\frac{U(n)}{V(n)}$, choosing $n$ suitably (their Algorithm B1).
With this algorithm they computed  both $\gamma$ and $e^{\gamma}$ to $30000$ places.
They also determined  the initial part of 
the  ordinary continued
fraction expansions  of both $\gamma$  and  $e^{\gamma}$,
and established the following result.\


\begin{theorem}~\label{th3141} {\rm (Brent and McMillan 1980)}

(1) If 
 $\gamma= \frac{m_0}{n_0}$ is rational, with $m_0, n_0 \in \ZZ_{>0}$ then
its denominator $n_0 > 10^{15000}.$

(2) If $e^{\gamma} = \frac{m_1}{n_1}$ is rational, with  $m_1, n_1 \in \ZZ_{>0}$ then
its denominator $n_1 > 10^{15000}.$
\end{theorem}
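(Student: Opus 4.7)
The plan is to exploit the classical fact that the simple continued fraction convergents $p_k/q_k$ of a real number $\alpha$ are its best rational approximations: if $p/q$ is any rational with $1 \le q < q_{k+1}$, then $|\alpha - p/q| \ge |\alpha - p_k/q_k|$, and moreover $|\alpha - p_k/q_k| < 1/(q_k q_{k+1})$. Consequently, if $\gamma = m_0/n_0$ in lowest terms, then either $m_0/n_0$ coincides with one of the convergents $p_k/q_k$ (and the continued fraction of $\gamma$ terminates there), or $n_0 \ge q_{K+1}$ for every index $K$ at which $p_K/q_K \neq \gamma$. Thus it suffices to compute the continued fraction of $\gamma$ far enough that the denominators certifiably exceed $10^{15000}$ while verifying that the algorithm does not terminate.

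First, I would compute a provably correct approximation to $\gamma$ to roughly $N = 30000$ decimal places using the Brent–McMillan algorithm B1 referenced in the text, which is based on the identity
\[
\gamma = \frac{U(n)}{V(n)} - \frac{K_0(2n)}{I_0(2n)},
\]
together with the rigorous error bound $0 < K_0(2n)/I_0(2n) < \pi e^{-4n}$. Choosing $n$ so that $\pi e^{-4n} < 10^{-N}$ and summing enough terms of $U(n)$ and $V(n) = I_0(2n)$, one obtains a certified interval for $\gamma$ of width less than $10^{-N}$.

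Second, I would apply the Euclidean/continued-fraction algorithm to this interval, producing successive partial quotients $a_0, a_1, a_2, \ldots$ together with convergent denominators $q_0 < q_1 < q_2 < \cdots$. Each $a_k$ can be certified as correct as long as the computed interval for $\gamma$ is narrower than the gap between $p_k/q_k$ and the neighboring best approximations, which is of order $1/q_k^2$; thus the algorithm produces reliable output up to denominators of size roughly $10^{N/2} = 10^{15000}$. At every step one observes that the fractional part used to extract $a_{k+1}$ is strictly positive within the certified interval, which rules out the possibility that any $p_k/q_k$ equals $\gamma$.

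Third, by the best-approximation property, the existence of a certified convergent with $q_K > 10^{15000}$, together with the fact that $p_K/q_K \neq \gamma$, forces $n_0 > 10^{15000}$. The identical procedure, applied to the Brent–McMillan approximation of $e^{\gamma}$ (computed by them to the same precision), yields the bound $n_1 > 10^{15000}$ for any representation $e^{\gamma} = m_1/n_1$ in lowest terms.

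The main obstacle is not conceptual but computational and arithmetic-hygienic: one must rigorously propagate error bounds through the Bessel-function summation and through the subsequent continued-fraction extraction, so that the produced partial quotients $a_k$ are guaranteed correct rather than merely numerically plausible. A secondary point worth emphasizing is that nothing in this argument relies on a model for the statistical behaviour of the $a_k$; the Gauss–Kuzmin heuristic $\log q_k \sim k\,\pi^2/(12 \log 2)$ merely predicts that $30000$ digits of $\gamma$ suffice to reach $q_K \approx 10^{15000}$, and this prediction is borne out empirically by Brent and McMillan's tabulation.
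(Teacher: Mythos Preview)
Your proposal is correct and matches the paper's own proof: both rely on the best-approximation property of continued fraction convergents (the paper cites Khinchin, Theorem~17) to reduce the claim to the computational task of certifying a convergent $p_K/q_K$ with $q_K>10^{15000}$ and verifying that the expansion does not terminate there. Your write-up is more explicit about the error-propagation hygiene, but the mathematical content is the same.
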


\begin{proof} 
We follow Brent \cite{Bre77}. 
For  any real number $\theta$   Theorem 17 of Khinchin \cite{Khinchin} states: if $\frac{p_n}{q_n}$
is an ordinary  continued fraction convergent for $\theta$ then $|q_n \theta - p_n| \le |q\theta -p|$ for all integers $p$ and $q$
with $0 < |q| \le q_n$. It therefore suffices to find a convergent $\frac{p_n}{q_n}$ with $q_n > 10^{15000}$ 
such that $\theta \ne \frac{p_n}{q_n}$, and the latter inequality holds if   $q_{n+1}$ exists.  
\end{proof}

The  continued fraction expansion of $\gamma$ begins
$$
\gamma = [0; 1, 1, 2, 1, 2, 1, 4, 3, 13, 5, 1, 1, 8, 1, 2, 4, 1, 1, 40, 1, 11, 3, 7, 1, 7, 1, 1, 5, 1, 49, 4,  ...],
$$
see \cite[Sequence A002852]{OEIS}. 
  Brent and McMillan also computed various statistics for the initial part of this  continued fraction, concerning
the number of partial quotients of different sizes of the initial continued fractions of $\gamma$ and of $e^{\gamma}$,
in order to test whether $\gamma$ behaves like a  ``random" real number.
They found good agreement with the predicted limiting distribution 
of partial quotients for a random real number $\theta= [a_0, a_1, a_2, \cdots]$.
It is a theorem of Kuz'min that  the distribution of the $j$-th partial quotient $\text{Prob}[a_j=k]$ 
 rapidly approaches the 
 Gauss-Kuz'min distribution 
$$
p(k) := \log_2 (1+ \frac{1}{k}) - \log_2( 1+ \frac{1}{k+1}),
$$
as $j \to \infty$, see  Khinchin \cite[III.15]{Khinchin}.  

Diophantine approximations to $\gamma$ can be extracted from various series expansions for $\gamma$
involving rational numbers. 
In 1910 Vacca \cite{Vac10} found the expansion
\begin{eqnarray*} 
\gamma &= &\sum_{k=1}^{\infty} (-1)^k \frac{ \lfloor \log_2 k\rfloor}{k}\\
& =& \Big(\frac{1}{2} - \frac{1}{3}\Big) + 2 \Big(\frac{1}{4} - \frac{1}{5} + \frac{1}{6} - \frac{1}{7}\Big) + 3 \Big( \frac{1}{8} - \frac{1}{9} + \cdots - \frac{1}{15}\Big) + \cdots.
\end{eqnarray*}
Truncating this expansion at $k=2m-1$ gives approximations $\frac{p_m}{q_m}$ to $\gamma$ satisfying
$$
    0 <  \gamma - \frac{p_m}{q_m} \le \frac{4(\log m + 1)}{m}.
$$
In 2010 Sondow \cite{Son10} gave refinements of this expansion yielding approximations
with a  convergence rate 
$O(\frac{\log m}{m^r})$ for an arbitrary but  fixed $r \ge 2$.
However approximations of these types fall far short  of establishing
 irrationality of $\gamma$,  since  the denominators $q_m$ grow
 exponentially with $m$.

Much recent work on Diophantine approximation 
of explicit constants $\theta$ has focused on finding 
 a series of rational approximations  $\frac{u_n}{v_n}$ with the properties:
 \begin{enumerate}
 \item
 Each sequence $u_n, v_n$ separately satisfies 
 the same  linear recurrence with coefficients that are polynomials in the ring $\ZZ[n]$,
 where $n$ is the recurrence parameter.
 \item
 The initial conditions for the recurrences $u_n, v_n$ are rational, so that both sequences
 consist of rational numbers. 
 \item
 One has $\frac{u_n}{v_n} \to \theta$ as $n \to \infty.$
 \end{enumerate}
 We give examples of such recurrences below, e.g. \eqn{3152a}. For such sequences 
the power series $U(z) = \sum_{n=1}^{\infty} u_n z^n$ resp. $V(z)= \sum_{n=1}^{\infty} v_n z^n$ will each (formally) satisfy  a 
homogeneous linear differential equation $D F(z)=0 $ in the $z$-variable,
whose coefficients are polynomials in  $\QQ[z]$, i.e. the operator $D$ has the form
$$
D=\sum_{j=0}^n R_j(z) \frac{d^j}{dz^j}.
$$
We will refer to such 
a sequence of approximations satisfying (1), (2) as being of  {\em $H$-type}, and will call any real number  obtainable
as a limit of such approximations an {\em elementary $H$-period}. More generally,
the  {\em ring  $\sHP$ of $H$-periods} will be the ring 
generated over $\overline{\QQ}$ by elementary $H$-periods\footnote{The name {\em $H$-period} is proposed
by analogy with the more studied class of $G$-periods given below.}.
The ring $\sHP$  is clearly countable, and its possible relations to  rings of periods $\sP$
or exponential periods $\sEP$  discussed in Section \ref{sec310} remain to be determined.

It turns out to be useful to  single out  a subclass of such numbers, which are the set
of such approximations   for which the linear differential operators $D$  have a
special property: \smallskip

{ \bf $G$-operator  Property.} {\em At some algebraic point $z_0 \in \overline{\QQ}$ the 
 linear differential operator with coefficients in $\CC[z]$ has 
 a full rank set of
a  $G$-function solutions. }\smallskip

The notion  of {\em  $G$-function}  is presented  in detail in Bombieri \cite{Bom81},
Andr\'{e} \cite{And89}, \cite{And03},  and
Dwork, Gerotto and Sullivan \cite{DGS94}. We give
a definition of a subclass of $G$-functions
obtained by specializing to the rational number case. (The general case allows
algebraic number coefficients all drawn from a fixed number field $K$.) 
It is a function given by a power
series $F(z) = \sum_{n=0}^{\infty} a_n z^n$, with rational $a_n$ such that
\begin{enumerate}
\item
There is a constant $C>0$ such that $|a_n| \le C^n$, so that the
power series $F(z)$ has a nonzero radius of convergence.
\item
There is  a constant $C' >0$ such that the least common multiple of the denominators of $a_1, .., a_n$
is at most $(C')^n$.
\item
The $a_n= \frac{p_n}{q_n}$ are solutions of a linear recurrence having coefficients that are
polynomials in the variable $n$ with integer coefficients.
\end{enumerate} 

The notion of $G$-operator
was formulated  in Andr\'{e} \cite[IV. 5]{And89}, \cite[Sect. 3]{And00a}.
Andr\'{e}'s definition of $G$-operator is given in 
another form, in terms of it
satisfying the Galochkin condition (from Galochkin \cite{Gal74}, also see Bombieri \cite{Bom81}), but this class of operators 
is known to coincide with those given by the definition above.
The  minimal differential operator satisfied by a $G$-function is known to be a
$G$-operator in Andr'{e}'s sense \cite[Theorem 3.2]{And00a},
by a result of D. and G.  Chudnovsky. Conversely, any $G$-operator in Andr\'{e}'s sense has
a full rank set of $G$-function solutions at any  algebraic point not a singular point
of the operator (\cite[Theorem 3.5]{And00a})..

Andr\'{e} \cite[p. 719]{And00a} has shown that $G$-operators, viewed in the complex domain,
are of a very restricted type. 
They must be  Fuchsian on the whole Riemann sphere  $\PP^1(\CC)$,
 i.e. they have only regular singular points, including the point  $\infty$,
 see also  \cite[Theorem 3.4.1]{And03}.
A conjecture formulated  in Andr\'{e} \cite{And89}, \cite[p. 718]{And00a}
is that all $G$-operators should come from  (arithmetical algebraic) ``geometry",
i.e. they each should be a product of  factors of Picard-Fuchs operators, controlling the
variation of cohomology in a parametrized family of algebraic varieties defined over $\bar{\QQ}$.

We will say that a  real number $\theta$  that
 is a limit of a convergent sequence of  rational approximations $\frac{u_n}{v_n}$ 
 with $u_n, v_n$ coefficients in 
 the power series expansions of two (rational) $G$-functions 
 is  an {\em elementary $G$-period}. 
 As above we  obtain a ring $\sGP$ of {\em $G$-periods}, defined as the ring
generated over $\overline{\QQ}$  by elementary $G$-periods. 
This ring   has recently been
shown in Fischler and Rivoal \cite[Theorem 3]{FR13},
to coincide with the field $\mbox{Frac}(\bG)$,
in which $\mbox{Frac}(\bG)$ is the fraction field 
of a certain ring $\bG$ constructed from $G$-function values. 
The ring  $\bG$ is  defined to be  the set of complex numbers $f(\alpha)$ where
$\alpha \in \bar{\QQ}$ and $f(z)$ is any branch of a (multi-valued)
analytic continuation of a $G$-function with coefficients defined over some  number
field 
which is nonsingular at $z=\alpha$.  
Fischler and Rivoal \cite[Theorem 1]{FR13} show that the ring $\bG$ coincides
with the set of all complex numbers  whose real and imaginary parts
can each  be written as $f(1)$, with  $f(z)$ being some 
$G$-function with rational coefficients (as in the definition above) and
whose power series expansion has radius of convergence exceeding $1$.
Their result \cite[Theorem 3]{FR13} also establishes
that the set of elementary $G$-periods is itself a ring containing all real
algebraic numbers, which coincides with
$\mbox{Frac}(\bG)\cap \RR$.
Again, possible relations of the field $\sGP$ with the Kontsevich-Zagier
rings of periods $\sP$ and extended periods $\sEP$
remain to be determined.\footnote{The paper \cite[Sect. 2.2]{FR13} discusses
the possibility   that the equality $\bG =\hat{\sP} =\sP [ \frac{1}{\pi}]$ might hold,
and sketches an argument due to a reviewer that supports the conjecture  that $\bG \subseteq \hat{\sP}$.}

As already mentioned in Section \ref{sec310}, a famous result of Ap\'{e}ry (\cite{Ap79}, \cite{Ap81})
in 1979 established that the period $\zeta(3)$ is irrational.
 Ap\'{e}ry's  result was
obtained by finding rational approximations $\frac{p_n}{q_n}$ to $\zeta(3)$ 
which were generated by two different solutions $\{ p_n, q_n: n \ge 1\}$, to the second order linear recurrence
with polynomial coefficients:
\begin{equation}\label{3152a}
n^3 \, u_n = (34n^3- 51n^2+27n -5) u_{n-1} - (n-1)^3 u_{n-2}
\end{equation}
with initial conditions $p_0=0, p_1=6$ and $q_0=1, q_1=5,$ respectively. Here $q_n$ are now
called {\em Ap\'{e}ry numbers}, and are given by 
\begin{equation}\label{3152b}
q_n = \sum_{k=0}^n {\binom{n+k}{k}}^2 {\binom{n}{k}}^2, 
\end{equation}
while $p_n$ are given by
\begin{equation}\label{3152c}
p_n = \sum_{k=0}^n {\binom{n+k}{k}}^2 {\binom{n}{k}}^2\Big( \sum_{m=1}^n \frac{1}{m^3} +
 \sum_{m=1}^k \frac{ (-1)^{m-1}}{ 2m^3 {\binom{n}{m}} {\binom{n+m}{m}}}\Big),
\end{equation}
see Fischler \cite[Sect. 1.2]{Fi04}). 
These approximations are
closely spaced enough and  good enough to imply that, for any $\epsilon>0$,  
and all $q \ge q(\epsilon)$ one has 
$$
|\zeta(3) - \frac{p}{q}| > q^{-\theta + \epsilon}  \mbox{for all}~~p \in \ZZ, 
$$
with exponent $\theta= 13.41782$,
which implies that $\zeta(3)$ is irrational.  
The method permits deriving  families of approximations given by related recurrences for $\zeta(m)$ 
for all $m \ge 2$ (cf. Cohen \cite{Coh81}) ,  but only for $m=2$ and $3$ are 
 they are good enough to prove irrationality. For each $m \ge 2$  the associated power series
$$
f(z) = \sum_{n \ge 0} p_n z^n,  ~~\qquad ~g(z) = \sum_{n=0}^{\infty} q_n z^n
$$
will be solutions to a linear differential equation with polynomial coefficients,
and are of $H$-type. For   $m=2, 3$ these approximations are known to be of $G$-type.
A conceptually new proof of irrationality of $\zeta(3)$
using the same approximations, but connecting them with modular forms on the
congruence subgroup $\Gamma_1(6)$ of $SL(2, \ZZ)$ was presented in 1987 by Beukers \cite{Beu87}.

Ap\'{e}ry's discovery  was based on series acceleration transformations
of the series $\zeta(3) = \sum_{n=1}^{\infty} \frac{1}{n^3}.$
An initial  example of Ap\'{e}ry of such accelerations is  the identity
\begin{equation}\label{3152d}
 \zeta(3) =\frac{5}{2} \sum_{n=1}^{\infty} \frac{(-1)^{n-1}}{n^3 {\binom{2n}{n}}},
\end{equation}
see  \cite[Sect. 3]{vdP79}.  The partial sums of this series  converge at
a rate $O( 4^{-n})$ to $\frac{2}{5} \zeta(3)$, but 
are by themselves  insufficient to prove irrationality.
 Ap\'{e}ry's proof  used further  series acceleration transformations, described in \cite[Sect. 4]{vdP79}.  
It is interesting to note that the identity \eqn{3152d} is a special case of identities
in an 1890  memoir of A. A. Markoff \cite{Markov1890} on series acceleration, as is explained in
Kondratieva and Sadov \cite{KS05}.   
In 1976 Gosper (\cite{Gos76},  \cite{Gos90}) noted  related identities,
also found by series acceleration techniques, 
including
\begin{equation*}
 \zeta(3) = \sum_{n=1}^{\infty} \frac{30n-11}{16(2n-1)n^3 {\binom{2n-1}{n}}^2}.
\end{equation*}

A new approach to  Ap\'{e}ry's  irrationality
proof  of $\zeta (3)$ was found by Nesterenko \cite{Nes96}
in 1996, which used a relation of the Ap\'{e}ry approximations $p_n, q_n$  to 
Pad\'{e} approximations to polylogarithms.
Nesterenko also   found
 a  new continued fraction for $2\zeta(3)$, 
\begin{equation}\label{3152g}
2 \zeta(3) =2+ \cfrac{1}{2+ \cfrac{2}{4+ \cfrac{1}{3+\cfrac{4}{2+ \cfrac{2}{4+\cfrac{6}{6+ \cfrac{4}{5+\cfrac{9}{4+
 \cdots}}}}}}}}.
\end{equation}
Writing this as $2 \zeta(3) = b_0 + \cfrac{a_1}{b_1  + \cdots}$, 
 the numerators $a_n$  grow quadratically in $n$ while the denominators $b_n$ 
grow linearly with $n$,  and are given for $k \ge 0$ (excluding $a_1$) by
$$
\begin{array}{llll}
 a_{4k+1}= k(k+1), & b_{4k+1}= 2k+2, &   a_{4k+2} = (k+1)(k+2),  & b_{4k+2}= 2k+4,   \\
  a_{4k+3}= (k+1)^2, & b_{4k+3}= 2k+3, &a_{4k+4} = (k+2)^2 ,& b_{4k+4}= 2k+2. 
\end{array}
$$
This result was obtained starting from   Pad\'{e} type simultaneous polynomial approximations to the logarithm,
dilogarithm and trilogarithm found 
by Gutnik \cite{Gut83},  see also the survey of  Beukers \cite{Beu81}. When specialized at
the point $x=1$ these Pad\'{e} approximations yield the  sequences $q_n, -2p_n$. 
The  continued fraction \eqn{3152g} 
was obtained from  a  Mellin-Barnes type integral for a special case of 
the  Meijer G-function \cite{Mei36}, 
a generalized hypergeometric function
(see  \cite[Chap. V]{EMOT53}).  A survey of methods using 
Pad\'{e} approximations to polylogarithms   is given in Nesterenko \cite{Nes03}.
This approach has led to further results, including a new proof  of irrationality of $\zeta(3)$
of Nesterenko \cite{Nes09}.

A natural question concerns whether  $\gamma$ has rational approximations of $H$-type.
A  family of such rational approximations to $\gamma$ were found in 2007  
 through  the combined efforts of A. I. Aptekarev, A. I. Bogolyubskii, D. V. Khristoforov, V. G. Lysov
and D. N. Tulyakov in the seven papers in the volume \cite{Apt07}.
Their result is  stated in Aptekarev and Tulyakov \cite{AT09} 
in the following  slightly improved form.\smallskip


\begin{theorem}\label{th3152} {\em (Aptekarev, Bogolyubskii, Khristoforov, Lysov, Tulyakov 2007)}
Euler's constant is approximated by
a ratio of two rational solutions of the   third-order recurrence relation with polynomial coefficients
\begin{eqnarray*} 
(16n-15) q_{n+1} &= & (128n^3 + 40n^2 -82n-45) q_n 
-  n^2(256n^3 -240n^2 +64n-7) q_{n-1}\nonumber \\
&&~~~ + (16n+1)n^2(n-1)^2 q_{n-2}.
\end{eqnarray*}
The  two solutions $\{p_n: n \ge 0\}$ and $\{ q_n: n \ge 0\}$ are determined by the  initial conditions 
$$
p_0 :=0, ~~~p_1 :=2, ~~~p_2 := 31,
$$
and
$$
q_0 := 1, ~~~q_1 :=3, ~~~q_2 := 50.
$$
They have the following properties:\smallskip

 (1)  (Integrality)  
 For  $n \ge 0,$   
 $$  p_n \in \ZZ,~~~q_n \in \ZZ. $$

 (2)  (Denominator growth rate) For $n \ge 1$, 
 \begin{equation*}
 q_n= (2n)! \frac{ e^{\sqrt{2n}}}{\sqrt[4]{n}}
 \left( \frac{1}{\sqrt{\pi}(4e)^{3/8}} +
  O ( \frac{1}{\sqrt{n}}) \right).
 \end{equation*}
 
 (3)  (Euler's constant approximation) For $n \ge 1$, 
\begin{equation*}
 \gamma- \frac{p_n}{q_n} = -2 \pi e^{- 2 \sqrt{2n}}\left( 1+ O (\frac{1}{\sqrt{n}})\right).
\end{equation*}
\end{theorem}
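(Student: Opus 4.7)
The plan is to realize the pair $(p_n, q_n)$ as Hermite--Pad\'e (i.e.\ simultaneous rational) approximations to a system of \emph{three} analytic functions, following the strategy that the Aptekarev school used for irrationality problems around zeta values. The natural candidate system, which is forced on us by the appearance of $\gamma$, consists of $1$ together with the two functions arising from the exponential integral $E_1(z) = \int_z^\infty e^{-t}/t\,dt$ and its companion: recall from Section~\ref{sec25} and Theorem~\ref{th41} that $\gamma$ is the constant term in the expansion $E_1(z) = -\gamma - \log z + \sum_{k\ge 1}(-1)^{k+1} z^k/(k\cdot k!)$, so evaluating a suitably chosen linear form $P_n(z) + Q_n(z)E_1(z) + R_n(z)\log z$ at (or near) $z = 0$ will automatically produce expressions of shape $q_n \gamma - p_n + (\text{small remainder})$. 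The three functions being approximated is what forces the linear recurrence to be third-order rather than Ap\'ery-second-order.

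First I would write down explicit double- or triple-integral representations for both $q_n$ and $q_n\gamma - p_n$ in the spirit of Beukers' formula \eqn{833a} for $\zeta(3)$, with integrands of the form $L_n(x)\,e^{-x}$ where $L_n$ is a Laguerre-type polynomial orthogonal with respect to an appropriate (possibly logarithmically weighted) moment functional on $[0,\infty)$. From the integrals it is immediate that $q_n\in\mathbb{Z}$ (Laguerre polynomials have integer coefficients after normalization) and that the denominators of $p_n$ divide a bounded power of $D_n = \mathrm{lcm}(1,\dots,n)$; the integrality statement (1) for $p_n$ then requires a ``denominator miracle'' showing that the contributions from the logarithmic terms cancel. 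The three-term-apart recurrence falls out of the classical Favard-type three-term recurrence for the underlying multiple-orthogonal polynomial family; the specific polynomial coefficients displayed in the theorem, as well as the given initial values $p_0,p_1,p_2$ and $q_0,q_1,q_2$, are then a finite computation to be matched.

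For the asymptotics in (2) and (3), I would apply the saddle-point (steepest descent) method to the integral representations. The factorial $(2n)!$ in the growth of $q_n$ reflects the degree of the approximating polynomials, while the exponent $\pm 2\sqrt{2n}$ is the signature of an integrand of the form $\exp(-x - 2n/x)$ whose saddle at $x=\sqrt{2n}$ contributes $e^{-2\sqrt{2n}}$, with the opposite-sign exponential $e^{+\sqrt{2n}}$ for $q_n$ coming from the dominant saddle on the other sheet. Equivalently, Birkhoff--Trjitzinsky asymptotics applied to the given third-order recurrence, after the WKB substitution $y_n = (2n)!\, u_n$, produces three formal solutions with characteristic exponentials $1,\; e^{+\sqrt{2n}}$, and $e^{-2\sqrt{2n}}$; the approximation $p_n/q_n$ picks out the minimal (subdominant) solution, and the ratio gives the $-2\pi e^{-2\sqrt{2n}}(1+O(n^{-1/2}))$ claimed in (3). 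The main obstacle will be (a) the integrality of $p_n$, where one must show that the expected $D_n$-type denominators actually cancel, mirroring the classical but delicate integrality property of the Ap\'ery numbers \eqn{3152b}--\eqn{3152c}, and (b) pinning down the precise constants $\tfrac{1}{\sqrt{\pi}(4e)^{3/8}}$ and $-2\pi$, which requires carrying the steepest descent analysis through the matching of subleading terms and verifying that the normalization coming from the initial conditions $p_0,\dots,q_2$ agrees with the one coming from the integral representations.
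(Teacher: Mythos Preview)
Your overall strategy matches the paper's account closely: the approximants are indeed constructed via multiple orthogonal polynomials (equivalently, Hermite--Pad\'e approximation) with respect to a pair of Laguerre-type weights on $[0,\infty)$, namely $e^{-x}$ and $e^{-x}\log x$. The paper records the explicit Rodrigues-type formula
\[
Q_n(x) = \frac{1}{(n!)^2}\,\frac{e^x}{x-1}\Big(\frac{d}{dx}\Big)^{n} x^n \Big(\frac{d}{dx}\Big)^{n} (x-1)^{2n+1} x^n e^{-x},
\]
together with the integral representations $q_n=\int_0^\infty Q_n(x)e^{-x}\,dx$ and $p_n-\gamma q_n=\int_0^\infty Q_n(x)e^{-x}\log x\,dx$; the recurrence and the asymptotics (2), (3) then come exactly as you say, from the multiple-orthogonality relations and from steepest descent on these integrals. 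So for parts (2) and (3), and for the recurrence itself, your plan is essentially the paper's.

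The one place your plan diverges from what actually happened is the integrality of $p_n$. Your proposed ``denominator miracle'' showing that the $D_n$-type denominators from the logarithmic moments cancel is precisely what the original 2007 work could \emph{not} do: Aptekarev et al.\ only established the weaker statement $q_n\in\ZZ$, $D_n p_n\in\ZZ$. The full integrality $p_n\in\ZZ$ was obtained two years later by Tulyakov by a different device: he found a more complicated auxiliary system of recurrences satisfied by the same sequences, in which the integrality of $p_n$ becomes manifest from the recurrence coefficients themselves, rather than from any cancellation in the integral. So if you pursue your integral-representation route for (1), expect to land on $D_n p_n\in\ZZ$ and then need a separate combinatorial or recurrence-based argument to remove the $D_n$; the closed form $p_n=\sum_k\binom{n}{k}^2(n+k)!(H_{n+k}+2H_{n-k}-2H_k)$ found later by the Hessami Pilehroods makes clear why the cancellation is not transparent from the integral alone.
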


\noindent 
The original result showed in place of (1) the weaker result that $q_n \in \ZZ$
and $D_n p_n \in \ZZ$ with $D_n= \mbox{l.c.m.} [1, 2, ..., n]$.
The integrality result for $p_n$ follows from a later result of Tulyakov \cite{Tu09},
 which finds a more complicated system of recurrences that these sequences
 satisfy, in terms of which the integrality of $p_n$ is manifest. 
These approximants clearly are of $H$-type.

The results in Theorem~\ref{th3152}  are based on  formulas derived from a
 family of multiple orthogonal polynomials 
in the sense of Aptekarev, Branquinho and van Assche \cite{ABA03}. 
These lead to exact integral formulas
\begin{equation*}
q_n = \int_{0}^{\infty} Q_n(x) e^{-x}dx,
\end{equation*}
and
\begin{equation}\label{862}
p_n - \gamma q_n = \int_{0}^{\infty} Q_n(x) e^{-x}\log x \, dx,
\end{equation}
in which $Q_n(x)$ are the family of  polynomials (of degree $2n$) given by
\begin{equation*}
Q_n(x) = \frac{1}{(n!)^2} \frac{e^x}{x-1} \left( \frac{d}{dx}\right)^n x^n \left( \frac{d}{dx}\right)^n (x-1)^{2n+1} x^n e^{-x}.
\end{equation*}
Note for  $Q_0(x) = 1$ that \eqn{862} becomes the integral formula  \eqn{225c} for $\Gamma'(1)$.
Also in 2009  Rivoal \cite{Riv09} found  an alternative construction of these approximants 
  that makes use of Pad\'{e} acceleration methods
for series similar to Euler's divergent series \eqn{554a}. His method 
applies more generally to numbers $\gamma + \log x$, where $x>0$ is rational. 
In 2010 Kh. and T. Hessami Pilehrood \cite[Corollary 4]{HP09}
found closed forms for 
these approximants,
\begin{equation*}
q_n = \sum_{k=0}^n {\binom{n}{k}}^2 (n+k)!, \quad p_n = \sum_{k=0}^n {\binom{n}{k}}^2(n+k)! ( H_{n+k} + 2 H_{n-k} - 2 H_k),
\end{equation*}
where the $H_k$ are harmonic numbers. These  rational approximations $\frac{p_n}{q_n}$ 
thus are of  a  kind  similar in appearance to Ap\'{e}ry's
approximation sequence \eqn{3152b}, \eqn{3152c}  to $\zeta(3)$, and also satisfy a third order recurrence.

Recently Kh. and T. Hessami Pilehrood \cite[Theorem 1, Corollary 1]{HP13} constructed
an elegant sequence of rational approximations converging to Euler's constant,
also analogous to the Ap\'{e}ry   approximations to $\zeta(3)$, 
 with slightly better convergence properties
than those given in Theorem \ref{th3152}, 
and having a surprising connection with certain approximations to the Euler-Gompertz constant.
They set 
\begin{equation*}
q_n   =  \sum_{k=0}^n {\binom{n}{k}}^2 k! , \quad
p_n   =  \sum_{k=0}^n {\binom{n}{k}}^2 k! \,(2 H_{n-k} - H_k).
\end{equation*}
The sequence $q_n$ satisfies the homogeneous second order recurrence 
\begin{equation}\label{892b}
q_{n+2} = 2(n+2) q_{n+1} - (n+1)^2 q_n.
\end{equation}
and initial conditions $q_0=1, q_1=2$.
The sequence $p_n$ satisfies the inhomogeneous second order recurrence 
\begin{equation*}
p_{n+2} = 2(n+2) p_{n+1} - (n+1)^2 p_n -\frac{n}{n+2},
\end{equation*}
with initial conditions $p_0=0, p_1=1$, and it satisfies a homogeneous third order recurrence.


\begin{theorem}\label{th3153}  {\em (Kh. and T. Hessami Pilehrood 2013)} 
Let $(q_n)_{n \ge 0}, (p_n)_{n \ge 0}$ be defined as above.
Then $q_n \in \ZZ$ and  $D_n p_n \in \ZZ$, where $D_n= l.c.m. [1, 2, ..., n]$,
and for all $n \ge 1,$
\begin{equation*}
\gamma- \frac{p_n}{q_n} = -e^{-4 \sqrt{n}}\Big( 2\pi + O(\frac{1}{\sqrt{n}}) \Big).
\end{equation*}
Here  the growth rate of the sequence  $q_n$ is, for all $n \ge 1$, 
\begin{equation*}
q_n = n! \frac{e^{2 \sqrt{n}}}{\sqrt[4]{n}}\Big(\frac{1}{2 \sqrt{\pi e}} + O(\frac{1}{\sqrt{n}}) \Big)
\end{equation*}
\end{theorem}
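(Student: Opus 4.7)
The plan is to reduce everything to asymptotic analysis of the classical Laguerre polynomial $L_n(-1)$, after identifying $q_n = n!\,L_n(-1)$. Integrality is immediate: $q_n$ is manifestly in $\ZZ$, and since each $H_k$ and $H_{n-k}$ for $0 \le k \le n$ has denominator dividing $D_n$, we get $D_n p_n \in \ZZ$. For the identification with a Laguerre value, use $\binom{n}{k}^2 k! = \binom{n}{k}\cdot n!/(n-k)!$ and substitute $j=n-k$ to obtain
\[
q_n = n!\sum_{j=0}^n \binom{n}{j}\frac{1}{j!} = n!\, L_n(-1).
\]

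Next I would establish an integral representation of the linear form $p_n - \gamma q_n$. The basic tool is the moment identity $\int_0^{\infty} x^k e^{-x}\log x\,dx = k!(H_k - \gamma)$, which immediately handles the $H_k$ piece. For the $H_{n-k}$ piece, first write $(n-k)! H_{n-k}$ via the companion identity $\int_0^{\infty} y^{n-k} e^{-y}\log y\,dy = (n-k)!(H_{n-k}-\gamma)$; rewriting the sum under $k \mapsto n-k$ and combining with the first representation yields a single integral
\[
p_n - \gamma q_n = \int_0^{\infty} R_n(x)\, e^{-x}\log x\,dx,
\]
with $R_n(x)$ an explicit polynomial of degree $\le 2n$ expressible in Laguerre polynomials with parameter shifts. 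As a check, one verifies that both sides satisfy the same inhomogeneous three-term recurrence with matching initial conditions; the recurrence for $p_n$ itself follows from Zeilberger-style creative telescoping applied to the hypergeometric sum defining $p_n$, which also certifies the homogeneous second-order recurrence \eqref{892b} for $q_n$.

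With closed-form and integral representations in place, the asymptotics follow from steepest-descent analysis of classical contour integrals. The generating function $\sum_{n\ge 0}L_n(x)z^n = (1-z)^{-1}\exp\!\bigl(-xz/(1-z)\bigr)$ gives
\[
L_n(-1) = \frac{1}{2\pi i}\oint \frac{1}{1-z}\exp\!\Bigl(\frac{z}{1-z}\Bigr)\frac{dz}{z^{n+1}}.
\]
The saddle of the phase $\Phi(z) = z/(1-z) - (n+1)\log z$ is at $z_* = 1 - (n+1)^{-1/2} + O(n^{-1})$, with $\Phi(z_*) = 2\sqrt{n} - \tfrac{1}{2} + O(n^{-1/2})$. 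Gaussian expansion at the saddle delivers $L_n(-1) \sim e^{-1/2}\cdot e^{2\sqrt{n}}/(2\sqrt{\pi}\, n^{1/4})$, which is exactly the claimed asymptotic for $q_n$. The companion integral for $p_n - \gamma q_n$ is treated by the same method: its dominant saddle lies on the conjugate branch (the subdominant solution of the Poincar\'e equation for \eqref{892b}), and contributes $\sim 2\pi\,n!\,e^{-2\sqrt{n}}/(2\sqrt{\pi e}\, n^{1/4})$, yielding on division by $q_n$ the claimed rate $\gamma - p_n/q_n = -2\pi\, e^{-4\sqrt{n}}(1 + O(n^{-1/2}))$. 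Alternatively, one can apply Liouville--Green WKB directly to the recurrence \eqref{892b} and identify $(p_n - \gamma q_n)/q_n$ as the ratio of the subdominant to the dominant fundamental solution, with the same outcome.

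The hard part will be pinning down the two absolute constants $1/(2\sqrt{\pi e})$ and $2\pi$, together with the sign of the error. For $q_n$ this is a routine Gaussian integration at the saddle $z_*$; extracting the constant and sign for the subdominant mode, however, requires either (i) a Brent--McMillan-type identity expressing $p_n - \gamma q_n$ in terms of the modified Bessel function $K_0$, whose known asymptotic $K_0(2\sqrt{n}) \sim \sqrt{\pi}/(2\,n^{1/4})\cdot e^{-2\sqrt{n}}$ then produces the constant $2\pi$ directly, or (ii) a Stokes-line connection-formula computation that matches WKB prefactors across the turning point of the recurrence. Once this is settled, the $O(1/\sqrt{n})$ remainders in both parts of the theorem follow by pushing the saddle-point expansions one further order.
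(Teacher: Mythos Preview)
The paper itself does not prove this theorem; it is a survey that cites \cite[Theorem~1, Corollary~1]{HP13}. The only proof sketch the paper gives nearby is for the companion Theorem~\ref{th3154}, where the argument runs through a Mellin--Barnes integral $I_n = (n!)^2(2\pi i)^{-1}\int \Gamma(s-n)^2/\Gamma(s+1)\,ds$ and its evaluation in Whittaker functions; the Hessami Pilehroods treat the $\gamma$-case by the same confluent-hypergeometric machinery.

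Your route is genuinely different for the denominator asymptotic, and there it works. The identification $q_n = n!\,L_n(-1)$ is correct, and Perron's formula for Laguerre polynomials at a negative argument delivers the $q_n$-estimate with the right constant $1/(2\sqrt{\pi e})$ without any contour shifting; this is a clean shortcut. The integrality claims are also fine.

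The linear-form part of your sketch has a gap. The integral representation you write, with $R_n$ ``Laguerre with parameter shifts,'' does not come out that way: the $H_k$-piece produces the polynomial $\sum_k\binom{n}{k}^2 x^k = {}_2F_1(-n,-n;1;x)$, which is of Jacobi/Legendre type rather than Laguerre, and the $H_{n-k}$-piece yields $n!\sum_j\binom{n}{j}y^j/(j!)^2$, a ${}_1F_2$ object. More to the point, saddle-point on an integral of the shape $\int R_n(x)e^{-x}\log x\,dx$ picks up the \emph{dominant} behavior of the integrand, not an exponentially small cancellation; and WKB on the recurrence~\eqref{892b} gives the rate $e^{-2\sqrt{n}}$ for the subdominant solution but leaves the prefactor and sign undetermined without a connection formula. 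Your option~(i), a special-function identity that isolates the subdominant mode, is therefore not an optional cross-check but the missing ingredient --- and supplying it is essentially the Hessami Pilehrood argument in different notation.
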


These approximations converge to $\gamma$ from one side. The sequence $D_n$
needed to clear the denominator of $p_n$ is well known to have
growth rate 
$$
D_n = e^{n(1+o(1))},
$$ 
 an asymptotic  result that is equivalent 
in strength to the prime number theorem.

The Euler-Gompertz constant $\delta$ also has a convergent
series of rational approximations with  {\em  the same denominator sequence $q_n$}
as the Euler constant approximations above.  
The new numerators $s_n$ satisfy the same recurrence  \eqn{892b} as 
the denominators $q_n$, 
i.e. 
$$s_{n+2} = 2(n+2) s_{n+1} - (n+1)^2 s_n,$$
 but with  initial conditions $s_0=0, s_1=1$.
The $s_n$ are integers, and are also given by the expression
\begin{equation*}
s_n = \sum_{k=1}^n a(k-1) (k+1){\binom{n}{k}} \frac{(n-1)!}{(k-1)!}, 
\end{equation*}
in which the function values  $a(m)$ are
\begin{equation*}
a(m) = \sum_{k=0}^m (-1)^k \, k!,
\end{equation*}
see \cite[Sequence A002793]{OEIS}.
Here the $a(m)$  are the  partial sums of Euler's divergent series discussed in Section \ref{sec25}.
These approximations $\frac{s_n}{q_n}$ are exactly 
the partial quotients $\frac{s_n}{q_n}$ of the Laguerre continued fraction \eqn{249e}
for the Euler-Gompertz constant.  

 One  has the following rate of convergence estimate  for 
 this sequence of rational approximations (\cite[Theorem 2]{HP13}).


\begin{theorem}\label{th3154} {\em (Kh. and T. Hessami Pilehrood 2013)} 
Let $(q_n)_{n \ge 0},  (s_n)_{n \ge 0}$ be defined as above. 
Then $q_n , s_n \in \ZZ$, and for all $n \ge 1,$ the approximations $s_n/q_n$ to
the Euler-Gompertz constant satisfy
\begin{equation*}
\delta- \frac{s_n}{q_n} = e^{-4 \sqrt{n}}\Big( 2\pi e + O(\frac{1}{\sqrt{n}}) \Big).
\end{equation*}
\end{theorem}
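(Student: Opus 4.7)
The plan is to exploit the structure already established in the excerpt: the approximants $s_n/q_n$ are the successive convergents of the Laguerre continued fraction \eqn{249e} for $\delta$, so their differences telescope to an explicit positive series whose asymptotic behavior can be read off from the growth of $q_n$ given in Theorem \ref{th3153}. Integrality $s_n,q_n \in \ZZ$ is immediate by induction, since the recurrence $u_{n+2} = 2(n+2)u_{n+1} - (n+1)^2 u_n$ has integer coefficients and the initial data is integral in both cases.

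The key identity is the classical convergent-difference formula
\begin{equation*}
s_{n+1} q_n - s_n q_{n+1} = (n!)^2,
\end{equation*}
which I would establish by induction: the recurrence gives
\[
s_{n+1} q_n - s_n q_{n+1} = -(n+1)^2(s_{n-1} q_n - s_n q_{n-1}) = (n+1)^2 (s_n q_{n-1} - s_{n-1} q_n),
\]
so by induction the expression equals $(n!)^2(s_1 q_0 - s_0 q_1) = (n!)^2$. Dividing by $q_n q_{n+1}$ (which is positive) gives
\[
\frac{s_{n+1}}{q_{n+1}} - \frac{s_n}{q_n} = \frac{(n!)^2}{q_n q_{n+1}}>0,
\]
so the convergents $s_n/q_n$ increase monotonically. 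Using the convergence $s_n/q_n \to \delta$, which follows either from Stieltjes's general convergence theory for $J$-continued fractions of the form \eqn{249f} applied to Laguerre's fraction \eqn{249d} at $x=1$, or directly from the rapid decay of the terms below, one obtains
\begin{equation*}
\delta - \frac{s_n}{q_n} \,=\, \sum_{k=n}^{\infty} \frac{(k!)^2}{q_k q_{k+1}}.
\end{equation*}

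The asymptotic evaluation is then straightforward. Substituting the growth estimate from Theorem \ref{th3153},
$q_k = \frac{k!}{2\sqrt{\pi e}} \cdot \frac{e^{2\sqrt{k}}}{k^{1/4}}\bigl(1 + O(k^{-1/2})\bigr)$, gives
\begin{equation*}
\frac{(k!)^2}{q_k q_{k+1}} \,=\, \frac{4\pi e\,(k(k+1))^{1/4}}{(k+1)\,e^{2\sqrt{k}+2\sqrt{k+1}}}\bigl(1+O(k^{-1/2})\bigr) \,=\, \frac{4\pi e}{\sqrt{k}}\,e^{-4\sqrt{k}}\bigl(1+O(k^{-1/2})\bigr),
\end{equation*}
using $2\sqrt{k+1} = 2\sqrt{k}+1/\sqrt{k}+O(k^{-3/2})$. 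I would then approximate the resulting sum by an integral; the substitution $u=\sqrt{k}$ gives
\[
\int_n^{\infty} \frac{4\pi e}{\sqrt{k}}\,e^{-4\sqrt{k}} \,dk = 8\pi e \int_{\sqrt{n}}^{\infty} e^{-4u}\,du = 2\pi e\, e^{-4\sqrt{n}},
\]
and an Euler-Maclaurin estimate on the remaining discretization error yields an $O(e^{-4\sqrt{n}}/\sqrt{n})$ correction, matching the form claimed.

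The main obstacle is the second step: one needs the error bound $O(k^{-1/2})$ in Theorem \ref{th3153} to hold \emph{uniformly} in $k$ so that it can be summed without losing the leading constant $2\pi e$. This should be inherent in the proof of Theorem \ref{th3153} (which itself rests on a saddle-point analysis of the closed form for $q_n$), but verifying uniformity — and carefully tracking that the correction from $q_{k+1}$ does not shift the leading constant — is the most technically delicate point. Once uniformity is in hand, the sum/integral comparison is routine, and no new ingredients are required to complete the argument.
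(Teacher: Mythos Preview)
Your argument is correct and complete in outline, but it follows a different route from the one the paper describes. The paper (following Hessami Pilehrood) obtains the asymptotic directly from a Mellin--Barnes integral representation
\[
I_n=(n!)^2\,\frac{1}{2\pi i}\int_{c-i\infty}^{c+i\infty}\frac{\Gamma(s-n)^2}{\Gamma(s+1)}\,ds,
\]
which is identified with a Whittaker function $W_{\kappa,\nu}(1)$ and analyzed asymptotically; this produces the linear form $\delta q_n-s_n$ and its size in one stroke, and yields the identities $\delta=W_{-1/2,0}(1)/W_{1/2,0}(1)$ as a by-product. Your approach is more elementary and modular: you use only the three-term recurrence to get the standard convergent-difference identity, telescope to a positive series, and read off the constant $2\pi e$ from the asymptotic of $q_n$ already recorded in Theorem~\ref{th3153}. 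This has the advantage of showing explicitly that Theorem~\ref{th3154} is a formal consequence of Theorem~\ref{th3153}, and it makes the one-sided convergence $s_n/q_n\uparrow\delta$ transparent; the cost is that the hard analysis (saddle-point or Whittaker asymptotics) is merely pushed into the cited growth estimate for $q_n$ rather than avoided.

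One small slip: in your induction for $s_{n+1}q_n-s_nq_{n+1}$ the factor should be $n^2$, not $(n+1)^2$, since shifting the recurrence gives $u_{n+1}=2(n+1)u_n-n^2u_{n-1}$. With $D_n=n^2D_{n-1}$ and $D_0=1$ you correctly get $D_n=(n!)^2$; as written your recursion would give $((n+1)!)^2$. Your stated uniformity concern is well placed but not a real obstacle: the $O(n^{-1/2})$ in Theorem~\ref{th3153} is asserted for all $n\ge1$, hence uniform, and the sum-to-integral comparison then goes through exactly as you indicate.
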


The proof of Theorem~\ref{th3154} relates these approximations to values
of a Mellin-Barnes type integral
\begin{equation*}
I_n = (n!)^2 \frac{1}{2 \pi i} \int_{c-i\infty}^{c+i \infty} \frac{\Gamma(s-n)^2}{\Gamma(s+1)} ds.
\end{equation*}
where the vertical line of integration has $c>n$. The integral is expressed in
terms of  Whittaker function $W_{\kappa, \nu}(z)$ (a confluent hypergeometric function).
The proof displays some  identities
involving the Euler-Gompertz constant and  Whittaker function values, namely
\begin{equation*}
\delta = \frac{ W_{-1/2, 0} (1)}{W_{1/2, 0}(1)}
\end{equation*}
using 
\begin{equation*}
W_{-1/2, 0}(1) = \frac{\delta}{\sqrt{e}}, \quad\quad  W_{1/2, 0}(1) = \frac{1}{\sqrt{e}}.
\end{equation*}

Table \ref{tab3151} presents data on the approximations $(q_n, p_n, s_n)$ for small $n$   
providing good approximations to  $\gamma$
and $\delta$. Recall  that $p_n$ are rational numbers, while
$q_n, D_n p_n, s_n$ are integers, with $D_n= {\rm l.c.m.} [1,2, ..., n]$.

 \begin{table}\centering
\renewcommand{\arraystretch}{.85}
\begin{tabular}{|r|r|r|r|r|}
\hline
\multicolumn{1}{|c|}{$n$} &
\multicolumn{1}{c|}{$q_n$}& 
\multicolumn{1}{c|}{$D_np_n$}&
\multicolumn{1}{c|}{$D_n$}&

\multicolumn{1}{c|}{$s_n$}\\ \hline
0 &      1   &0 & 1 &0  \\ 
 1 &     2 &   $1$  & 1& 1 \\
 2 &     7  &  $ 8$   &  2&4    \\
 3 &     34&  $ 118$ &  6 &20 \\ 
 4 &   209  & $1450$ &12 & 124\\ 
 5 &    1546 & $53584$ & 60 & 920 \\ 
 6 &    13327  & $461718$ &60 & 7940 \\ 
 7 &     130922 & $31744896$ &  420 &78040\\ 
 8 &     1441729 & $699097494$ & 840 &859580 \\ 
 9 &      17572114& $25561222652$ & 2520 &10477880\\ 
10 &       234662231& $341343759982$ & 2520 &139931620 \\ \hline
\end{tabular}
\bigskip

\noindent \caption{Approximants $p_n/q_n$ and $s_n/q_n$ to $\gamma$ and $\delta$, with $D_n= {\rm l.c.m.} [1,2, ..., n]$.}
\label{tab3151}
\end{table}

The approximations given  in Theorems \ref{th3153}  and \ref{th3154},
certify that both $\gamma$ and $\delta$ are elementary $H$-periods, so
that both $\gamma, \delta \in \sHP$.
These approximations 
are of insufficient quality to imply the irrationality of either Euler's
constant or of the Euler-Gompertz constant. 
It remains a challenge to find a sequence of 
Diophantine approximations to $\gamma$ (resp. $\delta$) 
that approach them  sufficiently fast   to certify  irrationality.

One may  ask whether $\gamma$ has Diophantine approximations of $G$-type. This is unknown. The
current expectation is that this is not possible, i.e. that  $\gamma \not\in \sGP$. 
Based on the observation of Rivoal and Fischler that $\sGP= \mbox{Frac}(\bG)$, and the
 the belief (\cite[Sect. 2.2]{FR13}) that the ring
$\bG$ may coincide with the Kontsevich-Zagier ring $\hat{\sP}$ of
extended periods, this expectation amounts to   a stronger form of  Conjecture \ref{conj2}.

We conclude this section with recent results concerning $G$-type and $H$-type approximations
to other constants considered by Euler.
In $2010$ Rivoal \cite{Riv10b} showed 
that  the periods  $\Gamma(\frac{k}{n})^n$, where $\frac{k}{n} >0$ ,
 can be approximated  by sequences $\frac{p_n}{q_n}$ 
of rational numbers of the $G$-type, so are elementary $G$-periods.
In another paper Rivoal \cite{Riv10a} finds sequences
of rational approximations showing  that the individual numbers $\Gamma(\frac{k}{n})$,
which are not known to be periods  for $0< k<n$,  are elementary $H$-periods.  

Finally we note that various authors  have suggested other approaches to establishing
the irrationality of Euler's constant, e.g. Sondow \cite{Son03}, \cite{Son09}.
and Sondow and Zudilin \cite{SZu06}.

%
%
%
%
\subsection{Transcendence results related to Euler's constant}\label{sec312}
\setcounter{equation}{0}

The first transcendence results for numbers involving Euler's constant came from
the breakthrough of Shidlovskii \cite{Shi59}, \cite{Shi62} 
in the 1950's and 1960's on transcendental values of $E$-functions,
which is detailed in his book \cite{Sh89}.  The class of {\em $E$-functions} was introduced in 1929 by
Siegel \cite[p. 223]{Sie29},  consisting of those analytic functions  $F(z) = \sum_{n=0}^{\infty} c_n \frac{z^n}{n!}$
whose power series expansions have the following properties:
\begin{enumerate}
\item
The coefficients $c_n$ belong to a fixed algebraic number field $K$ 
(a finite extension of $\QQ$).
For each $\epsilon>0$,   the maximum of the absolute values of the algebraic
conjugates of $c_n$ is bounded by $O(n^{n\epsilon})$ as $n \to \infty$.
\item
For  each $\epsilon >0$
 such that  there is a sequence of integers $q_0, q_1, q_2 ...$ such that
$q_n c_n$ is an algebraic integer, with 
$q_n = O( n^{n \epsilon})$ 
as $n \to \infty$.
\item
The function $F(z)$ satisfies a (nontrivial) linear differential equation
$$
\sum_{j=0}^n R_j(z) \frac{d^j}{dz^j} y(z) = 0, 
$$
whose coefficients are polynomials $R_j(z) \in K_1[z]$, where $K_1$
is some algebraic number field.\footnote{This condition on $K_1$ could be weakened to require only
that the coefficients are in $\CC[z]$.}
\end{enumerate}
Each $E$-function is an entire function  of the complex variable $z$. The set of all $E$-functions
forms a ring $\bE$ (under $(+, \times)$)  which is also closed under differentiation, and this ring  is an algebra
over $\bar{\QQ}.$

We also define a  subclass of   $E$-functions  called {\em $E^{*}$-functions}
  in  parallel with the class of $G$-functions,  following Shidlovskii \cite[Chap. 13]{Sh89}.
An analytic function $F(z)= \sum_{n=1}^{\infty} c_n \frac{z^n}{n!}$ is an {\em $E^{*}$-function} if it has  the following properties:
\begin{enumerate}
\item
$F(z)$ is an $E$-function.
\item
There is some constant $C>0$ such that the maximum of the absolute values of the algebraic
conjugates of $c_n$
 is bounded by $O(C^n)$ as $n \to \infty$.
\item
There is some constant $C' >0$
 such that  there is a sequence of integers $q_0, q_1, q_2 ...$ such that each
$q_n c_n$ is an algebraic integer, and with 
$q_n = O((C')^{n})$ 
as $n \to \infty$.
\end{enumerate}
 This definition  narrows the growth rate conditions on
the coefficients compared to Siegel's conditions.
 The set  of  all  $E^{*}$-functions  forms a ring ${\bE}^{*}$ closed under differentiation
which is also an algebra over $\bar{\QQ}$.
 Certainly  $\bE^{*} \subseteq \bE$, and it is conjectured that the two rings are equal
 (\cite[p. 407]{Sh89}). Results of   Andr\'{e} (\cite{And00a}, \cite{And00b})   apply to the class $\bE^{\ast}$.

Siegel \cite[Teil I]{Sie29} originally applied his method to prove transcendence of algebraic values 
of the Bessel function $J_0(\alpha)$
for algebraic $\alpha$ where $J_0(\alpha) \ne 0$.
In formalizing  his method in 1949,  
Siegel \cite{Sie49} 
first converted an $n$-th order differential operator to a linear system of
first order   linear differential equations
$$
\frac{dy_i}{dx} = \sum_{j=1}^n R_{ij}(x) y_j, ~~~1 \le i \le n,
$$
where the  functions $R_{i,j} (x)$ are rational functions. In order for his proofs to
apply, he required  this system to   satisfy an extra condition which he called {\em normal.}
 Siegel 
was able to verify normality  in  a  few cases,
including the exponential function and  certain Bessel functions.
A great  breakthrough of Shidlovskii \cite{Shi59} in 1959 permitted the method to
prove transcendence of solution values for all cases where the obvious
necessary conditions are satisfied.  
The Siegel-Shidlovskii theorem states  that if the $E_i(x)$ are algebraically
independent functions over the rational function field $K(x)$, then at any non-zero algebraic
number not a pole of any function $R_{ij}(x)$, the values $E_j(\alpha)$ are algebraically
independent.  For treatments of these results see  Shidlovskii \cite{Sh89} and 
Feldman and Nesterenko \cite[Chap. 5]{FN98}.

More recent progress on $E$-functions  includes  a study in 1988 of the Siegel normality condition by
Beukers, Brownawell and Heckman \cite{BBH88} which  obtains effective measures
of algebraic independence in some cases.
In 2000  Andr\'{e} \cite[Theorem 4.3]{And00a}
(see also \cite{And03}) 
 established that for each $E^{*}$-function  the minimal order linear differential equation
with $\CC(z)$ coefficients that it satisfies has singular points only at $z= 0$ and $z=\infty$; that is, 
it has a basis of $n$ independent  holomorphic solutions at all other points. Furthermore $z=0$ is 
necessarily a regular
singular point and $\infty$ is an irregular singular point of a restricted type. 
This result goes a long way towards explaining  why all the known
 $E^{*}$-functions are
of hypergeometric type. 
Andr\'{e} \cite{And00b}
 applied this characterization to give a new proof of the Siegel-Shidlovskii 
theorem. Finally we remark that although transcendence results are usually associated to $E$-functions
and irrationality results to $G$-functions, in 1996 Andr\'{e} \cite{And96} obtained transcendence
results for certain $G$-function values. 

 Returning to Euler's constant, in 1968 Mahler \cite{Mah68} applied Shidlovskii's methods
to certain Bessel functions. For example
$$J_0(z) = \sum_{n=0}^{\infty} \frac{(-1)^n}{ (n!)^2 } (\frac{z}{2})^{2n}$$
  is a Bessel function of the first kind, and
\begin{equation}
\label{bessel3}
Y_0(z) = \frac{2}{\pi} \Big(\log (\frac{z}{2} )+\gamma\Big) J_0(z) 
+ \frac{2}{\pi}\Big( \sum_{n=1}^{\infty} (-1)^{n-1} \frac{H_n}{(n!)^2} (\frac{z^2}{4})^n\Big)
\end{equation}
is a Bessel function of the second kind, 
see \cite[(9.1.13)]{AS}. 
 Here $J_0(z)$ is an $E^{*}$-function, but $Y_0(z)$, which contains Euler's
 constant in its expansion \eqn{bessel3},   is not, because it has a logarithmic
 singularity at $x=0$.  However the modified function
  $$
  \tilde{Y}_0(z) := \frac{\pi}{2} Y_0(z) -  \Big(\log (\frac{z}{2}) + \gamma\Big) J_0(z)= 
  \sum_{n=1}^{\infty} (-1)^{n-1} \frac{H_n}{(n!)^2} (\frac{z^2}{4})^n
  $$
is an $E^{*}$-function, and Mahler uses this fact. 
He noted the following special case of his general results.


\begin{theorem}\label{th100a} {\em (Mahler 1968)}
The number
\begin{equation*}
\frac{\pi}{2} \frac{Y_0(2)}{J_0(2)} - \gamma
\end{equation*}
is transcendental.
\end{theorem}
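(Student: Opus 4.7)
\noindent\textbf{Proof proposal for Theorem \ref{th100a}.}

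The plan is to realize the quantity $\frac{\pi}{2}\frac{Y_0(2)}{J_0(2)}-\gamma$ as a ratio of values of two $E^{*}$-functions at an algebraic point, and then apply the Siegel--Shidlovskii theorem to a first-order linear system satisfied by these functions and their derivatives. The key observation is that at $z=2$ the logarithmic term $\log(z/2)$ in \eqref{bessel3} vanishes, so the modified Bessel partner
\begin{equation*}
\tilde{Y}_0(z) \;:=\; \frac{\pi}{2}Y_0(z) - \bigl(\log(z/2)+\gamma\bigr) J_0(z) \;=\; \sum_{n=1}^{\infty}(-1)^{n-1}\frac{H_n}{(n!)^2}\Bigl(\frac{z^2}{4}\Bigr)^n
\end{equation*}
evaluates to $\tilde{Y}_0(2)=\tfrac{\pi}{2}Y_0(2)-\gamma J_0(2)$. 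Consequently
\begin{equation*}
\frac{\tilde{Y}_0(2)}{J_0(2)} \;=\; \frac{\pi}{2}\frac{Y_0(2)}{J_0(2)}-\gamma,
\end{equation*}
so it suffices to prove that $\tilde{Y}_0(2)$ and $J_0(2)$ are algebraically independent over $\overline{\mathbb{Q}}$ (which is stronger than transcendence of the ratio, and which also requires $J_0(2)\neq 0$, a standard fact since the smallest positive zero of $J_0$ is $\approx 2.4048$).

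First I would verify that both $J_0(z)$ and $\tilde{Y}_0(z)$ are $E^{*}$-functions with coefficients in $\mathbb{Q}$: for $J_0$ this is immediate from its series, and for $\tilde{Y}_0$ the harmonic-number coefficients $H_n/(n!)^2$ manifestly satisfy the required growth and denominator bounds (the common denominator $\operatorname{lcm}[1,\ldots,n]$ grows like $e^{n(1+o(1))}$). Next I would assemble the first-order linear system for $(J_0, J_0', \tilde{Y}_0, \tilde{Y}_0')$ over $\mathbb{Q}(z)$: Bessel's equation gives $J_0'' = -\frac{1}{z}J_0'-J_0$, and a direct differentiation of the definition of $\tilde{Y}_0$ (using that $(\log(z/2)+\gamma)J_0$ supplies a $\frac{2}{z}J_0'$ inhomogeneity) yields
\begin{equation*}
\tilde{Y}_0'' \;=\; -\frac{1}{z}\tilde{Y}_0' - \tilde{Y}_0 - \frac{2}{z}J_0',
\end{equation*}
so the four functions satisfy a Siegel-normal system with only $z=0$ as a singularity.

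The crucial step, and the one I expect to be the main obstacle, is to prove algebraic independence of the right functions over $\mathbb{C}(z)$. The four functions $J_0,J_0',\tilde{Y}_0,\tilde{Y}_0'$ are not algebraically independent, because the Wronskian identity $J_0Y_0'-J_0'Y_0=\tfrac{2}{\pi z}$ translates into
\begin{equation*}
J_0\tilde{Y}_0' - J_0'\tilde{Y}_0 \;=\; \frac{1}{z}\bigl(1-J_0^{2}\bigr),
\end{equation*}
which is the unique algebraic relation, so the transcendence degree of $\mathbb{C}(z)(J_0,J_0',\tilde{Y}_0,\tilde{Y}_0')$ over $\mathbb{C}(z)$ is exactly $3$. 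Establishing that no further polynomial relation exists is the hard analytic/algebraic input: one argues (as in Mahler's original treatment, or by Shidlovskii's contiguity/monodromy criterion, or by inspecting the irregular singularity at $\infty$ via Galois-theoretic or Picard--Vessiot arguments) that $J_0$ is transcendental over $\mathbb{C}(z)$ and that $\tilde{Y}_0$ cannot lie in the differential field $\mathbb{C}(z)(J_0)$ since that field contains no solution of the inhomogeneous equation with the correct monodromy at $z=0$ (the second Bessel solution introduces a genuine $\log$-branching whose removal leaves the transcendental $\tilde{Y}_0$).

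Granted this algebraic independence result, the point $z=2$ is algebraic and non-singular for the system, so the Siegel--Shidlovskii theorem as stated by Shidlovskii (see \cite{Sh89}) transfers the transcendence degree: $\operatorname{tr.deg}_{\overline{\mathbb{Q}}}\,\overline{\mathbb{Q}}\bigl(J_0(2),J_0'(2),\tilde{Y}_0(2),\tilde{Y}_0'(2)\bigr)=3$. In particular $J_0(2)$ and $\tilde{Y}_0(2)$ are algebraically independent over $\overline{\mathbb{Q}}$, so the ratio $\tilde{Y}_0(2)/J_0(2)$ is transcendental, completing the proof.
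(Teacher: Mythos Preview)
Your proposal is correct and follows exactly the approach the paper sketches: the paper does not give a detailed proof but only records the key reduction---that $\tilde Y_0(z)$ is an $E^{*}$-function and that at $z=2$ the logarithmic term drops out, so the number in question equals $\tilde Y_0(2)/J_0(2)$---and then attributes the transcendence to Mahler's application of Shidlovskii's methods. Your sketch faithfully expands this outline, including the correct inhomogeneous equation for $\tilde Y_0$, the Wronskian relation $J_0\tilde Y_0'-J_0'\tilde Y_0=\tfrac{1}{z}(1-J_0^2)$ reducing the functional transcendence degree to $3$, and the Siegel--Shidlovskii transfer at the nonsingular algebraic point $z=2$.
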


We have
$J_0(2) = \sum_{n=0}^{\infty} \frac{(-1)^{n}}{ (n!)^2}$ and 
$\frac{\pi}{2} Y_0(2) =\gamma J_0(2)+ \sum_{n=1}^{\infty} (-1)^{n-1}
\frac{H_n}{(n!)^2},$
from which it follows that  Mahler's transcendental number is
\begin{equation*}
\frac{\pi}{2} \frac{Y_0(2)}{J_0(2)} - \gamma =
\frac{ \sum_{n=1}^{\infty} \frac{(-1)^{n-1} H_n}{(n!)^2} }{\sum_{n=0}^{\infty}\frac{(-1)^n}{ (n!)^2}}.
\end{equation*}

More recent transcendence results apply to Euler's constant along
with other constants.
In 2012 Rivoal \cite{Riv10c} proved a result  which, as a special case,  implies 
the transcendence of at least one of the Euler constant $\gamma$
and the Euler-Gompertz constant $\EG$. It uses the Shidlovskii method
and improves on an observation of Aptekarev \cite{Apt09}
 that at least one of $\gamma$ and $\EG$ must be irrational. 
 This transcendence of at least one of $\gamma$ and $\delta$ was also established about
 the same time by Kh. and T. Hessami Pilehrood \cite[Corollary 3]{HP13}.


\begin{theorem}\label{th102} {\em (Rivoal 2012)}
Euler's constant $\gamma$ and the Euler-Gompertz  constant
$$
\EG := \int_{0}^{\infty} \frac{e^{-w}}{1+w} \, dw =  \int_{0}^1 \frac{dv}{1- \log v}
$$
together have the following properties:\smallskip

(1) (Simultaneous Diophantine Approximation)
One cannot approximate the pair $(\gamma, \EG)$ very well 
with rationals $(\frac{p}{q}, \frac{r}{q})$ having the same denominator. 
For each $\epsilon>0$ there is a constant $C(\epsilon)>0$ such that
for all integers $p, q, r$ with $q \ne0$, there holds
\begin{equation*}
|\gamma - \frac{p}{q}| + |\EG- \frac{r}{q}| \ge \frac{ C(\epsilon)} {H^{3+ \epsilon}},
\end{equation*}
where $H= \max(|p|, |q|, |r|)$.

(2) (Transcendence) The transcendence degree of the field $\QQ(e, \gamma, \delta)$ 
generated by $e , \gamma$ and $\delta$  over the rational numbers $\QQ$  is at least two.
\end{theorem}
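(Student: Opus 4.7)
The plan is to apply the Siegel--Shidlovskii method to a system of $E^{*}$-functions built around $e^{z}$ and the complementary exponential integral $\Ein(z)=\sum_{n\ge 1}(-1)^{n-1}z^{n}/(n\cdot n!)$. The starting observation is identity \eqn{353EG},
\[
\Ein(1)=\gamma+\frac{\delta}{e},
\]
which realizes $\Ein(1)$ as a $\QQ$-linear combination of $1$, $\gamma$, and $\delta/e$. Consequently $\QQ(e,\gamma,\delta)\supseteq\QQ(e,\Ein(1))$, so once one proves that $e$ and $\Ein(1)$ are algebraically independent over $\QQ$, statement (2) follows immediately.

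To establish that algebraic independence I would set up the first-order system satisfied by $y_1=e^{z}$, $y_2=e^{-z}$, $y_3=\Ein(z)$, namely
\[
y_1'=y_1,\qquad y_2'=-y_2,\qquad z\,y_3'=1-y_2.
\]
Together with the constant function $1$ these have polynomial coefficients in $z$; the only finite singular point is $z=0$, and $z=1$ is ordinary. The functions $e^{z}$ and $\Ein(z)$ are $E^{*}$-functions in the sense of Section~\ref{sec312} (the coefficients of $\Ein(z)$ are $1/(n\cdot n!)$, well within the required growth). The critical step is to verify that $e^{z}$ and $\Ein(z)$ are algebraically independent over $\CC(z)$; this is forced by Andr\'e's Fuchsian analysis of $E^{*}$-operators together with the irregular asymptotics of $\Ein$ at $\infty$, since any polynomial relation $P(z,e^{z},\Ein(z))=0$ would propagate under $d/dz$ to an unsustainable identity between a non-elementary antiderivative of $(1-e^{-z})/z$ and an algebraic function of $e^{z}$. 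Once algebraic independence of the $E^{*}$-functions over $\CC(z)$ is in hand, Shidlovskii's theorem applied at the ordinary algebraic point $z=1$ yields algebraic independence over $\QQ$ of $e$ and $\Ein(1)$, completing (2).

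For part (1) the plan is to invoke the effective (quantitative) form of the Siegel--Shidlovskii method, which yields a simultaneous linear-independence measure for the values of $E^{*}$-functions at algebraic points. Feeding in the explicit Pad\'e-type simultaneous approximations $(p_n,q_n,s_n)$ of Theorems~\ref{th3153} and~\ref{th3154} of Hessami Pilehrood -- where $q_n\in\ZZ$, $D_n p_n,s_n\in\ZZ$ with $D_n=\operatorname{lcm}[1,\dots,n]=e^{n(1+o(1))}$ and
\[
|q_n\gamma-p_n|,\ |q_n\delta-s_n|=O(e^{-4\sqrt n}),\qquad q_n=n!\,e^{2\sqrt n+O(\log n)}/\sqrt[4]{n},
\]
-- gives for every $n$ three integer linear forms in $1,\gamma,\delta$ with controlled height and size. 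Arranging these at two consecutive indices into a $3\times 3$ integer matrix whose non-vanishing determinant is forced by the three linearly independent solutions of the underlying third-order recurrence, and estimating that determinant two ways (one via the size bounds, one via its integrality), yields the claimed lower bound with exponent $3+\epsilon$ for arbitrary $\epsilon>0$.

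The main obstacle will be the first step: verifying algebraic independence of $e^{z}$ and $\Ein(z)$ over $\CC(z)$ cleanly enough to satisfy the hypotheses of Shidlovskii's theorem in the presence of the singularity of the system at $z=0$. A secondary difficulty is the non-degeneracy needed for part (1): one must show that the $3\times 3$ matrices built from $(q_n,p_n,s_n)$ and $(q_{n+1},p_{n+1},s_{n+1})$ have non-vanishing determinant and precisely estimate the denominator losses coming from the $D_n$ factors, since these are what control the exponent $3+\epsilon$ and any slackness here degrades the bound.
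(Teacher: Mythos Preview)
Your treatment of part (2) is essentially the paper's (and Rivoal's) argument: reduce via the identity $\Ein(1)=\gamma+\delta/e$ (equivalently $-\sE(-1)=\gamma+\delta/e$ in the paper's notation) to showing that $e$ and $\Ein(1)$ are algebraically independent, set up the first-order system for the $E^*$-functions $e^{\pm z}$ and $\Ein(z)$, verify algebraic independence over $\CC(z)$, and invoke Shidlovskii. That is correct, and the obstacle you name (independence over $\CC(z)$ near the singular point $z=0$) is the genuine technical step, handled in the literature.

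For part (1), however, there is a real gap. The quantitative Siegel--Shidlovskii machinery does give the bound, but it does so through its \emph{own} Hermite--Pad\'e construction (small linear forms in $1,e^z,\mathcal{E}_\alpha(z)$ specialized at $z=1$, as in Rivoal's Theorem~1). The Hessami Pilehrood sequences $(q_n,p_n,s_n)$ are a different object: they are Type~I simultaneous approximations to $(\gamma,\delta)$, not small linear forms in $1,\gamma,\delta$ with all three coefficients present. Trying to run a determinant argument on them fails quantitatively. One has $q_n\asymp n!\,e^{2\sqrt n}n^{-1/4}$, $D_n\asymp e^{n}$, and errors $|q_n\gamma-p_n|,|q_n\delta-s_n|\asymp e^{-4\sqrt n}$. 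The nonvanishing integer you would extract from a row like $(-D_np_n,\,D_nq_n,\,0)$ against $(p,q,r)$ is $D_n(qp_n-pq_n)$, and the resulting lower bound is of order $1/(D_nq_n)$, i.e.\ of size roughly $e^{-n}\,(n!)^{-1}$ when $q\asymp e^{4\sqrt n}$. In terms of $H\asymp q$ this is far worse than any power $H^{-(3+\epsilon)}$; the denominator loss from $D_n$ alone already kills the bound, and the factorial growth of $q_n$ relative to the merely sub-exponential decay $e^{-4\sqrt n}$ is precisely why these approximants do not even prove irrationality of $\gamma$ or $\delta$ individually. Your ``$3\times 3$ determinant from two consecutive indices'' also does not make sense as stated (two indices supply two rows per variable pattern, not three rows of a single pattern), and the third-order recurrence you mention governs $p_n$, not a single linear form in $1,\gamma,\delta$. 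To obtain the exponent $3+\epsilon$ you must use linear forms whose size decays like a \emph{power} of their height; those come from the Pad\'e approximants to the $E^*$-functions themselves, not from the Hessami Pilehrood sequences.
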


\begin{proof}
This follows from a much more general 
result of Rivoal's  \cite[Theorem 1]{Riv10c},
which  concerns values of the  function 
$$
\sG_{\alpha}(z) :=  z^{-\alpha} \int_{0}^{\infty} (t+z)^{\alpha -1} e^{-t} dt.
$$
Here we specialize to the case  $\alpha=0$,  and on taking $z=1$ we have   
\begin{equation*}
\sG_0(1) = \int_{0}^{\infty}  \frac{e^{-t}}{t+1} dt = \EG,
\end{equation*} 
using Hardy's integral \eqn{560aa}. The function $\sG_0(z)$
satisfies a linear differential equation, but is not an $E$-function in the sense of transcendence theory.
 Rivoal makes use of the identity
\begin{equation}\label{922}
\gamma + \log z =  -e^{-z} \sG_0(z)-\sE(-z),
\end{equation}
valid for $z \in \CC \smallsetminus \RR_{\le 0}$, 
where the entire function 
$$
\sE(z) := \sum_{n=1}^{\infty} \frac{z^n}{n \cdot n! },
$$
 is an $E^{*}$-function.

Result (i) follows from  the assertion of  Theorem 1 (ii) of Rivoal \cite{Riv10c},
specialized to  
parameter values $\alpha=0$ and $z=1$. This result uses explicit Hermite-Pad\'{e}
approximants to  the  $E^{*}$-functions  $1, e^z, \mathcal{E}_{\alpha}(z)$,
where
$\mathcal{E}_{\alpha}(z) = \sum_{m=0}^{\infty} \frac{z^m}{m!(m+\alpha +1)}$,
with $\alpha \ne -1, -2, -3, ...$. Here $\mathcal{E}(z)$ corresponds to $\alpha=-1$
with the divergent constant term dropped from the power series expansion.

Result  (ii) follows from the assertion of Theorem 2 (ii) of Rivoal \cite{Riv10c},
which asserts for (complex) algebraic numbers $z \not\in (-\infty, 0]$ that
the field generated over $\QQ$ by the three numbers $e^z, \,  \gamma+ \log z, \,  \sG_0(z) $
has transcendence degree at least two. It makes use of \eqn{922},
which can be rewritten as the integral identity
\begin{equation*}
-\gamma= \log z  + z \int_{0}^1 e^{-tz} \log t \,  dt
+ e^{-z} \int_{0}^{\infty} \frac{e^{-t}}{t+z} \, dt
\end{equation*}
It   uses the specialization $z=1$, which essentially gives  Hardy's identity \eqn{560b}
divided by $e$, which we may rewrite as
\begin{equation*}
-\sE(-1)= \gamma + \frac{\delta}{e}.
\end{equation*}
The result then follows using  Shidlovskii's Second Fundamental
Theorem \cite[p. 123]{Sh89}. \end{proof}
 
 Rivoal \cite{Riv10c} remarks that the transcendence result Theorem \ref{th102} (2) is implicit
 in the approach that Mahler \cite{Mah68}  formulated in 1968.
At the  end of his paper Mahler \cite[p. 173]{Mah68} states without details that 
 his method  extends to show 
that for rational $\nu_0>0$  and nonzero algebraic numbers $\alpha$,  the integrals 
$$
I_k(\nu_0, \alpha) := \int_{0}^{1} x^{\nu_0 -1} (\log x)^k e^{-\alpha x} dx, ~~~k=0, 1,2, ...
$$ 
are algebraically independent transcendental numbers.
Choosing $\nu_0=1, \alpha=1$
and $k=0,1$, we have
 $$I_0(1, 1) :=  \int_{0}^{1} e^{-x} dx= 1- \frac{1}{e},$$
  and, using \eqn{559f}, 
$$ I_1(1,1)= \int_{0}^1 (\log x)\, e^{-x} dx = \int_{0}^{\infty}(\log x)\, e^{-x} dx- \frac{1}{e} \int_0^{\infty} \log (x+1) e^{-x} dx= 
-\gamma -\frac{\delta}{e}.
$$
Mahler's statement asserts that these  are algebraically independent transcendental numbers. 
This assertion implies that $\QQ (\gamma, \delta, e)$
has transcendence degree at least two over $\QQ$.

We next present  recent results which concern the transcendence of 
collections of generalized
Euler constants discussed in Section \ref{sec37}. These results  
are applications of Baker's results on linear forms in logarithms, and allow 
the possibility of one exceptional algebraic value.
Recall 
that the {\em Euler-Lehmer constants}, for $0 \le  h < k$ studied by Lehmer \cite{Leh75},  are defined by
\[
\gamma(h, k)  := \lim_{x \to \infty} \Big(\sum_{\substack{0 \le n < x\\ n \equiv h~ (\bmod k)}} \frac{1}{n} - \frac{\log x}{k} \Big).
\]
In 2010 M. R. Murty and N. Saradha \cite[Theorem 1]{MS10} obtained the following result. 


\begin{theorem}\label{th3162} {\em (Murty and Saradha 2010)}
In the infinite list of Euler-Lehmer constants 
\[ 
\{ \gamma(h, k) :   1 \le h < k,~~  \mbox{for all}~~  k \ge 2\}, 
 \] 
at most one value is an algebraic number. If 
Euler's constant $\gamma$ is an algebraic number, then only
the number $\gamma(2,4) = \frac{1}{4} \gamma$ in the above
list is algebraic.
\end{theorem}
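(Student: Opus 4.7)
The plan is to reduce the algebraicity of each $\gamma(h,k)$ to an explicit relation involving $\gamma$ and logarithms of algebraic numbers, and then to invoke Baker's theorem on linear forms in logarithms. The first step is to derive a closed-form expression: combining the definition of $\gamma(h,k)$ with the digamma limit $\psi(x) = \lim_{n\to\infty}(\log n - \sum_{j=0}^{n-1}\tfrac{1}{x+j})$ of Theorem 3.1 gives, for coprime $(h,k)$,
\[
k\gamma(h,k) \;=\; -\log k - \psi(h/k).
\]
Substituting Gauss's evaluation of the digamma function at rational arguments,
\[
\psi(h/k) = -\gamma - \log(2k) - \tfrac{\pi}{2}\cot(\pi h/k) + 2\!\!\sum_{1 \le j < k/2}\!\!\cos(2\pi jh/k)\,\log\sin(\pi j/k),
\]
produces the master identity
\[
k\gamma(h,k) - \gamma \;=\; \log 2 + \tfrac{\pi}{2}\cot(\pi h/k) - 2\!\!\sum_{1 \le j < k/2}\!\!\cos(2\pi jh/k)\,\log\sin(\pi j/k) \;=:\; A_{h,k}.
\]
For non-coprime pairs with $d = \gcd(h,k) > 1$, $h = dh'$, $k = dk'$, the scaling identity $\gamma(h,k) = \gamma(h',k')/d - (\log d)/k$ together with the transcendence of $\log d$ (Hermite--Lindemann) reduces the analysis to coprime pairs after adjoining $\log d$ to the pool of logarithms considered in the Baker step below.

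Second, suppose $\gamma(h_1,k_1)$ and $\gamma(h_2,k_2)$ are both algebraic for two distinct coprime pairs. Subtracting the master identities gives
\[
A_{h_1,k_1} - A_{h_2,k_2} \;=\; k_1\gamma(h_1,k_1) - k_2\gamma(h_2,k_2) \;\in\; \overline{\QQ},
\]
while direct computation shows the left side equals
\[
\tfrac{\pi}{2}\bigl[\cot(\pi h_1/k_1) - \cot(\pi h_2/k_2)\bigr] + 2\!\!\sum_{j<k_2/2}\!\!\cos\tfrac{2\pi jh_2}{k_2}\log\sin\tfrac{\pi j}{k_2} - 2\!\!\sum_{j<k_1/2}\!\!\cos\tfrac{2\pi jh_1}{k_1}\log\sin\tfrac{\pi j}{k_1}.
\]
Writing $\pi = -i\log(-1)$ converts this into an $\overline{\QQ}$-linear combination of $1$, $\log(-1)$, and logarithms of positive algebraic numbers $\sin(\pi j/k_i)$, set equal to $0$. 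Baker's theorem---that $1, \log\alpha_1,\ldots,\log\alpha_n$ are linearly independent over $\overline{\QQ}$ whenever the $\log\alpha_i$ are linearly independent over $\QQ$---forces every coefficient in this relation to vanish once a $\QQ$-basis is extracted. In particular, the coefficient of $\log(-1)$ must vanish, yielding $\cot(\pi h_1/k_1) = \cot(\pi h_2/k_2)$; since $x \mapsto \cot(\pi x)$ is injective on $(0,1)$, this forces $h_1/k_1 = h_2/k_2$, and coprimality gives $(h_1,k_1) = (h_2,k_2)$, a contradiction. For the second assertion, $\psi(1/2) = -\gamma - 2\log 2$ yields $4\gamma(2,4) = -\log 4 - \psi(1/2) = \gamma$, so $\gamma(2,4) = \gamma/4$; when $\gamma$ is algebraic, this is an algebraic member of the list, and by the first part it is the unique such member.

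The principal technical obstacle is verifying the $\QQ$-linear independence of $\{1, \log(-1), \log\sin(\pi j/k_i)\}$ modulo the classical cyclotomic-unit relations $\sin(\pi(k-j)/k) = \sin(\pi j/k)$ and $\prod_{1 \le j < k}\sin(\pi j/k) = k/2^{k-1}$, uniformly across all pairs $(h_i,k_i)$. The values $\log\sin(\pi j/k)$ are logarithms of cyclotomic units, and to apply Baker cleanly one must pass to an explicit basis of their $\QQ$-span (augmented by $\log(-1)$ and by $\log p$ for primes $p$ arising from non-coprime reductions), then confirm that no further hidden $\QQ$-relations exist that would allow the $\log(-1)$ coefficient to be absorbed by the log-sine part. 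Once this independence is in hand the cotangent reduction closes the argument, so the heart of the proof is a careful arithmetic analysis of the lattice of relations among real and imaginary logarithms of cyclotomic integers.
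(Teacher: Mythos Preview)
Your approach—write $k\gamma(h,k)=-\log k-\psi(h/k)$, insert Gauss's evaluation of $\psi$ at rationals, eliminate $\gamma$ by differencing two instances, and feed the resulting $\overline{\QQ}$-linear form in $\pi$ and $\log\sin(\pi j/k)$ into Baker's theorem—is correct and is the Murty--Saradha strategy the paper sketches; the paper itself gives no proof beyond that two-sentence summary, so at the level of ideas you are aligned with it.

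One comment on what you flag as the ``principal technical obstacle'': it is much lighter than you fear. Every $\log\sin(\pi j/k)$ (and every $\log d$ produced by the non-coprime reduction) is a \emph{real} logarithm of an algebraic number, while $\log(-1)=i\pi$ is purely imaginary. Hence $i\pi$ cannot lie in the $\QQ$-span of the real logs, so when you extract a $\QQ$-basis of the whole collection you may take $i\pi$ itself as a basis vector; its coefficient in the linear form, namely $-\tfrac{i}{2}\bigl[\cot(\pi h_1/k_1)-\cot(\pi h_2/k_2)\bigr]$, is unchanged by whatever $\QQ$-relations hold among the real log-sines. Baker then kills that coefficient directly, giving the cotangent equality with no input from the cyclotomic-unit lattice. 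In other words, the delicate independence analysis you anticipate is unnecessary for this conclusion: the real/imaginary separation already isolates the $\pi$-term. With that observation your argument is complete.
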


Their proof uses the fact that a large set of  linear combinations of $\gamma(h,k)$
are equal to logarithms of integers. 
 Then Baker's bounds for  linear forms in logarithms of
algebraic numbers are applied  to derive a contradiction from 
the assumption that the set contains  at least  two algebraic numbers.
   An immediate consequence of Theorem \ref{th3162} is the fact that over all rational numbers
$0< x\le 1$ at least one of $\Gamma(x), \Gamma'(x)$ is transcendental, with
at most one possible exceptional $x$. (\cite[Corollary]{MS10}), see also
Murty and Saradha \cite{MS07}. 

In a related direction Murty and Zaytseva \cite[Theorem 4]{MZ13}  obtained a 
corresponding transcendence  result for the generalized
Euler constants $\gamma(\Omega)$ associated to a finite set $\Omega$ of primes
that were introduced by Diamond and Ford \cite{DF08}.
Recall from Section \ref{sec37} that these constants are obtained by setting
$\zeta_{\Omega}(s) = \prod_{p  \in \Omega} \Big(1- p^{-s} \Big) \zeta(s)$ and 
defining $\gamma(\Omega)$ as the constant term in 
its Laurent expansion around $s=1$, writing
\[
\zeta_{\Omega} (s) = \frac{D(\Omega)}{s-1} + \gamma(\Omega) + \sum_{n=1}^{\infty} \gamma_n(\Omega) (s-1)^n.
\]
In particular $\gamma(\emptyset) = \gamma$. 


\begin{theorem}\label{th3163} {\em (Murty and Zaytseva 2013)}
In the infinite list of Diamond-Ford 
generalized Euler constants $ \gamma(\Omega)$ where  $ \Omega$ runs over all
finite subsets of primes (including the empty set),   all numbers are transcendental with at
most one exception.
\end{theorem}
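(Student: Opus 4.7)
My plan is to obtain an explicit closed form for $\gamma(\Omega)$ as a $\QQ$-linear combination of $\gamma$ and logarithms of primes, and then invoke Baker's theorem on linear independence of logarithms of algebraic numbers to conclude that at most one value in the family can be algebraic. The entire argument is parallel in spirit to the Murty--Saradha proof of Theorem~\ref{th3162}, but is even cleaner because a single Laurent-expansion computation reduces the problem directly to a statement about linear forms in $\log p$.

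The first step is to Laurent-expand $Z_{\Omega}(s)= g_{\Omega}(s)\,\zeta(s)$ around $s=1$, where $g_{\Omega}(s):=\prod_{p\in \Omega}(1-p^{-s})$. One has $g_{\Omega}(1)=D(\Omega)$, and the logarithmic derivative yields
\begin{equation*}
\frac{g_{\Omega}'(1)}{g_{\Omega}(1)}=\sum_{p\in \Omega}\frac{\log p}{p-1},
\end{equation*}
so that $g_{\Omega}(s)=D(\Omega)+D(\Omega)\bigl(\sum_{p\in \Omega}\frac{\log p}{p-1}\bigr)(s-1)+O((s-1)^2)$. Multiplying by $\zeta(s)=\frac{1}{s-1}+\gamma+O(s-1)$ and reading off the constant term gives the key identity
\begin{equation*}
\gamma(\Omega)\;=\;D(\Omega)\,\gamma\;+\;D(\Omega)\sum_{p\in \Omega}\frac{\log p}{p-1}.
\end{equation*}
Note that $D(\Omega)\in\QQ$ and that the second summand is a nonzero $\QQ$-linear combination of $\{\log p:p\in \Omega\}$ whenever $\Omega\neq\emptyset$.

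The transcendence step is then short. Suppose $\gamma(\Omega_1)$ and $\gamma(\Omega_2)$ are both algebraic with $\Omega_1\neq \Omega_2$. Dividing by $D(\Omega_i)$ and subtracting eliminates $\gamma$, giving
\begin{equation*}
\frac{\gamma(\Omega_1)}{D(\Omega_1)}-\frac{\gamma(\Omega_2)}{D(\Omega_2)}\;=\;\sum_{p\in \Omega_1\setminus \Omega_2}\frac{\log p}{p-1}\;-\;\sum_{p\in \Omega_2\setminus \Omega_1}\frac{\log p}{p-1}.
\end{equation*}
The left side is algebraic, whereas the right side is a nonzero $\QQ$-linear combination of logarithms of distinct primes, which by the Baker--Hermite--Lindemann theorem (the $\overline{\QQ}$-linear independence of $\log p$ over distinct primes $p$) is transcendental. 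This contradiction forces $\Omega_1=\Omega_2$. The case when $\gamma$ itself is algebraic also falls out: for any $\Omega\neq\emptyset$, $\gamma(\Omega)-D(\Omega)\gamma$ would be an algebraic multiple of a nonzero linear form in $\log p$'s, again transcendental by Baker, so $\gamma(\emptyset)=\gamma$ would be the unique algebraic element.

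The main obstacle is essentially just the Laurent-expansion bookkeeping, which is routine; the real content is imported from Baker's theorem. One small item to verify carefully is the sign and normalization convention for $\gamma(\Omega)$ in the paper's definition so that the constant term in the Laurent expansion indeed comes out to the stated $D(\Omega)\gamma+D(\Omega)\sum_{p\in\Omega}\frac{\log p}{p-1}$; beyond this the proof has no genuinely difficult step.
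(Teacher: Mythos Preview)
Your argument is correct. The Laurent-expansion computation is right: writing $g_\Omega(s)=\prod_{p\in\Omega}(1-p^{-s})$, one has $g_\Omega(1)=D(\Omega)$ and $g_\Omega'(1)/g_\Omega(1)=\sum_{p\in\Omega}\frac{\log p}{p-1}$, so the constant term of $g_\Omega(s)\zeta(s)$ at $s=1$ is exactly $D(\Omega)\gamma+D(\Omega)\sum_{p\in\Omega}\frac{\log p}{p-1}$. The elimination of $\gamma$ by subtraction and the appeal to Baker (in fact Hermite--Lindemann already suffices, since $\sum c_p\log p=\log\bigl(\prod p^{c_p}\bigr)$ with $\prod p^{c_p}$ algebraic and $\ne 1$) is clean and complete.

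The paper does not give a detailed proof; it only remarks that the result is proved by contradiction via Baker's theorem on linear forms in logarithms, and points to the representation \eqref{604aa} of $\gamma(\Omega)$ as a finite sum of Euler--Lehmer constants $\gamma(h,P_\Omega)$ as the link to Theorem~\ref{th3162}. Your route bypasses the Euler--Lehmer detour entirely: by expanding the Euler factor $g_\Omega(s)$ directly, you land immediately on the explicit formula $\gamma(\Omega)/D(\Omega)=\gamma+\sum_{p\in\Omega}\frac{\log p}{p-1}$, which makes the Baker step a one-liner. This is arguably the more transparent path, and it also makes the final remark in the paper (that if $\gamma$ were algebraic then $\gamma(\emptyset)=\gamma$ would be the unique exception) immediate from your formula.
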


The constants $\gamma(\Omega)$ can be expressed as finite  linear
combinations of  Euler-Lehmer numbers, cf. \eqn{604aa},
so this result is closely related to the previous theorem.
This result is also proved by contradiction, using Baker's results on linear forms in logarithms. 
If $\gamma$ were algebraic then $\gamma(\emptyset)$
would be the unique exceptional value.
However one   expects there to be no algebraic value in the list.

\renewcommand{\theequation}{\arabic{section}.\arabic{equation}} 
%
%
%
%
\section{Concluding remarks}\label{sec4}
\setcounter{equation}{0}

Euler's constant appears {\em sui generis}. Despite
three centuries of effort, it seems unrelated to other
constants in any simple way. 

A first consequence arising from this  is:  {\em  the  appearance
of Euler's constant in ostensibly unrelated fields of mathematics may
signal some hidden relationship between these fields.}
A striking example   is the analogy
between results concerning  the structure of factorizations of
random integers and cycle structure of random permutations.
These results are given  for  restricted
factorizations 
in Sections \ref{sec34} and \ref{sec35} and for
restricted cycle structure of permutations in Sections  \ref{sec38} and \ref{sec38a},
respectively.
Results in these two fields were initially proved separately, and were later 
 unified  in limiting  probability models
developed by Arratia, Barbour and Tavar\'{e} \cite{ABT97}.
These models account not only for the main parts of
the probability distribution but also for  
 tail distributions,  where there are large deviations results in
which Euler's constant appears.
 On a technical level an explanation 
for the parallel results in the two subjects  was offered by 
Panario and Richmond \cite{PR01}
at the level of  asymptotic analysis of generating functions of a particular form.
This  research development  is an example of  one of Euler's 
research methods detailed in Section \ref{sec26}, that of  looking for a general scheme
uniting two special problems.

In a second direction, the study of Euler's constant and
multiple zeta values can now  be   viewed in a larger
context of rings of periods connected with algebraic integrals. 
The sui generis nature of $\gamma$
is  sharply formulated in the conjecture that it is  not  a Kontsevich-Zagier period.
On the other hand, it is  known to belong to  the larger ring
of  exponential periods $\sEP$,  as described in Section \ref{sec310}. 
This research development illustrates another  of Euler's research methods,
to keep searching for a fuller understanding of any problem already solved.

In a third direction, one may ask 
for an explanation of  the appearance of
Euler's constant in various sieve method
formulas. Here
Friedlander and Iwaniec \cite[p. 239]{FI10} comment 
 that these appearances  
stem  from two distinct  sources. The first is through
Mertens's product  theorem \ref{th51}. The second is via differential-difference
equations of the types treated in Sections \ref{sec34} and  \ref{sec35}.

In a fourth direction, there remain mysteries about the appearance of Euler's constant in
number theory in the gamma and zeta function. \smallskip

{\bf Observation.} {\em Euler's constant $\gamma$ appears  in the 
Laurent expansion of the Riemann zeta function
 at $s=1$; here the zeta function
 is a function encoding the properties of the finite primes. Euler's constant also appears
in the Taylor expansion  of the gamma function at $s=1$; here  the gamma function
encodes   properties 
of the (so-called) ``infinite prime" or "real prime" (Archimedean place).}\smallskip

The analogy between the   "real prime" and the finite primes traces back to Ostrowski's
work in the period 1913-1917 which
classified all absolute values on $\QQ$ (Ostrowski's Theorem).
Such analogies have been pursued for a
century;  a comprehensive treatment of such analogies 
is given in  Haran \cite{Har01}.

A mild mystery is to give a conceptual explanation for
this occurrence of Euler's constant
in two apparently different contexts, one associated to the finite primes, and the other associated to the 
real  prime (Archimedean place). 
There is a known connection between these two contexts, given
by the product formula for rational numbers 
over all places, finite and infinite. It says
\[
\prod_{p \le \infty} |r|_{p} = 1, \qquad   r \in \QQ.
\]
in which $|\cdot|_p$ denotes the $p$-adic valuation, normalized with $|p|_p = \frac{1}{p}$
and $|x| _{\infty}$ being the real absolute value. Does this relation explain the
common appearance?  
On  closer examination  there appears to be  a mismatch   between these
two occurrences, visible in 
 the functional equation of the Riemann zeta function.  The functional equation
 can be written 
 \begin{equation*}
\hat{\zeta}(s) = \hat{\zeta}(1-s),
\end{equation*}
in which appears  the {\em  completed zeta function}
\begin{equation*}
\hat{\zeta}(s) := \pi^{-\frac{s}{2}} \Gamma(\frac{s}{2}) \zeta(s)= \pi^{-\frac{s}{2}} \Gamma(\frac{s}{2})
\prod_{p}\left( 1- \frac{1}{p^s} \right)^{-1}.
\end{equation*}
The completed zeta function is given 
as an Euler product over all ``primes", both the finite primes and  the 
Archimedean Euler factor $\pi^{-\frac{s}{2}} \Gamma(\frac{s}{2})$. 
In this formula evaluation at the  
point $s=1$ for the Riemann zeta function corresponds to evaluation of  the
gamma function  at the shifted point  $s' = \frac{s}{2}= \frac{1}{2}$. 
The mismatch is that for the gamma function, Euler's constant most 
naturally appears in connection with its derivative at 
the point $s'=1$,  rather than at the point $s'= \frac{1}{2}, $ cf \eqn{201}. 
 Is there a conceptual explanation of this mismatch?  Theorem ~\ref{th30}
 associates Euler's constant with the digamma function at $s=\frac{1}{2}$, but
in that case an extra factor of $\log 2$ appears.
Perhaps  this mismatch can be resolved by 
considering an asymmetric  functional equation for the  function  $\Gamma(s) \zeta(s)$. 
Here one has, valid for $\text{Re}(s)>1$, the integral formula
\[
 \Gamma(s) \zeta(s)= \int_{0}^{\infty} \frac{1}{e^x -1} x^{s-1} dx.
\] 
One of Riemann's 
proofs of the  functional equation for $\zeta(s)$
is based on this integral.

Another  mystery concerns  whether there may exist an interesting
arithmetical  dynamical interpretation of Euler's constant,
different from that given in Section \ref{sec39}. 
The   logarithmic derivative of the archimedean factor $\pi^{-\frac{s}{2}} \Gamma(\frac{s}{2})$ at
the value $s=2$ (that is, $s' =1$) produces the value  $-g/2$ where 
\begin{equation*}
g:= \gamma + \log \pi.
\end{equation*}
The constant $g$  appears elsewhere in the
``explicit formulas" of prime number theory, as a contribution at the (special) point
$x=1$ in the Fourier test  function space, cf.  \cite[p. 280]{BL99}.  
Perhaps one should search for a dynamical
system whose entropy is the constant $-g:= -\gamma - \log \pi \approx -1.721945$.

In a fifth  direction, Euler's constant can be viewed as a value
emerging from regularizing the zeta function at $s=1$, 
identified with the constant term in its Laurent series expansion at $s=1$.
There are a number of different proposals
for ``renormalizing"  multiple zeta values, including the value of  ``$\zeta(1)$",
with the intent to preserve other algebraic structures.
This is an active area
of current research.  Various alternatives are discussed in 
Ihara, Kaneko and Zagier \cite{IKZ06},
Guo and Zhang \cite{GZ08} , \cite{GZ08b},  Guo, Paycha, Xie and Zhang \cite{GPXZ09}
and Manchon and Paycha \cite{MP10}. 
Some of these authors  (for example \cite{IKZ06}) 
prefer to  assign  the ``renormalized"  value $0$
to $``\zeta(1)"$, to be obtained  by further subtracting off Euler's constant.

This direction generalizes to algebraic number fields.
For background we refer to  Neukirch \cite[Chap. VII. Section 5]{Neu99}.
The Dedekind zeta function $\zeta_K(s)$ of a number field $K$ is given by 
$$
\zeta_K(s) = \sum_{A} \frac{1}{N_{K/\QQ}(A)^{s}}
$$
where $A$ runs over all the ideals in the ring of integers $O_K$ of 
the number field, and $N_{K/\QQ}$ is the norm function.
 This function analytically continues to a meromorphic function on $\CC$
which  has a simple pole at $s=1$, with Laurent expansion
\begin{equation}
\label{dedz}
\zeta_K(s) = \frac{ \alpha_{-1}(K)}{s-1} + \alpha_0(K) + \sum_{j=1}^{\infty} \alpha_j(K)(s-1)^j.
\end{equation}
The  term  $\alpha_{-1}(K)$ encodes arithmetic information about the field $K$,
given in the {\em analytic class number formula}
$$
 Res_{s=1}(\zeta_K(s)) := \alpha_{-1}(K) = \frac{h_KR_K}{e^{g_K}},
$$
in which $h_K$ is the (narrow) class number of $K$, $R_K$ is the regulator of $K$
(the log-covolume of the unit group of $K$) and $g_K$ is the genus of $K$, given by
$$
g_K := \log \frac{w_K |d_K|^{1/2}}{2^{r_1} (2\pi)^{r_2}},
$$
in which $w_K$ is the number of roots of unity in $K$, $d_K$ is the absolute discriminant of $K$,
$r_1$ and $r_2$ denote the number of real (resp. complex) Galois conjugate fields to $K$, inside $\bar{\QQ}$.
The constant term $\alpha_0(K)$ 
 is then an  analogue of Euler's constant for the number field $K$, since $\alpha_0(\QQ) = \gamma$.
 A refined notion studies
the constant term 
of a zeta  function attached to an individual ideal class $\sC$ of  a number field $K$. Such a partial zeta function
 has the form
$$
\zeta(\sC,s) := \sum _{A \in \sC}  \frac{1}{N_{K/\QQ}( A)^{s}}
$$ 
where the sum runs over all integral ideals $A$   
of  the number field belonging to the  ideal class $\sC$.
The Dedekind zeta function is the sum of  ideal class zeta functions
over all $h_K$ ideal classes.  These partial zeta functions $\zeta(\sC, s)$ extend to meromorphic functions of $s$ on $\CC$
with only singularity a simple pole at $s=1$ having a residue term 
 that is  independent of $\sC$, i.e. $\alpha_{-1}(\sC, K) =\frac{1}{h_K} \alpha_{-1}(K)$
(\cite[Theorem VII (5.9)]{Neu99}).

 For imaginary
quadratic fields the constant term $a_0(\sC, K)$  in the Laurent expansion
at $s=1$  of an ideal class zeta function 
can be expressed in terms of a modular form using the first Kronecker limit formula;
cf. Kronecker \cite{Kron}.
 This limit formula concerns 
the real-analytic Eisenstein series, for $\tau=x+iy$ with $y >0$ and $s \in \CC$
$$
E(\tau, s) = \sum_{(m,n) \ne (0,0)} \frac{y^s}{|m \tau + n|^{2s}},
$$
which for fixed $\tau$  analytically 
continues to a meromorphic function of $s \in \CC$ and has a simple pole at $s=1$ with
residue $\pi$ and Laurent expansion
$$
E(\tau, s) = \frac{\pi}{s-1} + c_0(\tau) + \sum_{j=1}^{\infty} c_n(\tau) (s-1)^n.
$$
The first Kronecker limit formula expresses the constant term $c_0(\tau)$ as
\begin{equation*}
c_0(\tau)= 2 \pi (\gamma - \log 2) - \log (\sqrt{y} |\eta(\tau)|^2),
\end{equation*}
in which $\eta(\tau)$ is the Dedekind eta function, a modular form given by
$$
\eta(\tau) = q^{1/24} \prod_{n=1}^{\infty} (1- q^n),~~~~~~\mbox{with} ~~~q= e^{2 \pi i \tau},
$$
see \cite[Sect. 2]{Zag75}.
In   1923 Herglotz \cite{Her23ab}
found an analogous   formula for the constant
term in the Laurent series expansion at $s=1$ of $\zeta(\sC, s)$ for 
an ideal class $\sC$ of a real quadratic field, which involved 
Dedekind sums and an auxiliary transcendental function.  In 1975 by Zagier \cite{Zag75} obtained
another formula for this case which is expressed using   continued fractions and an auxiliary function.

In 2006 Ihara \cite{Ih06}
studied another  quantity extracted from  the Laurent expansion \eqn{dedz} of the Dedekind zeta function
of a number field at $s=1$, given by  
\begin{equation*}
\gamma(K) := \frac{\alpha_0(K)}{\alpha_{-1}(K)},
\end{equation*}
which he termed the  {\em Euler-Kronecker constant} of the
number field $K$.
Here $\gamma(K)$ represents a scaled form of the constant term, and it generalizes  Euler's constant
since $\gamma(\QQ) = \gamma.$
Motivated by the function field case, Ihara hopes that $\gamma(K)$ encodes interesting
information of a different type on the field $K$, especially as $K$ varies. 
Under the assumption of
the Generalized Riemann Hypothesis, Ihara obtained  upper and lower bounds
$$
- c_1 \log D_K  \le \gamma(K) \le c_2 \log\log D_K,
$$
where $D_K$ is the (absolute value of the)  discriminant of $K$ and $c_1, c_2>0$ are absolute constants.
Further unconditional upper bounds on $\gamma(K)$ were obtained by Murty \cite{Mur11}
and lower bounds were given by Badzyan \cite{Bad10}.

Finally,  ongoing attempts to place Euler's constant in a larger
context have continued in  many directions. These include studies of new special functions
involving Euler's constant, such as  the {\em generalized Euler-constant function} $\gamma(z)$ proposed
by Sondow and Hadjicostas \cite{SH07}. 
Two other striking formulas, found in 2005 by  Sondow \cite{Son05}, are the double integrals
$$
\int_{0}^1 \int_{0}^1 \frac{1-x}{(1-xy)(-\log xy)} dx dy = \gamma
$$
and
$$
\int_{0}^1 \int_{0}^1 \frac{1-x}{(1+xy)(-\log xy)} dx dy = \log \frac{4}{\pi},
$$
 Guillera and Sondow \cite{GS08} then found a common integral generalization 
 with a complex parameter that
 unifies these formulas with 
  double integrals of Beukers \cite{Beu79} for $\zeta(2)$ and $\zeta(3)$,
  see  \eqn{833a}.

Many other generalizations and 
variants of Euler's constant  have
been proposed.
  These include a $p$-adic analogue of Euler's constant introduced by 
J. Diamond \cite{Di77} in 1977 (see also Koblitz \cite{Kob78} and Cohen \cite[Sect. 11.5]{Coh07b}) and
 $q$-analogues of Euler's constant formulated by Kurokawa
and Wakayama \cite{KW04} in 2004.
A topic of perennial interest is the search for  new expansions and representations 
for Euler's constant. For example  1917 work of 
Ramanujan \cite{Ram17} was extended in 1994 by  Brent \cite{Bre94}.
Some new expansions  involve  accelerated convergence of known expansions, e.g. 
Elsner \cite{Els95}, K. and T. Hessami Pilehrood \cite{HP07},  and
Pr\'{e}vost \cite{Pr08}.  Recent developments of the Dickman function led
to the study of new constants involving Euler's constant and polylogarithms,
cf. Broadhurst \cite{Bro10} and Soundararajan \cite{Sou10}.

All these studies, investigations  of irrationality,
connections with the Riemann hypothesis, possible interpretation as a (non)-period,
and  formulating generalizations of Euler's constant,  may together
lead to  a better understanding of the place of Euler's constant $\gamma$ in mathematics.\smallskip

\paragraph{\bf Acknowledgments.} This paper started with  a talk
  on Euler's constant given  in 2009 in the ``What is...?" seminar run by
Li-zhen Ji at the University of Michigan.
 I  am grateful to  Jordan Bell for providing 
 Latin translations of Euler's work in Sections 2.1--2.3,  and for 
 pointing out relevant Euler papers E55, E125, E629,
 and thank 
 David Pengelley for  corrections and critical
comments on Euler's work in Section 2. 
  I thank Sylvie Paycha for comments on dimensional regularization in Section 3.2;
  Doug Hensley, Pieter Moree, K. Soundararajan and 
Gerald Tenenbaum for  remarks on  Dickman's function  in  Section 3.5;
 Andrew Granville 
for extensive comments on the anatomy of integers (\cite{Gra11}), affecting Sections 3.5, 3.6 and 3.9-3.11;
Tony Bloch for pointing out work of 
Cohen and Newman described in Section 3.13; 
Maxim Kontsevich for comments on rings of periods  in Section 3.14;  and
Stephane Fischler, Tanguy Rivoal and Wadim Zudilin for comments on Sections 3.15 and 3.16. 
I thank Daniel Fiorilli, Yusheng Luo  and Rob Rhoades for important corrections.
I  am especially grateful to  Juan Arias de Reyna  for  detailed checking, detecting
many misprints and errors,  and supplying  additional references. 
 Finally I thank the reviewers  for many  historical and mathematical suggestions
 and corrections.

%
%
%
%



\end{document}